\newcommand{\Z}{\mathbb{Z}}
\newcommand{\Q}{\mathbb{Q}}
\newcommand{\R}{\mathbb{R}}
\newcommand{\N}{\mathbb{N}}
\newcommand{\abs}[1]{\left\lvert #1\right\rvert}
\newcommand{\inv}{^{-1}}
\newcommand{\Lzero}{\mathrm{L}^{0}}
\newcommand{\Sinf}{\mathfrak{S}_{\infty}}
\newcommand{\Aut}{\mathrm{Aut}}
\newcommand{\Autf}{\mathrm{Aut}_f}
\newcommand{\MAlg}{\mathrm{MAlg}}
\newcommand{\MAlgf}{\mathrm{MAlg}_f}
\newcommand{\Leb}{\mathrm{Leb}}
\newcommand{\id}{\mathrm{id}}
\DeclareMathOperator{\supp}{\mathrm{supp}}
\title{Polish full groups preserving an infinite measure}
\author{Fabien \textsc{Hoareau}}
\newtheorem{thm}{Theorem}[section]
\newtheorem{cor}[thm]{Corollary}
\newtheorem{lem}[thm]{Lemma}
\newtheorem{prop}[thm]{Proposition}
\newtheorem{thmi}{Theorem}				
\theoremstyle{definition}
\newtheorem{claim}{Claim}[subsection]
\newtheorem*{claim2}{Claim}
\newtheorem{defi}[thm]{Definition}
\newtheorem{ex}[thm]{Example}
\newtheorem{rem}[thm]{Remark}
\newtheorem{nota}[thm]{Notation}
\newtheorem*{question*}{Question}
\newtheorem*{ack}{Acknowledgements}
\begin{document}

\maketitle

\begin{abstract}
   We extend some results of Carderi and Le Maître on full groups in the probability context to the infinite measure one: there exists at most one Polish group topology (refining the weak topology and coarser than the uniform topology) on an ergodic full group, and the orbit full group of a locally compact group acting in a Borel manner can be endowed with a Polish group topology. Moreover, orbit full groups are complete invariants for orbit equivalence. 
   We then generalize a result from Le Maître on non-Polishability of the group of finitely supported bijections: the finitely supported elements of an ergodic full group carries a Polish group topology if and only if the associated full group comes from a countable equivalence relation. We finish with algebraic and topological results on ergodic (orbit) full groups concerning normal subgroups, contractibility and genericity of aperiodic elements.
\end{abstract}

\tableofcontents

\section{Introduction}

In his pioneering work (\cite{Dye1959}, \cite{Dye1963}), Dye introduced the full groups as subgroups of the group $\Aut(X,\mu)$ of probability measure-preserving (pmp) bijections of a standard probabilty space $(X,\mu)$, where two such bijections are identified if they coincide on a conull set. This definition extends to the broader setup of non-singular Borel bijections of $(X,\mu)$, \textit{i.e.} bijections $T: X \to X$ such that $T_\ast \mu$ is equivalent to $\mu$. We denote by $\Aut(X,[\mu])$ the group they form.

\begin{defi}
	A \textbf{full group} $\mathbb{G}$ is a subgroup of $\Aut(X,[\mu])$ which is stable under \textbf{cutting and pasting}, where a bijection $T$ is obtained by cutting and pasting $(T_n)$ if there exists a countable partition $(A_n)$ of the space such that $T_{\restriction A_n} = T_{n \restriction A_n}$ for any $n \in \N$.
\end{defi}

At this level of generality, countably generated full groups were first studied by Krieger in \cite{Krieger1969}.
The study of these groups is enriched when we see them as Polish groups, and as such we recall the following definitions.

\begin{defi}
The \textbf{uniform topology} on $\Aut(X,[\mu])$ is the group topology induced by the \textbf{uniform metric} $d_\mu$ defined by $d_\mu(S,T) = \mu(\left\{ x \in X \mid T(x) \neq S(x) \right\})$. We also define the \textbf{bi-uniform metric} $\widetilde{d}$ by $\widetilde{d}(S,T) = d_\mu(S,T) + d_\mu(S\inv, T\inv)$.
\end{defi}

The group $\Aut(X,[\mu])$ is complete for the bi-uniform metric. Moreover it can be shown that a full group is separable for the uniform topology if and only if it is the full group of a countable equivalence relation (see \Cref{defi: eqrel fg}), in which case it is a Polish group.

In the more specific context of pmp bijections, Carderi and Le Maître took a step out of the countable world and developped in \cite{CarderiLM2016} a framework providing many new Polish full groups.
Our first order of business is to generalize this construction to the setup of bijections preserving a $\sigma$-finite infinite measure. The group they form is denoted by $\Aut(X,\lambda)$, where $(X,\lambda)$ is a standard $\sigma$-finite space. We give the definition of orbit full groups in this context.

\begin{defi}
	Let $G$ be a Polish group acting on a standard $\sigma$-finite space $(X, \lambda)$ in a Borel manner. The \textbf{orbit full group} $[\mathcal{R}_G]$ is defined by \[[\mathcal{R}_G] = \left\{ T \in \Aut(X,\lambda) \mid \forall x \in X : T(x) \in G \cdot x \right\}.\]
\end{defi}

If $(X,\lambda)$ is a standard probability space instead, we recover the definition from \cite{CarderiLM2016}.
However, from a dynamical point of view, the world of $\sigma$-finite infinite measures is very different from the world of finite measures. Indeed, the phenomenon of dissipativity appears, examplified by the map $x \mapsto x+1$ on $(\R, \Leb)$. The space is cleaved by the \textit{Hopf decomposition} into the conservative (or recurrent) and dissipative parts, and even for well-understood systems such as odometers, Eigen, Hajian and Halverson proved in \cite{EigenHajianHalverson1998} that recurrence behaves differently: odometers can fail to be multiply recurrent. Furthermore, the group $\Aut(X,\lambda)$ is not a simple group, as opposed to its probability counterpart $\Aut(X,\mu)$ (see \cite{Fathi1978}). Our first result is the following 
(for the definition of the \textit{topology of orbital convergence in measure}, see \Cref{section: Polish structures on orbit full groups}).

\begin{thmi}
	[see Thm.\ \ref{Thm: orbit full groups are Polish groups LC}]
	{\label{thmi: orbit fg of G lcsc}}
	let $G$ be a locally compact Polish group acting in a measure-preserving manner on a standard $\sigma$-finite space $(X,\lambda)$. The orbit full group $[\mathcal{R}_G]$ endowed with the topology of orbital convergence in measure is a Polish group.
\end{thmi}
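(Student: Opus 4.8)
The plan is to fix, once and for all, a probability measure $\mu$ equivalent to $\lambda$, a Borel partition $X = \bigsqcup_{k} X_k$ into sets of finite $\lambda$-measure, and a left-invariant compatible metric $d_G \le 1$ on $G$; after refining the Polish topology on $X$ in the sense of Becker--Kechris we may and do assume that the $G$-action is continuous, which changes neither the Borel structure nor $\lambda$. For $T \in [\mathcal{R}_G]$ the set $\{(x,g) : g\cdot x = T(x)\}$ is Borel with non-empty sections, so a measurable uniformization yields a Borel \emph{cocycle representative} $c_T \colon X \to G$ with $c_T(x)\cdot x = T(x)$ for $\mu$-a.e.\ $x$, and the composition rule $c_{ST}(x) = c_S(Tx)\,c_T(x)$ holds a.e.\ for any choice of representatives. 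The topology of orbital convergence in measure (\Cref{section: Polish structures on orbit full groups}) is then induced by a metric that measures how far such a representative of $S\inv T$ sits from the constant map $e$, in $\lambda$-measure on each $X_k$, symmetrised between $S\inv T$ and $(S\inv T)\inv$ exactly as the bi-uniform metric $\widetilde{d}$ symmetrises $d_\mu$; so the task is to verify that this metric is separable, induces a group topology, and is complete. (Separation of points is straightforward: if the metric vanishes one extracts, after diagonalising over $k$, representatives converging a.e.\ to $e$, and continuity of the action forces $S\inv T = \id$.)

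The first step is that this is a group topology. Continuity of inversion is immediate from $c_{T\inv}(x) = c_T(T\inv x)\inv$. Continuity of multiplication follows from the composition rule together with left-invariance of $d_G$, once one notes that for $T \in [\mathcal{R}_G]$ the ``bad set'' $\{x : d_G(c_S(x),e) > \varepsilon\}$ is carried by $T\inv$ onto a set of the \emph{same} $\lambda$-measure; this is precisely the point at which $\lambda$-invariance of the elements of $[\mathcal{R}_G]$ is used, and it is what replaces the invariance of the ambient probability measure that is available in the pmp setting of \cite{CarderiLM2016}. For separability one approximates, on each $X_k$, an arbitrary representative by a cocycle taking finitely many values in a fixed countable dense subgroup of $G$ and constant along the atoms of a finite partition drawn from a fixed countable generating subalgebra of the measure algebra; the transformation obtained from such a cocycle by cutting and pasting lies in $[\mathcal{R}_G]$, and the transformations so produced form a countable dense set.

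The main obstacle is completeness. Given a Cauchy sequence $(T_n)$, after passing to a rapidly converging subsequence I would first produce \emph{coherent} representatives $c_n$ of $T_n$ that are Cauchy in $\Lzero(X,\lambda|_{X_k};G)$ for every $k$: writing $T_{n+1} = T_n \circ R_n$ with $R_n$ orbitally close to $\id$, the map $x \mapsto c_n(R_n x)\,c_{R_n}(x)$ represents $T_{n+1}$, and the difficulty is to control $c_n(R_n x)$ when $R_n x$ is merely close to $x$. Local compactness is essential here: one uses that $\lambda$-small orbital perturbations admit representatives with values in a fixed compact subset of $G$, together with properness of $d_G$ on compacta — or, more structurally, one passes to a lacunary complete Borel cross-section $Y$ of the action, which exists \emph{because} $G$ is locally compact (Kechris), so that $\mathcal{R}_G|_Y$ is a countable Borel equivalence relation whose full group is already Polish for the uniform metric (see \Cref{defi: eqrel fg}) and $[\mathcal{R}_G]$ is reconstructed from $[\mathcal{R}_G|_Y]$ together with an $\Lzero(\,\cdot\,;G)$-valued datum recording the motion inside each $G$-orbit; transporting completeness through this description finishes the step. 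Once the $c_n$ converge in $\Lzero$ to some $c$, set $T(x) := c(x)\cdot x$; then $T_n(x) = c_n(x)\cdot x \to T(x)$ $\mu$-a.e.\ by continuity of the action, so for bounded continuous $g \ge 0$, Fatou's lemma gives $\int g\, d(T_*\lambda) \le \liminf_n \int g(T_n x)\,d\lambda(x) = \int g\, d\lambda$, whence $T_*\lambda \le \lambda$; running the same argument for $(T_n\inv)$ produces a limit $S$ with $S_*\lambda \le \lambda$, and continuity of multiplication forces $ST = TS = \id$, so $S = T\inv$, hence $T_*\lambda = \lambda$ and $T$ is a $\lambda$-preserving Borel bijection with $T(x) \in G\cdot x$, i.e.\ $T \in [\mathcal{R}_G]$, and $T_n \to T$. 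Combining the three steps shows that $[\mathcal{R}_G]$ with the topology of orbital convergence in measure is a Polish group, which is \Cref{Thm: orbit full groups are Polish groups LC}.
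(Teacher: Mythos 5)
Your proposal diverges from the paper's route at the very first step, and the divergence hides the real difficulty. The paper does \emph{not} prove this theorem by making the action continuous via Becker--Kechris and then verifying Polishness of a quotient metric by hand; it invokes \cite[Thm.~4.4]{HoareauLM2024} to replace the Borel action by a continuous action on a \emph{locally compact} Polish space carrying a \emph{Radon} (hence locally finite) measure, and then applies the general \Cref{Thm: orbit full groups are Polish groups}, whose proof realizes $\widetilde{[\mathcal{R}_G]}$ as a $G_\delta$ subgroup of the Polish group $\Lzero(X,\lambda,G)$ and quotients by the closed kernel of $\Phi$. The reason the paper needs the stronger model is that the Becker--Kechris change of topology preserves the Borel structure and the measure $\lambda$ but gives no control on whether $\lambda$ is locally finite for the new topology, and \Cref{section: necessity of the local finiteness} shows this is not a cosmetic issue: without local finiteness the topology of (orbital) convergence in measure need not even refine the weak topology (\Cref{prop: non locally finite tau_m does not refine tau_w}), and \Cref{ex: action of Sinf} exhibits a continuous measure-preserving action of a Polish group for which $\tau_m^o$ is \emph{not} a Polish group topology. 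So "we may and do assume the action is continuous" does not, by itself, put you in a situation where the theorem's conclusion is accessible.

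The gap then surfaces concretely in your completeness step, which is where all the content of the theorem is concentrated and which you do not actually carry out. To make the coherent representatives $x\mapsto c_n(R_nx)\,c_{R_n}(x)$ Cauchy in $\Lzero(X,\lambda,G)$ you need $c_n\circ R_n$ to be $\Lzero$-close to $c_n$; since $c_n$ is merely measurable, the only mechanism for this is the continuity of the right action of $(\Aut(X,\lambda),\tau_w)$ on $(\Lzero,\tau_m)$, which requires knowing that $R_n\to\id_X$ \emph{weakly}. Deducing weak smallness of $R_n$ from orbital smallness ($c_{R_n}\to e$ in measure, hence $R_nx\to x$ in $\tau_X$ in measure) is exactly the content of \Cref{prop: topology on Aut seen as measureable functions}(2), which is where local finiteness is used and which fails in the examples above. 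Neither "properness of $d_G$ on compacta" (a compact range for $c_{R_n}$ gives no control on $c_n(R_nx)$ versus $c_n(x)$) nor "transporting completeness through the lacunary cross-section description" is an argument; the latter reconstruction of $[\mathcal{R}_G]$ from $[\mathcal{R}_G\restriction_Y]$ plus an $\Lzero$-valued datum is plausible as a program but is precisely the kind of structure theory that \cite[Thm.~4.4]{HoareauLM2024} packages, and it is the step that uses local compactness of $G$ in an essential way. (A smaller issue: your countable dense set of finitely-valued cocycles does not obviously consist of elements of $\widetilde{[\mathcal{R}_G]}$ --- an arbitrary finitely-valued cocycle need not induce a bijection of $X$; separability is better obtained for free from $\widetilde{[\mathcal{R}_G]}$ being a subspace of the separable space $\Lzero(X,\lambda,G)$.)
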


Our result is actually more general than this, and expands on a subtle point specific to infinite measures, namely local finiteness. Recall that a Borel measure on a topological space is \textbf{locally finite} when every point admits a neighbourhood of finite measure. Under this additional condition, we obtain the conclusion of \Cref{thmi: orbit fg of G lcsc} for continuous actions of general Polish groups. In fact this conclusion may fail to hold when the measure is not locally finite and the acting group is not locally compact (see \Cref{ex: action of Sinf}).

\begin{thmi}
	[see Thm.\ \ref{Thm: orbit full groups are Polish groups}, Cor.\ \ref{essfree}]
	{\label{thmi: Polish topology on fg}}
	Let $G$ be a Polish group acting in a continuous manner on a Polish space $(X,\tau_X)$ equipped with an atomless $\sigma$-finite measure $\lambda$ which is locally finite on $\tau_X$. The orbit full group $[\mathcal{R}_G]$, equipped with the topology of orbital convergence in measure, is a Polish group. Moreover, if the action is measure-preserving and essentially free, $G$ topologically embeds in $[\mathcal{R}_G]$.
\end{thmi}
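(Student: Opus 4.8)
The plan is to verify, for the topology of orbital convergence in measure on $[\mathcal{R}_G]$, that it is (i) a metrizable group topology, (ii) separable, and (iii) completely metrizable --- so that $[\mathcal{R}_G]$ is a Polish group --- and then (iv) that $g\mapsto(x\mapsto g\cdot x)$ is a topological group embedding of $G$ when the action is measure-preserving and essentially free. Throughout I would fix a finite Borel measure $\nu$ equivalent to $\lambda$, a bounded compatible metric $d_G\le 1$ on $G$ that is invariant under $g\mapsto g\inv$, and, for each $T\in[\mathcal{R}_G]$, a Borel cocycle $c_T\colon X\to G$ with $c_T(x)\cdot x=T(x)$; such a $c_T$ exists by measurable selection applied to the Borel set $\{(x,g):g\cdot x=T(x)\}$, whose sections are cosets of the closed stabilisers $G_x$ (here the continuity of the action is already used, to know $G_x$ is closed). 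Since $c_T$ is not unique, the data actually seen by the topology are the coset-valued maps $x\mapsto c_T(x)G_x$, equivalently the Borel functions $\rho(S(x),T(x)):=\inf\{d_G(g,e):g\cdot S(x)=T(x)\}$, and I would phrase the whole argument in these terms.

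For (i), metrizability is part of the definition, and what needs proof is continuity of the two translations and of inversion. Right translation by a fixed $R$, and inversion, are dealt with by changing variables through $R$ (resp.\ through $T$ itself); since every element of $[\mathcal{R}_G]$ preserves $\lambda$, this change of variables preserves ``in-measure'' statements on sets of finite $\lambda$-measure, to which one reduces by a $\sigma$-finite exhaustion. Left translation, and joint continuity of multiplication, rest on the cocycle identity $c_{ST}(x)=c_S(T(x))\,c_T(x)$ together with continuity of the $G$-action, to control the motion of $g\cdot x$ with $g$; this is the delicate point of (i), because one cannot simply invoke continuity of a fixed Borel $c_R$ at the moving argument $T_n(x)$, and the way around is to argue at the level of coset-valued maps and of the weak topology which the orbital topology refines. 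For (iii), a Cauchy sequence $(T_n)$ has coset-valued cocycles of the $T_n$ and of the $T_n\inv$ Cauchy in $\nu$-measure, hence convergent in measure along a subsequence; as $G$ is Polish the limiting values remain in $G$, producing a Borel $c$ and a candidate limit $T\colon x\mapsto c(x)\cdot x$. The control on the inverses forces $T$ to be a bijection, measure-preservation passes to the limit on each finite-measure set, and one checks $T_n\to T$; thus a suitable compatible metric is complete.

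Step (ii) is the main obstacle. Fix a countable dense subgroup $\Gamma\le G$; then for $\lambda$-a.e.\ $x$ the orbit $\Gamma\cdot x$ is dense in $G\cdot x$ for the orbit (quotient) topology. Because $\lambda$ is locally finite on $\tau_X$, the Lindel\"of property writes $X$ as a countable union of open sets of finite measure, so $\tau_X$ has a countable basis of finite-measure sets, which after closing under the Boolean operations and invoking regularity of $\lambda$ on finite-measure pieces is dense in the measure algebra of each such piece. The claim to establish is that every $T\in[\mathcal{R}_G]$ is approximated, in the orbital topology, by an element of $[\mathcal{R}_G]$ whose cocycle is $\Gamma$-valued and takes finitely many values over sets from this countable family; the collection of all such elements is countable, giving a countable dense set. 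Building such an approximant from $T$ is a marriage/back-and-forth argument along the orbit equivalence relation: first discretise $c_T$ by elements of $\Gamma$ up to a small error (possible since $\Gamma$ is dense), then repair the ensuing failure of bijectivity on a set of small measure, using the density of $\Gamma$-orbits and atomlessness of $\lambda$ to cut and reroute, all while remaining measure-preserving. This is precisely the step that breaks down when $\lambda$ is not locally finite and $G$ is not locally compact (cf.\ \Cref{ex: action of Sinf}), so one of those hypotheses is genuinely needed; the locally compact case, \Cref{Thm: orbit full groups are Polish groups LC}, is the parallel device for forcing the discretisation into a countable set.

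For (iv), assume the action is measure-preserving and essentially free, and set $\iota(g)\colon x\mapsto g\cdot x$. Each $\iota(g)$ preserves $\lambda$ and visibly lies in $[\mathcal{R}_G]$, and $\iota$ is a homomorphism. Essential freeness gives $G_x=\{e\}$ for $\lambda$-a.e.\ $x$, so the cocycle of $\iota(g)$ is forced to be $\lambda$-a.e.\ the constant $g$; hence $\iota$ is injective, and one reads off that $\rho(\iota(g_n)(x),\iota(g)(x))$ and $\rho(\iota(g_n)\inv(x),\iota(g)\inv(x))$ are ($x$-independent) quantities tending to $0$ whenever $g_n\to g$, so $\iota$ is continuous. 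Conversely, if $\iota(g_n)\to\iota(g)$ in the orbital topology, these constant quantities converge to $0$ in $\nu$-measure on a set of positive finite measure, which for constants forces $g_n\to g$ in $G$; therefore $\iota$ is a homeomorphism onto its image, and since $G$ is completely metrizable that image is in fact a closed subgroup of $[\mathcal{R}_G]$.
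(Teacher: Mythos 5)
Your strategy is genuinely different from the paper's: you try to verify metrizability, separability and completeness of $\tau_m^o$ directly on $[\mathcal{R}_G]$ by manipulating coset-valued cocycles, whereas the paper realises $[\mathcal{R}_G]$ as the quotient of $\widetilde{[\mathcal{R}_G]}=\Phi\inv(\Aut(X,\lambda))$, a $G_\delta$ subgroup of the Polish group $\Lzero(X,\lambda,(G,\tau_G))$ (\Cref{prop: measurable functions to a Polish group is Polish}), by the closed normal subgroup $\ker\Phi$, and then quotes \Cref{prop: G delta Polish} and \Cref{prop: quotient Polish group}. That device makes your steps (ii) and (iii) automatic, and this matters because your separability argument has a genuine gap: after discretising the cocycle of $T$ into a $\Gamma$-valued step function, the map $x\mapsto\gamma(x)\cdot x$ can fail to be injective or surjective on a set of positive measure, and the promised ``repair'' turning it back into a measure-preserving bijection of $[\mathcal{R}_G]$ that is still $\tau_m^o$-close to $T$ is exactly the hard part; you give no construction for it, and no ergodicity is available here to power an exchange argument along the orbits. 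The quotient construction sidesteps this entirely: one only needs density of step functions in $\Lzero(X,\lambda,G)$ (\Cref{prop: functions with finitely many values are dense}), never that the approximants be bijections.

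The second gap is the continuity of multiplication, which you correctly flag as delicate but then defer to ``the weak topology which the orbital topology refines''. That refinement is precisely what has to be proved, and it is where local finiteness genuinely enters: by \Cref{prop: topology on Aut seen as measureable functions}\emph{(2)} (Papangelou), local finiteness of $\lambda$ on $\tau_X$ is what makes the topology that $\Aut(X,\lambda)$ inherits from $\Lzero(X,\lambda,(X,\tau_X))$ coincide with $\tau_w$; only then does convergence of $\Phi(g_n)$ to $\Phi(g)$ force $f\circ\Phi(g_n)\to f\circ\Phi(g)$ in measure for a merely Borel $f:X\to G$, which is what the cocycle identity $c_{ST}(x)=c_S(T(x))c_T(x)$ demands (pointwise convergence of $\Phi(g_n)(x)$ says nothing about a Borel $f$ evaluated there). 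Your proposal instead locates the use of local finiteness in the separability step via Lindel\"of, which is not where it is needed: in \Cref{ex: action of Sinf} the space $\Lzero(X,\lambda,G)$ is still separable, and what breaks is the compatibility between $\tau_m$ and $\tau_w$ on $\Aut(X,\lambda)$, i.e.\ the continuity of the group operations. Your step (iv) is essentially the paper's \Cref{essfree} and is correct.
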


\Cref{thmi: orbit fg of G lcsc} follows from \Cref{thmi: Polish topology on fg} via \cite[Thm.~4.4]{HoareauLM2024}, which allows us to turn any Borel infinite measure-preserving action of a locally compact Polish group into a continuous one, where the measure is locally finite.

We now restrict our study to the much more rigid class of ergodic full groups.
Recall that a subgroup $\mathbb{G}$ of $\Aut(X,\lambda)$ is \textbf{ergodic} if for every $A \subseteq X$ satisfying $\lambda(T(A) \Delta A) = 0$ for every $T \in \mathbb{G}$, we have that $A$ is either null or conull. Moreover, any measure-preserving action of a Polish group $G$ yields a group homomorphism $\pi : G  \to \Aut(X,\lambda)$, and we say that the $G$-action is ergodic when $\pi(G)$ is ergodic as a subgroup of $\Aut(X,\lambda)$.
Our next result is a reconstruction-type theorem in the vein of \cite[Thm.~2]{Dye1963}, building upon Fremlin's generalization to groups with \textit{many involutions} (see \Cref{thmFRE}), which is the appropriate condition to consider and is satisfied by all ergodic full groups.

\begin{thmi}
	[see Thm.\ \ref{Thm: orbit full groups are complete invariants of OE}]
	Let $G$ and $H$ be two locally compact Polish groups acting in a measure-preserving ergodic manner on a standard $\sigma$-finite space $(X,\lambda)$. Then any group isomorphism between the orbit full groups $[\mathcal{R}_G]$ and $[\mathcal{R}_H]$ is the conjugation by a bijection $S : X \to X$ which preserves the measure $\lambda$ up to multiplication by an element of $\R_+^\ast$.
\end{thmi}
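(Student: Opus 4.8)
The strategy is to deduce the statement from Fremlin's reconstruction theorem (\Cref{thmFRE}). The first step is to observe that an orbit full group $[\mathcal{R}_G]$ of a measure-preserving ergodic action is itself an ergodic full group: it is stable under cutting and pasting by the very definition of the orbit relation, and since the homomorphism $\pi : G \to \Aut(X,\lambda)$ associated to the action has image inside $[\mathcal{R}_G]$, ergodicity of the $G$-action passes to $[\mathcal{R}_G]$. Consequently both $[\mathcal{R}_G]$ and $[\mathcal{R}_H]$ have many involutions, and they act faithfully on the measure algebra $\MAlg(X,\lambda)$, which is Dedekind complete (being, since $\lambda$ is $\sigma$-finite, isomorphic as a Boolean algebra to a standard probability algebra). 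This puts us exactly in the setting where \Cref{thmFRE} applies.

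Given an abstract group isomorphism $\varphi : [\mathcal{R}_G] \to [\mathcal{R}_H]$, Fremlin's theorem then yields a Boolean algebra automorphism $\Phi$ of $\MAlg(X,\lambda)$ such that $\varphi(T) = \Phi \circ T \circ \Phi^{-1}$ for every $T \in [\mathcal{R}_G]$, where $T$ is viewed as an automorphism of the measure algebra. By the classical spatial realization of measure-algebra automorphisms over a standard Borel space (von Neumann's theorem), $\Phi$ is induced by a $\lambda$-nonsingular Borel bijection $S : X \to X$, unique up to a null set, so that $\varphi(T) = S T S^{-1}$ holds in $\Aut(X,[\lambda])$ for all $T \in [\mathcal{R}_G]$.

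It remains to pin down the behaviour of $S$ with respect to the measure. Put $\nu = (S^{-1})_\ast \lambda$; since $S$ is nonsingular, $\nu$ is a $\sigma$-finite measure equivalent to $\lambda$. For every $T \in [\mathcal{R}_G]$, the transformation $S T S^{-1} = \varphi(T)$ belongs to $[\mathcal{R}_H]$ and therefore preserves $\lambda$; unwinding this conjugation shows that $T$ preserves $\nu$. Hence every element of $[\mathcal{R}_G]$ preserves both $\lambda$ and the equivalent $\sigma$-finite measure $\nu$, so the Radon–Nikodym derivative $\frac{d\nu}{d\lambda}$ is invariant under all of $[\mathcal{R}_G]$. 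By ergodicity of $[\mathcal{R}_G]$ it is $\lambda$-almost everywhere equal to a constant $c \in \R_+^\ast$, and therefore $S_\ast \lambda = c^{-1}\lambda$, which is the assertion.

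The part I expect to require the most care is the interface between the purely algebraic conclusion of \Cref{thmFRE} and the measure-theoretic structure: checking that the hypotheses of Fremlin's theorem (many involutions, Dedekind completeness of the ambient algebra) are genuinely met by ergodic orbit full groups over an infinite measure, that the abstract automorphism $\Phi$ is spatial in this $\sigma$-finite context, and finally that ergodicity is exactly what forces $S$ to scale $\lambda$ rather than preserve it on the nose — the latter being the phenomenon that has no counterpart in the probability-preserving setting and is the reason the multiplicative constant $c \in \R_+^\ast$ must appear in the statement.
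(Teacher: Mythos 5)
Your proposal is correct and follows essentially the same route as the paper: establish that the ergodic orbit full groups have many involutions, invoke Fremlin's reconstruction theorem (\Cref{thmFRE}) to realize the isomorphism as conjugation by a non-singular bijection $S$, and then use ergodicity to force the Radon--Nikodym derivative of the transported measure to be constant (this last step is exactly the paper's \Cref{propOE2}, phrased with $(S^{-1})_\ast\lambda$ instead of $S_\ast\lambda$). The paper's full version of the theorem additionally shows that $S$ is an orbit equivalence between $\mathcal{R}_G$ and $\mathcal{R}_H$ via \Cref{propOE}, but that is not required by the statement as given here.
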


Apart from the uniform topology, one can also endow $\Aut(X,\lambda)$ with the weak topology (see \Cref{defi: weak topology}), which makes it a Polish group. In the pmp setup, Carderi and Le Maître proved that the topology of orbital convergence in measure brings the weak and the uniform topologies under the same umbrella. It is moreover the only Polish group topology that an ergodic full group can carry (\cite[Thm.~4.7]{CarderiLM2016}). We establish the following, extending this result to our setup.

\begin{thmi}
	[see Thm.\ \ref{UniquePoltopoonfg} and Thm.\ \ref{Thm: any Polish topology is coarser than the uniform}]
	Let $\mathbb{G}$ be an ergodic full group on a standard $\sigma$-finite space. Then $\mathbb{G}$ carries at most one Polish group topology, and such a topology is necessarily coarser than the uniform topology, but refines the weak topology, seen as topologies inherited from those on $\Aut(X,\lambda)$.
\end{thmi}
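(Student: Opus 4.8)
The plan is to establish the theorem in three steps: (i) every Polish group topology $\tau$ on $\mathbb{G}$ refines the weak topology $\tau_w$; (ii) every such $\tau$ is coarser than the uniform topology $\tau_u$; (iii) uniqueness. I would derive (iii) from (i) by a soft closed–graph argument: if $\tau_1,\tau_2$ are Polish group topologies on $\mathbb{G}$, then, both refining the Hausdorff topology $\tau_w$ by (i), the diagonal $\{(g,g):g\in\mathbb{G}\}$ is closed in the Polish group $(\mathbb{G},\tau_1)\times(\mathbb{G},\tau_2)$, hence is itself a Polish group; the two coordinate projections from the diagonal are continuous bijective homomorphisms between Polish groups, so by the open mapping theorem each is a topological isomorphism, and composing them exhibits $\id\colon(\mathbb{G},\tau_1)\to(\mathbb{G},\tau_2)$ as a homeomorphism, i.e. $\tau_1=\tau_2$. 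So everything reduces to (i) and (ii).

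For (i), I would fix a Polish group topology $\tau$ and, by Pettis' theorem, reduce $\tau\supseteq\tau_w$ to showing that the inclusion $\Phi\colon(\mathbb{G},\tau)\hookrightarrow(\Aut(X,\lambda),\tau_w)$ is Borel; since $\tau_w$ is generated by the maps $T\mapsto T(A)\in\MAlgf$ for $A$ ranging over a countable subfamily dense in $\MAlgf$, it suffices to check that each of these maps is $\tau$-Borel. This is where ergodicity and fullness enter, through the abundance of involutions: for each such $A$ one finds an involution $u_A\in\mathbb{G}$ with $\supp(u_A)=A$, so that $T(A)=\supp(Tu_AT\inv)$ and $T\mapsto Tu_AT\inv$ is $\tau$-continuous; it then remains to see that $v\mapsto\supp(v)$ is $\tau$-Borel on involutions. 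For that I would use that, for a finite-measure set $C$, the condition $\lambda(\supp(v)\cap C)=0$ is equivalent to $v$ centralizing a suitably chosen \emph{fixed} countable subgroup $\Gamma_C\le\mathbb{G}$ supported on $C$ (whose involutions realize a dense family of supports inside $C$), using that disjoint supports commute and that the restriction of $\mathbb{G}$ to $C$ is rich; such a centralizer is $\tau$-closed because $\tau$ is a group topology, and as $C$ runs over a countable generating algebra these countably many closed conditions pin down $\supp(v)$ as a point of the measure algebra. Hence $\Phi$ is Borel, therefore continuous.

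Assertion (ii), that $\tau\subseteq\tau_u$, is the main obstacle and the one genuinely new point compared with the probability case; when $\lambda$ is finite it is exactly \cite[Thm.~4.7]{CarderiLM2016}, so I would treat the infinite-measure case separately. Since $\tau_u$ is metrized by $\widetilde{d}$ it suffices to show $\widetilde{d}(T_n,\id)\to 0\Rightarrow T_n\to\id$ in $\tau$. Arguing by contradiction, after passing to a subsequence I may assume there is a $\tau$-open $U\ni\id$ with $T_n\notin U$ for all $n$ while $\sum_n\lambda(\supp T_n)<\infty$, so that $A:=\bigcup_n\supp T_n$ has finite measure. The idea is then to use ergodicity to produce a single $g\in\mathbb{G}$ with the translates $g^k(A)$ pairwise disjoint for all $k$ (an infinite tower over $A$ — this is exactly what the infinite measure buys us), and to assemble by cutting and pasting a single $W\in\mathbb{G}$ recording, on the $n$-th floor $g^n(A)$, the conjugated partial product $g^n(T_1\cdots T_n)g^{-n}$ and the identity off $\bigcup_n g^n(A)$. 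Commuting $W$ with $g$ produces, through a telescoping word in $W$ and $g$, the conjugates $g^nT_n^{\pm1}g^{-n}$, and recovering from these the $T_n$ themselves in a uniformly $\tau$-continuous way — the delicate bookkeeping step, possibly needing auxiliary elements built along the tower — would exhibit a subsequence of $(T_n)$ as the image of $W$ under a fixed sequence of $\tau$-continuous group operations, forcing it to $\tau$-converge to $\id$ and contradicting $T_n\notin U$. Combining (i), (ii) and the reduction (iii) yields the statement.
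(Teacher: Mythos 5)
Your overall skeleton matches the paper's: (i) is proved there by Kallman's technique, (ii) by a Steinhaus property for $(\mathbb{G},\tau_u)$ combined with Rosendal--Solecki automatic continuity, and uniqueness is then deduced from (i) by a soft argument (the paper uses Lusin--Suslin plus Baire-measurability rather than your closed-diagonal/open-mapping argument, but your step (iii) is correct and a reasonable alternative). However, both (i) and (ii) contain genuine gaps.

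In (i), the conditions you propose to verify --- ``$\lambda(\supp(v)\cap C)=0$'' for $C$ ranging over a countable algebra, detected by centralizing a fixed countable family of involutions --- are only \emph{qualitative}: they record which basic sets the image $T(A)=\supp(Tu_AT\inv)$ meets non-nullly. This does not make $T\mapsto T(A)$ a Borel map into $\MAlgf(X,\lambda)$: two sets (e.g.\ fat Cantor sets of measures $1/3$ and $2/3$) can meet every member of a countable generating algebra non-nullly while being far apart in $d_{X,\lambda}$, so the $\sigma$-algebra generated by your closed conditions is strictly coarser than the Borel $\sigma$-algebra of the measure algebra, and you cannot conclude that sets of the form $\left\{T \mid \lambda(T(A)\Delta A_0)<\varepsilon\right\}$ are Baire-measurable. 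The missing quantitative ingredient is exactly \Cref{lem3}: for $A\subseteq B$ with $\lambda(B\setminus A)=\varepsilon$ one has $F_{\varepsilon}^{A}=\mathbb{G}_{X\setminus A}\cdot\mathbb{G}_{(A,B)}$, a pointwise product of two sets closed in any Hausdorff group topology (these are essentially the centralizer-type sets you do identify), hence analytic, hence with the Baire property; since the $F_{\varepsilon}^{A}$ generate $\tau_w$ as a group topology, Baire-measurability of the identity map then gives continuity.

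In (ii), the reduction ``after passing to a subsequence I may assume $\sum_n\lambda(\supp T_n)<\infty$'' is unjustified and in general false, and this is precisely the new difficulty of the infinite-measure case. Uniform convergence is measured against a probability measure $\mu\in[\lambda]$: $\widetilde{d}(T_n,\id)\to0$ gives $\mu(\supp T_n)\to0$ but places no bound on $\lambda(\supp T_n)$, which can equal $+\infty$ for every $n$ (with $\bigcup_n\supp T_n$ conull), so no tower of pairwise disjoint translates $g^k(A)$ over $A=\bigcup_n\supp T_n$ can exist. Dealing with elements whose support has infinite $\lambda$-measure is exactly what the factorisation $T=T_1T_2T_\varepsilon$ of \Cref{factoriseT} and the syndetic-set analysis in \Cref{494Yi} are designed for. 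Moreover, even in the finite-support regime your concluding step --- recovering the $T_n$ from the single element $W$ ``in a uniformly $\tau$-continuous way'' so that a subsequence is forced to $\tau$-converge to $\id$ --- is not yet an argument: applying a fixed countable family of continuous group operations to one element produces a sequence, but nothing forces that sequence to converge; the Kittrell--Tsankov-type scheme you are gesturing at instead derives a contradiction from countable syndeticity and Baire category, which is how the paper proceeds.
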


Proving that our Polish topology is coarser than the uniform one uses automatic continuity, which has already been well-studied. For instance Sabok has developped a framework giving a proof that $\Aut(X,\mu)$ has automatic continuity in \cite{Sabok2019}, generalized to infinite measure by Le Maître in \cite{LM2022}. An indispensable tool for establishing automatic continuity is the \textit{Steinhaus Property}, introduced in \cite{RosendalSolecki2007}, and used by Kittrell and Tsankov in \cite{KittrellTsankov2010} to show that ergodic full groups of countable equivalence relations have automatic continuity.  We expand on these techniques, and following an unpublished result from Fremlin, we show that any infinite measure-preserving ergodic full group is Steinhaus when endowed with the uniform topology.

Our result relies on a new factorisation of any infinite-measure preserving bijection (see \Cref{factoriseT}), allowing us to decrease the  Steinhaus exponent obtained in Fremlin's proof, from $228$ to $114$. 

Next we focus on a group that has no probability measure-preserving analogue: the group $\Autf(X,\lambda)$ of bijections with supports of finite measure. In \cite{LM2022}, Le Maître proved that this group does not admit any Polish group topology, a result in the spirit of the work of Rosendal (see \cite{Rosendal2005}). In the infinite measure-preserving context, we already know that the full group of a countable equivalence relation is a Polish group when endowed with the uniform topology, which has to coincide with the topology of orbital convergence in measure by uniqueness. In that case, the finitely supported bijections of the full group form a group that can be endowed with a Polish group topology, contrasting with the case of $\Autf(X,\lambda)$. We show that this is an equivalence.

\begin{thmi}
	[see Thm.\ \ref{Thm: characterisation of fg of countable eqrel}]
	Let $\mathbb{G}$ be an ergodic full group on a standard $\sigma$-finite space. The group $\mathbb{G}_f = \mathbb{G} \cap \Autf(X,\lambda)$ can be endowed with a Polish group topology if and only if $\mathbb{G}$ is the full group of a countable equivalence relation.
\end{thmi}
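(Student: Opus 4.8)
The plan is to prove the two implications separately: the backward one by writing down an explicit complete separable bi-invariant metric on $\mathbb{G}_f$, and the forward one by an automatic-continuity-plus-Baire argument in the spirit of Le Maître's proof that $\Autf(X,\lambda)$ is not Polishable.

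\textbf{From a countable equivalence relation to a Polish topology.} Assume $\mathbb{G}=[\mathcal{R}]$ with $\mathcal{R}$ a countable ergodic equivalence relation, and put $d_\lambda(S,T):=\lambda(\{x\in X:S(x)\neq T(x)\})$ on $\mathbb{G}_f$. Since $\{S\neq T\}\subseteq\supp S\cup\supp T$, this quantity is finite on $\mathbb{G}_f$; and because the elements of $\mathbb{G}_f$ preserve $\lambda$ it is bi-invariant, hence defines a group topology. The work is to check that $(\mathbb{G}_f,d_\lambda)$ is Polish. I would prove separability by exhibiting the dense subgroup $\bigcup_n\{T\in\mathbb{G}:\supp T\subseteq A_n\}$, where $A_n\nearrow X$ is a finite-measure exhaustion: each piece is the full group of $\mathcal{R}$ restricted to the finite-measure set $A_n$, hence separable, and density follows by approximating $T\in\mathbb{G}_f$ with the element of $\mathbb{G}$ that agrees with $T$ on $A_n\cap T^{-1}(A_n)$ and is completed to a bijection of $A_n$ using ergodicity of $\mathcal{R}$, the $d_\lambda$-error being bounded by $\lambda(\supp T\setminus A_n)+\lambda(A_n\setminus T^{-1}(A_n))$, which tends to $0$ since both sets shrink to a null set inside the finite-measure set $\supp T$. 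For completeness, from a $d_\lambda$-Cauchy sequence $(T_k)$ I would extract, via a Borel--Cantelli sum, a subsequence converging pointwise a.e.\ to a Borel bijection $T$; as the $T_k$ lie in $[\mathcal{R}]$ so does $T$, and $T$ preserves $\lambda$ because on each piece $\{J=n\}$ of the partition along which the subsequence stabilizes it coincides with a measure-preserving $T_{k_n}$; Fatou applied to $\mathbf{1}_{\supp T_k}\to\mathbf{1}_{\supp T}$ gives $\lambda(\supp T)<\infty$; and $d_\lambda(T_k,T)\to 0$ follows by dominated convergence, a dominating function on a tail being available because $(\supp T_k)$ is Cauchy, hence convergent, in the complete metric space $\MAlgf(X,\lambda)$.

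\textbf{From a Polish topology back to a countable equivalence relation.} Suppose $\mathbb{G}_f$ carries a Polish group topology $\tau$. By the criterion recalled in the introduction it suffices to show that $\mathbb{G}$ is separable for the uniform topology, and I argue by contraposition. If $\mathbb{G}$ is non-separable, then $\mathbb{G}_f$ is non-separable in the uniform metric (it is uniformly dense in $\mathbb{G}$, so an uncountable uniformly separated subset of $\mathbb{G}$ produces one in $\mathbb{G}_f$), and, by the standard theory of induced full groups, one can fix a finite-measure set $A$ on which $\mathcal{R}_\mathbb{G}$ is ergodic and such that the induced full group $\mathbb{G}_A=\{T\in\mathbb{G}:\supp T\subseteq A\}$ on $(A,\lambda|_A)$ is a non-separable ergodic full group. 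Now comes the use of automatic continuity: by the Steinhaus property of finite-measure ergodic full groups (the finite-measure analogue of the Steinhaus statement proved in this paper), each $\mathbb{G}_A$ in its uniform topology is Steinhaus, so every homomorphism from it to a separable topological group — in particular the inclusion $\mathbb{G}_A\hookrightarrow(\mathbb{G}_f,\tau)$ — is continuous; since $\mathbb{G}_A$ is completely metrizable, its image is an analytic, hence Baire-measurable, subgroup of $(\mathbb{G}_f,\tau)$. From here I would transpose Le Maître's proof \cite{LM2022}: cover $\mathbb{G}_f$ by a countable increasing family of Baire-measurable symmetric sets assembled from the $\mathbb{G}_A$'s (organized by support measure rather than by a fixed exhaustion), apply Baire's theorem together with Pettis's lemma to place a neighbourhood of the identity inside a bounded power of one of them, and derive a contradiction from the impossibility for a connected group such as $\mathbb{G}_A$ to appear as a nontrivial discrete piece inside that neighbourhood — forcing $\mathbb{G}_A$, hence $\mathbb{G}$, to be separable.

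\textbf{Main obstacle.} The delicate point is exactly this last step. The family of finite-measure subsets of $X$ admits no countable cofinal subfamily, so $\mathbb{G}_f$ is strictly larger than $\bigcup_n\mathbb{G}_{A_n}$ for any fixed exhaustion $A_n\nearrow X$, and one cannot feed that chain into Baire's theorem directly; the whole difficulty of the forward direction is therefore to re-organize Le Maître's bookkeeping around the measure of the support and to carry it through for the abstract full group $\mathbb{G}$, where — unlike for $\Aut(X,\lambda)$ — one has much less explicit control over the induced subgroups $\mathbb{G}_A$ and over the way they fit together.
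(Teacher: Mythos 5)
Your backward direction is essentially sound and close to the paper's: you use the same metric $d_{u,f}(S,T)=\lambda(\{S\neq T\})$, the same Borel--Cantelli completeness argument, and a slightly different but workable separability argument (exhaustion by $\mathbb{G}_{A_n}$ with $A_n\nearrow X$ of finite measure, rather than the paper's direct isometric embedding of $\mathbb{G}_f$ into the separable space $\MAlgf(\mathcal{R},M_l)$ via $d_{u,f}(S,T)=\tfrac12 M_l(\mathrm{graph}(S)\Delta\mathrm{graph}(T))$). The forward direction, however, has a genuine gap, and it is precisely at the point you flag as the ``main obstacle''. Two specific problems. First, your claim that the image of $\mathbb{G}_A\hookrightarrow(\mathbb{G}_f,\tau)$ is analytic because $\mathbb{G}_A$ is completely metrizable is false in the case you need it: you have arranged for $(\mathbb{G}_A,\tau_u)$ to be \emph{non-separable}, so it is not Polish, and continuous images of non-separable completely metrizable spaces in a Polish space need not be analytic. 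So this route does not yield Baire-measurability of any useful covering sets. The paper gets Baire-measurability differently: it first re-runs the Kallman-type argument of Section~\ref{5.1} with $\mathbb{G}_f$ in place of $\mathbb{G}$ (the exchanging involutions used there all lie in $\mathbb{G}_f$) to show that \emph{any} Polish group topology $\tau$ on $\mathbb{G}_f$ must refine $\tau_w$, and then uses that the sets $F_n=\{T\in\mathbb{G}_f\mid\lambda(\supp T)\leqslant n\}$ are $\tau_w$-closed (\Cref{FepsilonClosed}), hence $\tau$-closed; these $F_n$ cover $\mathbb{G}_f$ and feed directly into Baire's theorem, sidestepping your worry about the lack of a countable cofinal family of finite-measure sets.

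Second, the contradiction you propose to extract --- ``the impossibility for a connected group such as $\mathbb{G}_A$ to appear as a nontrivial discrete piece'' --- is not an argument, and connectedness plays no role in the actual proof. The mechanism is quantitative: Baire plus Lindel\"of gives a countable subset of $\mathbb{G}_f$ that is $n_0$-dense for $d_{u,f}$ for some fixed $n_0$; on the other hand, non-separability of some $\mathbb{G}_{X_n}$ produces an uncountable family that is pairwise $\varepsilon$-separated for $d_{u,f}$. To contradict $n_0$-density one must amplify $\varepsilon$ past $n_0$, and the key missing ingredient is the ``support multiplication'' homomorphism (\Cref{MultSupp}): an injective $\pi_k:\mathbb{G}_f\to\mathbb{G}_f$, built by copying the action of $T$ onto $k$ disjoint copies of $X$ via partial isomorphisms of the pseudo-full group (this is where ergodicity of $\mathbb{G}$, and not just of $\Aut(X,\lambda)$, must be exploited), satisfying $d_{u,f}(\pi_k(S),\pi_k(T))=k\,d_{u,f}(S,T)$. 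Applying $\pi_k$ with $k\varepsilon>n_0$ gives uncountably many elements pairwise more than $n_0$ apart, and the pigeonhole principle finishes the proof. Without this lemma (or a substitute), your outline does not close.
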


The last part of this work is dedicated to the topological and algebraic properties of ergodic full groups, seen as Polish groups. Contractibility, which has been shown
\begin{itemize}
	\setlength\itemsep{0em}
	\item for $\Aut(X,\mu)$ by Keane in \cite{Keane1970},
	\item for probability measure-preserving orbit full groups by Carderi and Le Maître in \cite{CarderiLM2016},
	\item for $\Aut(X,\lambda)$ and infinite measure-preserving full groups of countable equivalence relations (in fact even for some type $\mathrm{III}$ actions) by Danilenko in \cite{Danilenko1995},
\end{itemize}
is established for the class of infinite measure-preserving ergodic orbit full groups in \Cref{prop: ergodic orbit fg is contractible}. The strategy used in \cite{Keane1970} and then in \cite{CarderiLM2016} cannot be directly used, as induced bijections are only defined for conservative bijections. We follow Danilenko's proof from \cite[Thm.~2.2]{Danilenko1995}, who uses the contractibility of the space of partitions of $X$ into subsets of measure one, to circumvent this issue. 

The question of the existence of normal subgroups is also adressed. Eigen proved in \cite{Eigen1981} that full groups generated by a single ergodic bijection admit a unique non-trivial normal subgroup. We get the following general statement, which in particular uses the decomposition of any bijection into three involutions, a result of Ryzhikov (see \Cref{3invo}).

\begin{thmi}
	[see Thm.\ \ref{Thm: normal subgroup of an ergodic fg}]
	Let $\mathbb{G}$ be an ergodic full group on a standard $\sigma$-finite space. The group $\mathbb{G}_f$ is simple, and is the only non-trivial normal subgroup of $\mathbb{G}$.
\end{thmi}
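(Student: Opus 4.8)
The goal splits naturally into three claims: (1) $\mathbb{G}_f$ is a normal subgroup of $\mathbb{G}$; (2) $\mathbb{G}_f$ is simple as an abstract group; (3) any non-trivial normal subgroup $N \trianglelefteq \mathbb{G}$ contains $\mathbb{G}_f$ and hence, if it is to be proper, equals $\mathbb{G}_f$. Normality of $\mathbb{G}_f$ is immediate: conjugating a bijection $T$ with $\lambda(\supp T) < \infty$ by any $S \in \mathbb{G}$ gives $STS^{-1}$ with support $S(\supp T)$, which still has finite measure since $S$ preserves $\lambda$ up to a scalar. For step (3), the plan is to take $T \in N \setminus \{\id\}$, find a Borel set $A$ of positive finite measure with $T(A) \cap A = \emptyset$, and then exploit ergodicity together with the cutting-and-pasting structure: a commutator trick shows that $N$ contains a rich supply of finitely supported elements, and ergodicity lets us move these around to generate all of $\mathbb{G}_f$. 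Concretely, for any $g \in \mathbb{G}_f$ supported on a small set $B$, one writes $g$ (or a conjugate of it) as a product of commutators $[S, T]$ with $S \in \mathbb{G}$ chosen so that $S$ conjugates part of $T$'s action into the region near $B$; here the decomposition of an arbitrary element of $\mathbb{G}_f$ into three involutions (Ryzhikov, \Cref{3invo}) is the key structural input, since it reduces the problem to showing that $N$ contains enough involutions with small support.

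For step (2), simplicity of $\mathbb{G}_f$, I would run essentially the same commutator machinery internally: given $\id \neq g_0 \in \mathbb{G}_f$ and an arbitrary $g \in \mathbb{G}_f$, I want to express $g$ as a product of conjugates of $g_0$ and $g_0^{-1}$ by elements of $\mathbb{G}_f$. The point is that $g_0$, being non-trivial, displaces a set $A$ of positive finite measure off itself, and since everything happens inside a region of finite measure we may use ergodicity of $\mathbb{G}$ restricted to a large finite-measure set (or rather, the induced relation on it) to conjugate $A$ around and cover the support of $g$. The involution decomposition again cuts the work down: it suffices to produce, in the normal closure of $g_0$ inside $\mathbb{G}_f$, every involution supported on a set of small finite measure, and a standard "commutators of transpositions" argument handles those. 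One has to be slightly careful that the conjugating elements themselves can be taken to be finitely supported — but this is arranged by only ever conjugating by bijections supported on a fixed finite-measure set containing both $\supp g_0$ and $\supp g$.

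The main obstacle, and the place where the infinite-measure setting genuinely differs from the probability case, is the interface between ergodicity of $\mathbb{G}$ on all of $(X,\lambda)$ and the finitely supported world: one cannot simply invoke ergodicity to match up sets of equal measure, because $\mathbb{G}$-elements need not have finite support and the ergodic matching may a priori require moving mass "through infinity." The resolution is to first observe that for any two sets $A, B$ of equal positive finite measure there is an element of $\mathbb{G}_f$ carrying $A$ onto $B$ — this follows from ergodicity of $\mathbb{G}$ (which gives \emph{some} element of $\mathbb{G}$ sending a positive-measure piece of $A$ into $B$) combined with a maximality/exhaustion argument and cutting-and-pasting, keeping all supports inside $A \cup B$. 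Establishing this "finite-measure transitivity" lemma cleanly is the crux; once it is in hand, both simplicity and the identification of $\mathbb{G}_f$ as the unique proper normal subgroup follow by the transposition-and-involution arguments sketched above, closely paralleling Eigen's treatment in \cite{Eigen1981} of the single-transformation case.
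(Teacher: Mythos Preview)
Your plan for showing that $\mathbb{G}_f$ is simple and that every non-trivial normal subgroup $N$ of $\mathbb{G}$ contains $\mathbb{G}_f$ is essentially the paper's approach: commutator trick to produce finitely supported involutions in $N$, conjugacy of involutions with equal support measure via finitely supported conjugators, then Ryzhikov's three-involution decomposition. Your ``finite-measure transitivity'' lemma is exactly \Cref{cor:exchanging involutions in ergodic full group}, already available at this point in the paper. (One minor slip: elements of $\mathbb{G}\leqslant\Aut(X,\lambda)$ preserve $\lambda$ exactly, not up to a scalar.)

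However, there is a genuine gap. From $\mathbb{G}_f\subseteq N$ you conclude ``hence, if it is to be proper, equals $\mathbb{G}_f$.'' This does not follow: a priori there could be normal subgroups strictly between $\mathbb{G}_f$ and $\mathbb{G}$. What you must show is that if $N$ contains \emph{any} element $T$ with $\lambda(\supp T)=+\infty$, then $N=\mathbb{G}$. The paper handles this separately (\Cref{Thm: normal subgroup of an ergodic fg}): take a separator $A$ for $T$ and an involution $U\in\mathbb{G}$ with $\supp U\subseteq A$ and $\lambda(\supp U)=\lambda(A\setminus\supp U)=+\infty$; then $[T,U]\in N$ is an involution whose support and co-support both have infinite measure, and \Cref{lem: involutions are conjugated} gives that $N$ contains every involution of this type. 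A second step then produces from two such involutions any involution whose support has infinite measure but finite-measure complement. With all infinitely supported involutions in $N$, Ryzhikov's theorem finishes. Your proposal omits this entire infinite-support argument, which is where the specifically infinite-measure content lies.
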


Finally, Polishness raises the question of generic properties. Krengel and Sachedva have proved already that conservative elements and ergodic elements are generic in $\Aut(X,\lambda)$, when endowed with the weak topology (see \cite{Krengel1967} and \cite{Sachdeva1971}). Carderi and Le Maître, on the other hand proved that aperiodic elements are generic in pmp orbit full groups, when the acting group is uncountable. In particular their proof uses a result from \cite{Tornquist2006}, which we also generalize to infinite measures. Some results of Choksi and Kakutani from \cite{ChoksiKakutani1979} about conjugates of ergodic bijections are also used to prove the following.

\begin{thmi}
	[see Thm.\ \ref{thm: category-density theorem}]
	Let $\mathbb{G} = [\mathcal{R}_G]$ be an ergodic orbit full group on a standard $\sigma$-finite space associated with a Borel action of a Polish group $(G,\tau_G)$. If the topology of orbital convergence in measure is Polish on $\mathbb{G}$, then the following are equivalent:
\begin{enumerate}[(1)]
  \setlength\itemsep{0em}
  \item the aperiodic elements of $\mathbb{G}$ form a dense subset of $\mathbb{G}$;
  \item the ergodic elements of $\mathbb{G}$ form a dense subset of $\mathbb{G}$;
  \item the $\mathbb{G}_f$-conjugacy class of any aperiodic element of $\mathbb{G}$ is dense in $\mathbb{G}$;
  \item for any essentially free measure-preserving action of a countable group $\Gamma \curvearrowright (X,\lambda)$, there is a dense $G_\delta$ of elements of $\mathbb{G}$ inducing a free action of $\Gamma \ast \Z$.
\end{enumerate}
These equivalent conditions moreover imply the following one:
\begin{enumerate}[(5)]
  \setlength\itemsep{0em}
  \item[(5)] for any $\tau_G$-neighbourhood $V$ of $e_G$, the set $\bigcup_{g \in V} \left\{ x \in X \mid g \cdot x \neq x \right\}$ is conull.
\end{enumerate}
Finally, \textit{(5)} implies the above conditions if the space $X$ is endowed with a compatible Polish topology, with regards to which the measure is locally finite and the $G$-action is continuous.
\end{thmi}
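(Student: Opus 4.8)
The plan is to prove the equivalences by a cycle and then handle the implications involving (5) separately. I would first observe that (3) $\Rightarrow$ (1) is trivial (a dense conjugacy class of aperiodic elements gives density of aperiodic elements), and that (1) $\Rightarrow$ (2) should follow from the category--density techniques of Choksi--Kakutani together with the fact that, inside an ergodic full group, the aperiodic elements can be perturbed in the topology of orbital convergence in measure to ergodic ones: more precisely, given an aperiodic $T$, one uses a Rokhlin-tower argument to approximate $T$ by elements whose induced transformation on a set of finite measure is ergodic, and the conservative/dissipative bookkeeping is controlled because we only need an approximation in measure. The reverse direction (2) $\Rightarrow$ (1) is immediate since ergodic elements are aperiodic. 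The core of the cycle is (1) $\Rightarrow$ (3): here I would use the simplicity of $\mathbb{G}_f$ from \Cref{Thm: normal subgroup of an ergodic fg} together with a conjugation argument showing that if one aperiodic element has dense $\mathbb{G}_f$-conjugacy class, then all do (because any two aperiodic elements of an ergodic full group are approximately conjugate by finitely supported elements, a statement one proves by a back-and-forth matching of finite pieces of their orbits, exactly the kind of marriage-lemma argument that underlies Dye-type theorems). Then (3) $\Rightarrow$ (4) would use the generalization of the T\"ornquist result (\cite{Tornquist2006}) to infinite measures announced in the introduction: freeness of the action of $\Gamma \ast \mathbb{Z}$ is a $G_\delta$ condition in $\mathbb{G}$, and density follows from (3) by realizing a free $\mathbb{Z}$-action in the dense conjugacy class while keeping the $\Gamma$-action fixed. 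Finally (4) $\Rightarrow$ (1) is clear because an element inducing a free action of $\Gamma \ast \mathbb{Z}$ is in particular aperiodic, and the $G_\delta$ set in question is nonempty hence dense-ish only if aperiodic elements are dense; one closes the loop by a Baire-category argument.

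For (1)--(4) $\Rightarrow$ (5): suppose some $\tau_G$-neighbourhood $V$ of $e_G$ has $\bigcup_{g \in V}\{x : g\cdot x \neq x\}$ non-conull, so there is a positive-measure set $A$ fixed pointwise by every $g \in V$. I would argue that this forces every element of $[\mathcal{R}_G]$ obtained from a "small" group element to act trivially on $A$, so that the topology of orbital convergence in measure cannot separate a neighbourhood of the identity in $\mathbb{G}$ from transformations that move $A$, contradicting density of aperiodic (indeed even of nontrivial-on-$A$) elements near the identity; concretely, aperiodic elements of $\mathbb{G}$ cannot be approximated by elements fixing $A$ pointwise, yet the hypothesis would make every element near $e_{\mathbb{G}}$ coming from $V$ fix $A$, and one needs that such elements form a neighbourhood of the identity in $\mathbb{G}$ — this is where the definition of orbital convergence in measure is used. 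The last assertion, that (5) implies (1)--(4) under the local-finiteness and continuity hypotheses, is the reconstruction of a density statement from a dynamical non-degeneracy condition: given (5), for any open $V \ni e_G$ the union of supports of $g \in V$ is conull, so by continuity of the action and local finiteness one can build, on any finite-measure piece and up to small error in measure, an element of $[\mathcal{R}_G]$ realizing a prescribed aperiodic pattern (e.g. a long cycle), using that the $G$-orbits are large enough everywhere; feeding this into a Rokhlin-tower construction yields aperiodic elements arbitrarily close to any given element of $\mathbb{G}$, giving (1).

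The main obstacle I expect is the implication (1) $\Rightarrow$ (3), i.e. upgrading mere density of aperiodic elements to density of a single $\mathbb{G}_f$-conjugacy class. In the pmp setting this rests on the fact that $\Aut(X,\mu)$ has a dense conjugacy class and that aperiodic transformations are all approximately conjugate; in infinite measure the conservative/dissipative dichotomy means "aperiodic" no longer implies "conservative," and approximate conjugacy of aperiodic transformations has to be established with care — two aperiodic transformations with different Hopf decompositions are still approximately conjugate in the uniform topology, but one must check the approximation survives in the (a priori coarser) topology of orbital convergence in measure and that the conjugating elements can be taken finitely supported. I would handle this by a Rokhlin-lemma-based matching: cut both transformations into long finite towers over finite-measure bases (possible by aperiodicity, via an infinite-measure Rokhlin lemma for full groups), match the towers combinatorially, and let the resulting finitely supported bijection conjugate one approximately onto the other; the error is measured on a finite-measure set, so it is controlled in the orbital topology. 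The Steinhaus/automatic-continuity input and the factorization of \Cref{factoriseT} may be needed to ensure the conjugating elements actually lie in $\mathbb{G}_f$ and not merely in $\Autf(X,\lambda)$.
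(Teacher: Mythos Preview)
Your overall cycle is in the right spirit but you over-engineer the implication $(1)\Rightarrow(3)$ and under-engineer the step through condition (4). In the paper, $(1)\Rightarrow(3)$ is almost immediate: Choksi--Kakutani (\cite[Thm.~7]{ChoksiKakutani1979}, adapted here as \Cref{thm: conjugacy class of an aperiodic is uniformly dense in APER}) already says that the $\mathbb{G}_f$-conjugacy class of \emph{any} aperiodic element is $\tau_u$-dense in $\mathrm{APER}\cap\mathbb{G}$; since $\tau_u$ refines $\tau_m^o$ (\Cref{Thm: any Polish topology is coarser than the uniform}), density of $\mathrm{APER}\cap\mathbb{G}$ gives density of every such conjugacy class. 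No back-and-forth matching, no simplicity of $\mathbb{G}_f$, no Steinhaus input, and no factorisation from \Cref{factoriseT} are used. Your worry that ``aperiodic no longer implies conservative'' is misplaced here: Choksi--Kakutani's argument works for arbitrary aperiodic $S$, and the conjugating element is built directly in $\mathbb{G}_f$ via exchanging involutions (\Cref{cor:exchanging involutions in ergodic full group}). Likewise, $(1)\Rightarrow(2)$ is not done by perturbing aperiodic to ergodic; the paper routes it as $(1)\Rightarrow(3)\Rightarrow(2)$, using \Cref{thm: existence of ergodic bijections in ergodic orbit full group} to exhibit one ergodic element whose (ergodic) conjugacy class is then dense.

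The genuine technical weight lies where you wave your hands: the bridge to your condition (4) goes through intermediate conditions (in the paper's numbering, $(2)\Rightarrow(4)\Rightarrow(5)\Rightarrow(6)\Rightarrow(7)$), and the substantive step is an infinite-measure adaptation of T\"ornquist's Category Lemma. This requires reproving that, under a suitable ``small-support perturbations exist'' hypothesis (condition (5) in the paper's internal numbering), the set of $S\in\mathbb{G}$ for which every alternating word in a prescribed sequence $(\gamma_n)$ and powers of $S$ has full support is a dense $G_\delta$. The openness uses a weak-topology argument, but the density is an induction on word length with a careful bookkeeping of finite partitions of a measure-one piece $X_k$; your proposal does not indicate how to carry this out. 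Your sketches of $(1\text{--}4)\Rightarrow(5)$ and of $(5)\Rightarrow(1\text{--}4)$ under local finiteness are essentially on target and match the paper's contraposition and maximality/regularity arguments.
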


In particular, these equivalent conditions allow us to give another characterization of ergodic full groups coming from a countable equivalence relation (see \Cref{cor: dichotomy dense conjugacy class}). 

Given the hypotheses of \Cref{Thm: orbit full groups are Polish groups} and \Cref{thm: category-density theorem}, their reliance on \cite[Thm.~4.4]{HoareauLM2024}, as well as \cite[Prop.~5.10]{HoareauLM2024} and \Cref{ex: action of Sinf}, the following question is very natural.

\begin{question*}
Let $G$ be a Polish group acting on a standard $\sigma$-finite space $(X,\lambda)$ in a measure-preserving manner, and consider the associated orbit full group $[\mathcal{R}_G]$, endowed with its topology $\tau_m^o$ of orbital convergence in measure. Are the following equivalent?
	\begin{itemize}
	\item The group $([\mathcal{R}_G],\tau_m^o)$ is a Polish group.
	\item There exists a continuous Polish model $G \curvearrowright (Y,\eta)$ for the $G$-action on $(X,\lambda)$: \textit{i.e.} an continuous measure-preserving $G$-action on a Polish space $Y$ endowed with a locally finite measure $\eta$, such that the actions are spatially isomorphic.
	\end{itemize}
\end{question*}

\begin{ack}
	I deeply thank my advisor François Le Maître for many invaluable advices and for his support, which led to the writing of this paper.
    I would also like to thank Corentin Correia and Matthieu Joseph for their many comments and for enlightening discussions. I also thank Arthur Troupel for his helpful comments when proofreading earlier drafts of this introduction. I finally thank David Fremlin for allowing me to use his proof of \Cref{494Yi}.
\end{ack}

\section{Prerequisites}{\label{sec: preliminaries}}

\subsection{About Polish and standard Borel spaces}

A \textbf{Polish space} is a separable and completely metrizable space, on which we can always find a bounded compatible metric, and a \textbf{Polish group} is a topological group whose topology is Polish. We recall also that a $G_\delta$ set is a countable intersection of open sets.

\begin{prop}
	[\textup{\cite[Thm.~3.11]{Kechris1995}}]
	{\label{prop: G delta Polish}}
	Let $(X,\tau)$ be a Polish space. Then $Z \subseteq X$ is Polish for the induced topology if and only if $Z$ is $G_{\delta}$ in $X$. 
\end{prop}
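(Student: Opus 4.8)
The plan is to prove both implications by the classical argument (going back to Alexandrov), using only standard facts about Polish spaces: a closed subspace of a Polish space is Polish, and a countable product of Polish spaces is Polish.

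For the easy direction ($Z$ is $G_\delta$ $\Rightarrow$ $Z$ is Polish), I would first treat the case where $Z = U$ is open. Fix a compatible complete metric $\rho$ on $X$. If $U = X$ there is nothing to do; otherwise the map $x \mapsto \bigl(x, 1/\rho(x, X \setminus U)\bigr)$ embeds $U$ homeomorphically onto a closed subset of the Polish space $X \times \R$, so $U$ is Polish. A closed subset is trivially Polish (restrict $\rho$). Now for a general $G_\delta$ set $Z = \bigcap_n U_n$ with each $U_n$ open, the diagonal map $x \mapsto (x)_{n \in \N}$ embeds $Z$ homeomorphically onto a closed subset of $\prod_n U_n$, which is Polish as a countable product of Polish spaces; hence $Z$ is Polish.

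For the harder direction ($Z$ is Polish $\Rightarrow$ $Z$ is $G_\delta$), let $d$ be a compatible complete metric on $Z$ and $\rho$ a compatible complete metric on $X$. Since $\overline{Z}$ is closed, hence $G_\delta$, in $X$, it suffices to show $Z$ is $G_\delta$ in $\overline{Z}$. For $n \geq 1$ let $W_n \subseteq X$ be the union of all $\rho$-open sets $V$ with $\rho$-diameter $\leq 1/n$ and with $d$-diameter of $V \cap Z$ at most $1/n$; each $W_n$ is open, so $G := \overline{Z} \cap \bigcap_n W_n$ is $G_\delta$ in $X$. If $x \in Z$, then compatibility of $d$ and $\rho$ with the topology at $x$ yields, for each $n$, a $\rho$-open $V \ni x$ witnessing $x \in W_n$, so $Z \subseteq G$. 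Conversely, given $x \in G$, pick a sequence $(x_k)$ in $Z$ with $x_k \to x$ for $\rho$; for each $n$, the membership $x \in W_n$ gives a neighbourhood forcing the tail of $(x_k)$ into a set of $d$-diameter $\leq 1/n$, so $(x_k)$ is $d$-Cauchy, hence $d$-converges to some $y \in Z$; since $d$-convergence implies $\rho$-convergence, uniqueness of limits gives $y = x$, so $x \in Z$. Thus $G = Z$.

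The main obstacle is the oscillation/Cauchy argument in the second direction: one must handle the two a priori unrelated metrics $d$ and $\rho$ simultaneously, checking both that the diameter conditions defining $W_n$ are achievable at every point of $Z$ and that they are strong enough to pin down an honest limit at every point of $G$. The first direction, by contrast, is a packaging exercise once the "open subsets of a Polish space are Polish" trick is available.
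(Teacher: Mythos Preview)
The paper does not prove this proposition at all; it is simply stated with a reference to \cite[Thm.~3.11]{Kechris1995}. Your proof is correct and is essentially the classical Alexandrov argument that one finds in Kechris (or any standard descriptive set theory text), so there is nothing to compare.
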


\begin{prop}
	[\textup{\cite[Prop.~1.2.3]{BeckerKechris1996}}]
	{\label{prop: quotient Polish group}}
	Let $G$ be a Polish group, and let $H \vartriangleleft G$ be a closed normal subgroup. Then $G / H$ is a Polish group for the quotient topology.
\end{prop}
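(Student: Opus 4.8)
The plan is to verify the three ingredients of ``Polish group'' for $G/H$ equipped with the quotient topology: that it is a Hausdorff topological group, that it is separable metrizable, and --- the only substantial point --- that it is completely metrizable, which by \Cref{prop: G delta Polish} reduces to realising $G/H$ as a $G_\delta$ subset of its metric completion.

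The structural part is soft. Writing $\pi \colon G \to G/H$ for the quotient map, one first notes that $\pi$ is open: for open $U \subseteq G$, the set $\pi\inv(\pi(U)) = UH = \bigcup_{h \in H} Uh$ is open. Hence $\pi \times \pi$ is an open continuous surjection, so a quotient map, and composing it with the continuous maps $(g,g') \mapsto g\inv g'$ and $g \mapsto g\inv$ on $G$ shows that the operations of $G/H$ are continuous; it is Hausdorff because the $\pi \times \pi$-preimage of its diagonal is $\{(g,g') : g\inv g' \in H\}$, which is closed since $H$ is. Taking a countable basis $\mathcal{B}$ of $G$, the sets $\pi(U)$ for $U \in \mathcal{B}$ form a basis of $G/H$, so $G/H$ is second countable, hence separable and metrizable --- being a first countable Hausdorff topological group, by Birkhoff--Kakutani --- with a compatible left-invariant metric available.

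The \emph{crux} is complete metrizability, and this is where one genuinely uses that $G$ is Polish rather than merely metrizable. Fix a left-invariant compatible metric $d$ and a complete compatible metric $\rho$ on $G$, and put $\bar d(gH, g'H) := \inf_{h \in H} d(g, g'h)$; one checks that $\bar d$ is a compatible metric on $G/H$ (a pseudometric by left-invariance, separating points because cosets are closed, compatible because $\pi(B_d(g,\varepsilon)) = B_{\bar d}(\pi(g),\varepsilon)$ by openness and continuity of $\pi$). Let $(\widehat Y, \hat d)$ be the completion of $(G/H, \bar d)$, a Polish space containing $G/H$. I would then exhibit $G/H$ as $\bigcap_n W_n$ with each $W_n \subseteq \widehat Y$ open, where $W_n$ collects those $y$ admitting a nonempty open $V \subseteq G$ of $\rho$-diameter $\le 2^{-n}$ with $\pi(V)$ of $\hat d$-diameter $\le 2^{-n}$ clustering at $y$, the definition arranged so that such witnesses can be taken nested. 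The inclusion $G/H \subseteq \bigcap_n W_n$ is clear from continuity of $\pi$; the converse is the delicate step --- from a nested system of such sets $V$ one extracts a $\rho$-Cauchy sequence in $G$, whose limit $g$ (available by completeness of $\rho$) satisfies $\pi(g) = y$. Encoding enough refinement into the $W_n$ to make this extraction work while keeping each $W_n$ open is the bookkeeping that forms the main obstacle; it can be bypassed by invoking Hausdorff's theorem that a metrizable continuous open image of a completely metrizable space is completely metrizable, applied to $\pi$. Either way $G/H$ is then both a topological group and a Polish space, hence a Polish group.
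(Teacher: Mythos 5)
The paper gives no proof of this proposition---it is quoted as a black box from Becker--Kechris---and your argument is correct and essentially reproduces the standard proof found in that reference: the quotient topology makes $G/H$ a second countable Hausdorff topological group, the quotient metric $\inf_{h\in H} d(g,g'h)$ metrizes it (note that verifying symmetry and the triangle inequality for this formula uses normality of $H$, not just left-invariance of $d$), and complete metrizability follows from Hausdorff's theorem that a metrizable continuous open image of a completely metrizable space is completely metrizable, applied to the open map $\pi$. Your hands-on sketch of realising $G/H$ as a $G_\delta$ in its completion is, as you acknowledge, only an outline, but the appeal to Hausdorff's theorem closes the argument, so there is no gap.
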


Polish spaces are the topological spaces that give rise to a widely used measurable structure via their Borel sets. 
A \textbf{standard Borel space} $X$ is an uncountable measurable space whose Borel $\sigma$-algebra $\mathcal{B}(X)$ comes from a Polish topology.
Let us now endow $(X,\mathcal{B}(X))$ with an atomless measure, there are two different main cases:
	\begin{enumerate}\setlength\itemsep{0em}
	\item If $\mu$ is a probability measure defined on $\mathcal{B}(X)$, then $(X,\mathcal{B}(X),\mu)$ is a \textbf{standard probabilty space} (up to a renormalization any finite measure falls in this category).
	\item If $\lambda$ is an infinite $\sigma$-finite measure defined on $\mathcal{B}(X)$, then $(X,\mathcal{B}(X),\lambda)$ is a \textbf{standard $\sigma$-finite space}.
	\end{enumerate}
In the rest of this paper, the $\sigma$-algebra $\mathcal{B}(X)$ will usually be omitted in the notations, thus we will denote by $(X,\lambda)$ a standard $\sigma$-finite space. We recall the following classical result.

\begin{thm}
	[\textup{Lusin-Suslin, see \textit{e.g.}~\cite[Thm.~15.2]{Kechris1995}}]
	{\label{Thm: Lusin Suslin}}
	Let $X$ and $Y$ be two standard Borel spaces, and let $f: X \rightarrow Y$ be an injective Borel map. Then for every Borel subset $A$ of $X$, $f(A)$ is Borel.	
\end{thm}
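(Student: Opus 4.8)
The plan is to reduce the statement to the case of a \emph{continuous} injective map defined on a \emph{closed} subspace of the Baire space $\N^{\N}$, and then to exhibit a Borel set equal to the image by running a Lusin scheme through the generalized Lusin separation theorem for analytic sets. I begin with three reductions. First, it suffices to treat the case $A = X$: replace the standard Borel space $X$ by $A$ together with a Polish topology inducing its Borel $\sigma$-algebra, and $f$ by $f_{\restriction A}$, which is still an injective Borel map. Second, realize $X$ and $Y$ as Polish spaces and apply the change-of-topology theorem to refine the topology of $X$ to a finer Polish topology with the same Borel sets making $f$ continuous. Third, using that every nonempty Polish space is a continuous bijective image of a closed subspace $F \subseteq \N^{\N}$, precompose $f$ with such a bijection $F \to X$ --- this preserves injectivity, continuity and the image --- so that we may assume $X \subseteq \N^{\N}$ is closed. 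The goal is now to show that $f(X)$ is Borel in $Y$.

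Fix a compatible metric on the Polish space $Y$. For $s \in \N^{<\N}$ let $N_s \subseteq \N^{\N}$ be the basic clopen set of sequences extending $s$, and set $X_s := X \cap N_s$. Then $(X_s)_{s}$ is a Lusin scheme: $X_\emptyset = X$, $X_s = \bigsqcup_{i} X_{s^{\frown}i}$, and for $\alpha \in \N^{\N}$ the intersection $\bigcap_{n} X_{\alpha_{\restriction n}}$ is $\{\alpha\}$ when $\alpha \in X$ and $\emptyset$ otherwise (using completeness of the Baire metric and closedness of $X$ in $\N^{\N}$). Each $X_s$ is Borel, hence standard Borel, so $f(X_s)$ is analytic in $Y$; and since $f$ is injective, the sets $\{f(X_{s^{\frown}i})\}_{i \in \N}$ are pairwise disjoint for every fixed $s$. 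The generalized Lusin separation theorem, together with an intersection with suitable closed sets, then yields Borel sets $B_s \subseteq Y$ with $f(X_s) \subseteq B_s$, with $B_{s^{\frown}i} \subseteq B_s$, with the $\{B_{s^{\frown}i}\}_{i}$ pairwise disjoint, and with $B_s \subseteq \overline{f(X_s)}$. Put $B := \bigcap_{n} \bigcup_{|s| = n} B_s$, a Borel subset of $Y$.

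It remains to check $B = f(X)$. If $y = f(x)$ with $x \in X$, then $y \in f(X_{x_{\restriction n}}) \subseteq B_{x_{\restriction n}}$ for every $n$, so $y \in B$. Conversely, let $y \in B$ and pick, for each $n$, a sequence $s_n$ of length $n$ with $y \in B_{s_n}$. Tracing the nesting $B_{t^{\frown}i} \subseteq B_t$ down to the first coordinate at which two sequences differ shows that $\{B_t : |t| = n\}$ is a pairwise disjoint family, and since $B_{s_{n+1}} \subseteq B_{(s_{n+1})_{\restriction n}}$ we get $s_n = (s_{n+1})_{\restriction n}$; hence the $s_n$ determine some $\alpha \in \N^{\N}$ with $y \in B_{\alpha_{\restriction n}}$ for all $n$. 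From $\emptyset \neq B_{\alpha_{\restriction n}} \subseteq \overline{f(X_{\alpha_{\restriction n}})}$ we get that every $X_{\alpha_{\restriction n}}$ is nonempty, so the Lusin scheme property forces $\alpha \in X$; and from $y \in \overline{f(X_{\alpha_{\restriction n}})}$ for all $n$, together with continuity of $f$ at $\alpha$ --- which pushes $f(X_{\alpha_{\restriction n}})$ into arbitrarily small balls around $f(\alpha)$ --- we obtain $y = f(\alpha) \in f(X)$. Thus $B = f(X)$ is Borel; undoing the reductions proves the statement. (Running the same scheme in the other direction additionally shows that $f^{-1} : f(X) \to X$ is Borel, i.e.\ $f$ is a Borel isomorphism onto its image.)

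The step I expect to be the main obstacle is the coordinated construction of the $B_s$ --- simultaneously nested, pairwise disjoint at each level, and squeezed inside the closures $\overline{f(X_s)}$ --- together with the branch-extraction in the inclusion $B \subseteq f(X)$, which is exactly where injectivity of $f$ and closedness of $X$ in $\N^{\N}$ are genuinely used. The reduction steps, while routine, do rely on the nontrivial (but standard) facts that a Polish topology can be refined to make a given Borel map continuous and that every Polish space is a continuous bijective image of a closed subspace of $\N^{\N}$. An alternative way to package the core argument is to prove directly that $f(X)$ is both analytic and co-analytic and then invoke Suslin's theorem, but the scheme above accomplishes both at once while also producing the Borel inverse.
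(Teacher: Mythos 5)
Your proof is correct: the paper states this result without proof, citing \cite[Thm.~15.2]{Kechris1995}, and your argument is precisely the standard one from that reference (reduction to a continuous injection on a closed subset of $\N^{\N}$, followed by the Lusin scheme built from the generalized separation theorem for pairwise disjoint analytic sets). Nothing to add.
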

	
	All standard Borel spaces are isomorphic (see \cite[Thm.~15.6]{Kechris1995}). Moreover, all standard probability spaces are isomorphic (see \cite[Thm.~17.41]{Kechris1995}). Furthermore, since every atomless sigma finite space can be written as a disjoint union of measure one subsets, this implies that all $\sigma$-finite spaces are isomorphic, justifying the terminologies.

We say that a function $f : X \to Y$ between two topological spaces is \textbf{Baire-measurable} if the preimage $f\inv(A)$ of any open subset $A \subseteq Y$ has the \textbf{Baire property}, that is to say that there exists an open subset $U \subseteq X$ such that $f\inv(A) \Delta U$ is meager. By \cite[Thm.~21.6]{Kechris1995}, all $\sigma(\Sigma_1^1)$-measurable functions between Polish spaces are Baire-measurable. We refer to \cite[I.8]{Kechris1995} for more on the Baire property.

\begin{prop}
	[\textup{\cite[Thm.~1.2.6]{BeckerKechris1996}}]
	{\label{prop: analytic morphism}}
	Let $G$ and $H$ be two Polish groups. Then any Baire-measurable homomorphism $\varphi: G \rightarrow H$ is continuous.
\end{prop}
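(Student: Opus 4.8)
The plan is to reduce to continuity at the identity and then apply the classical Baire-category argument due to Pettis. Since $\varphi$ is a homomorphism, it suffices to prove that $\varphi$ is continuous at $e_G$: if $x_n \to g$ in $G$ then $g\inv x_n \to e_G$, hence $\varphi(g\inv x_n) \to e_H$ by continuity at $e_G$, and therefore $\varphi(x_n) = \varphi(g)\varphi(g\inv x_n) \to \varphi(g)$. So the plan is to fix an open neighbourhood $V$ of $e_H$ and produce an open neighbourhood $U$ of $e_G$ with $\varphi(U) \subseteq V$.

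First I would use continuity of $(x,y) \mapsto x\inv y$ at $(e_H, e_H)$ to pick an open neighbourhood $W$ of $e_H$ with $W\inv W \subseteq V$. Since $H$ is Polish it is separable, so there is a sequence $(h_n)$ in $H$ with $H = \bigcup_n h_n W$, whence $G = \bigcup_n \varphi\inv(h_n W)$. As $\varphi$ is Baire-measurable and each $h_n W$ is open, every $\varphi\inv(h_n W)$ has the Baire property; and as $G$ is Polish it is a Baire space, so this union cannot consist entirely of meager sets. Fix $n_0$ with $A := \varphi\inv(h_{n_0} W)$ non-meager.

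The heart of the matter is the Pettis-type lemma: if $A$ is a non-meager subset of a topological group with the Baire property, then $A\inv A$ is a neighbourhood of the identity. To prove it, write $A \mathbin{\Delta} O$ meager with $O$ open; since $A$ is non-meager, $O \neq \emptyset$ and $A$ is comeager in $O$. Fix $p \in O$; by continuity of $g \mapsto p g\inv$ there is an open neighbourhood $U$ of $e_G$ with $p g\inv \in O$ for every $g \in U$, so $p \in O \cap Og$ and $O \cap Og$ is a non-empty open set for each such $g$. Right translation being a homeomorphism, $Ag$ is comeager in $Og$; hence both $A$ and $Ag$ are comeager in the non-empty (so non-meager) open set $O \cap Og$, and their intersection is non-empty. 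Any $x \in A \cap Ag$ satisfies $x \in A$ and $x g\inv \in A$, so $g = (x g\inv)\inv x \in A\inv A$. Applying this to $A = \varphi\inv(h_{n_0}W)$ gives an open neighbourhood $U$ of $e_G$ with $U \subseteq A\inv A$, and then
\[
\varphi(U) \subseteq \varphi(A)\inv \varphi(A) \subseteq (h_{n_0}W)\inv(h_{n_0}W) = W\inv W \subseteq V,
\]
as required.

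I expect the Pettis lemma to be the only genuinely substantial step, since it is where the category hypothesis on $\varphi$ gets converted into topological information about $\varphi(U)$. The points that need care are that comeagerness in an open set passes to open subsets and is preserved by right translation, and that two subsets comeager in the same non-empty open subset of a Baire space must meet — both follow from the meager sets forming a $\sigma$-ideal stable under homeomorphisms together with the fact that non-empty open subsets of a Baire space are non-meager. The remaining ingredients (reduction to $e_G$, the separability covering, and the appeal to the Baire category theorem) are routine.
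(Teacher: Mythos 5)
Your proof is correct: it is the standard Pettis-lemma argument, which is precisely the proof given in the reference \cite[Thm.~1.2.6]{BeckerKechris1996} that the paper cites for this statement (the paper itself includes no proof). All the delicate points — the Baire property of the preimages, non-meagerness via the Baire category theorem, and the translation/comeagerness bookkeeping in Pettis's lemma — are handled correctly.
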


\subsection{Infinite measures and measure algebras}

We give the definition of measure algebras in the context of $\sigma$-finite spaces, and recall a few essential facts.

\begin{defi}
	We call the \textbf{measure algebra} of a standard $\sigma$-finite space $(X,\lambda)$ and denote by $\MAlg(X,\lambda)$ the space of Borel subsets of $X$, where two such subsets are identified if the measure of their symmetric difference is equal to zero. We will however mostly be interested in $\MAlgf(X,\lambda)$, the subspace of $\MAlg(X,\lambda)$ comprised of the elements of finite measure, which we will call the \textbf{finite measure algebra} of $(X,\lambda)$. It is equipped with the metric $d_{X,\lambda}$ defined by $d_{X,\lambda}(A,B) \coloneqq \lambda(A \Delta B)$.
\end{defi}

We have the following well-known result (see \textit{e.g.} \cite[Lem.~2.1]{LM2022}).

\begin{prop}
	{\label{prop: MAlgf is Polish}}
	Let $(X,\lambda)$ be a standard $\sigma$-finite space. Then $(\MAlgf(X,\lambda),d_{X,\lambda})$ is a separable and complete metric space.
\end{prop}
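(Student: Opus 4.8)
The plan is to identify $(\MAlgf(X,\lambda), d_{X,\lambda})$ isometrically with a subset of $\mathrm{L}^1(X,\lambda)$ and transfer completeness from there, and to obtain separability from the $\sigma$-finite decomposition of $X$ together with the classical separability of the measure algebra of a finite measure space.

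First I would record that $d_{X,\lambda}$ is a genuine metric on $\MAlgf(X,\lambda)$: it is finite-valued since $\lambda(A\Delta B)\le\lambda(A)+\lambda(B)<\infty$ for $A,B$ of finite measure, it is symmetric, it satisfies the triangle inequality via $A\Delta C\subseteq(A\Delta B)\cup(B\Delta C)$, and $d_{X,\lambda}(A,B)=0$ says exactly that $A=B$ in the (finite) measure algebra. The map $\iota\colon A\mapsto\mathds{1}_A$ then embeds $(\MAlgf(X,\lambda),d_{X,\lambda})$ isometrically into $(\mathrm{L}^1(X,\lambda),\norm{\cdot}_1)$, since $\norm{\mathds{1}_A-\mathds{1}_B}_1=\lambda(A\Delta B)$, and its image is the set $E$ of classes of $\{0,1\}$-valued integrable functions.

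For completeness, I would show that $E$ is closed in $\mathrm{L}^1(X,\lambda)$: if $\mathds{1}_{A_n}\to f$ in $\mathrm{L}^1$, then a subsequence converges to $f$ $\lambda$-a.e., so $f$ takes values in $\{0,1\}$ a.e., i.e.\ $f=\mathds{1}_A$ with $A=\{f=1\}$, which has finite measure since $\lambda(A)=\norm{f}_1<\infty$; hence $f\in E$. As $\mathrm{L}^1(X,\lambda)$ is complete (Riesz--Fischer), the closed subset $E$ is complete, and transporting back along the isometry $\iota$ gives completeness of $(\MAlgf(X,\lambda),d_{X,\lambda})$: a Cauchy sequence $(A_n)$ has Cauchy image $(\mathds{1}_{A_n})$, which converges in $\mathrm{L}^1$ to some $\mathds{1}_A\in E$, whence $d_{X,\lambda}(A_n,A)\to 0$.

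For separability, write $X=\bigsqcup_n X_n$ with $0<\lambda(X_n)<\infty$ and put $Y_N=\bigsqcup_{n\le N}X_n$. Fix a countable algebra $\mathcal{D}$ generating $\mathcal{B}(X)$ (it exists as $X$ is standard Borel). For each $N$, the collection $\{D\cap Y_N: D\in\mathcal{D}\}$ is a countable subalgebra of $\mathcal{B}(Y_N)$ generating it, and on the finite measure space $(Y_N,\lambda_{\restriction Y_N})$ the sets approximable in $d_{Y_N,\lambda}$ by this subalgebra form a $\sigma$-algebra (a monotone class argument), so they exhaust $\MAlg(Y_N,\lambda)=\MAlgf(Y_N,\lambda)$. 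Thus $\mathcal{C}\coloneqq\{D\cap Y_N: D\in\mathcal{D},\,N\in\N\}$ is a countable subset of $\MAlgf(X,\lambda)$; given $A\in\MAlgf(X,\lambda)$ and $\varepsilon>0$, since $\lambda(A)=\sum_n\lambda(A\cap X_n)<\infty$ we pick $N$ with $\lambda(A\setminus Y_N)<\varepsilon/2$ and then $D\in\mathcal{D}$ with $\lambda((A\cap Y_N)\Delta(D\cap Y_N))<\varepsilon/2$, so that $d_{X,\lambda}(A,D\cap Y_N)<\varepsilon$. Hence $\mathcal{C}$ is dense. The only point needing a little care here is the density of a countable generating subalgebra in the measure algebra of a finite measure space, which is the standard approximation theorem; everything else is a routine transfer along $\iota$, so there is no serious obstacle.
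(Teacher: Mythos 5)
Your argument is correct. Note that the paper does not prove this proposition at all: it is stated as a well-known fact with a citation to \cite[Lem.~2.1]{LM2022}, so there is no in-paper proof to compare against. Your route — the isometric embedding $A\mapsto\mathds{1}_A$ into $\mathrm{L}^1(X,\lambda)$ with closed image for completeness, and the $\sigma$-finite exhaustion $Y_N$ combined with the standard approximation theorem for a countable generating algebra in a finite measure space for separability — is the standard proof of this lemma, and all the steps (closedness of the set of $\{0,1\}$-valued classes via a.e.\ convergence along a subsequence, the splitting $A\Delta(D\cap Y_N)\subseteq(A\setminus Y_N)\cup\bigl((A\cap Y_N)\Delta(D\cap Y_N)\bigr)$) check out.
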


The existence of a \textit{maximal element} in a measure algebra is very useful. Some authors prove the following via a \textit{measurable Zorn's lemma}, but we can also adapt another proof of Le Maître in the finite measure setting (\cite[Appendix~A]{LMthesis}), to our context. This proof works for an abstract measure algebra, with no underlying measured space (see \textit{e.g.} \cite[Ch.~32]{FremlinVol3}).

\begin{prop}
	{\label{prop: maximal element in MAlgf}}
	Consider $(X,\lambda)$ a standard $\sigma$-finite space, and $\MAlgf(X,\lambda)$ the associated finite measure algebra. Let $\mathcal{F} = \left\{ F_i \mid i \in I \right\}$ be a family of elements of $\MAlgf(X,\lambda)$.
	\begin{enumerate}
	\item[(1)] If $\mathcal{F}$ is upward directed and satisfies $M = \sup_{A \in \mathcal{F}} \lambda(A) < \infty$ for any $A$ in $\mathcal{F}$, then $\mathcal{F}$ admits a supremum, which has measure $M$. Moreover, if $\mathcal{F}$ is stable by countable union, then $\sup \mathcal{F}$ is a maximal element. Furthermore, the supremum (resp. maximal element) of the family is obtained as the limit of an increasing sequence of elements of the family.
	\item[(2)] If $\mathcal{F}$ satisfies $\lambda( \cup_{i \in J} F_i) < R < \infty$ for any finite subset $J \subseteq I$, then $\mathcal{F}$ admits a supremum. Furthermore, the supremum of the family is obtained as the limit of an increasing sequence of finite reunions of elements of the family.
	\end{enumerate}
\end{prop}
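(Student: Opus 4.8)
The plan is to establish (1) first and then derive (2) from it by a reduction. For (1), the strategy is the classical "exhaustion" argument adapted to a $\sigma$-finite measure algebra. Since $M = \sup_{A \in \mathcal{F}} \lambda(A) < \infty$, I would pick a sequence $(A_n)$ in $\mathcal{F}$ with $\lambda(A_n) \to M$. Using that $\mathcal{F}$ is upward directed, I can replace $(A_n)$ by an increasing sequence: define $B_n \in \mathcal{F}$ inductively so that $B_n \supseteq B_{n-1}$ and $B_n \supseteq A_n$ (possible by upward directedness), so $\lambda(B_n) \to M$ as well. Set $S = \bigcup_n B_n$; by continuity from below $\lambda(S) = M$, and $S$ has finite measure, so $S \in \MAlgf(X,\lambda)$. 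To see $S = \sup \mathcal{F}$: it is an upper bound because for any $F \in \mathcal{F}$, upward directedness gives $C_n \in \mathcal{F}$ with $C_n \supseteq F$ and $C_n \supseteq B_n$; then $\lambda(F \setminus S) \le \lambda(C_n \setminus B_n) = \lambda(C_n) - \lambda(B_n) \le M - \lambda(B_n) \to 0$, so $\lambda(F \setminus S) = 0$, i.e. $F \le S$ in the measure algebra. Conversely any upper bound $U$ of $\mathcal{F}$ satisfies $U \ge B_n$ for all $n$, hence $U \ge S$; thus $S$ is the least upper bound. If moreover $\mathcal{F}$ is stable under countable unions, then $S$ itself, being the countable union $\bigcup_n B_n$ of members of $\mathcal{F}$, belongs to $\mathcal{F}$, and being the supremum it is a maximal element. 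The sequence $(B_n)$ is the required increasing sequence.

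For (2), the idea is to pass to the directed closure of $\mathcal{F}$ under finite unions. Let $\mathcal{F}' = \{\,F_{i_1} \cup \cdots \cup F_{i_k} \mid k \in \N,\ i_1,\dots,i_k \in I\,\}$. This family is upward directed (the union of two finite unions is a finite union) and, by hypothesis, every member has measure $< R$, so $M' \coloneqq \sup_{A \in \mathcal{F}'}\lambda(A) \le R < \infty$. By part (1), $\mathcal{F}'$ admits a supremum $S$, realized as the limit of an increasing sequence of elements of $\mathcal{F}'$, i.e. of finite unions of elements of $\mathcal{F}$. Finally $S = \sup \mathcal{F}$ as well: an upper bound of $\mathcal{F}$ is automatically an upper bound of $\mathcal{F}'$ (a finite union of things $\le U$ is $\le U$), and conversely, so the two families have the same set of upper bounds, hence the same least upper bound.

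The main obstacle, and the only genuinely delicate point, is the upper-bound verification in (1): one must be careful that the measure algebra is only $\sigma$-finite, not finite, so "complementation" arguments like $\lambda(F \setminus S) = \lambda(F) - \lambda(F \cap S)$ are legitimate only because $\lambda(F) < \infty$ for $F \in \MAlgf(X,\lambda)$, which is exactly why the statement is phrased for the \emph{finite} measure algebra. The use of upward directedness to produce, for a \emph{fixed} $F$, a sequence $C_n$ dominating both $F$ and $B_n$ is what makes the estimate $\lambda(F\setminus S)\le M-\lambda(B_n)$ work; without directedness this fails. Everything else (continuity from below, extracting an increasing subsequence, closure under countable unions giving membership of the supremum) is routine. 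I would also remark that the argument never uses the underlying point space beyond the facts that $\MAlgf$ is a metric Boolean algebra with a strictly positive finitely-additive-and-continuous measure, so it transfers verbatim to abstract $\sigma$-finite measure algebras as stated.
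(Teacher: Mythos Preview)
Your proof is correct and follows essentially the same approach as the paper: both extract an increasing sequence $(B_n)$ in $\mathcal{F}$ with $\lambda(B_n)\to M$ via upward directedness, take its union/limit as the supremum, and reduce (2) to (1) by passing to the family of finite unions. The only cosmetic difference is that the paper phrases the existence of the limit as a Cauchy-sequence argument in the complete metric space $(\MAlgf(X,\lambda),d_{X,\lambda})$ rather than a direct union plus continuity from below, and in fact your verification that $S$ dominates every $F\in\mathcal{F}$ (via the auxiliary $C_n$) is more explicit than the paper's.
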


\begin{proof}
	\textit{(1)}. Let us start by assuming that $\mathcal{F}$ is an upward directed family of elements of $\MAlgf(X,\lambda)$, in other words for any $A, B$ in $\mathcal{F}$ there exists $C$ in $\mathcal{F}$ such that $A \subseteq C$ and $B \subseteq C$.
	
	Fix a sequence $(A_n)$ of elements of $\mathcal{F}$ with $\lambda(A_n)$ converging to $M$. As $\mathcal{F}$ is upward directed, by induction we can find $B_n \in \mathcal{F}$ such that $A_n \subseteq B_n$ and $B_{n-1} \subseteq B_n$, which gives us an increasing sequence in $\mathcal{F}$ satisfying $\lambda(B_n) \to M$. We prove that it is a Cauchy sequence.
	Let $\varepsilon > 0$ and let $N \in \N$ be such that for any $n \geqslant N$ we have $\lambda(B_n) > M - \varepsilon$. Then for any $n > m \geqslant N$, $d_{X,\lambda}(B_n,B_m) = \lambda(B_n \Delta B_m) = \lambda(B_n \setminus B_m) = \lambda(B_n) - \lambda(B_m) < \varepsilon$. Therefore $(B_n)$ is a Cauchy sequence in $(\MAlgf(X,\lambda),d_{X,\lambda})$ which is complete, and its limit is $\sup \mathcal{F}$. In particular $\lambda(\sup \mathcal{F}) = \sup_{A \in \mathcal{F}} \lambda(A)$.
	
	We now assume that $\mathcal{F}$ is stable by countable union (do note that this implies being upward directed). By the previous argument, $\sup \mathcal{F}$ is defined as the limit of an increasing sequence $(B_n)$ of elements of $\mathcal{F}$. This means that we have $\sup \mathcal{F} = \lim B_n = \bigcup_{n \in \N} B_n$, which is in $\mathcal{F}$ by hypothesis.
	
	\textit{(2)} We finally assume that $\mathcal{F}$ is any family of elements of $\MAlgf(X,\lambda)$ satisfying the measure condition for finite unions. Denote by $\mathcal{G}$ the family of finite reunions of elements of $\mathcal{F}$. It is upward directed, and therefore by the previous argument it admits a supremum $\sup \mathcal{G}$, which is also the supremum of $\mathcal{F}$.
\end{proof}

We have the following two classical lemmas about the behaviour of a probability measure in the class of a $\sigma$-finite measure (recall that such a measure always exist).

\begin{lem}
	[\textup{\cite[Lem.~4.2.1]{Cohn2013}}]
	{\label{lem: probability measure equivalent to infinite measure}}
	Let $X$ be a standard Borel space endowed with its natural $\sigma$-algebra. Let $\lambda$ and $\mu$ be two Borel measures defined on $X$, with $\mu$ finite. Then $\mu$ is absolutely continuous with regards to $\lambda$ if and only if for any $\varepsilon > 0$ there exists $\delta >0$ such that any Borel subset $A$ of $X$ satisfying $\lambda(A) < \delta$ also satisfies $\mu(A) < \varepsilon$.
\end{lem}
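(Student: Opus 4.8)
The plan is to prove both implications directly; the reverse direction is immediate, and the forward direction is a standard Borel--Cantelli-type argument whose only subtlety is where the finiteness of $\mu$ enters.

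For the ``if'' direction, suppose the $\varepsilon$--$\delta$ condition holds and let $A$ be a Borel set with $\lambda(A) = 0$. Then $\lambda(A) < \delta$ for every $\delta > 0$, so by hypothesis $\mu(A) < \varepsilon$ for every $\varepsilon > 0$, whence $\mu(A) = 0$. This shows $\mu \ll \lambda$ without using that $\mu$ is finite.

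For the ``only if'' direction I would argue by contraposition. Assume the $\varepsilon$--$\delta$ condition fails: there is some $\varepsilon_0 > 0$ such that for each $n \in \N$ one can find a Borel set $A_n$ with $\lambda(A_n) < 2^{-n}$ but $\mu(A_n) \geqslant \varepsilon_0$. Set $B_n = \bigcup_{k \geqslant n} A_k$ and $B = \bigcap_{n \in \N} B_n = \limsup_n A_n$. On the $\lambda$ side, subadditivity gives $\lambda(B_n) \leqslant \sum_{k \geqslant n} 2^{-k} = 2^{1-n}$, and since $B \subseteq B_n$ for all $n$ we get $\lambda(B) = 0$. On the $\mu$ side, $B_n \supseteq A_n$ yields $\mu(B_n) \geqslant \varepsilon_0$ for all $n$; and since $(B_n)$ is a decreasing sequence and $\mu$ is \emph{finite}, continuity from above gives $\mu(B) = \lim_n \mu(B_n) \geqslant \varepsilon_0 > 0$. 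Thus $\lambda(B) = 0 < \mu(B)$, so $\mu$ is not absolutely continuous with respect to $\lambda$, which is the desired contradiction.

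The only place the hypothesis that $\mu$ is finite is genuinely needed is the last step, where continuity from above for the decreasing sequence $(B_n)$ requires $\mu(B_1) < \infty$ (indeed $\mu(B_1) \leqslant \mu(X) < \infty$); without some finiteness assumption the conclusion $\mu(B) \geqslant \varepsilon_0$ could fail. I expect no real obstacle here beyond recording this point carefully; everything else is elementary measure theory.
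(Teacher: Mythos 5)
Your proof is correct; the paper gives no proof of this lemma, citing it directly to Cohn's textbook, and your argument (the trivial ``if'' direction plus the contrapositive via $B=\limsup_n A_n$ and continuity from above, where finiteness of $\mu$ is used) is exactly the standard one found there. The only nit is terminological: you announce a proof by contraposition but close by calling the conclusion a ``contradiction''; either framing works, just pick one.
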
 

	\begin{lem}
	{\label{lem: convergence in proba equivalent to sigma finite on finite measure}}
Fix $\mu$ a probability measure in $[\lambda]$. Let also $(A_n)$ be a sequence of Borel subsets of $X$. We have the following equivalence.
\[
(\mu(A_n) \rightarrow 0) \Longleftrightarrow (\forall \, C \mbox{ Borel subset of X of finite $\lambda$-measure, } \lambda(A_n \cap C) \rightarrow 0) .
\]
	\end{lem}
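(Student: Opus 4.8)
The plan is to establish the two implications separately, with \Cref{lem: probability measure equivalent to infinite measure} doing the heavy lifting in both directions: it provides an $\varepsilon$--$\delta$ characterisation of absolute continuity of a \emph{finite} Borel measure with respect to an arbitrary Borel measure on $X$ (which in our application will sometimes be the infinite measure $\lambda$, sometimes the probability measure $\mu$). Throughout I use that $\mu \in [\lambda]$ means $\mu$ and $\lambda$ have exactly the same null sets, so in particular $\mu \ll \lambda$.

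For the implication $(\Rightarrow)$, I would assume $\mu(A_n) \to 0$, fix a Borel set $C$ with $\lambda(C) < \infty$, and introduce the finite measure $\lambda_C \coloneqq \lambda(\,\cdot\, \cap C)$. Since every $\mu$-null set is $\lambda$-null, hence $\lambda_C$-null, we have $\lambda_C \ll \mu$. Applying \Cref{lem: probability measure equivalent to infinite measure} with $\lambda_C$ in the role of the dominated finite measure and $\mu$ in the role of the dominating measure yields, for each $\varepsilon > 0$, a $\delta > 0$ such that $\mu(A) < \delta$ implies $\lambda(A \cap C) = \lambda_C(A) < \varepsilon$. As $\mu(A_n) \to 0$, eventually $\mu(A_n) < \delta$, so eventually $\lambda(A_n \cap C) < \varepsilon$, proving $\lambda(A_n \cap C) \to 0$.

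For the implication $(\Leftarrow)$, I would assume $\lambda(A_n \cap C) \to 0$ for every Borel $C$ of finite $\lambda$-measure, fix $\varepsilon > 0$, and use the $\sigma$-finiteness of $\lambda$ to pick an increasing sequence $(C_k)$ of Borel sets of finite $\lambda$-measure with $\bigcup_k C_k = X$. Since $\mu$ is a finite measure, continuity from above gives $\mu(X \setminus C_k) \to 0$, so I can fix $k$ with $\mu(X \setminus C_k) < \varepsilon/2$. Now $\mu$ is finite with $\mu \ll \lambda$, so \Cref{lem: probability measure equivalent to infinite measure} provides $\delta > 0$ such that $\lambda(A) < \delta$ implies $\mu(A) < \varepsilon/2$. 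Since $\lambda(A_n \cap C_k) \to 0$, for all large $n$ we get $\lambda(A_n \cap C_k) < \delta$, hence $\mu(A_n \cap C_k) < \varepsilon/2$, and therefore $\mu(A_n) \leqslant \mu(A_n \cap C_k) + \mu(X \setminus C_k) < \varepsilon$. Thus $\mu(A_n) \to 0$.

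I do not anticipate a genuine obstacle; the proof is short. The two points that require a little care are: feeding \Cref{lem: probability measure equivalent to infinite measure} the measures in the correct roles (the dominated measure must be the finite one, while the dominating measure is unrestricted and can even be infinite); and, in the converse direction, first peeling off the tail contribution $\mu(A_n \setminus C_k)$ by exploiting the finiteness of $\mu$ together with the $\sigma$-finite exhaustion, before invoking the lemma on the single finite-$\lambda$-measure set $C_k$.
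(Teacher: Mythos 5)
Your proof is correct, but the forward direction takes a genuinely different route from the paper's. The paper treats the two implications asymmetrically: it declares the converse ``immediate'' from \Cref{lem: probability measure equivalent to infinite measure} (your exhaustion of $X$ by finite-measure sets plus the tail estimate $\mu(X\setminus C_k)<\varepsilon/2$ is exactly the detail being suppressed there), while for the forward direction it introduces the Radon--Nikodym derivative $f$ with $d\lambda = f\,d\mu$ and applies dominated convergence to $\lambda(A_n\cap C)=\int_C \mathds{1}_{A_n}f\,d\mu$. You instead reuse \Cref{lem: probability measure equivalent to infinite measure} with the roles swapped, taking $\lambda_C=\lambda(\cdot\cap C)$ as the dominated finite measure and $\mu$ as the dominating one; the needed absolute continuity $\lambda_C\ll\mu$ does follow from $\mu$ and $\lambda$ sharing null sets, as you say. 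Both arguments work, but yours is more uniform and arguably cleaner: the paper's appeal to ``the Lebesgue dominated convergence theorem'' is slightly delicate, since $\mu(A_n)\to 0$ only gives convergence of $\mathds{1}_{A_n}$ to $0$ in measure, not pointwise a.e., so one must either pass to subsequences or invoke the convergence-in-measure version of dominated convergence; your $\varepsilon$--$\delta$ argument sidesteps this entirely at no real cost.
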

	
	\begin{proof}
The converse implication is immediate from \Cref{lem: probability measure equivalent to infinite measure}.
For the direct implication, consider the Radon-Nikodym derivative $f$ such that $d\lambda = f d\mu$. For any Borel subset $C$ of $X$ of finite measure, we have
\[
\lambda(A_n \cap C) = \int_{C} \mathds{1}_{A_n}  f(x)d \mu(x)  \longrightarrow 0 
\]
by the Lebesgue dominated convergence theorem.
	\end{proof}

The interplay between topology and measure is particularly relevant in the case of infinite measures. We recall the definition of locally finite measures.

\begin{defi}
    Let $\lambda$ be a Borel measure on a Polish space $X$. We say that $\lambda$ is \textbf{locally finite} (with regards to the topology of $X$) if every $x\in X$ admits an open neighborhood $U$ such that $\lambda(U)<+\infty$.
\end{defi}

\subsection{Support and separators}

This section holds in the broad context of Borel bijections of a standard Borel space $X$, with no measure involved.
The \textbf{support} of a Borel bijection $T : X \to X$ is defined as follows:
\[
\supp T \coloneqq \left\{ x \in X \mid T(x) \neq x \right\}.
\]
Notice that for two Borel bijections $T,S$ of $X$, we have $S(\supp T) = \supp(STS\inv )$.

A separator gives a very useful decomposition of the support, which is more useful than just a non-null Borel subset disjoint from its image by $T$. 

\begin{defi}
	Let $T$ be a Borel bijection of a standard Borel space $X$. A Borel subset $A$ of $\supp T$, such that $\supp T = A \sqcup \left(T(A) \cup T\inv (A)\right)$ is called a \textbf{separator} for $T$.
\end{defi}

The following well-known lemma is crucial, as it implies that separators always exist for Borel bijections. This proof uses a result from \cite{EisenmannGlasner2016} and is given for convenience. It is of note that we can also prove the following for measure-preserving bijections via \Cref{prop: maximal element in MAlgf}.

\begin{lem}
	{\label{lem: better separator}}
	Let $X$ be a standard Borel space, and $T : X \to X$ be a Borel bijection. Let also $C$ be any Borel subset of $X$. Then there exists a Borel subset $A \subseteq C$ such that $C \cap \supp T = A \sqcup (T(A) \cup T\inv(A))$. In particular for $C = X$, this implies that $A$ is a separator for $T$.
\end{lem}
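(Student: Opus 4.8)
The plan is to reduce the general statement to the case $C = X$ is unnecessary in the wrong direction; instead, I would prove the statement directly for an arbitrary Borel set $C$, since the extra generality costs nothing and handling $C \cap \supp T$ is what one needs anyway. Replacing $T$ by its restriction to the $T$-invariant hull of $C$ is awkward because $C$ need not be $T$-invariant, so I would keep $T$ as is and work inside $\supp T$.

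Here is the approach. Consider the Borel set $D \coloneqq C \cap \supp T$. I want a Borel $A \subseteq D$ with $D = A \sqcup (T(A) \cup T^{-1}(A))$; equivalently $A$ should meet each point of $D$ in such a way that $A$, $T(A)\setminus A$, $T^{-1}(A)\setminus(A\cup T(A))$ partition $D$, and crucially $A$ is disjoint from $T(A)\cup T^{-1}(A)$ (a ``separator'' condition), while its forward/backward images cover everything. Restrict attention to the orbit equivalence relation $\mathcal{R}_T$ generated by $T$ on $X$; each $T$-orbit is either finite (a cycle) or a copy of $\Z$. The point is that choosing $A$ is a local, orbit-by-orbit combinatorial problem: on a $\Z$-orbit one can take alternate points; on a cycle of length $\ell$ one takes a maximal set of points no two of which are $T$-adjacent and whose closed neighbourhood is the whole cycle (a maximal independent set that is also dominating in the cycle graph). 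The invocation of the result from \cite{EisenmannGlasner2016} is precisely to make this choice Borel and uniform over all orbits simultaneously: that paper provides a Borel way to select, in the graph of a Borel bijection, an independent set which is also ``maximal'' in the appropriate sense (equivalently, a Borel proper colouring / Borel maximal independent set for the bipartite-like structure given by $T$), from which the separator is read off. Concretely, I would apply their result to the Borel graph on $D$ with edge set $\{(x,Tx) : x, Tx \in D\}$, obtain a Borel maximal independent set $A$, and then verify the two required properties.

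The verification splits cleanly. Independence of $A$ gives $A \cap T(A) = \varnothing$ and $A \cap T^{-1}(A) = \varnothing$, hence $A$, $T(A)$, $T^{-1}(A)$ restricted appropriately, and the disjointness $A \sqcup (T(A) \cup T^{-1}(A))$ makes sense once we intersect with $D$ --- one must be a little careful that $T(A)$ and $T^{-1}(A)$ may overlap (on a $2$-cycle), which is why the statement writes $T(A) \cup T^{-1}(A)$ and not a disjoint union there. Maximality of $A$ (every point of $D \setminus A$ is $T$-adjacent to a point of $A$) gives $D \setminus A \subseteq T(A) \cup T^{-1}(A)$, i.e. the covering property. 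Combining, $D = A \sqcup (T(A) \cup T^{-1}(A))$ as desired; and when $C = X$ we have $D = \supp T$, so $A$ is a separator by definition. A small subtlety to address: points $x \in \supp T$ with $Tx = x$ do not exist by definition of the support, but one should check fixed points of $T^2$ etc.\ cause no trouble --- they don't, because the only relevant adjacency is via $T$ itself.

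The main obstacle is the Borelness of the selection, i.e.\ making precise that the ``maximal independent set'' construction of \cite{EisenmannGlasner2016} applies to the graph $\{(x,Tx)\}$ and yields exactly the combinatorial object we need on both finite cycles and $\Z$-orbits; once that black box is in place the rest is routine bookkeeping with disjointness and covering. I would therefore spend most of the written proof pinning down the graph to which their theorem is applied and translating ``Borel maximal independent set'' into the separator equation, and only briefly remark on the orbit-by-orbit picture as intuition.
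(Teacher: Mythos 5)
Your proposal is correct and follows essentially the same route as the paper: the paper invokes Lemma 5.1 of Eisenmann--Glasner to write $\supp T$ as a countable Borel partition into $T$-independent pieces $B_n$, and then runs precisely the greedy induction you black-box, producing a Borel independent dominating set $A$ inside $C\cap\supp T$. The only point to tighten is that the cited lemma supplies a countable Borel colouring (a partition with $T(B_n)\cap B_n=\emptyset$) rather than a maximal independent set directly, so the greedy extraction you defer to the black box is exactly the explicit content of the paper's induction on the sets $C\cap B_n$.
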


\begin{proof}	 
	Using Lemma 5.1 from \cite{EisenmannGlasner2016}, we write $\supp T = \bigsqcup_{n \in \N} B_n$, where $(B_n)$ is a partition of $\supp T$ such that $T(B_n)$ is disjoint from $B_n$ for any $n \in \N$. We then have $C\cap \supp T = \bigsqcup_{n \in \N} C \cap B_n$, and for all $n \in \mathbb{N}$ we have that $T(C \cap B_n)$ is disjoint from $C \cap B_n$.

We now inductively define a sequence $(A_n)$ of Borel subsets of $C \cap \supp T$ as follows. Fix $A_0 = C \cap B_0$, and let $A_{i+1}$ be defined by
\[
A_{i+1} = (C \cap B_{i+1}) \setminus \left(   \left(    \bigsqcup_{j \leqslant i} T(A_j)  \right) \cup  \left(  \bigsqcup_{j \leqslant i} T\inv (A_j) \right)  \right).
\]
We conclude the proof by noticing that $A \coloneqq \bigsqcup_n A_n$ is suitable. Indeed, for any $i$ we have $A_{i+1} \cap \left( \bigsqcup_{j \leqslant i} T(A_j) \right) = \emptyset$ and $A_{i+1} \cap \left( \bigsqcup_{j \leqslant i} T\inv (A_j) \right) = \emptyset$ so $A$ is disjoint from both $T(A)$ and $T\inv (A)$. We have $C \cap \supp T = A \cup T(A) \cup T\inv (A)$, as almost any $x \in (C \cap \supp T) \setminus A$ is either in $T(A)$ or in $T\inv (A)$.
\end{proof}

\section{Topologies on the group of infinite measure-preserving bijections}

\subsection{The group \texorpdfstring{$\Aut(X,\lambda)$}{Aut(X,lambda)} and the uniform topology}

Let $(X,\lambda)$ be a standard $\sigma$-finite space.
The group $\Aut(X,\lambda)$ plays a very important part in our work. It is defined as the group of measure-preserving bijections of $(X,\lambda)$, \textit{i.e.} bijections $T : X \to X$ satisfying $\lambda(A) = \lambda( T\inv(A)) = \lambda(T(A))$ for any Borel subset $A \subseteq X$, where two such bijections are identified when they coincide on a conull set.

The following group does not have any analogue in the probability measure-preserving context. We also introduce two useful notations.

	\begin{defi}
The group $\Autf(X,\lambda)$ is the normal subgroup of $\Aut(X,\lambda)$ consisting of all $T$ in $\Aut(X,\lambda)$ such that $\lambda(\supp T)$ is finite.
	\end{defi}

\begin{nota}
For a subgroup $\mathbb{G}$ of $\Aut(X,\lambda)$ and 
a Borel subset $A \subseteq X$
, we define
\[
\mathbb{G}_A \coloneqq \left\{  T \in  \mathbb{G} \mid \mbox{supp}\,T \subseteq A  \right\},
\]
as well as
\[
\mathbb{G}_f \coloneqq \left\{ T \in \mathbb{G} \mid \lambda(\supp T) < \infty \right\} = \mathbb{G} \cap \Autf(X,\lambda).
\]
\end{nota}

In the probability context, Halmos defined in \cite{Halmos1956} two metrics on the group of measure-preserving automorphisms and showed that they are comparable, and that they both induce $\tau_u$. The generalization of this topology to the group of non-singular bijections was introduced by \cite{IonescoTulcea1965}, and studied in the infinite measure context for instance in \cite{ChoksiKakutani1979}.

	\begin{defi}
	{\label{def: uniform topo}}	
We define on $\Aut(X,\lambda)$ the \textbf{uniform topology} $\tau_u$ as the group topology induced by the family of pseudometrics
\[
d_{u,C}: S,T \mapsto \lambda \left( \left\{  x \in C \mid S(x) \neq T(x)  \right\} \right) ,
\]
where $C$ ranges among Borel subsets of $X$ of finite measure.
	\end{defi}
	
	\begin{rem}
	{\label{rem: equiv proba measures}}
By \Cref{lem: convergence in proba equivalent to sigma finite on finite measure} this definition is equivalent to the more commonly found one, which is defined as follows. Let $\mu$ be a probability measure in $[\lambda]$ and define the metric $d_\mu$ on $\Aut(X,\lambda)$ by $d_\mu(S,T) \coloneqq \mu(\left\{ x \in X \mid S(x) \neq T(x)\right\})$. Then $\tau_u$ is the topology induced by $d_\mu$. The definition we chose is more suited to our needs.
	\end{rem}

\begin{rem}
{\label{prop: uniform topology}}
From the definition of $d_{u,C}$, if we set
\[
\mathcal{N}_{C,\varepsilon} = \left\{  T \in \Aut(X,\lambda) \mid  
\lambda(C \cap \supp T) \leqslant \varepsilon  \right\},
\]
then the family $\left\{ \mathcal{N}_{C,\varepsilon} \mid C \mbox{ Borel subset of $X$ of finite measure, } \varepsilon > 0 \right\}$ is a base of neighbourhoods of the identity in $(\Aut(X,\lambda),\tau_u)$. One can also consult \cite[Prop.~494Cb]{FremlinVol3}.
\end{rem}

\subsection{Weak topology and topology of convergence in measure}

We now give one of the classical definitions of the weak topology of $\Aut(X,\lambda)$.

\begin{defi}
	{\label{defi: weak topology}}
	The \textbf{weak topology} on $\Aut(X,\lambda)$ is denoted by $\tau_w$ and is defined in the following way: $T_n \rightarrow T$ if and only if for all $A \subseteq X$ Borel subset of finite measure, one has $\lambda(T_n(A) \Delta T(A)) = d_{X,\lambda}(T_n(A),T(A)) \rightarrow 0$.
\end{defi}

The following proposition states that it is a Polish group.

\begin{prop}
	[\textup{\cite[Prop.~2.2]{LM2022}}]
	{\label{prop: Aut is Polish}}
	The group $\Aut(X,\lambda)$ is equal to the group of isometries of $(\MAlgf(X,\lambda),d_{X,\lambda})$ which fix $\emptyset$. As such, it is a Polish group.	
\end{prop}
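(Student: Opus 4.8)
The plan is to identify $\Aut(X,\lambda)$ with the stabiliser of $\emptyset$ inside the isometry group of the metric space $(\MAlgf(X,\lambda),d_{X,\lambda})$, which is Polish by \Cref{prop: MAlgf is Polish}, and then invoke the general fact that isometry groups of Polish metric spaces are Polish.

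\emph{From bijections to isometries.} Every $T \in \Aut(X,\lambda)$ acts on $\MAlgf(X,\lambda)$ by $A \mapsto T(A)$; since $T$ preserves $\lambda$ and commutes with $\Delta$, we get $d_{X,\lambda}(T(A),T(B)) = \lambda(T(A \Delta B)) = \lambda(A \Delta B) = d_{X,\lambda}(A,B)$, so this is an isometry fixing $\emptyset$. The assignment $T \mapsto (A \mapsto T(A))$ is a group homomorphism into the isometry group; it is injective because if $T$ and $S$ agree on all finite-measure sets modulo null sets, then $T\inv S$ fixes every member of a fixed countable algebra of finite-measure Borel sets separating the points of $X$, which forces $T\inv S = \id$ almost everywhere.

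\emph{From isometries to bijections.} Here is the core. Let $\Phi$ be an isometry of $(\MAlgf(X,\lambda),d_{X,\lambda})$ fixing $\emptyset$. Then $\lambda(\Phi(A)) = d_{X,\lambda}(\Phi(A),\emptyset) = d_{X,\lambda}(A,\emptyset) = \lambda(A)$, so $\Phi$ preserves $\lambda$. All the Boolean-ring operations on $\MAlgf(X,\lambda)$ are definable from $d_{X,\lambda}$ and $\lambda$: one has $A \subseteq B$ iff $\lambda(A) + d_{X,\lambda}(A,B) = \lambda(B)$, and $A \cap B = \emptyset$ iff $d_{X,\lambda}(A,B) = \lambda(A) + \lambda(B)$; from the induced order one recovers $\cup, \cap, \Delta, \setminus$ as the (existing) suprema and infima that an order-isomorphism must preserve. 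Hence $\Phi$ is a $\lambda$-preserving automorphism of the Boolean ring $\MAlgf(X,\lambda)$. Since $\lambda$ is $\sigma$-finite, every element of $\MAlg(X,\lambda)$ is a $d_{X,\lambda}$-increasing limit of a sequence from $\MAlgf(X,\lambda)$, so $\Phi$ extends uniquely to a measure-preserving automorphism of the full measure algebra $\MAlg(X,\lambda)$ (it sends a countable increasing union to the limit of the images of the partial unions). Finally, a measure-preserving automorphism of the measure algebra of a standard $\sigma$-finite space is induced by an element $T \in \Aut(X,\lambda)$, unique modulo null sets: this reduces to the classical point-realisation theorem for standard probability spaces by writing $X = \bigsqcup_n X_n$ with $\lambda(X_n) = 1$. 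Then $T$ maps to $\Phi$, so $T \mapsto (A \mapsto T(A))$ is a group isomorphism of $\Aut(X,\lambda)$ onto the group of isometries of $(\MAlgf(X,\lambda),d_{X,\lambda})$ fixing $\emptyset$.

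\emph{Polishness.} By \Cref{prop: MAlgf is Polish} the space $(\MAlgf(X,\lambda),d_{X,\lambda})$ is Polish, hence its full isometry group, with the topology of pointwise convergence, is a Polish group (a standard fact: it is a $G_\delta$ subgroup of the space of all self-maps with the product topology). The stabiliser of $\emptyset$ is closed in it, hence Polish by \Cref{prop: G delta Polish}. Transporting this topology along the isomorphism above yields a Polish group topology on $\Aut(X,\lambda)$, and by construction $T_n \to T$ in it iff $\lambda(T_n(A) \Delta T(A)) \to 0$ for every finite-measure Borel set $A$, i.e.\ iff $T_n \to T$ for $\tau_w$; so this topology is precisely the weak topology, and $(\Aut(X,\lambda),\tau_w)$ is Polish.

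I expect the main obstacle to be the reconstruction step: verifying carefully that an abstract $\emptyset$-fixing isometry of $\MAlgf(X,\lambda)$ is a measure-algebra automorphism (the metric definitions of the Boolean operations are elementary but must be checked, and one must work around the fact that $\MAlgf(X,\lambda)$ is only a Boolean ring, with no complements, before passing to $\MAlg(X,\lambda)$), and then that such an automorphism is spatially realised — the latter ultimately resting on the classical probability-space theorem together with the $\sigma$-finite decomposition.
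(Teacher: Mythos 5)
Your proposal is correct and follows essentially the same route as the cited source \cite[Prop.~2.2]{LM2022} (the paper itself gives no proof, only the citation): isometries fixing $\emptyset$ preserve $\lambda$ and the metrically definable Boolean structure, hence extend to measure-algebra automorphisms that are spatially realised via the $\sigma$-finite decomposition into probability pieces, and the resulting pointwise-convergence topology on the isometry group is exactly $\tau_w$. All the steps you flag as delicate (order-definability of the ring operations, well-definedness of the extension to $\MAlg(X,\lambda)$, point realisation) go through as you describe.
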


Equivalently, one can define $\tau_w$ in the following manner: $T_n \rightarrow T$ if and only if for all $A \subseteq X$ Borel subset of finite measure $\lambda(T_n(A) \setminus T(A)) \rightarrow 0$. Moreover, if $\mu$ is a probability measure in $[\lambda]$ and $T_n \to T$ for $\tau_w$, for any $A \subseteq X$ we have $\mu(T_n(A) \Delta T(A)) \to 0$. We actually have the following.

	\begin{lem}
	{\label{lem: continuous action of Aut lambda on Malg mu}}
	Let $(X,\lambda)$ be a standard $\sigma$-finite space, and $\mu$ be a probability measure in $[\lambda]$. Then for any Borel subset $A \subseteq X$ the application $T \in \Aut(X,\lambda) \mapsto \mu( T(A) \Delta A)$ is $\tau_w$-continuous.
	\end{lem}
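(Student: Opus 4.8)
The plan is to exploit that $(\Aut(X,\lambda),\tau_w)$ is a Polish group, hence metrizable, so it suffices to prove \emph{sequential} continuity of $T \mapsto \mu(T(A)\Delta A)$. So fix a sequence $T_n \to T$ in $\tau_w$. Since symmetric difference is associative and commutative and $(T_n(A)\Delta A)\,\Delta\,(T(A)\Delta A) = T_n(A)\Delta T(A)$, the triangle inequality for the pseudometric $B,C \mapsto \mu(B\Delta C)$ gives
\[
\bigl\lvert \mu(T_n(A)\Delta A) - \mu(T(A)\Delta A) \bigr\rvert \leqslant \mu\bigl(T_n(A)\Delta T(A)\bigr),
\]
so it is enough to show $\mu(T_n(A)\Delta T(A)) \to 0$ (which also proves the assertion stated just before the lemma).

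The main idea is to reduce everything to Borel sets of \emph{finite} $\lambda$-measure, where the definition of $\tau_w$ together with \Cref{lem: probability measure equivalent to infinite measure} (applied to $\mu \ll \lambda$) turns $\lambda$-convergence into $\mu$-convergence. Write $X = \bigcup_k X_k$ as an increasing union of Borel sets of finite $\lambda$-measure and set $A_k \coloneqq A \cap X_k$; then $A_k$ has finite $\lambda$-measure and $A_k \uparrow A$. Note that $B \mapsto \mu(T(B))$ is again a probability measure. Fix $\varepsilon>0$; since $\mu$ and $\mu\circ T$ are finite measures and $X\setminus X_k \downarrow \emptyset$, choose $k$ with $\mu(A\setminus A_k) < \varepsilon$ and $\mu(T(X\setminus X_k)) < \varepsilon$. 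Using that $T_n$ and $T$ are bijections and $A_k \subseteq A$, decompose
\[
T_n(A)\Delta T(A) \subseteq T_n(A\setminus A_k) \;\cup\; \bigl(T_n(A_k)\Delta T(A_k)\bigr) \;\cup\; T(A\setminus A_k).
\]

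It remains to estimate the $\mu$-measure of the three pieces. The third piece satisfies $\mu(T(A\setminus A_k)) \leqslant \mu(T(X\setminus X_k)) < \varepsilon$ by the choice of $k$. For the middle piece, $A_k$ has finite $\lambda$-measure, so $\lambda(T_n(A_k)\Delta T(A_k)) \to 0$ by definition of $\tau_w$, whence $\mu(T_n(A_k)\Delta T(A_k)) \to 0$ by \Cref{lem: probability measure equivalent to infinite measure}. For the first piece, since $A\setminus A_k \subseteq X\setminus X_k$,
\[
\mu\bigl(T_n(A\setminus A_k)\bigr) \leqslant \mu\bigl(T_n(X\setminus X_k)\bigr) = 1 - \mu\bigl(T_n(X_k)\bigr),
\]
and as $X_k$ has finite $\lambda$-measure we get $\lambda(T_n(X_k)\Delta T(X_k)) \to 0$, hence $\mu(T_n(X_k)) \to \mu(T(X_k))$, so $\limsup_n \mu(T_n(X\setminus X_k)) = 1 - \mu(T(X_k)) = \mu(T(X\setminus X_k)) < \varepsilon$. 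Combining, $\limsup_n \mu(T_n(A)\Delta T(A)) \leqslant 2\varepsilon$, and letting $\varepsilon \to 0$ finishes the proof.

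The one genuinely infinite-measure obstacle is the first piece: $A\setminus A_k$ may have infinite $\lambda$-measure, so one cannot control $T_n(A\setminus A_k)$ via $\tau_w$-convergence on finite sets directly. Passing to complements — $\mu(T_n(X\setminus X_k)) = 1 - \mu(T_n(X_k))$ — converts it into a statement about the finite $\lambda$-measure set $X_k$, at the (free) cost of using that $\mu$ is a probability measure; everything else is routine.
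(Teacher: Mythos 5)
Your proof is correct, but it takes a different route from the paper's. The paper reduces the whole problem to finite-measure sets via \Cref{lem: convergence in proba equivalent to sigma finite on finite measure}: it suffices to show $\lambda(C\cap(T_n(A)\Delta A))\to 0$ for every Borel $C$ of finite $\lambda$-measure, and this follows from a short chain of triangle inequalities in $(\MAlgf(X,\lambda),d_{X,\lambda})$, namely $\lambda(C\cap(T_n(A)\Delta A))\leqslant d_{X,\lambda}(C,T_n(C))+d_{X,\lambda}(T_n(C\cap A),C\cap A)$, both terms vanishing by weak convergence. You instead exhaust $X$ by finite-measure sets $X_k$, truncate $A$, and control the infinite tail $T_n(A\setminus A_k)$ by the complementation trick $\mu(T_n(X\setminus X_k))=1-\mu(T_n(X_k))$, invoking only \Cref{lem: probability measure equivalent to infinite measure}. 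The paper's argument is shorter and stays inside the measure-algebra isometry picture; yours is more hands-on but has the small advantage of explicitly proving continuity at an arbitrary $T$ (the paper only verifies convergence to the identity and leaves the translation to general $T$ implicit). A cosmetic remark: the condition $\mu(A\setminus A_k)<\varepsilon$ you impose on $k$ is never used, since both tail estimates go through $\mu(T(X\setminus X_k))$ and $\mu(T_n(X\setminus X_k))$.
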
	
	
	\begin{proof}
	Let $(T_n)$ be a sequence of measure-preserving bijections $\tau_w$-converging to the identity in $\Aut(X,\lambda)$.
	By Lemma \ref{lem: convergence in proba equivalent to sigma finite on finite measure} it is enough to prove that for any Borel subset $C$ of finite $\lambda$-measure we have $\lambda(C \cap (T_n(A) \Delta A)) \to 0$.
	We have
	\begin{align*}
	\lambda(C \cap (T_n(A) \Delta A)) & = \lambda((C \cap T_n(A)) \Delta (C \cap A))\\
	& = d_{X,\lambda}(C \cap T_n(A) , C \cap A)\\
	& \leqslant d_{X,\lambda}(C \cap T_n(A) , T_n(C) \cap T_n(A)) +  d_{X,\lambda}(T_n(C) \cap T_n(A) , C \cap A)\\
	& \leqslant d_{X,\lambda}(C,T_n(C)) + d_{X,\lambda}(T_n(C \cap A) , C \cap A),
	\end{align*}
and both terms tend to $0$ by weak convergence of $(T_n)$ to $\id_X$, as $\lambda(C) < + \infty$.
	\end{proof}

The second topology we will be interested in is a natural topology that $\Aut(X,\lambda)$ inherits from the space of measurable functions. 

\begin{defi}
	Let $(Z,\tau_Z)$ be a Polish space. We denote by $\Lzero(X,\lambda,(Z,\tau_Z))$ the space of Lebesgue-measurable maps from $X$ to $Z$, identified up to measure $0$. This notation will often be shortened to $\Lzero(X,\lambda,Z)$ if there is no possible confusion on the topology of the range. Let $d$ be a compatible bounded metric on $(Z,\tau_Z)$. For any Borel subset $C \subseteq X$ of finite measure, we define $\tilde{d}_C$ by
\[
\tilde{d}_C(f,g) \coloneqq \int_C d(f(x),g(x)) d\lambda(x).
\]
The family of pseudometrics $\tilde{d}_C$ induces the \textbf{topology of convergence in measure} denoted by $\tau_m$, where $C$ ranges among the Borel subsets of $X$ of finite measure.
\end{defi}

	\begin{rem}
	{\label{rem: pseudometrics}} 
	We can also define $\tau_m$ as follows.
	Let $\mu$ be a finite measure in $[\lambda]$, let $d$ be a compatible bounded metric on $Z$, define a metric $\tilde{d}$ on $\Lzero(X,\lambda,(Z,\tau_Z))$ by
\begin{align*}
\tilde{d}(f,g) \coloneqq \int_X d(f(x),g(x)) \, d\mu(x),
\end{align*}
then $\tau_m$ is also induced by the metric $\tilde{d}$.
	\end{rem}

It is of note that $\tau_m$ does not depend on the choice of the measure, only on its equivalence class, and also does not depend on the choice of $d$, but only on the topology it induces.
This follows from the following statement.

	\begin{prop}
	[\textup{\cite[Prop.~6]{Moore1976}}]
	{\label{prop: cv in measure equiv}}
	Let $(f_n)$ be a sequence of elements of $\Lzero(X,\lambda,(Z,\tau_Z))$, and $f \in \Lzero(X,\lambda,(Z,\tau_Z))$, where $(Z,\tau_Z)$ is Polish, with $d$ a compatible bounded metric. Then the following are equivalent:
\begin{enumerate}[(a)]\setlength\itemsep{0em}
\item $f_n \longrightarrow_{\tau_m} f$,
\item \label{item: Moore (b)} for any probability measure $\mu$ in $[\lambda]$, for all $\varepsilon >0$, $\mu (\{  x \in X \mid d(f_n(x),f(x)) > \varepsilon  \}) \longrightarrow 0$,
\item every subsequence of $(f_n)$ admits a subsequence $(f_{n_k})$ converging $\lambda$-almost everywhere to $f$.
\end{enumerate}
	\end{prop}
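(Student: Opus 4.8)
The plan is to prove \Cref{prop: cv in measure equiv} by establishing the cycle $(a) \Rightarrow (b) \Rightarrow (c) \Rightarrow (a)$, reducing everything to the classical statements about convergence in measure on a \emph{finite} measure space. The key observation is that, by \Cref{rem: pseudometrics}, once we fix a probability measure $\mu \in [\lambda]$ and a compatible bounded metric $d$ on $Z$, the topology $\tau_m$ is induced by the single metric $\widetilde{d}(f,g) = \int_X d(f(x),g(x))\, d\mu(x)$. So $f_n \to_{\tau_m} f$ literally means $\int_X d(f_n(x),f(x))\, d\mu(x) \to 0$, i.e. $L^1(\mu)$-convergence of the scalar functions $x \mapsto d(f_n(x),f(x))$.

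First I would prove $(a) \Rightarrow (b)$: fix an arbitrary probability measure $\mu \in [\lambda]$ and a bounded compatible metric $d$; since $\tau_m$ does not depend on these choices (this is exactly what we are proving, but for this implication we only need that $(a)$ implies $\widetilde{d}(f_n,f) \to 0$ for \emph{one} choice, and then that $L^1(\mu)$-convergence implies convergence in $\mu$-measure, which is Markov's inequality: $\mu(\{d(f_n,f) > \varepsilon\}) \leqslant \varepsilon\inv \int_X d(f_n,f)\, d\mu$). The subtle point is the quantifier ``for any probability measure $\mu$ in $[\lambda]$'': one shows that if $\widetilde{d}_{\mu_0}(f_n,f) \to 0$ for one $\mu_0 \in [\lambda]$, then $\mu(\{d(f_n,f)>\varepsilon\}) \to 0$ for every other $\mu \in [\lambda]$ — this follows from \Cref{lem: probability measure equivalent to infinite measure} applied to the pair $\mu, \mu_0$ (equivalently, absolute continuity of $\mu$ with respect to $\mu_0$ gives the implication ``$\mu_0$-small $\Rightarrow$ $\mu$-small'' uniformly).

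Next, $(b) \Rightarrow (c)$: assume $(b)$ and take any subsequence of $(f_n)$; it also satisfies $(b)$, so fixing $\mu$, we have $\mu(\{d(f_{n_j}, f) > 1/k\}) \to 0$ as $j \to \infty$ for each $k$, and a standard diagonal extraction produces a further subsequence $(f_{n_{j_\ell}})$ with $\sum_\ell \mu(\{d(f_{n_{j_\ell}}, f) > 1/\ell\}) < \infty$; Borel--Cantelli then yields $\mu$-a.e. convergence, hence $\lambda$-a.e. convergence since $\mu$ and $\lambda$ have the same null sets. Finally, $(c) \Rightarrow (a)$: suppose $f_n \not\to_{\tau_m} f$, i.e. $\widetilde{d}(f_n,f) \not\to 0$ for some (any) fixed $\mu$ and $d$; extract a subsequence with $\widetilde{d}(f_{n_k},f) \geqslant \delta > 0$ for all $k$; by $(c)$ this subsequence has a further subsequence converging $\lambda$-a.e., hence $\mu$-a.e., to $f$; since $d$ is bounded, the dominated convergence theorem gives $\widetilde{d} \to 0$ along that further subsequence, contradicting $\widetilde{d} \geqslant \delta$.

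The main obstacle — really the only non-mechanical point — is handling the universal quantifier over probability measures $\mu \in [\lambda]$ in item $(b)$ and, implicitly, the claimed independence of $\tau_m$ from the choices of $\mu$ and $d$: one must be careful that ``convergence in measure'' is genuinely the same notion for all equivalent probability measures and all compatible metrics. For the metric independence, the clean argument is that if $d, d'$ are two bounded compatible metrics on the Polish space $Z$, then $d' \leqslant \psi \circ d$ for some continuous increasing $\psi$ with $\psi(0)=0$ on the (bounded) relevant range — or, more robustly, one simply runs the $(a)\Leftrightarrow(c)$ equivalence, noting that the a.e.-convergence characterization in $(c)$ makes no reference to $d$ at all, so it pins down $\tau_m$ canonically and forces all the metric-dependent formulations to agree. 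Everything else is routine measure theory (Markov, Borel--Cantelli, dominated convergence), so I would keep those steps terse and cite \Cref{lem: probability measure equivalent to infinite measure} for the comparison of equivalent measures.
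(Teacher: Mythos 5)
The paper does not prove this proposition at all --- it is quoted verbatim from Moore, so there is no internal proof to compare against. Your argument is correct and is the standard one (Markov's inequality for $(a)\Rightarrow(b)$, diagonal extraction plus Borel--Cantelli for $(b)\Rightarrow(c)$, and dominated convergence along a.e.-convergent sub-subsequences for $(c)\Rightarrow(a)$), and your handling of the quantifier over $\mu\in[\lambda]$ via \Cref{lem: probability measure equivalent to infinite measure} is exactly right. The only point worth flagging is that your identification of $\tau_m$ with the single metric $\widetilde d$ rests on \Cref{rem: pseudometrics}, which the paper also asserts without proof; to be fully self-contained you should either justify it (decompose $X$ into finite-$\lambda$-measure pieces and use boundedness of $d$ to control the tail uniformly in $n$) or sidestep it by running $(a)\Rightarrow(b)$ and $(c)\Rightarrow(a)$ directly with the pseudometrics $\tilde d_C$, converting between $\lambda$-smallness on finite-measure sets and $\mu$-smallness via \Cref{lem: convergence in proba equivalent to sigma finite on finite measure}.
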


Next is a useful generalization of the classical approximation by simple functions.

	\begin{lem}
	[\textup{\cite[Lem.~4]{KaichouhLM2015}}]
	{\label{prop: functions with finitely many values are dense}}
Let $Z$ be a Polish space, and $(X,\lambda)$ be a standard $\sigma$-finite space. The space of measurable functions from $X$ to $Z$ that take finitely many values  is dense in $\Lzero(X,\lambda,Z)$.
	\end{lem}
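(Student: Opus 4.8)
The statement to prove is \Cref{prop: functions with finitely many values are dense}: that the measurable functions $X \to Z$ taking finitely many values are dense in $\Lzero(X,\lambda,Z)$ for a standard $\sigma$-finite space $(X,\lambda)$ and Polish space $Z$. The natural strategy is to reduce to the classical probability case, since we already have \Cref{rem: pseudometrics} telling us that $\tau_m$ is metrized by $\tilde d(f,g) = \int_X d(f(x),g(x))\,d\mu(x)$ for a fixed probability measure $\mu \in [\lambda]$ and a bounded compatible metric $d$ on $Z$. Fix such a $\mu$ and such a $d$ (bounded, say by $1$). Then density in $\Lzero(X,\lambda,Z) = \Lzero(X,\mu,Z)$ for $\tau_m$ is exactly density in the probability-measure setting, so it suffices to invoke (or reprove) the classical fact for $(X,\mu)$.

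\textbf{Key steps.} First I would reduce, as above, to a probability measure $\mu$ and a bounded metric $d$, so that $\tau_m$ becomes the topology of a genuine metric $\tilde d$. Second, given $f \in \Lzero(X,\mu,Z)$ and $\varepsilon > 0$, I would use separability of $Z$: cover $Z$ by countably many Borel sets $(U_k)_{k\in\N}$ each of $d$-diameter at most $\varepsilon$ (for instance, let $V_k$ be balls of radius $\varepsilon/2$ around a dense sequence and disjointify: $U_k = V_k \setminus \bigcup_{j<k} V_j$). Pick a point $z_k \in U_k$ for each nonempty $U_k$. Third, set $A_k = f\inv(U_k)$; these form a countable Borel partition of $X$, so $\sum_k \mu(A_k) = 1$, hence there is $N$ with $\sum_{k > N}\mu(A_k) < \varepsilon$. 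Define $g$ to equal $z_k$ on $A_k$ for $k \le N$ and to equal $z_0$ (some fixed point) on $\bigcup_{k>N} A_k$; this $g$ takes finitely many values. Fourth, estimate $\tilde d(f,g) = \int_X d(f,g)\,d\mu \le \sum_{k\le N}\int_{A_k} d(f(x),z_k)\,d\mu(x) + \sum_{k>N}\int_{A_k} d(f(x),z_0)\,d\mu(x) \le \varepsilon\cdot\mu(X) + 1\cdot\varepsilon = 2\varepsilon$, using diam$(U_k)\le\varepsilon$ on the first sum and the bound $d\le 1$ on the second. Since $\varepsilon$ was arbitrary, $f$ lies in the closure of the finite-valued functions.

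\textbf{Main obstacle.} There is essentially no serious obstacle here — the only subtlety worth flagging is the one genuinely new to the infinite-measure context, namely that one cannot integrate $d(f,g)$ against $\lambda$ itself (the constant-difference approximation would have infinite $\tilde d_C$-contributions summed over an exhausting sequence of finite-measure pieces), which is precisely why passing to the equivalent probability measure $\mu$ via \Cref{rem: pseudometrics} is the right move; once that reduction is made, the argument is the textbook one. Alternatively, and even more briefly, one may simply cite \cite[Lem.~4]{KaichouhLM2015} directly, as the excerpt does, since that reference already handles the $\sigma$-finite case verbatim; the above sketch records the short self-contained argument for completeness.
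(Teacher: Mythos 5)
Your proof is correct. The paper itself gives no argument for this lemma --- it is quoted directly from \cite[Lem.~4]{KaichouhLM2015} --- and your self-contained proof is the standard one (and essentially the one in that reference): pass to a probability measure $\mu\in[\lambda]$ and a bounded compatible metric so that $\tau_m$ is metrized by $\tilde d$, partition $Z$ into countably many Borel pieces of small diameter, pull back, truncate to finitely many pieces of total $\mu$-mass at least $1-\varepsilon$, and estimate. All steps check out, including the measurability of the approximant and the two-term bound $\tilde d(f,g)\leqslant \varepsilon+\varepsilon$.
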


Let us now consider the right action of $\Aut(X,\lambda)$ on $\Lzero(X,\lambda,(Z,\tau_Z))$, defined as follows: if $T \in \Aut(X,\lambda)$ and $f \in \Lzero(X,\lambda,(Z,\tau_Z))$, then 
\[
(f \cdot  T)(x) \coloneqq f(T(x))
\]
We will often write $T(x) = Tx$ for convenience. Notice that when $Z = X$, we may see $\Aut(X,\lambda)$ as a subset of $\Lzero(X,\lambda,(X,\tau_X))$, where we identify an measure-preserving bijection $T$ with the corresponding function $f_T: x \mapsto Tx$. 

We then have the following proposition, which will be crucial for the proof of the main result of section \ref{section: Polish Orbit full groups}. The local finiteness hypothesis is what distinguishes this statement from the analogous result in the finite measure context (see \cite[Prop.~2.9]{CarderiLM2016}), and was proved in \cite{Papangelou1968}.

	\begin{prop}{\label{prop: topology on Aut seen as measureable functions}}
Let $(X,\lambda)$ be a standard $\sigma$-finite space and endow $\Aut(X,\lambda)$ with its weak topology $\tau_w$. Consider also $\tau_X$ a Polish topology on $X$, compatible with its Borel structure.
	\begin{enumerate}[(1)]\setlength\itemsep{0em}
	\item The inclusion of $(\Aut(X,\lambda),\tau_w) $ in $ (\Lzero(X,\lambda,(X,\tau_X)),\tau_m)$ is continuous.
	
	\item If $\lambda$ is locally finite on $\tau_X$, then the inclusion $(\Aut(X,\lambda),\tau_w) \hookrightarrow (\Lzero(X,\lambda,(X,\tau_X)),\tau_m)$ is a topological embedding.
	
	\item If $\lambda$ is locally finite on $\tau_X$, then $\Aut(X,\lambda)$ is $G_\delta$ in $(\Lzero(X,\lambda,(X,\tau_X)),\tau_m)$.
	\end{enumerate}
	\end{prop}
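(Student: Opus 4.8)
\textbf{Proof plan for Proposition~\ref{prop: topology on Aut seen as measureable functions}.}

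For (1), the plan is to unwind the definitions of both topologies on a sequence $(T_n)$ converging weakly to $\id_X$ (it suffices to check continuity at the identity since both sides carry group-compatible, or at least left-invariant, structures, and in fact $\tau_m$-convergence to $f_T$ can be tested after composing on the right by $T^{-1}$). Fixing a probability measure $\mu \in [\lambda]$ and a compatible bounded metric $d$ on $(X,\tau_X)$, by Remark~\ref{rem: pseudometrics} and Proposition~\ref{prop: cv in measure equiv} it is enough to show $\mu(\{x : d(T_n x, x) > \varepsilon\}) \to 0$ for every $\varepsilon > 0$. First I would approximate: choose a Borel partition of $X$ into finitely many pieces $C_1, \dots, C_k$ of small $d$-diameter plus a remainder of small $\mu$-measure (possible since $(X,\tau_X)$ is separable metric and $\mu$ is a finite measure — cover by small balls, take a finite subfamily exhausting all but $\eta$ of the mass, disjointify). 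On each $C_i$, the condition $d(T_n x, x) > \varepsilon$ forces $T_n x \notin C_i$ once the diameter is below $\varepsilon$, i.e. $x \in C_i \setminus T_n^{-1}(C_i)$, whose $\mu$-measure is controlled by $\mu(C_i \,\Delta\, T_n^{-1}(C_i))$; this tends to $0$ by Lemma~\ref{lem: continuous action of Aut lambda on Malg mu} (applied to $T_n^{-1}$, which also converges weakly to $\id_X$ since $\tau_w$ is a group topology). Summing over $i$ and adding the $\eta$-remainder gives the bound; letting $\eta \to 0$ finishes (1).

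For (2), I need the reverse inequality of topologies: a $\tau_m$-convergent sequence $T_n \to T$ in $\Aut(X,\lambda)$ must converge weakly. By passing to $T_n T^{-1}$ it again suffices to treat $T = \id_X$, so assume $\mu(\{x : d(T_n x, x) > \varepsilon\}) \to 0$ for all $\varepsilon$; I must show $\lambda(T_n(A) \,\Delta\, A) \to 0$ for every Borel $A$ of finite $\lambda$-measure, equivalently (Lemma~\ref{lem: convergence in proba equivalent to sigma finite on finite measure}) $\mu(T_n(A) \,\Delta\, A) \to 0$ for every Borel $A$. This is where local finiteness enters. By inner/outer regularity of the finite Borel measure $\mu$ on the Polish space $(X,\tau_X)$ it suffices to handle $A$ open, and by the same regularity one reduces to $A$ open with $\lambda(A) < \infty$ — here local finiteness guarantees a basis of $\tau_X$-open sets of finite $\lambda$-measure, so such $A$ are $\tau_m$-dense in $\MAlg$ in the relevant sense. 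For an open $A$: if $x \in A$ then $d(T_n x, x) \le \varepsilon$ together with $d(x, A^c) > \varepsilon$ forces $T_n x \in A$, so $\mu(A \setminus T_n^{-1}(A)) \le \mu(\{x : d(T_n x, x) > \varepsilon\}) + \mu(\{x \in A : d(x, A^c) \le \varepsilon\})$; the second term is at most $\eta$ for $\varepsilon$ small since $A = \bigcup_\varepsilon \{x \in A : d(x,A^c) > \varepsilon\}$ and $\mu$ is finite. Combined with the measure-preserving property $\mu \circ T_n^{-1}$-versus-$\mu$ control (or rather: $\lambda(T_n^{-1}(A)) = \lambda(A)$, and a standard symmetric-difference estimate), this yields $\mu(T_n(A) \,\Delta\, A) \to 0$ first for open $A$, then for all finite-measure Borel $A$ by density, then for all Borel $A$ by Lemma~\ref{lem: convergence in proba equivalent to sigma finite on finite measure}.

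For (3), the plan is to exhibit $\Aut(X,\lambda)$ as a $G_\delta$ subset of $\Lzero(X,\lambda,(X,\tau_X))$ by writing down countably many closed-or-$G_\delta$ conditions cutting it out: namely, $f \in \Lzero(X,\lambda,(X,\tau_X))$ lies in (the image of) $\Aut(X,\lambda)$ iff $f$ is (mod null) a measure-preserving bijection. I would characterize this by: (a) $f_* \lambda = \lambda$ (equivalently $\lambda(f^{-1}(U)) = \lambda(U)$ for $U$ in a countable basis of finite-measure open sets — using local finiteness again — each such equality being a closed condition on $\tau_m$ by (1)–(2) type arguments / continuity of $f \mapsto \lambda(f^{-1}(U) \cap C)$), and (b) $f$ is essentially injective, which can be phrased via a countable family of conditions of the form "for basic open $U, V$ with disjoint closures, $\lambda(\{x : f(x) \in U\} \cap \{x : f(x) \in V\}) = 0$" — but injectivity is subtler and is really the crux: the clean way is to note that a measure-preserving essentially-injective $f$ is automatically essentially surjective (by the maximal-element / exhaustion argument, Proposition~\ref{prop: maximal element in MAlgf}, since $f(X)$ has full measure), so one only needs measure-preservation plus essential injectivity. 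Each condition should be seen to be $G_\delta$ (or $F_\sigma$, hence its complement avoided, hence $G_\delta$) for $\tau_m$ using Proposition~\ref{prop: cv in measure equiv} to pass between $\tau_m$-convergence and a.e.-convergence along subsequences. I expect the main obstacle to be (3), specifically verifying that the essential-injectivity condition is genuinely $G_\delta$ rather than merely Borel — the resolution being to encode injectivity through countably many conditions testing that $f$ separates a fixed countable separating family of Borel (or basic open) sets up to null sets, each of which is a closed condition once phrased as an integral inequality, so that $\Aut(X,\lambda)$ becomes a countable intersection of closed sets intersected with the $G_\delta$ set carved out by measure-preservation. Alternatively, and more robustly, one invokes Proposition~\ref{prop: Aut is Polish}: $\Aut(X,\lambda)$ is Polish in its weak topology, (2) says $\tau_w$ equals the induced $\tau_m$ topology, and a Polish subspace of a Polish space is $G_\delta$ by Proposition~\ref{prop: G delta Polish} — this last route is the cleanest and I would present it as the proof of (3).
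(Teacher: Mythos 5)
Your proposal is correct, and for item \emph{(3)} your final ("cleanest") route is exactly the paper's proof: $(\Aut(X,\lambda),\tau_w)$ is Polish by \Cref{prop: Aut is Polish}, item \emph{(2)} identifies $\tau_w$ with the subspace topology inherited from $\tau_m$, and \Cref{prop: G delta Polish} then gives the $G_\delta$ conclusion. The difference lies in \emph{(1)} and \emph{(2)}: the paper does not prove these at all but delegates them to Papangelou's theorem, whereas you supply direct arguments. Those arguments are sound. For \emph{(1)}, the reduction to continuity at $\id_X$ via right translation (using that $\tau_m$ depends only on the measure class, so pushing forward $\mu$ by $T^{-1}$ is harmless), the finite partition into small-diameter pieces plus an $\eta$-remainder, and the control of $\mu(C_i\setminus T_n^{-1}(C_i))$ by \Cref{lem: continuous action of Aut lambda on Malg mu} applied to $T_n^{-1}$ all work, and correctly require no local finiteness. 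For \emph{(2)}, you locate the use of local finiteness in the right place: it is what gives outer regularity of $\lambda$ on finite-measure Borel sets (approximation from outside by open sets of finite measure), after which the estimate $A\setminus T_n^{-1}(A)\subseteq\{x:d(T_nx,x)>\varepsilon\}\cup\{x\in A:d(x,A^c)\leqslant\varepsilon\}$ for open $A$, combined with \Cref{lem: probability measure equivalent to infinite measure} and the measure-preservation identity $\lambda(T_n^{-1}(A))=\lambda(A)$, closes the argument; this is essentially the content of the cited result of Papangelou, and the paper's counterexamples in the non-locally-finite case confirm the hypothesis cannot be removed here. The one genuinely shaky part of your write-up is the first, direct approach to \emph{(3)} (cutting out $\Aut(X,\lambda)$ by measure-preservation and essential-injectivity conditions): making essential injectivity a $G_\delta$ condition for $\tau_m$ is delicate and you do not actually carry it out, but since you explicitly discard it in favour of the \Cref{prop: G delta Polish} route, this does not affect the correctness of the proof you propose to present.
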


	\begin{proof}

Items \textit{(1)} and \textit{(2)} are proved in \cite[Thm.~1]{Papangelou1968}. Note that it can actually be shown that $(\Aut(X,\lambda),\tau_w)$ acts continuously on $(\Lzero(X,\lambda,(Z,\tau_Z)),\tau_m)$ for any Polish space $(Z,\tau_Z)$, which also yields \textit{(1)}, as $\Aut(X,\lambda)$ can then be seen as the orbit of $\id_X$ in $\Lzero(X,\lambda,(X,\tau_X))$. We do not provide details, as we will not need this stronger fact.

\noindent
\textit{(3).} By \Cref{prop: Aut is Polish} $(\Aut(X,\lambda),\tau_w)$ is a Polish group and by \textit{(2)} the weak topology is the induced topology, so by \Cref{prop: G delta Polish} $\Aut(X,\lambda)$ is $G_\delta$ in $(\Lzero(X,\lambda,(X,\tau_X)),\tau_m)$, which is itself Polish (see \cite[Sec.~2.4]{CarderiLM2016}).
\end{proof}

The hypothesis of local finiteness in Proposition \ref{prop: topology on Aut seen as measureable functions} is used to find a measure-controllable set containing the compact subset we approximate, and cannot be dispensed with. We give examples in \Cref{section: necessity of the local finiteness}.

We will finally need the following.

	\begin{prop}
	[\textup{\cite[Prop.~7]{Moore1976}, see also \cite[Prop.~19.6]{Kechris2010}}]
	{\label{prop: measurable functions to a Polish group is Polish}}
Let $G$ be a Polish group. Then $\Lzero(X,\lambda,G)$ is a Polish group for the topology of convergence in measure and the pointwise product.
	\end{prop}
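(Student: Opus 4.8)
The plan is to verify directly that $\Lzero(X,\lambda,G)$, equipped with the pointwise product and the topology $\tau_m$, meets the two requirements of a Polish group: that $\tau_m$ is Polish (separable and completely metrizable), and that the group operations are $\tau_m$-continuous. Throughout I fix a probability measure $\mu \in [\lambda]$ and, crucially, choose $d$ to be a \emph{bounded complete} compatible metric on $G$; such a metric exists, since starting from any compatible complete metric $d_0$ on the Polish space $G$, the metric $\min(d_0,1)$ is bounded, compatible, and still complete. By \Cref{rem: pseudometrics} the associated metric $\tilde{d}(f,g) = \int_X d(f(x),g(x))\,d\mu(x)$ induces $\tau_m$; recall moreover that by \Cref{prop: cv in measure equiv} the topology $\tau_m$ depends neither on $\mu$ nor on the choice of $d$, only on the topology of $G$, which is precisely what lets me demand that $d$ be complete. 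I first note that the operations are well defined on a.e.-equivalence classes and land in $\Lzero(X,\lambda,G)$: the map $x \mapsto f(x)g(x)$ is measurable, being the composite of the measurable map $x\mapsto (f(x),g(x))$ with the continuous multiplication $G\times G \to G$, and likewise for $x \mapsto f(x)^{-1}$.

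Metrizability is then immediate from $\tilde{d}$, and separability follows by combining \Cref{prop: functions with finitely many values are dense} with the separability of $G$ and of $\MAlgf(X,\lambda)$ (\Cref{prop: MAlgf is Polish}): fixing a countable dense subset $D \subseteq G$ and a countable dense subalgebra of the measure algebra, the simple functions taking values in $D$ on finitely many pieces drawn from that subalgebra form a countable $\tau_m$-dense family. For completeness I would take a $\tilde{d}$-Cauchy sequence $(f_n)$ and extract a subsequence $(f_{n_k})$ with $\tilde{d}(f_{n_k},f_{n_{k+1}}) < 2^{-k}$. By Tonelli, $\int_X \sum_k d(f_{n_k}(x), f_{n_{k+1}}(x))\,d\mu(x) < \infty$, so for $\mu$-a.e.\ $x$ the series $\sum_k d(f_{n_k}(x), f_{n_{k+1}}(x))$ converges, making $(f_{n_k}(x))_k$ a $d$-Cauchy sequence in $G$. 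Here completeness of $d$ pays off: $(f_{n_k}(x))_k$ converges a.e.\ to a limit $f(x)\in G$, and $f$ is measurable as an a.e.-limit of measurable maps. Since $d \le 1$, dominated convergence gives $\tilde{d}(f_{n_k}, f)\to 0$; as $(f_n)$ is Cauchy with a convergent subsequence, $\tilde{d}(f_n,f)\to 0$. Hence $(\Lzero(X,\lambda,G),\tilde{d})$ is complete and $\tau_m$ is Polish.

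It remains to prove continuity of the group operations, and for a topological group it suffices to show that $(f,g)\mapsto fg^{-1}$ is $\tau_m$-continuous. Since $\tau_m$ is metrizable I may argue sequentially, and I expect the cleanest route to be the subsequence criterion \Cref{prop: cv in measure equiv}\textit{(c)}, avoiding any attempt at a direct metric estimate, which fails because multiplication in a general Polish group interacts badly with integration. Given $f_n \to f$ and $g_n \to g$ in $\tau_m$, to show $f_n g_n^{-1} \to fg^{-1}$ it is enough to show every subsequence admits a further subsequence converging to $fg^{-1}$; along any subsequence I extract (twice) a further one on which simultaneously $f_n \to f$ and $g_n \to g$ $\lambda$-a.e. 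By joint continuity of $(a,b)\mapsto ab^{-1}$ on $G$, one then has $f_n(x)g_n(x)^{-1} \to f(x)g(x)^{-1}$ for a.e.\ $x$, i.e.\ a.e.\ convergence, hence $\tau_m$-convergence along that subsequence. This yields continuity, so $\Lzero(X,\lambda,G)$ is a Polish group. The main point requiring care is the completeness argument, specifically the insistence on a \emph{complete} bounded compatible metric $d$ (legitimate only because $\tau_m$ is independent of the choice of $d$), since a merely left-invariant metric on $G$ need not be complete; the continuity of the operations, by contrast, is soft once one commits to the a.e.-subsequence characterization rather than a metric inequality.
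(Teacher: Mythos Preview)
The paper does not supply its own proof of this proposition: it is simply cited from \cite{Moore1976} and \cite{Kechris2010} without argument. Your proof is correct and follows the standard route one finds in these references, making good use of the tools already assembled in the paper (\Cref{rem: pseudometrics}, \Cref{prop: cv in measure equiv}, \Cref{prop: functions with finitely many values are dense}, \Cref{prop: MAlgf is Polish}); in particular, the care you take to choose a bounded \emph{complete} compatible metric on $G$ for the completeness step, and the use of the a.e.-subsequence criterion for continuity of the operations, are exactly the right moves.
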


\section{Full groups and exchanging involutions}{\label{sec: fg and involutions}}

Dye's definition of full groups from \cite{Dye1959} easily extends to our infinite measure-preserving setup as follows. In this whole section, $(X, \lambda)$ will be a standard $\sigma$-finite space as usual.

	\begin{defi}
Consider $(T_n)$ a sequence of elements of $\Aut(X,\lambda)$. An element $T$ in $\Aut(X,\lambda)$ is obtained by \textbf{cutting and pasting} $(T_n)$ if there exists a countable partition $(A_n)$ of $X$ such that for all $n$ in $\mathbb{N}$ we have
\[
T_{\restriction A_{n}} = T_{n \restriction A_{n}}.
\]
A subgroup $\mathbb{G}$ of $\Aut(X,\lambda)$ is a \textbf{full group} if it is stable under the operation of cutting and pasting any sequence of elements of $\mathbb{G}$.
	\end{defi}

	\begin{nota}
For any family $(T_i)_{i \in I}$ with $T_i \in \Aut(X,\lambda) $ for any $ i \in I$, we denote by $[(T_i)_{i \in I}]$ the full group generated by $(T_i)_{i \in I}$, that is to say the smallest full group that contains the family. In particular, any $S \in [(T_i)_{i \in I}]$ satisfies $\supp S \subseteq \bigcup_{i \in I} \supp T_i$, up to a null set.
	\end{nota}

	\begin{defi}
We say that a subgroup $\mathbb{G}$ of $\Aut(X,\lambda)$ is \textbf{ergodic} if for every $A \subseteq X$ such that $\lambda(T(A) \Delta A) = 0$ for every $T$ in $\mathbb{G}$, we have that $A$ is either null or conull.	
	\end{defi}	
	
One of the easy consequences of ergodicity is the following, which will be used in \Cref{sec: complete invariants}.

	\begin{rem}
	{\label{rem: invariant f is constant}}
Any real-valued function $f$ which satisfies $f = f \circ T\inv$ for any $T$ in an ergodic subgroup $\mathbb{G}$ of $\Aut(X,\lambda)$ is essentially constant. Indeed for every $q \in \Q$ consider the set $\left\{ x \in X \mid f(x) < q \right\}$. It is $\mathbb{G}$-invariant (under the action by precomposition by the inverse) so it is null or conull, thus $f$ is constant up to a null set (see \textit{e.g.} \cite[Prop.~1.0.9]{Aaronson1997}).
	\end{rem}
	
Ergodicity of a group $\mathbb{G}$ immediately yields ergodicity of any group containing $\mathbb{G}$, but it does not go down to subgroups in general. We do however have the following well known proposition (see \textit{e.g.}~\cite[Prop.~1.6.7]{Aaronson1997}), about the behaviour of ergodicity with regards to weak-density. The proof is immediate from \Cref{lem: continuous action of Aut lambda on Malg mu}.

	\begin{prop}
	{\label{prop:weaker invariance for ergodic subgroups}}
Let $\mathbb{G} \leqslant \Aut(X,\lambda)$ be an ergodic subgroup, and let $\Gamma$ be a $\tau_w$-dense subgroup of $\mathbb{G}$. Then $\Gamma$ is ergodic.
	\end{prop}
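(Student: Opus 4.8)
The plan is to deduce ergodicity of $\Gamma$ directly from the weak-density hypothesis together with \Cref{lem: continuous action of Aut lambda on Malg mu}. Fix a probability measure $\mu$ in the measure class $[\lambda]$. Let $A \subseteq X$ be a Borel subset which is $\Gamma$-invariant in the sense that $\lambda(S(A) \Delta A) = 0$ for every $S \in \Gamma$; equivalently $\mu(S(A)\Delta A) = 0$ for every $S \in \Gamma$. The goal is to upgrade this to invariance under all of $\mathbb{G}$, at which point ergodicity of $\mathbb{G}$ forces $A$ to be null or conull, giving ergodicity of $\Gamma$.

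The key step is the following: by \Cref{lem: continuous action of Aut lambda on Malg mu}, the map $T \in \Aut(X,\lambda) \mapsto \mu(T(A) \Delta A)$ is $\tau_w$-continuous. It vanishes on $\Gamma$, which is $\tau_w$-dense in $\mathbb{G}$, hence it vanishes on all of $\mathbb{G}$: for every $T \in \mathbb{G}$ we have $\mu(T(A) \Delta A) = 0$, and therefore $\lambda(T(A)\Delta A) = 0$ since $\mu$ and $\lambda$ are equivalent. Thus $A$ is $\mathbb{G}$-invariant, and ergodicity of $\mathbb{G}$ yields that $A$ is null or conull. Since this holds for an arbitrary $\Gamma$-invariant $A$, the subgroup $\Gamma$ is ergodic.

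I do not anticipate any real obstacle here: the entire content is packaged in \Cref{lem: continuous action of Aut lambda on Malg mu}, and the only subtlety worth spelling out is the translation between ``$\lambda(S(A)\Delta A)=0$'' and ``$\mu(S(A)\Delta A)=0$'', which is immediate from $\mu \sim \lambda$. The proof is genuinely two lines once that lemma is in hand, which matches the excerpt's remark that ``the proof is immediate from \Cref{lem: continuous action of Aut lambda on Malg mu}.''
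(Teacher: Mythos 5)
Your proof is correct and follows exactly the route the paper intends: the paper gives no further detail beyond stating that the result "is immediate from \Cref{lem: continuous action of Aut lambda on Malg mu}", and your argument — continuity of $T \mapsto \mu(T(A)\Delta A)$, vanishing on the dense subgroup $\Gamma$, hence on all of $\mathbb{G}$ — is precisely that intended deduction. The remark about passing between $\lambda$-null and $\mu$-null symmetric differences is a harmless and correct clarification.
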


\begin{ex}
	{\label{ex: G_f ergodic iff G ergodic}}
	For any ergodic full group $\mathbb{G} \leqslant \Aut(X,\lambda)$, $\mathbb{G}_f$ is $\tau_w$-dense in $\mathbb{G}$ (and in fact dense for $\tau_u$, see \Cref{rem: Autf uniformly dense in Aut}), so $\mathbb{G}$ is ergodic if and only if $\mathbb{G}_f$ is ergodic.
\end{ex}
	
For the uniform topology on full groups, we have the following. Recall that for any probability measure $\mu \in [\lambda]$,  $d_\mu(S,T) = \mu(\left\{ x \in X \mid S(x) \neq T(x) \right\})$. The result is classical even in the broader setting of non-singular full groups, and attributed to Hamachi and Osikawa \cite[Lem.~6]{HamachiOsikawa1981}.

	\begin{prop}
	{\label{closedforunif}}
Any full group $\mathbb{G} \leqslant \Aut(X,\lambda)$ is complete when equipped with the metric $\widetilde{d}: S,T \mapsto d_\mu(S,T) + d_\mu(S\inv ,T\inv )$, for any probability measure $\mu$ in $[\lambda]$. In particular, $\mathbb{G}$ is closed in $(\Aut(X,\lambda),\tau_u)$.
	\end{prop}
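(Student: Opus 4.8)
The plan is to show that any $\widetilde{d}$-Cauchy sequence $(T_n)$ in a full group $\mathbb{G}$ converges to an element of $\mathbb{G}$, using that $(\Aut(X,\lambda),\widetilde{d})$ is complete (equivalently, that $\Aut(X,\lambda)$ is bi-uniformly complete, which follows from \Cref{prop: Aut is Polish} together with the observation that $\widetilde{d}$ is a complete metric on the isometry group of $(\MAlgf(X,\lambda),d_{X,\lambda})$ fixing $\emptyset$ — or more elementarily, one checks directly that the limit is measure-preserving). So the only real content is that the limit, which a priori lives in $\Aut(X,\lambda)$, actually lies in $\mathbb{G}$ when each $T_n$ does; i.e. that $\mathbb{G}$ is $\widetilde{d}$-closed in $\Aut(X,\lambda)$, and hence closed in $(\Aut(X,\lambda),\tau_u)$ since $\tau_u$ is induced by $\widetilde{d}$ (or even just by $d_\mu$, as $d_\mu$ and $\widetilde{d}$ induce the same topology — convergence in $d_\mu$ of $T_n \to T$ forces $d_\mu(T_n\inv,T\inv) \to 0$ by the measure-preserving property).

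First I would reduce to a fast-convergence subsequence: passing to a subsequence we may assume $\sum_n \widetilde{d}(T_n,T_{n+1}) < \infty$, in particular $\sum_n d_\mu(T_n,T_{n+1}) < \infty$, so that by Borel–Cantelli the set of $x$ for which $T_n(x)$ is eventually constant is $\mu$-conull; call the eventual value $T(x)$. The same applied to the inverses gives that $S(x) := \lim T_n\inv(x)$ is defined $\mu$-a.e., and $S = T\inv$ a.e. Then $T \in \Aut(X,\lambda)$: it is measure-preserving because for a Borel set $A$ of finite measure, $T_n\inv(A) \to T\inv(A)$ in $\MAlgf$ (the symmetric differences are controlled by the set where $T_n\inv$ and $T\inv$ differ, which has $\mu$-measure $\to 0$, hence $\lambda(\cdot \cap A) \to 0$ by \Cref{lem: convergence in proba equivalent to sigma finite on finite measure}), so $\lambda(T\inv(A)) = \lim \lambda(T_n\inv(A)) = \lambda(A)$, and similarly for $T$. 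And $T_n \to T$ for $\widetilde{d}$ along the subsequence, hence the whole sequence converges since it was Cauchy.

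It remains to see $T \in \mathbb{G}$. The key is to build a countable partition $(A_n)$ of $X$ with $T_{\restriction A_n} = T_{n\restriction A_n}$: indeed then $T$ is obtained by cutting and pasting the $T_n \in \mathbb{G}$, so $T \in \mathbb{G}$ by fullness. With the fast-convergence subsequence in hand, set $B_n := \{x : T_k(x) = T_n(x) \text{ for all } k \ge n\}$; these are increasing with $\mu$-conull union (by the a.e. eventual-constancy above), and on $B_n$ we have $T = T_n$. Taking $A_0 := B_0$ and $A_n := B_n \setminus B_{n-1}$ gives a countable partition of a conull set with $T_{\restriction A_n} = T_{n\restriction A_n}$; throwing the remaining null set into $A_0$ (where it does not matter) yields a genuine partition of $X$. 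Hence $T \in \mathbb{G}$. The main obstacle — and it is a mild one — is simply being careful that the various limits are taken along the subsequence where convergence is fast enough to apply Borel–Cantelli, and that passing back to the full Cauchy sequence is legitimate; there is no deep difficulty, this is a standard "fullness = closedness under countable cut-and-paste implies uniform completeness" argument.
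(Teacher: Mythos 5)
Your argument is correct, and it is the standard one; the paper itself gives no proof of this proposition, simply attributing it to Hamachi--Osikawa, so there is nothing to compare it against beyond noting that your Borel--Cantelli plus cut-and-paste argument is exactly what that reference does. Two small points worth being explicit about. First, the assertion that $S=\lim T_n\inv$ equals $T\inv$ a.e.\ is the one place where the bi-uniform control is genuinely used and deserves a line: one intersects the conull set where $(T_n\inv)$ is eventually constant with $\bigcap_n T_n(E)$, where $E$ is the conull set of eventual constancy of $(T_n)$ (each $T_n(E)$ is conull since $T_n$ preserves $\lambda$), and on that set $T(S(x))=T_m(T_m\inv(x))=x$ for $m$ large. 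A $d_\mu$-Cauchy sequence alone would not suffice here in the infinite-measure setting, since the $T_n$ do not preserve $\mu$; this is precisely why the statement is phrased with $\widetilde d$. Second, your claim that $d_\mu$ and $\widetilde d$ induce the same topology is right, but the justification should go through the containment $\{x: T_n\inv(x)\neq T\inv(x)\}\subseteq T(\{y: T_n(y)\neq T(y)\})$ together with the fact that $T_\ast\mu\in[\lambda]$ and that $\tau_u$ does not depend on the choice of probability measure in $[\lambda]$ (Remark~\ref{rem: equiv proba measures}). With those two routine verifications spelled out, the proof is complete; the cut-and-paste step via the increasing sets $B_n$ is exactly as it should be.
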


We now prove that there exists a \textit{partial measure-preserving isomorphism} which sends any fixed Borel subset of positive measure to another fixed Borel subset of the same measure. The probability version of this result has seen many applications, see for instance \cite{CarderiLM2016}, \cite{Fathi1978} or \cite{Kechris2010}. We first give the definitions of partial isomorphisms and pseudo-full groups, in order to give a very general version of the main proposition of this section.

	\begin{defi}
Let $\mathbb{G}$ be a subgroup of $\Aut(X,\lambda)$.
\begin{enumerate}[(1)]
\item Let $A$ and $B$ be two Borel subsets of $X$. We say that $\phi:A \rightarrow B$ is a \textbf{partial isomorphism} of $\mathbb{G}$ if there exists a partition $(A_n)_{n \in \mathbb{N}}$ of A, a partition $(B_n)_{n \in \mathbb{N}}$ of $B$, and a sequence $(T_n)_{n \in \mathbb{N}}$ of elements of $\mathbb{G}$ such that $T_n(A_n) = B_n$ and $T{_n}_{\restriction A_n} = \phi_{\restriction A_n}$, for every $n$ in $\mathbb{N}$. The domain of $\phi$ is $\mathrm{dom}(\phi) = A$ and its range is $\mathrm{rng}(\phi)= B$, up to null sets. In particular $\lambda(A) = \lambda(B)$.
\item The set of all partial isomorphisms of $\mathbb{G}$ is called the \textbf{pseudo-full group} of $\mathbb{G}$, and is denoted by $[[\mathbb{G}]]$.
\item The \textbf{uniform topology} defined on $\mathbb{G}$ generalizes to $[[\mathbb{G}]]$: it is the topology generated by the following metric:
\begin{align*}
d(\phi_1,\phi_2)  \coloneqq & \mu(\left\{  x \in \mathrm{dom}(\phi_1) \cap \mathrm{dom}(\phi_2) \mid \phi_1(x) \neq \phi_2(x)  \right\}) \\
 + & \mu(\mathrm{dom}(\phi_1) \Delta \mathrm{dom}(\phi_2)),
\end{align*}
where $\mu \in [\lambda]$ is a probability measure. We refer to \cite{Danilenko1995} for more on this topology.
\end{enumerate}
	\end{defi}

	\begin{rem}
	{\label{rem:elements of pseudo full groups are more than just restrictions}}
It is tempting to see the partial isomorphisms in $[[\mathbb{G}]]$ as restrictions of elements of $\Aut(X,\lambda)$ (and it is indeed the case in the probability context, see \cite[Cor.~1.15]{LMthesis}), but it is not the case here. For example, consider $X = [0, + \infty[$ with the Lebesgue measure and $T$ defined by $T : x \mapsto x+1$, which can easily be realised as a partial isomorphism. It has $X$ as its domain, but its range is not conull. 
	\end{rem}

The following is a crucial tool for full groups, and is well-known, but we have found no complete reference for the $\sigma$-finite version of the statement. As such, we provide a complete and detailed proof. Do note that there are no conditions on the measure of the complements of $A$ and $B$ in $X$, as highlighted by \Cref{rem:elements of pseudo full groups are more than just restrictions}.

	\begin{prop}{\label{prop:pseudo full group exchange subsets}}
Let $\mathbb{G}$ be an ergodic subgroup of $\Aut(X,\lambda)$. There exists a map 
\[
\begin{array}{rcccl}
\Phi & : &  \left\{ (A,B) \in (\MAlg(X,\lambda))^2 \mid \lambda(A) = \lambda(B) \right\}  & \longrightarrow & [[\mathbb{G}]] \\
& & (A,B) &  \longmapsto & \phi_{A,B}
\end{array}
\]
satisfying the following conditions:
\begin{itemize}
\item[•] For any $\phi_{A,B}$, we have $\mathrm{dom}(\phi_{A,B})= A $ and $\mathrm{rng}(\phi_{A,B})= B$.
\item[•] The restriction of $\Phi$ to $\left\{ (A,B) \in (\MAlgf(X,\lambda))^2 \mid \lambda(A) = \lambda(B)  \right\}$ is continuous for the uniform topology.
\end{itemize} 
	\end{prop}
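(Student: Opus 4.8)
The plan is to build $\phi_{A,B}$ by an exhaustion argument, using ergodicity to produce, at each step, a partial isomorphism moving a non-null piece of the remaining part of $A$ into the remaining part of $B$, and then to assemble these pieces with the cutting-and-pasting stability of $[[\mathbb{G}]]$. Concretely, first I would establish the following elementary claim: if $\mathbb{G}$ is ergodic and $A', B'$ are non-null Borel sets, then there exist non-null $A'' \subseteq A'$, $B'' \subseteq B'$ and $T \in \mathbb{G}$ with $T(A'') = B''$. This follows because ergodicity of $\mathbb{G}$ forbids $B'$ from being disjoint (mod null) from $\bigcup_{T \in \mathbb{G}} T(A')$ — otherwise the $\mathbb{G}$-invariant saturation of $A'$ would be a non-trivial invariant set — so some $T(A') \cap B'$ is non-null, and one takes $A'' = A' \cap T^{-1}(B')$, $B'' = T(A'')$. (Technically one must be careful that $\bigcup_{T \in \mathbb{G}} T(A')$ is measurable; since $\mathbb{G}$ need not be countable, I would instead argue directly: the set $C = \{x : \lambda(B' \cap \text{(orbit-type saturation)}) \dots\}$ — or more simply, pick a maximal element via \Cref{prop: maximal element in MAlgf} of the family of sets of the form $T(A') \cap B'$, and show maximality plus ergodicity forces it to be non-null if $B'$ is.)

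Next I would iterate this using the maximality machinery of \Cref{prop: maximal element in MAlgf}. Consider the family $\mathcal{F}$ of partial isomorphisms $\psi \in [[\mathbb{G}]]$ with $\mathrm{dom}(\psi) \subseteq A$ and $\mathrm{rng}(\psi) \subseteq B$; this family is upward directed under extension-by-disjoint-pieces (given $\psi_1, \psi_2$ whose domains and ranges overlap, one restricts to the parts outside the overlap and glues), and the collection of domains forms an upward directed family in $\MAlg(X,\lambda)$ bounded by $\lambda(A)$. When $\lambda(A) < \infty$ this lands in $\MAlgf$ and a supremum domain $A_\infty$ exists by part (1) of \Cref{prop: maximal element in MAlgf}; when $\lambda(A) = \infty$ one works $\sigma$-finite-piece by piece, exhausting $A$ by an increasing sequence of finite-measure sets. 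Let $\phi_{A,B}$ be the resulting partial isomorphism with domain $A_\infty$ and range $B_\infty \subseteq B$. The claim from the previous paragraph, applied to $A' = A \setminus A_\infty$ and $B' = B \setminus B_\infty$, shows that if both were non-null we could extend $\phi_{A,B}$, contradicting maximality; hence at least one is null. But $\lambda(A_\infty) = \lambda(B_\infty)$ and $\lambda(A) = \lambda(B)$, so $\lambda(A \setminus A_\infty) = \lambda(B \setminus B_\infty)$, forcing both to be null: thus $\mathrm{dom}(\phi_{A,B}) = A$ and $\mathrm{rng}(\phi_{A,B}) = B$ as required.

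For the continuity statement on $\{(A,B) \in \MAlgf^2 : \lambda(A) = \lambda(B)\}$, the issue is that the construction above involves arbitrary choices, so $\Phi$ is not canonically continuous — one must set it up to be. I would fix once and for all, using the above, a single $\phi_{A_0, B_0}$ for a reference pair, or rather organize the choices so that $\Phi$ is defined coherently; the cleanest route is to prove a \emph{perturbation lemma}: given $\phi_{A,B}$ already chosen and $(A', B')$ uniformly close to $(A,B)$, one can choose $\phi_{A',B'}$ agreeing with $\phi_{A,B}$ on $A \cap A' \cap \phi_{A,B}^{-1}(B \cap B')$ and handling the small symmetric-difference pieces $A \Delta A'$, $B \Delta B'$ separately. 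Since $d(\phi_{A,B}, \phi_{A',B'})$ in the pseudo-full-group metric is then bounded by $\mu(A \Delta A') + \mu(B \Delta B')$ (plus the measure of the pieces where $\phi_{A,B}^{-1}$ lands outside $B'$, which is also controlled by $\mu(B \Delta B')$ since $\phi_{A,B}$ is measure-preserving), this gives the required continuity, provided the global definition of $\Phi$ is made compatible with these local perturbations — e.g.\ by defining $\Phi$ via a fixed countable dense set of pairs and extending by continuity, then checking the extension still has the right domain and range. \textbf{The main obstacle} I anticipate is precisely this last point: reconciling the non-canonical, maximality-based existence construction with a genuinely \emph{continuous} choice of $\Phi$ on the finite-measure locus, which requires either a careful direct perturbation argument throughout or a density-plus-completeness argument in $([[\mathbb{G}]], \tau_u)$ (completeness of which should follow from \Cref{closedforunif}-style reasoning), together with checking that the continuous extension does not lose the domain/range constraints in the limit.
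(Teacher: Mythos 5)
Your existence argument is a legitimate alternative to the paper's, but it is looser than it needs to be in two places. First, the family of partial isomorphisms from subsets of $A$ into subsets of $B$ is \emph{not} upward directed under extension: restricting $\psi_2$ to the part of its domain outside $\mathrm{dom}(\psi_1)$ and gluing produces an extension of $\psi_1$ only (and even then one must further restrict so the new range avoids $\mathrm{rng}(\psi_1)$), not a common upper bound of $\psi_1$ and $\psi_2$; likewise the collection of domains is not obviously stable under the operations \Cref{prop: maximal element in MAlgf} requires. The standard repair is the greedy exhaustion with the half-the-supremum trick (at stage $n$ pick $\psi_n$ moving a piece of the remaining part of $A$ into the remaining part of $B$ with domain of measure at least half the supremum achievable; summability of $\lambda(\mathrm{dom}\,\psi_n)$ then forces the residual supremum to vanish, and your ergodicity claim gives the contradiction if both residuals are non-null). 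Second, your measurability worry about $\bigcup_{T\in\mathbb{G}}T(A')$ has a cleaner resolution than the ones you sketch: pass once and for all to a countable $\tau_w$-dense subgroup $\Gamma=(\gamma_n)$ of $\mathbb{G}$, which is still ergodic by \Cref{prop:weaker invariance for ergodic subgroups}; the saturation under $\Gamma$ is a countable union and the argument goes through. This is exactly what the paper does.

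The genuine gap is the continuity statement, which you correctly flag as the main obstacle but do not close. A maximality-based construction makes arbitrary choices at each stage, and neither the perturbation lemma nor the ``define on a countable dense set of pairs and extend by continuity'' route is carried out; the latter in particular requires a uniform modulus of continuity to extend at all, and then a separate verification that the limiting partial isomorphism still has domain exactly $A'$ and range exactly $B'$ — none of which is supplied. The paper avoids the problem entirely by making the construction canonical from the start: having fixed the enumeration $(\gamma_n)$ of a countable dense subgroup, it defines $\widetilde A_n$ greedily as the part of $A$ sent into $B$ by $\gamma_n$ after removing what was already matched, and sets $\phi_{A,B}=\gamma_n$ on $\widetilde A_n$. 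Each $\widetilde A_n$ is obtained from $(A,B)$ by finitely many Boolean operations and applications of the fixed $\gamma_n$'s, all of which are continuous on $\MAlgf(X,\lambda)$, so continuity of $(A,B)\mapsto\phi_{A,B}$ on the finite-measure locus is automatic. To complete your proof you would either have to import this deterministic construction (at which point the maximality argument becomes redundant) or actually prove the perturbation lemma and the extension step you describe; as written, the second bullet of the statement is not established.
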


	\begin{proof}
The proof is roughly the same as \cite[Lem.~2.4]{Danilenko1995}, which is stated in the case of a probability measure and a countable acting group, but it easily adapts to our case by considering a countable dense subgroup of $\mathbb{G}$.

Since $(\mathbb{G},\tau_w)$ is a subgroup of the separable and metrizable (thanks to Proposition \ref{prop: Aut is Polish}) group $(\Aut(X,\lambda),\tau_w)$, it is itself separable. As such, we can consider $(T_n)$ a countable dense subset of $\mathbb{G}$ and $\Gamma = (\gamma_n)$ the countable subgroup generated by $(T_n)$. 

Let $A$ and $B$ first be two Borel subsets of $X$, such that $0 < \lambda(A) = \lambda(B) < + \infty$. We recursively define a countable family of pairwise disjoint subsets $A_n$ of $A$ as follows:
\begin{align*}
\begin{cases}
\displaystyle{\widetilde{A}_0 = (\gamma_{0}\inv B) \cap A }\\
\displaystyle{\widetilde{A}_{n+1} = \left(\gamma_{n+1}\inv \left( B \setminus \bigsqcup_{m \leqslant n} \gamma_m \widetilde{A}_m \right)\right) \bigcap \left( A \setminus \bigsqcup_{m \leqslant n}  \widetilde{A}_m \right). }
\end{cases}
\end{align*}
The set $\widetilde{A}_n$ represents the elements of $A$ sent by $\gamma_n$ to $B$, after removing the elements previously sent.
Now set $\widetilde{A} = \bigsqcup_{n \in \N} \widetilde{A}_n$ and let $\phi: \widetilde{A} \rightarrow \bigsqcup_{n \in \N} \gamma_n \widetilde{A}_n$ be the Borel application that sends $x \in \widetilde{A}_n$ to $\gamma_n (x) \in \gamma_n \widetilde{A}_n$. 
By definition, $\phi$ is a partial isomorphism between $\widetilde{A}$ and $\bigsqcup_{n \in \N} \gamma_n \widetilde{A}_n$. In particular, $\lambda(\mathrm{dom}(\phi)) = \lambda(\mathrm{rng}(\phi))$.

Let us now suppose that either $\lambda(A \setminus \mathrm{dom}(\phi)) > 0$ or $\lambda(B \setminus \mathrm{rng}(\phi)) > 0$. As $A$ and $B$ have finite measure, we have $\lambda(A \setminus \mathrm{dom}(\phi)) = \lambda(B \setminus \mathrm{rng}(\phi)) > 0$.
Define ${\overline{B}= \bigcup_{n \in \mathbb{N}}\gamma_n (A \setminus \mathrm{dom}(\phi))}$, and notice that $\displaystyle{\overline{B}}$ is non null and invariant under the action of $\Gamma$. Ergodicity and the fact that $\Gamma$ is dense in $\mathbb{G}$ ensure that $\displaystyle{\overline{B}}$ is conull, by Proposition \ref{prop:weaker invariance for ergodic subgroups}. This coupled with the fact that $\lambda(B \setminus \mathrm{rng}(\phi)) > 0$ implies that there exists an integer $n$ such that 
\[
\lambda \left(   (B \setminus \mathrm{rng}(\phi)) \, \bigcap \,  \gamma_n (A \setminus \mathrm{dom}(\phi))    \right) > 0.
\]
We define $n_0$ as the smallest such integer. 
As $\gamma_{n_0}$ is measure-preserving, we then have
\[
\lambda \left(   \gamma_{n_0}\inv  (B \setminus \mathrm{rng}(\phi)) \, \bigcap \,  (A \setminus \mathrm{dom}(\phi))    \right) > 0.
\]
Notice now that $(B \setminus \mathrm{rng}(\phi)) \subseteq  ( B \setminus \bigsqcup_{m < {n_0}} \gamma_m \widetilde{A}_m )  $ and $(A \setminus \mathrm{dom}(\phi)) \subseteq  ( A \setminus \bigsqcup_{m < {n_0}}  \widetilde{A}_m )  $, which means that $  \left(   \gamma_{n_0}\inv  (B \setminus \mathrm{rng}(\phi)) \, \bigcap \,  (A \setminus \mathrm{dom}(\phi))    \right)  $ is contained in $\widetilde{A}_{n_0}$ by construction, and thus it is contained in $\mathrm{dom} (\phi)$. This is the contradiction we sought, as this set has positive measure and is contained both in $\mathrm{dom} (\phi)$ and in $A \setminus \mathrm{dom} (\phi)$.\\

Now if $\lambda(A) = \lambda(B) = + \infty$, observe that $(A,\lambda_{\restriction A})$ and $(B,\lambda_{\restriction B})$ are both standard $\sigma$-finite spaces. It is then possible to write $A = \bigsqcup A'_i$ and $B = \bigsqcup B'_i$, with $\lambda(A'_i) = \lambda(B'_i) < + \infty$ for every $i$ in $\mathbb{N}$. The previous argument gives us a sequence $(\phi_i)$ of partial isomorphisms with domains $(A'_i)$ and ranges $(B'_i)$. The partial isomorphism defined on $A$ by $\phi_{\restriction A'_i} = \phi_i$ is in $[[\mathbb{G}]]$ by construction, and is suitable so we take it to be $\phi_{A,B}$.\\

For the last part of the statement, we notice that by construction, if $(A_n)$ and $(B_n)$ are as in the statement, then
	\[	
	\lambda\left( \left\{ x \in (A_n \cap A) \cup (B_n \cap B) \mid \phi_{A_n,B_n}(x) \neq \phi_{A,B}(x) \right\} \right) \longrightarrow 0.
	\]
which concludes the proof.
	\end{proof}

The following two corollaries are very useful and used extensively throughout the rest of this work. Recall that for any Borel subset $C \subseteq X$, $\mathbb{G}_C = \left\{ T \in \mathbb{G} \mid \supp T \subseteq C \right\}$. The proofs are straightforward if we just cut and paste the partial isomorphisms given by \Cref{prop:pseudo full group exchange subsets}.

	\begin{cor}
	{\label{cor:ergodic full group moves elements of same (co)-measure around}}
Let $\mathbb{G}$ be an ergodic full group and let $C \subseteq X$ be any Borel subset. Let $A$ and $B$ be two Borel subsets of $C$ such that $\lambda(A) = \lambda(B)$ and $\lambda(C \setminus A) = \lambda(C \setminus B)$. Then there exists an element $T$ of $\mathbb{G}_C$ such that $T(A) = B$ and $T(C\setminus A) = C \setminus B$.
	\end{cor}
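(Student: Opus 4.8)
\textbf{Proof proposal for \Cref{cor:ergodic full group moves elements of same (co)-measure around}.}

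The plan is to reduce to \Cref{prop:pseudo full group exchange subsets} and then glue together the partial isomorphisms it provides using the fact that $\mathbb{G}$ is a full group. First I would apply \Cref{prop:pseudo full group exchange subsets} to the ergodic subgroup $\mathbb{G}$ twice: once to the pair $(A,B)$, which have equal measure, yielding a partial isomorphism $\phi_1 = \phi_{A,B} \in [[\mathbb{G}]]$ with $\mathrm{dom}(\phi_1) = A$ and $\mathrm{rng}(\phi_1) = B$; and once to the pair $(C \setminus A, C \setminus B)$, which also have equal measure by hypothesis, yielding $\phi_2 = \phi_{C \setminus A, C \setminus B} \in [[\mathbb{G}]]$ with domain $C \setminus A$ and range $C \setminus B$. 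Since $A$ and $C \setminus A$ partition $C$, and likewise $B$ and $C \setminus B$ partition $C$, the map $\phi$ defined by $\phi_{\restriction A} = \phi_1$ and $\phi_{\restriction C \setminus A} = \phi_2$ is a well-defined bijection of $C$.

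Next I would verify that $\phi$, extended by the identity on $X \setminus C$, is an element of $\mathbb{G}_C$. Each $\phi_i$ is a partial isomorphism of $\mathbb{G}$, so by definition there is a countable partition $(A^i_n)_n$ of $\mathrm{dom}(\phi_i)$ and elements $(T^i_n)_n$ of $\mathbb{G}$ with $\phi_i$ agreeing with $T^i_n$ on $A^i_n$. Together with the identity on $X \setminus C$, these partitions assemble into a single countable partition of $X$, and on each piece the candidate bijection $T$ agrees with an element of $\mathbb{G}$ (either some $T^i_n$ or $\id_X \in \mathbb{G}$). Hence $T$ is obtained by cutting and pasting elements of $\mathbb{G}$, and since $\mathbb{G}$ is a full group, $T \in \mathbb{G}$. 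The inclusions $\supp T \subseteq C$, $T(A) = B$ and $T(C \setminus A) = C \setminus B$ are then immediate from the construction, since $\phi_1$ and $\phi_2$ have the prescribed domains and ranges.

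I do not expect any serious obstacle here: the only point requiring a little care is checking that $T$ is a genuine measure-preserving \emph{bijection} of $X$ (not merely a partial isomorphism), which is exactly what the complementary-measure hypothesis $\lambda(C \setminus A) = \lambda(C \setminus B)$ buys us — it guarantees that $\phi_1$ and $\phi_2$ have ranges that tile $C$ with no leftover, so that $\phi$ is surjective onto $C$. Once $T$ is seen to be a bijection preserving $\lambda$ (which follows piecewise, each $T^i_n$ being measure-preserving and the pieces being sent to pieces of equal measure), membership in $\mathbb{G}_C$ is a formal consequence of fullness.
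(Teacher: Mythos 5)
Your proposal is correct and follows exactly the route the paper indicates: apply \Cref{prop:pseudo full group exchange subsets} to the pairs $(A,B)$ and $(C\setminus A, C\setminus B)$, glue the resulting partial isomorphisms together with the identity on $X\setminus C$, and invoke fullness (cutting and pasting) to conclude the glued bijection lies in $\mathbb{G}_C$. The paper gives no more detail than this, so your write-up is if anything more explicit than the original.
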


\begin{cor}
	{\label{cor:exchanging involutions in ergodic full group}}
	Let $\mathbb{G}$ be an ergodic full group and let $A$ and $B$ be two Borel subsets of $X$ such that $\lambda(A \setminus B) = \lambda(B \setminus A)$. Then there exists an involution $U$ in $\mathbb{G}_{A\Delta B} $ such that $U(A) = B$. Moreover, $(A,B) \in (\MAlgf(X,\lambda))^2 \mapsto U$ is $\tau_u$-continuous.
\end{cor}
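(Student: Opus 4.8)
The plan is to deduce \Cref{cor:exchanging involutions in ergodic full group} directly from \Cref{cor:ergodic full group moves elements of same (co)-measure around} by taking $C = A \Delta B$ and exchanging the two ``halves'' $A \setminus B$ and $B \setminus A$ inside $C$. First I would observe that $C \setminus (A\setminus B) = B \setminus A$ and $C \setminus (B \setminus A) = A \setminus B$, so the hypothesis $\lambda(A\setminus B) = \lambda(B\setminus A)$ is exactly the pair of conditions $\lambda(A\setminus B) = \lambda(B\setminus A)$ and $\lambda(C\setminus(A\setminus B)) = \lambda(C\setminus(B\setminus A))$ needed to apply \Cref{cor:ergodic full group moves elements of same (co)-measure around} with the subsets $A\setminus B$ and $B\setminus A$ of $C$. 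That corollary produces $T \in \mathbb{G}_{A\Delta B}$ with $T(A\setminus B) = B\setminus A$ and $T(B\setminus A) = A\setminus B$, so $T$ swaps these two sets.

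Next I would upgrade $T$ to an involution. The point is that since $T$ exchanges $A\setminus B$ and $B\setminus A$, the square $T^2$ is supported inside $(A\setminus B) \sqcup (B\setminus A) = A\Delta B$ and in fact $T^2$ maps each of $A\setminus B$ and $B\setminus A$ to itself. Using \Cref{prop:pseudo full group exchange subsets} (or more simply building the involution by hand from $T$), fix a Borel isomorphism realized in $[[\mathbb{G}]]$ sending $A\setminus B$ to $B\setminus A$ coming from $T$, say $\phi = T_{\restriction A\setminus B}$, and define $U$ on $A\Delta B$ by $U_{\restriction A\setminus B} = \phi$ and $U_{\restriction B\setminus A} = \phi\inv$, with $U = \id$ off $A\Delta B$. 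By construction $U$ is an involution, lies in $\mathbb{G}_{A\Delta B}$ (it is obtained by cutting and pasting $T$ and $T\inv$ along the partition $\{A\setminus B,\, B\setminus A,\, X\setminus(A\Delta B)\}$ of $X$, and $\mathbb{G}$ is a full group), and satisfies $U(A\setminus B) = B\setminus A$, hence $U(A) = (A\cap B) \sqcup U(A\setminus B) = (A\cap B)\sqcup(B\setminus A) = B$ since $U$ fixes $A\cap B$ pointwise.

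For the continuity statement, I would track the construction through \Cref{prop:pseudo full group exchange subsets} and \Cref{cor:ergodic full group moves elements of same (co)-measure around}: the map $(A,B) \mapsto T$ provided there is $\tau_u$-continuous on pairs of finite measure (this is the content of the continuity clause of \Cref{prop:pseudo full group exchange subsets}, preserved by the finitely-many cut-and-paste operations defining \Cref{cor:ergodic full group moves elements of same (co)-measure around}), and the operations $A \mapsto A\setminus B$, $(A,B)\mapsto A\Delta B$ are $\tau_u$-continuous on $\MAlgf(X,\lambda)$ since they are $d_{X,\lambda}$-Lipschitz. Passing from $T$ to $U$ by restricting to $A\setminus B$ and symmetrizing only involves further cut-and-paste along sets depending continuously on $(A,B)$, and $d_\mu(U,U') \leqslant d_\mu(T,T') + 2\mu\big((A\setminus B)\Delta(A'\setminus B')\big) + \dots$ type estimates show $(A,B)\mapsto U$ remains continuous; I would just assemble these Lipschitz bounds rather than write them out in full.

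The main obstacle I anticipate is purely bookkeeping: making sure the involution $U$ really is an honest element of the \emph{full group} $\mathbb{G}$ and not merely of the pseudo-full group $[[\mathbb{G}]]$. This is where it matters that $T$ itself (not just a partial isomorphism) is supplied by \Cref{cor:ergodic full group moves elements of same (co)-measure around}, so that $U$ can be exhibited as a genuine cut-and-paste of $T$, $T\inv$ and $\id$ along a countable (here finite) Borel partition of all of $X$; the cut-and-paste stability of $\mathbb{G}$ then closes the argument. Everything else is routine.
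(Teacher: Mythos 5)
Your proposal is correct and matches the paper's (very terse) proof: the paper simply cuts and pastes the partial isomorphism $\phi_{A\setminus B,\,B\setminus A}$ from \Cref{prop:pseudo full group exchange subsets} together with its inverse and the identity along the partition $\{A\setminus B,\ B\setminus A,\ X\setminus(A\Delta B)\}$, with continuity inherited from the continuity clause of that proposition. Routing through \Cref{cor:ergodic full group moves elements of same (co)-measure around} first and then symmetrizing $T_{\restriction A\setminus B}$ is only a cosmetic detour, and your attention to why $U$ lands in $\mathbb{G}$ rather than merely $[[\mathbb{G}]]$ is exactly the right point to check.
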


\begin{rem}
	There is a clear distinction here between the probability measure-preserving case and our context. Indeed, if $(X,\mu)$ is a standard probability space and $\mu(A) = \mu(B)$, then we automatically have $\mu(A \setminus B) = \mu(B \setminus A)$. Thus \Cref{cor:exchanging involutions in ergodic full group} can be seen as a strenghtening of \Cref{cor:ergodic full group moves elements of same (co)-measure around}, as we directly get that $A$ can be sent to $B$ by an involution of $\mathbb{G}$.
	
	If $(X,\lambda)$ is a standard $\sigma$-finite space however, one has to be more cautious when manipulating Borel subsets, even those of equal measure with complements of equal measure. For instance set $(X,\lambda)=(\R,\Leb)$ and let us consider:
	\begin{center}
	\begin{tabular}{ccc}
	$\displaystyle{A \coloneqq [0,+\infty[}$ & and & $\displaystyle{B \coloneqq \bigsqcup_{n \in \N} [2n,2n+1[.}$
	\end{tabular}
	\end{center}
	We have $\lambda(A) = \lambda(B) = \lambda(X\setminus A) = \lambda(X \setminus B)= \lambda(A \setminus B) = +\infty$ but $\lambda(B \setminus A) = 0$. If $\mathbb{G}$ is an ergodic full group, by \Cref{cor:ergodic full group moves elements of same (co)-measure around} there exists $T \in \mathbb{G}$ such that $T(A) = B$. However, if there existed a measure-preserving bijection $U$ such that $U(A) = B$ and $U(B) = A$ (in particular the involutions given by \Cref{cor:exchanging involutions in ergodic full group} satisfy this property), we would have $U(A\setminus B) = U(A) \setminus U(B) = B \setminus A$, which is not possible because $\lambda(A \setminus B) \neq \lambda(B \setminus A)$.
\end{rem}

The involutions given by \Cref{cor:exchanging involutions in ergodic full group} play a crucial role in understanding the structure of ergodic full groups (see \Cref{unic} and \Cref{sec: section7}), and since they are trivial on $A \cap B$, the following definition is very natural. 

\begin{defi}
	{\label{defi: exchanging involution}}
	Let $A$ and $B$ be two Borel subsets of $X$ satisfying $A \cap B = \emptyset$, $\lambda(A) = \lambda(B)$ and $\lambda(X \setminus A) = \lambda(X \setminus B)$. An involution $U \in \Aut(X,\lambda)$ such that $U(A) = B$ and $U_{\restriction X \setminus (A \sqcup B)} = \id_{X \setminus (A \sqcup B)}$ is called a \textbf{$(A,B)$-exchanging involution}, \textit{i.e.} an involution sending $A$ to $B$.
\end{defi}

We in fact have that all involutions in $\Aut(X,\lambda)$ are exchanging involutions. The proof is immediate from \Cref{lem: better separator}.

	\begin{prop}
	[{\cite[Cor.~382F]{FremlinVol3}}]
	{\label{prop: Fre382Fa}}
Let $U$ be an involution in $\Aut(X,\lambda)$. Then there exists two disjoint Borel subsets $A$ and $B$ verifying $\lambda(A) = \lambda(B)$ and $\lambda(X \setminus A) = \lambda(X \setminus B)$, and such that $U$ is an $(A,B)$-exchanging involution. 
	\end{prop}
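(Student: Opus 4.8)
The plan is to reduce the statement to \Cref{lem: better separator} applied to the bijection $U$ itself. Let $U$ be an involution in $\Aut(X,\lambda)$. Since $\supp U$ is a Borel subset and $U$ is a Borel bijection, \Cref{lem: better separator} (with $C = X$) provides a separator $A \subseteq \supp U$, that is, a Borel set $A$ with $\supp U = A \sqcup (U(A) \cup U\inv(A))$. Because $U$ is an involution, $U\inv = U$, so this reads $\supp U = A \sqcup U(A)$, with $A$ and $U(A)$ disjoint. Set $B \coloneqq U(A)$; then $A \cap B = \emptyset$ and $U$ exchanges $A$ and $B$ (as $U(B) = U^2(A) = A$).

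Next I would verify the measure conditions. Since $U$ is measure-preserving, $\lambda(B) = \lambda(U(A)) = \lambda(A)$. For the complements: $X \setminus A = (X \setminus \supp U) \sqcup B$ and $X \setminus B = (X \setminus \supp U) \sqcup A$, and these are disjoint unions because $A, B \subseteq \supp U$. Hence $\lambda(X \setminus A) = \lambda(X \setminus \supp U) + \lambda(B) = \lambda(X \setminus \supp U) + \lambda(A) = \lambda(X \setminus B)$. So $A$ and $B$ satisfy all the hypotheses of \Cref{defi: exchanging involution}.

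Finally I would check that $U$ is genuinely an $(A,B)$-exchanging involution in the sense of the definition, i.e.\ that $U$ is the identity off $A \sqcup B$. But $A \sqcup B = A \sqcup U(A) = \supp U$ by the separator identity, so $X \setminus (A \sqcup B) = X \setminus \supp U$, on which $U$ acts trivially by definition of the support. This gives $U_{\restriction X \setminus (A \sqcup B)} = \id_{X \setminus (A \sqcup B)}$ and completes the proof.

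There is essentially no obstacle here: the entire content is packaged in \Cref{lem: better separator}, and the only thing to be careful about is using that $U$ is an involution to collapse $U(A) \cup U\inv(A)$ to a single disjoint piece $B = U(A)$, together with the bookkeeping that $A \sqcup B$ is exactly $\supp U$ so that the "identity outside $A \sqcup B$" clause is automatic. The measure-preservation of $U$ takes care of $\lambda(A) = \lambda(B)$ and, via the complement decomposition above, of $\lambda(X \setminus A) = \lambda(X \setminus B)$.
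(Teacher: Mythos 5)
Your proof is correct and follows exactly the route the paper intends: the paper itself states that the proposition "is immediate from" \Cref{lem: better separator}, and your argument is precisely that reduction, using $U = U\inv$ to collapse the separator decomposition to $\supp U = A \sqcup U(A)$ and then checking the measure and support conditions. Nothing to add.
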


Putting together \Cref{prop: Fre382Fa} and the following theorem by Ryzhikov, we are able to reconstruct full groups from their exchanging-involutions.

	\begin{thm}
	[{\cite{Ryzhikov1985}, see also \cite[Cor.~5.2]{Millerthesis}}]
	{\label{3invo}}
Let $\mathbb{G}$ be a full group. For any Borel subset $C$ of $X$, any element of $\mathbb{G}_C$ can be expressed as a product of at most three involutions in $\mathbb{G}_C$.
	\end{thm}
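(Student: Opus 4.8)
## Proof plan for Theorem \ref{3invo} (Ryzhikov's three-involutions theorem)

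The plan is to reduce, by a cutting-and-pasting argument, to the case of a single bijection acting on all of $X$, and then to produce the three involutions by a careful analysis of the orbit structure. First I would fix a Borel subset $C$ and an element $T \in \mathbb{G}_C$; since $\mathbb{G}_C$ is itself a full group on $C$ (with the induced $\sigma$-finite measure), there is no loss in taking $C = X$, so we must write an arbitrary $T \in \mathbb{G}$ as a product of at most three involutions of $\mathbb{G}$. The key structural input is the orbit decomposition of $T$: every $T$-orbit is either finite (a cycle) or infinite (a $\Z$-line), and by \Cref{lem: better separator} applied to $T$ we can choose a separator $A$ with $\supp T = A \sqcup (T(A) \cup T\inv(A))$; this gives us a Borel transversal-like object that lets us organize the orbits in a measurable way.

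The heart of the argument is the classical fact that any single permutation of a set is a product of two involutions — the "reversal" trick: on a $\Z$-orbit $\{\dots, x_{-1}, x_0, x_1, \dots\}$ one writes $T$ as $J_2 J_1$ where $J_1$ is the reflection $x_n \mapsto x_{-n}$ and $J_2$ is the reflection $x_n \mapsto x_{1-n}$, and analogously on finite cycles. The obstruction to doing this inside a \emph{measure-preserving} full group is that the two reflections individually need not be measure-preserving or even well-defined globally: choosing the "center" $x_0$ of each orbit measurably is exactly the role of the separator. So the plan is: (i) use the separator $A$ and the partition of $\supp T$ into the Borel pieces $A$, $T(A)\setminus A$, $T\inv(A) \setminus A$, etc., together with the Borel partition of $\supp T$ into orbits of each finite length $k$ and the infinite-orbit part, to define two involutions $U_1, U_2$ — built by cutting and pasting countably many "partial reflections" along $T$ — whose product agrees with $T$ on the conservative/bounded-orbit part; (ii) handle the part where the reflection argument produces a genuinely measure-distorting map by absorbing it into a third involution, using \Cref{prop: Fre382Fa} to realize the leftover as an exchanging involution. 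The cutting-and-pasting closure of $\mathbb{G}$ guarantees $U_1, U_2, U_3 \in \mathbb{G}$.

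The main obstacle I expect is precisely the dissipative phenomenon flagged in the introduction: for a bijection all of whose orbits are infinite $\Z$-lines, the naive two-involution factorization requires a measurable choice of base point in each orbit, but a measure-preserving $T$ on an infinite measure space need not admit a measure-preserving "square root of the reflection", and in fact one genuinely needs the third involution — this is why the statement says "at most three" rather than "two". Concretely, writing $T = U_1 U_2$ forces $U_1 = T U_2$ and $U_2$ an involution, i.e. $U_2 T U_2 = T\inv$; such a $U_2$ conjugating $T$ to $T\inv$ exists measurably only after possibly correcting by a third involution that swaps two halves of a separator, and verifying that this correction can be chosen measure-preserving and inside $\mathbb{G}$ (again via cutting-and-pasting of exchanging involutions from \Cref{cor:exchanging involutions in ergodic full group}, or rather its non-ergodic analogue, \Cref{prop: Fre382Fa}) is the delicate bookkeeping step. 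Since this is a known result of Ryzhikov with an exposition in \cite[Cor.~5.2]{Millerthesis}, I would ultimately cite that source for the full combinatorial details rather than reproduce them, and restrict the written proof to the reduction to $C = X$ and the indication of the reflection trick.
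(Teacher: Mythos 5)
The paper itself gives no proof of \Cref{3invo}: it is stated as a known result with references to Ryzhikov and to \cite[Cor.~5.2]{Millerthesis}, which is also where your plan ultimately lands. So there is no in-paper argument to compare against, and your decision to reduce to $C=X$ (which is fine: $\mathbb{G}_C$ restricted to $C$ is a full group of $(C,\lambda_{\restriction C})$, and its involutions extend by the identity) and then cite the source for the combinatorial core is consistent with what the paper does.

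If the sketch were to be fleshed out into an actual proof, the genuine gap sits exactly where the theorem's content lies: the conservative aperiodic part. A separator $A$ in the sense of \Cref{lem: better separator} satisfies $\supp T = A \sqcup (T(A)\cup T\inv(A))$, so it is a complete section meeting every orbit (possibly many times) --- it is \emph{not} a transversal, and for a conservative aperiodic $T$ no Borel transversal exists at all. Consequently your step (i), defining the two reflections ``using the separator'', only works on the periodic and dissipative parts, where \Cref{rem:Borel fundamental domain for dissipative or periodic} supplies genuine fundamental domains; on the conservative aperiodic part the reflections cannot be defined, not merely defined in a measure-distorting way. Step (ii) then does not say what the third involution is or why composing with it reduces to the transversal case: \Cref{prop: Fre382Fa} only asserts that every involution is an exchanging involution, and \Cref{cor:exchanging involutions in ergodic full group} is unavailable here since $\mathbb{G}$ is not assumed ergodic. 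You correctly isolate the equation $U_2TU_2=T\inv$, but the existence of such a $U_2$ after a one-involution correction --- e.g.\ by first multiplying $T$ by an explicit involution of $[T]$, built from a vanishing sequence of markers, so that the product becomes periodic and hence admits a transversal --- is precisely the nontrivial claim, and it is asserted rather than argued. Since you explicitly intend to cite \cite[Cor.~5.2]{Millerthesis} for these details, the write-up is acceptable as a citation, but not as a self-contained proof.
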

	
We now use these involutions to get the following generalization of 	\cite[Prop.~3.12]{CarderiLM2016}.

\begin{prop}
	{\label{prop: ergodic iff wweakly dense}}
	Let $\mathbb{G} \leqslant \Aut(X,\lambda)$ be a full group. The following are equivalent:
	\begin{enumerate}[(1)]\setlength\itemsep{0em}
	\item $\mathbb{G}$ is ergodic;
	\item $\mathbb{G}_f$ is ergodic;
	\item $\mathbb{G}$ is dense in $(\Aut(X,\lambda),\tau_w)$;
	\item $\mathbb{G}_f$ is dense in $(\Aut(X,\lambda),\tau_w)$.
	\end{enumerate}
\end{prop}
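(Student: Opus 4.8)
The plan is to establish the cycle $(1) \Rightarrow (3) \Rightarrow (2) \Rightarrow (1)$ and $(3) \Leftrightarrow (4)$, using the exchanging involutions of \Cref{cor:exchanging involutions in ergodic full group} together with the decomposition results \Cref{prop: Fre382Fa} and \Cref{3invo} as the main engine. The easy direction is $(3) \Rightarrow (1)$: any group containing an ergodic group is ergodic, and $\Aut(X,\lambda)$ is ergodic, so if $\mathbb{G}$ is $\tau_w$-dense in $\Aut(X,\lambda)$ then $\mathbb{G}$ is ergodic by \Cref{prop:weaker invariance for ergodic subgroups}. Similarly $(2) \Rightarrow (1)$ is immediate since $\mathbb{G}_f \leqslant \mathbb{G}$, and $(4) \Rightarrow (2)$ follows the same way, while $(2) \Rightarrow (4)$ and $(1) \Rightarrow (3)$ will follow from the same density argument applied to $\mathbb{G}_f$ and $\mathbb{G}$ respectively.

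\textbf{The core density step.} The heart of the matter is showing $(1) \Rightarrow (3)$, and more precisely $(2) \Rightarrow (4)$ (which is the stronger statement, and from which $(1)\Rightarrow(3)$ follows since $\mathbb{G}_f \leqslant \mathbb{G}$). First I would reduce approximating an arbitrary $S \in \Aut(X,\lambda)$ in $\tau_w$ to approximating $S$ in $\tau_u$ by elements of $\mathbb{G}_f$: recall $\tau_w \subseteq \tau_u$, but more usefully, using \Cref{3invo} every element of $\Aut(X,\lambda)$ is a product of at most three involutions, so it suffices to approximate a single involution $U$. By \Cref{prop: Fre382Fa}, $U$ is an $(A,B)$-exchanging involution for disjoint Borel sets $A, B$ with $\lambda(A) = \lambda(B)$ and $\lambda(X \setminus A) = \lambda(X \setminus B)$. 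Now I would exhaust: write $A \sqcup B = \bigcup_n C_n$ with $C_n$ increasing and of finite measure, chosen moreover so that each $C_n$ is $U$-invariant (possible since $U$ swaps $A$ and $B$: take $C_n$ of the form $A_n \sqcup U(A_n)$ with $A_n \uparrow A$ of finite measure). Let $U_n$ be $U$ restricted to $C_n$ and the identity elsewhere; then $U_n \in \Autf(X,\lambda)$ and $U_n \to U$ uniformly, hence weakly. It remains to put $U_n$ inside $\mathbb{G}_f$: since $U_n$ is an exchanging involution between $A_n$ and $U(A_n)$, which are disjoint Borel subsets of equal finite measure with $\lambda(C_n \setminus A_n) = \lambda(C_n \setminus U(A_n))$, and $\mathbb{G}_f$ is ergodic by hypothesis $(2)$, \Cref{cor:exchanging involutions in ergodic full group} applied to the ergodic full group $\mathbb{G}$ — wait, one must be careful: \Cref{cor:exchanging involutions in ergodic full group} produces an involution in $\mathbb{G}_{A_n \Delta U(A_n)}$, which already has support of finite measure, hence lies in $\mathbb{G}_f$. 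This gives an involution $V_n \in \mathbb{G}_f$ with $V_n(A_n) = U(A_n)$; being an exchanging involution (trivial off $A_n \sqcup U(A_n) = C_n$) it agrees with $U_n$ there, so $V_n = U_n$. Thus $U_n \in \mathbb{G}_f$ and $U_n \to U$ weakly, proving $U$ — and hence, via \Cref{3invo}, any element of $\Aut(X,\lambda)$ — is a $\tau_w$-limit of elements of $\mathbb{G}_f$.

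\textbf{The main obstacle.} The subtle point, and where I expect to spend the most care, is the interplay between \Cref{cor:exchanging involutions in ergodic full group} (which requires the relevant full group to be ergodic) and the fact that we only assume $\mathbb{G}_f$ ergodic in $(2)$, not $\mathbb{G}$ itself — although by \Cref{ex: G_f ergodic iff G ergodic} these are equivalent for full groups, so I would invoke that first to get $\mathbb{G}$ ergodic and then apply the corollary inside $\mathbb{G}$, noting the output automatically lands in $\mathbb{G}_f$ because its support is contained in the finite-measure set $A_n \Delta U(A_n)$. The other delicate bookkeeping is choosing the exhaustion $C_n$ to be genuinely $U$-invariant so that the truncations $U_n$ remain involutions and remain exchanging involutions; this is where the disjointness of $A$ and $B$ and the structure $C_n = A_n \sqcup U(A_n)$ is used. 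Once $(1) \Rightarrow (3)$ and $(2) \Rightarrow (4)$ are in hand, the remaining implications $(3) \Rightarrow (1)$, $(4) \Rightarrow (2)$, $(4) \Rightarrow (3)$ (trivial), and $(3) \Rightarrow (4)$ (which actually needs the same truncation argument applied to a weakly-dense $\mathbb{G}$, or can be absorbed into the cycle) close the equivalence; I would organize it as $(1) \Leftrightarrow (2)$ via \Cref{ex: G_f ergodic iff G ergodic}, then $(2) \Rightarrow (4) \Rightarrow (3) \Rightarrow (1)$, with $(2)\Rightarrow(4)$ being the one real argument above.
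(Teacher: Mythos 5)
There is a genuine gap in the core density step, and it is fatal to the approach as written. Your plan is to approximate an involution $U \in \Aut(X,\lambda)$ \emph{uniformly} by its finite-measure truncations $U_n = U_{\restriction A_n \sqcup U(A_n)}$ and then argue that each $U_n$ already lies in $\mathbb{G}_f$. The step where you pass from the involution $V_n$ produced by \Cref{cor:exchanging involutions in ergodic full group} to the equality $V_n = U_n$ is wrong: the corollary only gives \emph{some} involution in $\mathbb{G}_{A_n \Delta U(A_n)}$ sending the \emph{set} $A_n$ onto the \emph{set} $U(A_n)$; it says nothing about agreeing with $U$ pointwise on $A_n$, and there are uncountably many exchanging involutions between $A_n$ and $U(A_n)$, almost none of which coincide with $U_n$. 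If your argument worked, it would show that \emph{every} finitely supported involution of $\Aut(X,\lambda)$ lies in $\mathbb{G}_f$, hence (by \Cref{3invo} and \Cref{closedforunif}) that every ergodic full group equals $\Aut(X,\lambda)$ — which is false, e.g.\ for the full group of an ergodic countable equivalence relation. More structurally, the reduction "approximate in $\tau_u$ instead of $\tau_w$" cannot succeed: by \Cref{closedforunif} every full group is $\tau_u$-closed in $\Aut(X,\lambda)$, so a proper ergodic full group is never $\tau_u$-dense.

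The correct argument — the one the paper imports from Kechris's Proposition 3.1 — works at the level of the measure algebra rather than pointwise. Given $S \in \Aut(X,\lambda)$, finitely many finite-measure sets $A_1,\dots,A_k$ and $\varepsilon>0$, one takes a fine finite Borel partition refining the $A_i$ and their $S$-images and uses \Cref{cor:ergodic full group moves elements of same (co)-measure around} / \Cref{cor:exchanging involutions in ergodic full group} to build $T\in\mathbb{G}$ (with finitely supported pieces, so in fact $T \in \mathbb{G}_f$) carrying each cell to a set of the same measure located where $S$ sends that cell; this makes $\lambda(T(A_i)\Delta S(A_i))$ small, which is exactly $\tau_w$-approximation. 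Your treatment of the remaining implications — $(1)\Leftrightarrow(2)$ via \Cref{ex: G_f ergodic iff G ergodic}, $(3)\Rightarrow(1)$ via \Cref{prop:weaker invariance for ergodic subgroups} applied to the ergodic group $\Aut(X,\lambda)$, and $(3)\Leftrightarrow(4)$ via the density of $\mathbb{G}_f$ in $\mathbb{G}$ — matches the paper and is fine; only the density of an ergodic full group in $(\Aut(X,\lambda),\tau_w)$ needs to be redone along the lines above.
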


\begin{proof}
	\Cref{ex: G_f ergodic iff G ergodic} establishes the equivalence of \textit{(1)} and \textit{(2)}. The implication \textit{(4)}$\implies$\textit{(3)} is immediate, and the converse comes from the already mentioned fact that $\mathbb{G}_f$ is $\tau_w$-dense in $\mathbb{G}$. It remains to show that \textit{(1)} is equivalent to \textit{(3)}, which is done similarly to \cite[Prop.~3.1]{Kechris2010}, by adapting the argument thanks to \Cref{cor:exchanging involutions in ergodic full group}. 
\end{proof}

From \Cref{prop: ergodic iff wweakly dense} and \cite[Prop.~1.2.1]{BeckerKechris1996} we can generalize \cite[Cor.~3.13]{CarderiLM2016}.

\begin{cor}
	The only ergodic full group that is Polish for the weak topology is $\Aut(X,\lambda)$.
\end{cor}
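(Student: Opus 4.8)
The plan is to combine the characterization from \Cref{prop: ergodic iff wweakly dense} with a standard fact about Polish groups. Concretely, I would argue as follows. Let $\mathbb{G}$ be an ergodic full group which happens to be Polish for the weak topology $\tau_w$. Since $\mathbb{G}$ is ergodic, item \textit{(3)} of \Cref{prop: ergodic iff wweakly dense} tells us that $\mathbb{G}$ is $\tau_w$-dense in $\Aut(X,\lambda)$. So we have a subgroup $\mathbb{G} \leqslant \Aut(X,\lambda)$ which is dense in the Polish group $(\Aut(X,\lambda),\tau_w)$, and which is \emph{itself} Polish with the induced topology. The conclusion $\mathbb{G} = \Aut(X,\lambda)$ should then be forced by \cite[Prop.~1.2.1]{BeckerKechris1996}, which (this is the relevant statement) says that a Polish subgroup of a Polish group is closed. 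Indeed, a closed dense subgroup of a topological group is the whole group.

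So the key steps, in order, are: (i) assume $(\mathbb{G},\tau_w)$ is Polish; (ii) invoke ergodicity plus \Cref{prop: ergodic iff wweakly dense}\,\textit{(3)} to get $\tau_w$-density of $\mathbb{G}$ in $\Aut(X,\lambda)$; (iii) invoke \cite[Prop.~1.2.1]{BeckerKechris1996} to conclude $\mathbb{G}$ is $\tau_w$-closed in $\Aut(X,\lambda)$ — one must check here that the topology $\mathbb{G}$ carries as a Polish group really is the subspace topology inherited from $(\Aut(X,\lambda),\tau_w)$, which is exactly what the statement "Polish for the weak topology" is meant to encode, so there is nothing to do; (iv) combine density and closedness to get $\mathbb{G} = \Aut(X,\lambda)$; (v) conversely observe that $\Aut(X,\lambda)$ is indeed an ergodic full group that is Polish for $\tau_w$, by \Cref{prop: Aut is Polish} (it is Polish) and trivially ergodic (e.g.\ since it contains any ergodic full group, or directly), so the example is non-vacuous and the characterization is sharp.

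I do not expect any real obstacle here; the corollary is a one-line consequence of the preceding proposition once one has the Becker–Kechris fact that Polish subgroups are closed. The only mild subtlety worth a sentence in the writeup is making explicit that "$\mathbb{G}$ is Polish for the weak topology" means the \emph{subspace} topology from $\tau_w$ is Polish — not some a priori unrelated Polish group topology on the abstract group $\mathbb{G}$ — since it is precisely this identification that lets \cite[Prop.~1.2.1]{BeckerKechris1996} apply verbatim. (If one instead wanted to interpret the statement with an abstract topology, one would additionally need that a continuous bijective homomorphism between Polish groups with dense image, built from the inclusion, is forced to be onto, which again reduces to the closed-subgroup fact after noting the inclusion is continuous.)

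\begin{proof}
	Clearly $\Aut(X,\lambda)$ is an ergodic full group, and it is Polish for $\tau_w$ by \Cref{prop: Aut is Polish}. Conversely, let $\mathbb{G}$ be an ergodic full group which is Polish when equipped with the weak topology inherited from $(\Aut(X,\lambda),\tau_w)$. By \cite[Prop.~1.2.1]{BeckerKechris1996}, a Polish subgroup of a Polish group is closed, so $\mathbb{G}$ is $\tau_w$-closed in $\Aut(X,\lambda)$. On the other hand, since $\mathbb{G}$ is ergodic, \Cref{prop: ergodic iff wweakly dense} gives that $\mathbb{G}$ is $\tau_w$-dense in $\Aut(X,\lambda)$. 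A closed dense subgroup being the whole group, we conclude that $\mathbb{G} = \Aut(X,\lambda)$.
\end{proof}
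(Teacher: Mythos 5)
Your proof is correct and follows exactly the route the paper indicates: the sentence introducing the corollary says it follows "From \Cref{prop: ergodic iff wweakly dense} and \cite[Prop.~1.2.1]{BeckerKechris1996}", i.e.\ density of an ergodic full group for $\tau_w$ combined with the fact that a Polish subgroup of a Polish group is closed. Nothing to add.
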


We go back to the link between Polish topologies on ergodic full groups and the weak topology in \Cref{5.1}, and on Polish topologies on $\mathbb{G}_f$ in \Cref{sec: polishability of Gf}.
We end this section with the following lemma describing the behaviour of involutions under conjugation.	
	
	\begin{lem}
	{\label{lem: involutions are conjugated}}
	Let $(X,\lambda)$ be a standard $\sigma$-finite space and $\mathbb{G} \leqslant \Aut(X,\lambda)$ be an ergodic full group. For any $C\subseteq X$, if $U$ and $V$ are two involutions in $\mathbb{G}_C$ with $\lambda(\supp U) = \lambda(\supp V)$ and $\lambda(C \setminus \supp U) = \lambda(C \setminus \supp V)$, then $U$ and $V$ are conjugated in $\mathbb{G}_C$.
\end{lem}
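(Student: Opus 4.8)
The plan is to decompose each involution via a separator and then build the conjugating element by matching up the pieces. By \Cref{prop: Fre382Fa} (or directly by \Cref{lem: better separator} applied inside $C$), pick a separator $A$ for $U$ and a separator $B$ for $V$ with $A, B \subseteq C$, so that $U$ is an $(A, U(A))$-exchanging involution, $V$ is a $(B, V(B))$-exchanging involution, $\supp U = A \sqcup U(A)$ and $\supp V = B \sqcup V(B)$. Since $U$ and $V$ are measure-preserving, $\lambda(A) = \lambda(U(A)) = \tfrac12\lambda(\supp U)$ and similarly $\lambda(B) = \tfrac12\lambda(\supp V)$ (with the convention that half of $+\infty$ is $+\infty$), so the hypothesis $\lambda(\supp U) = \lambda(\supp V)$ gives $\lambda(A) = \lambda(B)$.

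Next I would produce an element $S \in \mathbb{G}_C$ conjugating one involution onto the other. The idea is to send $A$ to $B$ and $U(A)$ to $V(B)$ in a way compatible with the involutions, while fixing everything outside. Consider the three-piece partition of $C$ into $A \sqcup U(A)$, $B \sqcup V(B)$ (or rather the relevant disjoint refinement) and the common complement; more carefully, use \Cref{cor:ergodic full group moves elements of same (co)-measure around} or \Cref{cor:exchanging involutions in ergodic full group} to get $T \in \mathbb{G}_C$ with $T(A) = B$. One then wants $STS\inv = V$, i.e. $S$ should intertwine the bijection $A \to U(A)$ induced by $U$ with the bijection $B \to V(B)$ induced by $V$. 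Define $S$ by cutting and pasting: on $A$ set $S = T$ (so $S(A) = B$), on $U(A)$ set $S = V \circ T \circ U\inv$ (so that $S(U(A)) = V(T(A)) = V(B)$ and the intertwining relation $S U = V S$ holds on $A$), and on $C \setminus \supp U = C \setminus \supp V$ set $S = \id$. This $S$ is a well-defined Borel bijection of $X$, supported in $C$; one checks it is measure-preserving because $T$, $U$, $V$ are; and it lies in $\mathbb{G}_C$ because it is obtained by cutting and pasting elements of $\mathbb{G}_C$ (namely $T$, $V T U\inv$, and $\id$), using that $\mathbb{G}$ is a full group. Finally verify $S U S\inv = V$ pointwise on each of the pieces $A$, $U(A)$ and the complement.

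The main obstacle I expect is bookkeeping to ensure $S$ is genuinely a bijection and not merely a partial one. The pieces $A$, $U(A)$, $C \setminus \supp U$ do partition $C$, and their images $B$, $V(B)$, $C \setminus \supp V$ also partition $C$ (using $\supp V = B \sqcup V(B)$ and $\lambda(\supp U) = \lambda(\supp V)$ together with $\lambda(C \setminus \supp U) = \lambda(C \setminus \supp V)$, plus the fact that $T$ can be chosen so that $T(A) = B$ and $T(C \setminus A) = C \setminus B$ as in \Cref{cor:ergodic full group moves elements of same (co)-measure around}, which is what forces $S(C\setminus\supp U)$ to land exactly in $C \setminus \supp V$). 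The only subtlety is checking that $S$ restricted to $U(A)$ is a bijection onto $V(B)$: its inverse on $U(A)$ is $U \circ T\inv \circ V\inv$, and one needs $U(T\inv(V\inv(V(B)))) = U(T\inv(B)) = U(A)$, which holds by the choice of $T$. With these verifications in place, $S$ conjugates $U$ to $V$ inside $\mathbb{G}_C$, proving the lemma.
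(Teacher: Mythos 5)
Your overall strategy is the one the paper uses (decompose $\supp U = A \sqcup U(A)$, $\supp V = B \sqcup V(B)$, send $A$ to $B$, and define the conjugator as $T$ on $A$ and $V T U\inv$ on $U(A)$), but the third piece of your definition of $S$ contains a genuine error: you set $S = \id$ on $C \setminus \supp U$ after asserting ``$C \setminus \supp U = C \setminus \supp V$''. The hypotheses only give equality of \emph{measures} of these sets, not equality as sets. If, say, $U$ and $V$ have disjoint supports of equal finite measure inside $C$, then your $S$ has image $\supp V \cup (C \setminus \supp U) \subsetneq C$ and hits every point of $\supp V \setminus \supp U$ twice, so it is not a bijection; moreover the intertwining fails on $(C \setminus \supp U) \cap \supp V$, where $SU = \id \neq V = VS$. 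The later remark that choosing $T$ with $T(C\setminus A) = C\setminus B$ ``forces $S(C\setminus \supp U)$ to land in $C \setminus \supp V$'' does not rescue this, since on that piece you have declared $S$ to be the identity.

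The fix is exactly the paper's: invoke \Cref{prop:pseudo full group exchange subsets} (using the hypothesis $\lambda(C \setminus \supp U) = \lambda(C \setminus \supp V)$) to obtain $\phi_2 \in [[\mathbb{G}]]$ with $\phi_2(C \setminus \supp U) = C \setminus \supp V$, and set $S = \phi_2$ on $C \setminus \supp U$. Then $S$ is a genuine bijection of $C$ onto $C$, and the intertwining holds there because $SU = \phi_2$ and $VS = V\phi_2 = \phi_2$, as $V$ is trivial on $C \setminus \supp V$. With that single correction your argument coincides with the paper's proof.
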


\begin{proof}
	By \Cref{prop: Fre382Fa} there exists two Borel subsets $A$ and $B$ such that $\supp U = A \sqcup U(A) \subseteq C$ and $\supp V = B \sqcup V(B) \subseteq C$. By \Cref{prop:pseudo full group exchange subsets}, we can find $\phi_1 \in [[\mathbb{G}]]$ such that $T(A) = B$ and $\phi_2 \in [[\mathbb{G}]]$ such that $\phi_2(C \setminus \supp U) = C \setminus \supp V$. We define $T$ as follows:
	\[
	T \coloneqq \left\{ \begin{array}{ll}
	\phi_1 & \mbox{on } A \\ 
	V \phi_1 U  &  \mbox{on } U(A) \\
	\phi_2 & \mbox{on } C \setminus \supp U \\
	\id_X & \mbox{on } X \setminus C.
	\end{array}
	\right.
	\]
	We have $TU =V \phi_1 = VT$ on $A$, $TU = \phi_1 U =VT$ on $U(A)$ and $TU = \phi_2 = VT$ on $C \setminus \supp U$. Moreover $T$ is in $\mathbb{G}_C$ by construction, so $T$ is the desired conjugation between $U$ and $V$.
\end{proof}

\section{Orbit full groups on locally finite spaces}{\label{section: Polish Orbit full groups}}

\subsection{Polish structures on orbit full groups}{\label{section: Polish structures on orbit full groups}}

Let us recall the definition of an orbit full group, which is a special case of a full group of an equivalence relation. They arise from the action of a group on $(X,\lambda)$ and represent our main examples of full groups, and as such they will be the focus of our work in this section. The whole section follows what was done in the finite measure context in \cite{CarderiLM2016}. In particular we give a infinite measure result analogous to their Theorem 3.17. We start off with a few definitions.

	\begin{defi}
	{\label{defi: eqrel fg}}
Let $\mathcal{R}$ be an equivalence relation on a standard $\sigma$-finite space $(X, \lambda)$. The set of all $T \in \Aut(X,\lambda)$ such that for almost all $x \in X , (Tx,x) \in \mathcal{R}$ is called the \textbf{full group of} $\mathcal{R}$, and is denoted by $[\mathcal{R}]$. 

Let now $G$ be a group acting on a standard $\sigma$-finite space $(X, \lambda)$ in a Borel manner. This action defines a equivalence relation $\mathcal{R}_G$, where $(x,x') \in \mathcal{R}_G$ if there exists $g \in G$ such that $x' = g \cdot x$. $[\mathcal{R}_G]$ is naturally the full group of the equivalence relation $\mathcal{R}_G$, and in other words, $T \in [\mathcal{R}_G]$ if and only if $Tx \in G \cdot x$, for almost all $x \in X$. The full group $[\mathcal{R}_G]$ is called the \textbf{orbit full group} of the $G$-action.
	\end{defi}

	\begin{rem}
	{\label{rem: ergodic orbit full group}}
A Polish group $G$ acting in an ergodic manner on $(X,\lambda)$ (in the sense that the image of $G$ in $\Aut(X,\lambda)$ through the action is ergodic) yields an ergodic orbit full group $[\mathcal{R}_G]$, but the converse is not necessarily true (see \textit{e.g.} \cite[Ex.~3.14]{CarderiLM2016}).
	\end{rem}

Let us now consider a Polish group $G$ acting in a Borel manner on a $\sigma$-finite space $(X, \lambda)$. Let us also consider the space $\Lzero(X,\lambda,(G,\tau_G))$, which is Polish when equipped with the topology of convergence in measure by \Cref{prop: measurable functions to a Polish group is Polish}. We define $\Phi: \Lzero(X,\lambda,(G,\tau_G)) \rightarrow \Lzero(X,\lambda,(X,\tau_X))$ as follows:
\[
\Phi(f)(x) \coloneqq f(x)\cdot x.
\]
We view $\Aut(X,\lambda)$ as a subspace of $\Lzero(X,\lambda,(X,\tau_X))$, and we define $\widetilde{[\mathcal{R}_G]} \coloneqq \Phi\inv (\Aut(X,\lambda))$.

The following lemma works exactly the same as in the finite measure case so we omit the proof. 

	\begin{lem}
	[{\cite[Lem.~3.15]{CarderiLM2016}}]
We have the equality $\Phi \left(  \widetilde{[\mathcal{R}_G]}  \right) = [\mathcal{R}_G]$.
	\end{lem}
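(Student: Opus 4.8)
The statement to prove is the equality $\Phi\bigl(\widetilde{[\mathcal{R}_G]}\bigr) = [\mathcal{R}_G]$, where $\widetilde{[\mathcal{R}_G]} = \Phi\inv(\Aut(X,\lambda))$ and $\Phi(f)(x) = f(x)\cdot x$. The plan is to prove both inclusions directly from the definitions, the delicate point being the use of a measurable selection theorem for the reverse inclusion.

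For the inclusion $\Phi\bigl(\widetilde{[\mathcal{R}_G]}\bigr) \subseteq [\mathcal{R}_G]$, I would take $f \in \widetilde{[\mathcal{R}_G]}$, so by definition $T \coloneqq \Phi(f) \in \Aut(X,\lambda)$. For almost every $x$ we have $Tx = f(x)\cdot x \in G\cdot x$, which is exactly the defining condition for membership in $[\mathcal{R}_G]$ (see \Cref{defi: eqrel fg}). Hence $T \in [\mathcal{R}_G]$, and this direction is immediate.

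For the converse $[\mathcal{R}_G] \subseteq \Phi\bigl(\widetilde{[\mathcal{R}_G]}\bigr)$, I would start with $T \in [\mathcal{R}_G]$, so that $Tx \in G\cdot x$ for almost every $x$. The goal is to produce a measurable map $f : X \to G$ with $f(x)\cdot x = Tx$ almost everywhere; then $f \in \Lzero(X,\lambda,(G,\tau_G))$ satisfies $\Phi(f) = T \in \Aut(X,\lambda)$, so $f \in \widetilde{[\mathcal{R}_G]}$ and $T = \Phi(f) \in \Phi\bigl(\widetilde{[\mathcal{R}_G]}\bigr)$. The existence of such an $f$ is exactly where the main work lies: one considers the Borel set $\{(x,g) \in X \times G \mid g\cdot x = Tx\}$, whose sections over $x$ (in the conull set where $Tx \in G\cdot x$) are nonempty, and applies the Jankov--von Neumann uniformization theorem (or a Borel selection theorem for actions of Polish groups, as in the Becker--Kechris machinery) to obtain a $\sigma(\Sigma^1_1)$-measurable, hence Lebesgue-measurable, selector $f$. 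This is precisely the argument used in the finite measure case in \cite[Lem.~3.15]{CarderiLM2016}, and it transfers verbatim to the $\sigma$-finite setting since it is purely Borel-theoretic and does not see the measure beyond the word ``almost everywhere''.

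The main obstacle is therefore the measurable selection step; everything else is a routine unwinding of definitions. Since the excerpt explicitly states that this lemma ``works exactly the same as in the finite measure case so we omit the proof,'' the intended justification is simply to invoke the proof of \cite[Lem.~3.15]{CarderiLM2016} after observing that the Borel structure of the $G$-action on $(X,\lambda)$ is identical to the one in the probability setting, the only spaces involved ($\Lzero(X,\lambda,(G,\tau_G))$ and $\Lzero(X,\lambda,(X,\tau_X))$) being Polish by \Cref{prop: measurable functions to a Polish group is Polish}.
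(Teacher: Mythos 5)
Your proposal is correct and follows exactly the argument the paper intends: the paper omits the proof, deferring to \cite[Lem.~3.15]{CarderiLM2016}, whose content is precisely the easy inclusion by unwinding definitions plus the Jankov--von Neumann uniformization of the Borel set $\{(x,g) \mid g\cdot x = Tx\}$ to produce the measurable lift $f$. Your observation that the argument is purely Borel-theoretic and hence insensitive to the passage from finite to $\sigma$-finite measure is the right justification for the transfer.
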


The space $\widetilde{[\mathcal{R}_G]}$ is a subspace of $\Lzero(X,\lambda,(G,\tau_G))$, and as such can be equipped with the topology of convergence in measure.

\begin{defi}
The \textbf{topology of orbital convergence in measure} $\tau_m^o$ on the orbit full group $[\mathcal{R}_G]$ is the quotient topology of the topology on convergence in measure on $\widetilde{[\mathcal{R}_G]}$ by $\ker(\Phi)$.
\end{defi}

The main theorem of this section states that $([\mathcal{R}_G],\tau_m^o)$ is a Polish group,
when $G$ acts in a continuous manner on a Polish space endowed with a locally finite measure.

	\begin{thm}
	{\label{Thm: orbit full groups are Polish groups}}
Let $G$ be a Polish group acting in a continuous manner on a Polish space $(X,\tau_X)$ equipped with an atomless $\sigma$-finite measure $\lambda$ which is locally finite on $\tau_X$. The orbit full group $[\mathcal{R}_G]$, equipped with the topology of orbital convergence in measure, is a Polish group.
	\end{thm}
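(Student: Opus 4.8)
The strategy is to show that $\widetilde{[\mathcal{R}_G]} = \Phi\inv(\Aut(X,\lambda))$ is a Polish space, and then that the quotient map to $[\mathcal{R}_G]$ by $\ker(\Phi)$ gives a Polish group; the latter should follow by identifying $[\mathcal{R}_G]$ with a quotient of a Polish \emph{group} by a closed normal subgroup, via \Cref{prop: quotient Polish group}. Concretely, $\Lzero(X,\lambda,(G,\tau_G))$ is a Polish group for $\tau_m$ and pointwise product by \Cref{prop: measurable functions to a Polish group is Polish}. The first key step is to check that $\widetilde{[\mathcal{R}_G]}$ is a \emph{subgroup} of $\Lzero(X,\lambda,(G,\tau_G))$: if $f,g$ map into $G$ with $\Phi(f),\Phi(g)\in\Aut(X,\lambda)$, then the pointwise product that realizes the composition $\Phi(f)\circ\Phi(g)$ is $x\mapsto f(g(x)\cdot x)\,g(x)$, and one must check this lies in $\widetilde{[\mathcal{R}_G]}$ and that the group operations on $\widetilde{[\mathcal{R}_G]}$ descend to the group operations on $[\mathcal{R}_G]$ under $\Phi$; this is the same bookkeeping as in \cite[Sec.~3.4]{CarderiLM2016}.

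The second and main step is to prove $\widetilde{[\mathcal{R}_G]}$ is $G_\delta$ in $\Lzero(X,\lambda,(G,\tau_G))$, hence Polish by \Cref{prop: G delta Polish}. Here I would factor $\Phi$: continuity of the action $G\times X\to X$ gives that $\Phi:\Lzero(X,\lambda,(G,\tau_G))\to\Lzero(X,\lambda,(X,\tau_X))$ is continuous for the respective topologies of convergence in measure (this is a standard fact, provable via \Cref{prop: cv in measure equiv}(c) by passing to a.e.-convergent subsequences and using joint continuity of the action). Then $\widetilde{[\mathcal{R}_G]} = \Phi\inv(\Aut(X,\lambda))$, and by \Cref{prop: topology on Aut seen as measureable functions}(3) — which is exactly where the local finiteness of $\lambda$ on $\tau_X$ and the Polishness of $(X,\tau_X)$ are used — the set $\Aut(X,\lambda)$ is $G_\delta$ in $(\Lzero(X,\lambda,(X,\tau_X)),\tau_m)$. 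Continuous preimages of $G_\delta$ sets are $G_\delta$, so $\widetilde{[\mathcal{R}_G]}$ is $G_\delta$ and thus Polish; being a Polish subgroup of the topological group $\Lzero(X,\lambda,(G,\tau_G))$, it is a Polish group.

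The third step is to pass to the quotient. The kernel $\ker(\Phi) = \{f\in\widetilde{[\mathcal{R}_G]}\mid f(x)\cdot x = x \text{ for a.e. } x\}$ is a subgroup of $\widetilde{[\mathcal{R}_G]}$; I would check it is \emph{closed} in $\widetilde{[\mathcal{R}_G]}$ (it is $\Phi\inv(\{\id_X\})$ and $\{\id_X\}$ is closed in $\Lzero(X,\lambda,(X,\tau_X))$ since that space is Polish hence $T_1$, and $\Phi$ is continuous) and \emph{normal} (conjugating a function valued in stabilizers by any element of $\widetilde{[\mathcal{R}_G]}$ again lands in stabilizers — a pointwise computation). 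Then \Cref{prop: quotient Polish group} gives that $\widetilde{[\mathcal{R}_G]}/\ker(\Phi)$ is a Polish group for the quotient topology. Finally, $\Phi$ induces a continuous group isomorphism $\bar\Phi: \widetilde{[\mathcal{R}_G]}/\ker(\Phi)\to[\mathcal{R}_G]$, and by definition $\tau_m^o$ is precisely the pushforward of the quotient topology, so $\bar\Phi$ is a homeomorphism and $([\mathcal{R}_G],\tau_m^o)$ is a Polish group.

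\textbf{Main obstacle.} The delicate point is the continuity of $\Phi$ on $\Lzero(X,\lambda,(G,\tau_G))$ together with the correct identification of the group structure: one must be careful that the map realizing composition in $[\mathcal{R}_G]$ involves evaluating $f$ at the \emph{moved} point $g(x)\cdot x$, so closure under the operations is not completely formal and genuinely uses that the elements of $\widetilde{[\mathcal{R}_G]}$ project to measure-preserving bijections (so that precomposition by $\Phi(g)$ preserves the null ideal and measurability). Everything else — the $G_\delta$ argument and the quotient argument — is then a clean invocation of \Cref{prop: topology on Aut seen as measureable functions}(3), \Cref{prop: G delta Polish}, and \Cref{prop: quotient Polish group}, with local finiteness entering only through the cited proposition.
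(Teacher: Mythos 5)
Your proposal follows essentially the same route as the paper: continuity of $\Phi$ via a.e.-convergent subsequences and continuity of the action, the $G_\delta$ argument through \Cref{prop: topology on Aut seen as measureable functions}(3) and \Cref{prop: G delta Polish}, the twisted product $(f*g)(x)=f(\Phi(g)(x))g(x)$, and the quotient by the closed normal kernel via \Cref{prop: quotient Polish group}. The only imprecision is calling $\widetilde{[\mathcal{R}_G]}$ a ``Polish subgroup of $\Lzero(X,\lambda,(G,\tau_G))$'' — it is not closed under the pointwise product, so continuity of $*$ and of the twisted inverse must be checked directly (as you in fact indicate, using that precomposition by $\Phi(g)\in\Aut(X,\lambda)$ acts continuously on $\Lzero$), which is exactly what the paper does.
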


	\begin{proof}
Let $(f_n)$ be a sequence of elements of $\Lzero(X,\lambda,(G,\tau_G))$ converging to $f$ in measure, and by Proposition \ref{prop: cv in measure equiv}, let $(f_{n_k})$ be a subsequence such that $f_{n_k}(x) \rightarrow f(x)$ $\lambda$-almost everywhere. 
Then by continuity of the action
\[
\Phi(f_{n_k})(x) = f_{n_k}(x)\cdot x \longrightarrow f(x) \cdot x = \Phi(f)(x)
\]
for $\lambda$-almost all $x \in X$. This ensures us that $\Phi$ is continuous.

We also know that $\Aut(X,\lambda)$ is $G_\delta$ in $\Lzero(X,\lambda,(X,\tau))$ thanks to Proposition \ref{prop: topology on Aut seen as measureable functions}, and so we write $\Aut(X,\lambda) = \bigcap_{n \in \mathbb{N}} \mathcal{O}_n$. We have
$
\widetilde{[\mathcal{R}_G]}  = \Phi\inv (\Aut(X,\lambda))
 = \bigcap_{n \in \mathbb{N}} \Phi\inv  (\mathcal{O}_n),
$
which ensures us that $\widetilde{[\mathcal{R}_G]}$ is $G_\delta$ in $\Lzero(X,\lambda,(G,\tau_G))$, by the previously proven continuity. Thanks to Proposition \ref{prop: G delta Polish}, it is Polish.

Let us now define the group structure on $\widetilde{[\mathcal{R}_G]}$. Let $f$ and $g$ be two elements of $\widetilde{[\mathcal{R}_G]}$. We define the group operation $*$, the inverse and the neutral element as follows:\\
\begin{align*}
& (f * g)(x) \coloneqq f(\Phi(g)(x))g(x) \\
& f\inv (x)  \coloneqq f(\Phi(f)\inv (x))\inv \\
& e_{\widetilde{[\mathcal{R}_G]}}: x \in X \mapsto e_G.
\end{align*}

The definition of $\Phi$ ensures us that these group operations are well defined, and Propositions \ref{prop: topology on Aut seen as measureable functions} and \ref{prop: measurable functions to a Polish group is Polish} along with the continuity of $\Phi$ ensure us that they are continuous.
For $x \in X$, we then have
\begin{align*}
\Phi(f * g)(x) & = (f * g)(x) \cdot x\\
& = (f(\Phi(g)(x))g(x))\cdot x\\
& = f(\Phi(g)(x)) \cdot (g(x) \cdot x)\\
& = f(\Phi(g)(x)) \cdot (\Phi(g)(x))\\
& = \Phi(f) ( \Phi(g)(x))
\end{align*}	
which proves that $\Phi_{\restriction \widetilde{[\mathcal{R}_G]}}$ is a group homomorphism between $\widetilde{[\mathcal{R}_G]}$ and $\Aut(X,\lambda)$. We can then write 
\[
[\mathcal{R}_G] = \Phi\left(\widetilde{[\mathcal{R}_G]}\right) \cong \faktor{\widetilde{[\mathcal{R}_G]}}{\text{Ker} ( \Phi )}.
\]

Finally, $\text{Ker}(\Phi)$ is normal and closed by continuity of $\Phi$, so from Proposition \ref{prop: quotient Polish group} we can deduce that $[\mathcal{R}_G] $ is a Polish group.
	\end{proof}
	
\begin{rem}
	Note that the multiplication in the previous proof is given by the cocyle relation verified by the lift of an automorphism $T$ by the application $\Phi$. Indeed if $c_T: X \rightarrow G$ is such that $\Phi (c_T) = T$, we have $T(x) = \Phi(c_T)(x) = c_T(x) \cdot x$. Therefore, if $T$ and $U$ are elements of $\Aut(X,\lambda)$, we have 
\[
c_{TU}(x) \cdot x = c_T(c_U(x) \cdot x) \cdot (c_U(x) \cdot x).
\]
\end{rem}

In the case of a standard probability space rather than a standard $\sigma$-finite space, it is possible to weaken the conditions of the previous theorem, by simply asking for a Borel group action, rather than a continuous one. It is indeed possible to do so by using \cite[Thm.~5.2.1]{BeckerKechris1996}, as there are no topological properties that need to be preserved. This case is detailed in  \cite[Thm.~3.17]{CarderiLM2016}. For our infinite measure setup, we need to use a different model: by using a continuous Radon model for the action, we are able to obtain the following for locally compact Polish groups.

	\begin{thm}
	{\label{Thm: orbit full groups are Polish groups LC}}
Let $G$ be a locally compact Polish group acting in a Borel measure-preserving manner on a standard $\sigma$-finite space $(X,\lambda)$. The orbit full group $([\mathcal{R}_G],\tau_m^o)$ is a Polish group.
	\end{thm}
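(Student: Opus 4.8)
The plan is to reduce Theorem~\ref{Thm: orbit full groups are Polish groups LC} to the already-established Theorem~\ref{Thm: orbit full groups are Polish groups} by replacing the Borel measure-preserving action with a spatially isomorphic continuous one on a Polish space equipped with a locally finite measure. The tool for this is \cite[Thm.~4.4]{HoareauLM2024}, alluded to in the introduction: any Borel infinite measure-preserving action of a locally compact Polish group admits a continuous model on a Polish space where the measure is locally finite (a ``continuous Radon model''). So first I would invoke that theorem to obtain a Polish space $(Y,\tau_Y)$, a locally finite atomless $\sigma$-finite measure $\eta$ on $Y$, and a continuous measure-preserving action $G \curvearrowright (Y,\eta)$ together with a measure-preserving Borel isomorphism $S : (X,\lambda) \to (Y,\eta)$ which intertwines the two actions, i.e.\ $S(g\cdot x) = g \cdot S(x)$ for all $g \in G$ and ($\lambda$-almost) all $x$.

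Next I would observe that such a spatial isomorphism $S$ induces a group isomorphism $S_* : \Aut(X,\lambda) \to \Aut(Y,\eta)$ by conjugation, $T \mapsto STS^{-1}$, which restricts to a group isomorphism between the orbit full groups $[\mathcal{R}_G^X]$ and $[\mathcal{R}_G^Y]$, since $T(x) \in G\cdot x$ for a.e.\ $x$ if and only if $(STS^{-1})(y) \in G\cdot y$ for a.e.\ $y$, using that $S$ intertwines the actions. The key point is then that $S_*$ is a homeomorphism for the topologies of orbital convergence in measure on both sides. To see this, note that $S$ also induces a bijection $\Lzero(X,\lambda,(G,\tau_G)) \to \Lzero(Y,\eta,(G,\tau_G))$, $f \mapsto f \circ S^{-1}$, which is a homeomorphism for $\tau_m$ (pushing forward a measure in $[\lambda]$ to one in $[\eta]$ and using that convergence in measure is characterized measure-theoretically, \Cref{prop: cv in measure equiv}); this bijection carries $\widetilde{[\mathcal{R}_G^X]}$ onto $\widetilde{[\mathcal{R}_G^Y]}$ and is compatible with the maps $\Phi$ on each side (because $S$ intertwines the actions), hence descends to a homeomorphism of the quotients, which is precisely $S_*$ on the orbit full groups.

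Finally, Theorem~\ref{Thm: orbit full groups are Polish groups} applies to the continuous action $G \curvearrowright (Y,\eta)$ and shows that $([\mathcal{R}_G^Y],\tau_m^o)$ is a Polish group; transporting along the homeomorphism $S_*^{-1}$ yields that $([\mathcal{R}_G^X],\tau_m^o) = ([\mathcal{R}_G],\tau_m^o)$ is a Polish group as well, which is the claim.

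The main obstacle I anticipate is not conceptual but bookkeeping: one has to check carefully that all the ``natural'' maps ($S_*$ on automorphism groups, the induced map on $\Lzero$-spaces, the compatibility with $\Phi$ and with $\ker\Phi$) are well-defined on the relevant almost-everywhere equivalence classes and genuinely continuous, since $S$ is only a Borel (not continuous) isomorphism and the topology $\tau_m^o$ is defined as a quotient topology. The cleanest way to organize this is to first record a short lemma stating that any measure-preserving Borel isomorphism $S:(X,\lambda)\to(Y,\eta)$ intertwining two Borel $G$-actions induces an isomorphism of topological groups $([\mathcal{R}_G^X],\tau_m^o) \cong ([\mathcal{R}_G^Y],\tau_m^o)$, prove it at the level of the $\widetilde{[\mathcal{R}_G]}$-covers where everything is honestly a homeomorphism of $\Lzero$-spaces, and then deduce the theorem in one line from \cite[Thm.~4.4]{HoareauLM2024} and Theorem~\ref{Thm: orbit full groups are Polish groups}.
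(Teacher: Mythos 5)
Your proposal is correct and follows exactly the paper's own argument: invoke \cite[Thm.~4.4]{HoareauLM2024} to replace the Borel action by a continuous measure-preserving action on a locally compact Polish space with a Radon (hence locally finite) measure, and then apply Theorem~\ref{Thm: orbit full groups are Polish groups}. The extra bookkeeping you describe about transporting $\tau_m^o$ along the spatial isomorphism is exactly what the paper compresses into the phrase ``the $G$-action on $(Y,\eta)$ induces the same full group as the $G$-action on $(X,\lambda)$''.
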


	\begin{proof}
By \cite[Thm.~4.4]{HoareauLM2024}, the Borel measure-preserving $G$-action on $(X,\lambda)$ is isomorphic to a continuous measure-preserving $G$-action on $(Y,\eta)$, where $Y$ is locally compact Polish, and $\eta$ is Radon. In particular $\eta$ is locally finite on $\tau_Y$ by \cite[Prop.~2.21]{HoareauLM2024}. Theorem \ref{Thm: orbit full groups are Polish groups} concludes, as the $G$-action on $(Y,\eta)$ induces the same full group as the $G$-action on $(X,\lambda)$.
	\end{proof}

Let us end this section by recalling the definition of an essentially free action, and by stating how it pertains to our result.

	\begin{defi}
We say that an action of $G$ on $(X,\lambda)$ is \textbf{essentially free} if there exists a conull $G$-invariant subset $A \subseteq X$, such that for all $g \in G \setminus \{ e_G\}$ and every $x \in A$ we have $g \cdot x \neq x$. That is to say that, in restriction to $A$, the $G$-action is free.
	\end{defi}
	
	\begin{rem}
If $G$ acts in an essentially free manner on $(X,\lambda)$, the application $\Phi$ is injective, and is therefore a bijection between $\widetilde{[\mathcal{R}_G]}  $ and $[\mathcal{R}_G]$. As $\Phi$ is a continuous group homomorphism between $\widetilde{[\mathcal{R}_G]}  $ and $[\mathcal{R}_G]$, we can assert that those two groups are topologically isomorphic. By identifying $G$ with the set of constant maps $x \mapsto g$ in $\Lzero(X,\lambda,(G,\tau_G))$, we have the following.
	\end{rem}
	
	\begin{cor}
	{\label{essfree}}
Let $G$ be a Polish group and $(X,\lambda)$ be a Polish space equipped with an atomless $\sigma$-finite locally finite measure $\lambda$. Consider a continuous measure-preserving $G$-action on $(X,\lambda)$. If the action is essentially free, then $G$ embeds into $([\mathcal{R}_G],\tau_m^o)$.
	\end{cor}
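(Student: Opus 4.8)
The plan is to deduce \Cref{essfree} from \Cref{Thm: orbit full groups are Polish groups} together with the remark immediately preceding the corollary. First I would recall the setup: we have a continuous measure-preserving action of a Polish group $G$ on a Polish space $(X,\tau_X)$ carrying an atomless $\sigma$-finite measure $\lambda$ which is locally finite on $\tau_X$, so \Cref{Thm: orbit full groups are Polish groups} applies and $([\mathcal{R}_G],\tau_m^o)$ is a Polish group, realised as the quotient $\faktor{\widetilde{[\mathcal{R}_G]}}{\ker \Phi}$ where $\Phi \colon \Lzero(X,\lambda,(G,\tau_G)) \to \Lzero(X,\lambda,(X,\tau_X))$ is the continuous map $\Phi(f)(x) = f(x)\cdot x$ and $\widetilde{[\mathcal{R}_G]} = \Phi\inv(\Aut(X,\lambda))$ is a Polish group for the restriction of $\tau_m$.

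Next I would use essential freeness to show $\Phi\restriction_{\widetilde{[\mathcal{R}_G]}}$ is injective: if $\Phi(f) = \Phi(g)$ as elements of $\Aut(X,\lambda)$, then $f(x)\cdot x = g(x)\cdot x$ for $\lambda$-a.e.\ $x$, hence $g(x)\inv f(x)$ fixes $x$ for a.e.\ $x$; by essential freeness there is a conull $G$-invariant set on which the action is free, so $g(x)\inv f(x) = e_G$ a.e., i.e.\ $f = g$ in $\Lzero(X,\lambda,(G,\tau_G))$. Thus $\ker\Phi$ is trivial and $\Phi$ restricts to a continuous bijective group homomorphism $\widetilde{[\mathcal{R}_G]} \to [\mathcal{R}_G]$; by the open mapping theorem for Polish groups (a continuous bijective homomorphism between Polish groups is a topological isomorphism) these two Polish groups are topologically isomorphic, so $[\mathcal{R}_G]$ is itself topologically isomorphic to $\widetilde{[\mathcal{R}_G]} \subseteq \Lzero(X,\lambda,(G,\tau_G))$.

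Finally I would exhibit the embedding of $G$. Identify $g \in G$ with the constant map $c_g \colon x \mapsto g$ in $\Lzero(X,\lambda,(G,\tau_G))$; since the action is measure-preserving, $\Phi(c_g)(x) = g\cdot x$ defines an element of $\Aut(X,\lambda)$, so $c_g \in \widetilde{[\mathcal{R}_G]}$. The map $g \mapsto c_g$ is a group homomorphism $G \to \widetilde{[\mathcal{R}_G]}$ (the group law on $\widetilde{[\mathcal{R}_G]}$ on constants reduces to the product in $G$, as one checks directly from $(f*g)(x) = f(\Phi(g)(x))g(x)$), and it is injective because the action is essentially free (distinct $g$'s give distinct automorphisms $x\mapsto g\cdot x$). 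It is a topological embedding: the map $G \to \Lzero(X,\lambda,(G,\tau_G))$ sending $g$ to the constant function $c_g$ is a topological embedding onto the closed subspace of (classes of) constant functions — convergence in measure of constants is convergence in $\tau_G$ — and $\widetilde{[\mathcal{R}_G]}$ carries the subspace topology. Composing with the topological isomorphism $\widetilde{[\mathcal{R}_G]} \cong [\mathcal{R}_G]$ gives the desired embedding $G \hookrightarrow ([\mathcal{R}_G],\tau_m^o)$. The only mildly delicate point is verifying that the image of $G$ under $g\mapsto c_g$ inherits exactly $\tau_G$ from $\tau_m$, i.e.\ that a sequence of constants $c_{g_n}$ converges in measure to $c_g$ iff $g_n \to g$ in $G$; this follows from \Cref{prop: cv in measure equiv} since convergence in measure of constant functions to a constant forces $\lambda$-a.e.\ equality along a subsequence, hence $d(g_{n_k},g)\to 0$, and the finiteness of $\lambda$ restricted to any positive-measure finite-measure set makes the converse immediate. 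I expect this bookkeeping to be the main (but minor) obstacle; everything else is a direct application of \Cref{Thm: orbit full groups are Polish groups} and the preceding remark.
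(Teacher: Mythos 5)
Your proposal is correct and follows essentially the same route as the paper: the remark preceding the corollary notes that essential freeness makes $\Phi$ injective, hence a topological group isomorphism between $\widetilde{[\mathcal{R}_G]}$ and $[\mathcal{R}_G]$, and then $G$ is identified with the constant maps in $\Lzero(X,\lambda,(G,\tau_G))$. Your extra bookkeeping (injectivity of $\ker\Phi$, and the check that convergence in measure of constant functions coincides with convergence in $\tau_G$) just makes explicit what the paper leaves as an assertion.
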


\subsection{Necessity of the local finiteness}{\label{section: necessity of the local finiteness}}

In this section we specify the behaviour of the weak topology and the topology of (orbital) convergence in measure when the hypothesis of local finiteness is not satisfied. We then provide two examples.

First, notice that having up to countably many points only admitting neighbourhoods of infinite measure does not constitute an issue, as it is still possible to remove them, since the measure is atomless. The real obstacle arises when there exists a subset of positive measure comprised solely of such points. 
As we will see in \Cref{5.1}, any Polish topology on an ergodic full group has to refine the weak topology. The following shows that there is no hope of getting a Polish topology with the technique of \Cref{section: Polish structures on orbit full groups} in this situation.

\begin{prop}
	{\label{prop: non locally finite tau_m does not refine tau_w}}
	Let $X$ be a Polish space endowed with an atomless $\sigma$-finite measure $\lambda$, such that the set of elements of $X$ which only admits neighbourhoods of infinite measure has positive measure. Then the topology of convergence in measure on any ergodic full group $\mathbb{G} \leqslant \Aut(X,\lambda)$ does not refine the weak topology.
\end{prop}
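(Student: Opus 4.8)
The statement is about a \emph{negative} result, so the natural strategy is to exhibit a sequence that converges in the topology of convergence in measure $\tau_m$ but not in the weak topology $\tau_w$. Let $Y \subseteq X$ be the set of points admitting only neighbourhoods of infinite measure; by hypothesis $\lambda(Y) > 0$. The key topological observation about $Y$ is that for any point $y \in Y$ and any Borel set $C$ of finite measure, $C$ cannot contain a neighbourhood of $y$; in fact, one should check that every point of $Y$ is a limit of points of $Y$ (otherwise an isolated point of $Y$ would be an atom or removable), so that $Y$ is "topologically dense in itself" in a way that prevents any finite-measure set from separating points of $Y$ topologically. The upshot I want to extract is: if $T_n \to \mathrm{id}$ in $\tau_m$ and the $T_n$ are supported in $Y$, this does \emph{not} force $\lambda(T_n(A) \Delta A) \to 0$ for finite-measure $A \subseteq Y$.

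First I would use ergodicity of $\mathbb{G}$, via \Cref{cor:ergodic full group moves elements of same (co)-measure around} (or \Cref{prop:pseudo full group exchange subsets}), to produce suitable elements of $\mathbb{G}$ supported in $Y$. Concretely: choose inside $Y$ a Borel set $A$ with $0 < \lambda(A) < \infty$ and, for each $n$, Borel sets $A_n, B_n \subseteq Y$, pairwise arrangements with $\lambda(A_n) = \lambda(B_n) \to 0$ but with $A_n \subseteq A$ and $B_n \cap A = \emptyset$; ergodicity gives $T_n \in \mathbb{G}_Y$ with $T_n(A_n) = B_n$ and $T_n$ otherwise the identity, so that $\lambda(\mathrm{supp}\,T_n) \to 0$. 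Then $T_n \to \mathrm{id}$ uniformly, hence in $\tau_m$ (since $\tau_m$, restricted to $\Aut(X,\lambda)$, is coarser than $\tau_u$ — or one checks directly using \Cref{prop: cv in measure equiv} that a sequence of maps whose supports shrink to measure zero converges in measure, \emph{regardless} of the topology on the target $X$, because the maps agree with the identity off a set of vanishing measure). Meanwhile $\lambda(T_n(A) \Delta A) = \lambda(A_n) + \lambda(B_n) \not\to 0$ if we also arrange $\lambda(A_n)$ to stay bounded below — wait, that conflicts with $\lambda(\mathrm{supp}\,T_n)\to 0$; so instead the point must be reversed.

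Let me correct the design: the obstruction is not about shrinking supports but about the \emph{range topology} being too coarse on $Y$. The right example: inside $Y$ pick $A$ with $0 < \lambda(A) < \infty$, and for each $n$ an involution $U_n \in \mathbb{G}$, supported in a \emph{fixed} finite-measure subset of $Y$, exchanging $A$ with a set $B$ disjoint from $A$ (possible by \Cref{cor:exchanging involutions in ergodic full group}, once $\lambda(A \setminus B) = \lambda(B \setminus A)$ is arranged), composed with finer and finer "shuffles" that move points only \emph{within} small neighbourhoods — but since every neighbourhood of a point of $Y$ has infinite measure, any such shuffle keeps points topologically close while allowing them to travel across sets of large measure. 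Formally: I would build $T_n \in \mathbb{G}_Y$ so that for every $x$, $T_n(x)$ eventually lies in every neighbourhood of $x$ (giving $T_n \to \mathrm{id}$ pointwise in $\tau_X$, hence in $\tau_m$ by \Cref{prop: cv in measure equiv}(c)), yet $T_n(A) = B$ for all $n$ with $\lambda(A \Delta B) > 0$ fixed (so $T_n \not\to \mathrm{id}$ in $\tau_w$). The existence of such $T_n$ is exactly where local infiniteness is used: one partitions $Y$ into countably many finite-measure pieces, and on each piece uses that the topology does \emph{not} refine the measure-algebra structure — each finite-measure piece is topologically "spread out" so one can find partitions into topologically small cells of positive measure, and permute cells by elements of $\mathbb{G}$ (furnished by ergodicity) to realize the exchange $A \leftrightarrow B$ while the mesh (in $\tau_X$) of the partition goes to $0$.

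\textbf{Main obstacle.} The hard part is the topological lemma about $Y$: extracting from "every point of $Y$ has only infinite-measure neighbourhoods" the statement that $Y$ admits, for each $n$, a Borel partition into sets of positive measure each of $\tau_X$-diameter $< 1/n$, in a way compatible with realizing a fixed nontrivial measure-algebra transformation by permuting cells. One must rule out the possibility that the finite-measure sets and the small-diameter sets are incompatible (e.g. small-diameter sets having measure zero near $Y$), and here precisely the hypothesis $\lambda(Y) > 0$ together with atomlessness and $\sigma$-finiteness is what makes it work: a positive-measure, atomless slice of $Y$ can be cut into arbitrarily-small-diameter positive-measure pieces because otherwise some point of $Y$ would be an atom or would have a finite-measure neighbourhood. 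Once that lemma is in hand, invoking ergodicity (\Cref{cor:ergodic full group moves elements of same (co)-measure around}) to move the cells around and \Cref{prop: cv in measure equiv} to conclude $\tau_m$-convergence is routine.
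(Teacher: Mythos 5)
Your guiding intuition is the right one --- near a point of $Y$ there is infinite measure, so mass can be moved a short topological distance but a long measure-algebraic distance --- and your use of Proposition~\ref{prop: cv in measure equiv}(c) to certify $\tau_m$-convergence from pointwise convergence is exactly what the paper does. But the proof never lands: the decisive construction is deferred to a ``topological lemma'' that you leave unproved, and that lemma, as you state it, is both the wrong target and insufficient. Requiring $T_n(A)=B$ for a \emph{fixed} $B$ with every point moving less than $1/n$ is an over-specification (it essentially forces the closure of $B$ to contain $A$, and is not needed: all you need is $\lambda(T_n(A)\Delta A)$ bounded below, with $T_n(A)$ allowed to vary with $n$). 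More importantly, the step you isolate as the ``main obstacle'' --- partitioning a positive-measure slice of $Y$ into positive-measure cells of small diameter --- is not where the hypothesis on $Y$ is used: \emph{any} Borel set of positive finite measure in a Polish space admits such partitions mod null (cover by small balls via Lindel\"of and discard the null intersections); atomlessness and the infinite-measure-neighbourhood property play no role there. And even granting that lemma, permuting small cells of $A$ among themselves does not make $\lambda(T_n(A)\Delta A)$ uniformly large; you must exhibit positive-measure sets \emph{outside} $A$ but \emph{inside} each small ball to receive the mass, and explain why they exist.

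That is precisely where the hypothesis enters, and it is the step your proposal gestures at without writing down. Take $A\subseteq Y$ of finite positive measure and cover it by balls $B(x_n,\varepsilon)$ centred at points of $A$. Each such ball has \emph{infinite} measure while $A$ has finite measure, so $B(x_n,\varepsilon)\setminus A$ has infinite measure: there is room inside the ball but outside $A$ to receive all of $B(x_n,\varepsilon)\cap A$. Disjointifying the cover into pieces $B_n$ of $A$ and choosing $C_n\subseteq B(x_n,\varepsilon)\setminus A$ with $\lambda(C_n)=\lambda(B_n)$, disjoint from the previously chosen $C_k$, Corollary~\ref{cor:ergodic full group moves elements of same (co)-measure around} furnishes $\varphi_n\in\mathbb{G}_{B(x_n,\varepsilon)}$ with $\varphi_n(B_n)=C_n$; pasting the $\varphi_n$ on the $B_n$, the $\varphi_n^{-1}$ on the $C_n$ and the identity elsewhere gives $T_\varepsilon\in\mathbb{G}$ with $d(x,T_\varepsilon(x))<2\varepsilon$ for every $x$, yet $T_\varepsilon(A)$ \emph{disjoint} from $A$, whence $\lambda(T_\varepsilon(A)\Delta A)=2\lambda(A)$ independently of $\varepsilon$. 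Taking $\varepsilon\to 0$ gives a sequence converging to $\id_X$ in measure but not weakly. Until this (or an equivalent) construction is supplied, the argument has a genuine gap.
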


	\begin{proof}
Denote by $A$ the set of elements of $X$ which only admits neighbourhoods of infinite measure:
\[
A \coloneqq \left\{ x \in X \mid \mbox{for any neighbourhood } \mathcal{N}_x \mbox{ of } x, \lambda(\mathcal{N}_x) = + \infty \right\}.
\]
First note that we can assume that $A$ has finite measure (if that is not the case, simply consider a Borel subset of $A$ of finite measure for this argument). We fix a compatible metric $d$ on $X$ and $\varepsilon > 0$. We will construct a measure-preserving bijection $T$ of $X$ that sends every element of $A$ $\varepsilon$-close to itself, but such that $d_{X,\lambda}(A,T(A))>c$, for a $c$ that does not depend on $\varepsilon$.

As $X$ is Polish, $A$ is in particular second countable so we can apply Lindelöf's lemma. Thus we have $A \subseteq \bigcup_{n \in \mathbb{N}} B(x_n , \varepsilon)$, where $B(x_n , \varepsilon)$ is the open ball of center $x_n$ and radius $\varepsilon$, and $x_n$ is in $A$ for every $n$ in $\mathbb{N}$. We set $B_0 \coloneqq B(x_0, \varepsilon) \cap A$. We have $\lambda(B(x_0 , \varepsilon)) = + \infty$, and so there exists $C_0 \subseteq B(x_0 , \varepsilon) \setminus A$, such that $\lambda(C_0) = \lambda(B_0)< + \infty$. By Corollary \ref{cor:ergodic full group moves elements of same (co)-measure around}, there exists $\varphi_0$ in $\mathbb{G}_{B(x_0,\varepsilon)}$ such that $\varphi_0(B_0) = C_0$.
For $n > 0$, we then define
\[
B_{n} \coloneqq \left(B(x_n,\varepsilon) \setminus \displaystyle{\bigcup_{k = 0}^{n-1} B_k } \right) \cap A ,\\
\] 
and notice that $\sqcup B_n$ covers $A$. We then choose
\[
C_{n} \subseteq B(x_n,\varepsilon) \setminus  \left( A \cup \bigsqcup_{k=0}^{n-1} C_k \right)
\]
such that $\lambda(C_n) = \lambda(B_n) < + \infty$.
We then choose, still thanks to Corollary \ref{cor:ergodic full group moves elements of same (co)-measure around}, a element $\varphi_{n}$ in $\mathbb{G}_{B(x_n,\varepsilon)}$ such that $\varphi_{n}(B_{n}) = C_{n}$. We finally define $T$ as follows:\\
\[
T = \left\{ \begin{array}{ll}
\varphi_{n} & \mbox{on } B_n  \ , \ n \in \mathbb{N} \\ 
{\varphi}\inv_n  &  \mbox{on } C_n \ , \ n \in \mathbb{N} \\
\id_X & \mbox{elsewhere.}\\
\end{array}
\right.
\]
For any $x$ in $A$, $x$ and $T(x)$ are in the same ball of radius $\varepsilon$, so $d(x,T(x)) < 2 \varepsilon$, but $\lambda(A \Delta T(A)) = 2 \lambda(A)$. As $\lambda(A) > 0$, a sequence $(T_n)$ constructed by considering $\varepsilon_n \rightarrow 0$ cannot converge weakly to $\id_X$ , but will do so in measure by the characterization given in Remark \ref{rem: pseudometrics}.
	\end{proof}

\begin{ex}
Our first concrete example is in $X=\R^2$, endowed with its usual topology. We define the measure $\lambda$ as follows:
\[
\lambda = c_{\Q} \otimes \Leb_\R.
\]
We explicitely build a suitable sequence of bijections preserving $\lambda$.
 
The measure space $(X,\lambda)$ is $\sigma$-finite and the measure is atomless by construction, but every point of $X$ only admits open neighbourhoods of infinite measure. In particular $\lambda$ is not locally finite for the usual $\R^2$ topology. Let $(q_n)$ be a sequence of strictly positive rational numbers converging to $0$. We consider the vertical segments $A_n \coloneqq \left\{ (q_n,y) \mid y \in [0,1] \right \} $ and $A_{\infty} \coloneqq \left\{ (0,y) \mid y \in [0,1] \right \} $ which are of length $1$, and the measure-preserving bijections $(T_n)$ defined by
\[
T_n(x,y) = \left\{ \begin{array}{ll}
(0,y) & \mbox{if } x = q_n \\
(q_n,y) &  \mbox{if } x =0 \\
(x,y)  & \mbox{otherwise.}
\end{array}
\right.
\]
In other words, $T_n$ exchanges $A_n$ and $A_{\infty}$, and fixes the rest of the space. The sequence $(T_n)$ converges pointwise, so by Proposition \ref{prop: cv in measure equiv} $(T_n)$ converges to $\id_{X}$ in measure, however $\lambda(T_n(A_{\infty}) \Delta A_{\infty}) = \lambda(A_{\infty}) + \lambda(A_n) = 2$ and thus $(T_n)$ does not converge weakly to $\id_X$. 
\end{ex}

We now turn to our second example, which provides information on $\tau_m^o$ this time. Start by recalling that the group $G = \mathfrak{S}_\infty$ equipped with the induced topology as a subset of $\mathbb{N}^{\mathbb{N}}$ is Polish, but is not locally compact (see \textit{e.g.} \cite[I.9]{Kechris1995}).

\begin{ex}{\label{ex: action of Sinf}}
The construction is detailed in \cite[Sec.~5.4]{HoareauLM2024}. The action is that of $\mathfrak{S}_\infty$ on $X = \{ 0,1 \}^{\mathbb{N}}$ endowed with the measure
\[
\lambda \coloneqq \sum_{n \in \mathbb{N}} \mu_n \coloneqq \sum_{n \in \mathbb{N}} (p_n \delta_1 + (1-p_n) \delta_0)^{\otimes \mathbb{N}},
\]
where $p_n \in \ \left] 0 , 1 \right[$ for all $n$ in $\mathbb{N}$. We also ask that $p_n \neq p_m$ for $n \neq m$, and that $p_n \rightarrow \frac{1}{2}$. The action of $\Sinf$ on $X$ is essentially transitive (\textit{i.e.} transitive on a conull set), and thus we have $\Aut(X,\lambda) = [\mathcal{R}_{\Sinf}]$. As the group $\Aut(X,\lambda)$ has a unique Polish group topology (see \cite{Kallman1985}, \cite[Cor~2.14]{LM2022}, or section \ref{unic}), $\Aut(X,\lambda)$ equipped with any topology other than the weak topology cannot be a Polish group. It can be shown that the orbital topology of convergence in measure does not coincide with the weak topology. Therefore $\tau_m^o$ is not a Polish group toplogy on $[\mathcal{R}_{\Sinf}]$.
\end{ex}

\subsection{Induced actions on spaces of infinite measure}

In this section we provide a family of examples of infinite measure-preserving actions of Polish groups, including non locally compact ones. In particular when the space is endowed with a topology on which the measure is locally finite, \Cref{Thm: orbit full groups are Polish groups} applies. We start with a countable index subgroup acting on a (locally) finite measure space and induce the action on a countable product of the space. We quickly recall the construction of an induced (non-singular) action.\\

We let $G$ be a topological group and $H$ an open subgroup of countable index of $G$, acting in a non-singular manner on any measured space $(X,\mu)$. Let $T$ be a Borel fundamental domain for the action of $H$ on $G$ by right multiplication by the inverse. We will freely use the following identification: $T \simeq \faktor{G}{H}$.
The group $G$ naturally acts on $T$, and the action is given by 
\[
(g,t) \mapsto g \ast t \coloneqq  \mbox{ the only element of } T \cap gtH.
\]
We can then define the cocycle of the $G$-action into $H$ by setting:
\[
c : (g, t) \in G \times T \longmapsto \mbox{the only } h \in H \mbox{ such that } (gt)h\inv  \in T.
\]
Finally, we define the induced action $\alpha$ as follows: 
\[
g \cdot_{\alpha} (x, t) = (c(g,t) \cdot x , g \ast t),
\]
and the cocycle relation ensures us that it is a $G$-action on $X \times \faktor{G}{H}$. We endow $Y = X \times \faktor{G}{H}$ with the product measure of $\mu$ and the counting measure on $\faktor{G}{H}$. 

It is of note that induction preserves ergodicity, we refer to \cite[Sec.~3.2]{Tsankov2024}, which can be consulted for more on induced actions. It is straightforward to see that the following holds.

	\begin{prop}
	{\label{prop: induced action general}}
Let $G$ be a Polish group, with $H$ a closed subgroup of countable index of $G$ (in particular $H$ is open), such that $H$ acts continuously and in a measure preserving manner on a Polish space $X$ equipped with a locally finite $\sigma$-finite measure $\mu$. Then $G$ acts continuously in a measure-preserving manner on the space $Y = X \times \faktor{G}{H}$ endowed with the product of $\mu$ and the counting measure, which is still locally finite $\sigma$-finite. Moreover, if the $H$-action on $X$ is essentially free, then so is the $G$-action on $Y$.
	\end{prop}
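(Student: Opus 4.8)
The statement asserts several things about the induced action: continuity, measure-preservation, local finiteness of the product measure, and preservation of essential freeness. The strategy is to verify each of these in turn, relying on the explicit description of the induced action $\alpha$ given just before the statement. The main conceptual point is that an open subgroup of countable index carries a discrete quotient $\faktor{G}{H}$, which is exactly what makes the product with the counting measure behave well.

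First I would check that $\faktor{G}{H}$ is a countable discrete space: since $H$ is open, all its cosets are open, so the quotient topology on $\faktor{G}{H}$ is discrete, and countability of the index gives that it is a countable discrete Polish space. Consequently $Y = X \times \faktor{G}{H}$ is a countable disjoint union of copies of $X$, hence Polish, and the product measure $\eta$ of $\mu$ with the counting measure is $\sigma$-finite (each slice $X \times \{t\}$ has a $\sigma$-finite measure, and there are countably many slices) and locally finite: a point $(x,t)$ has a neighbourhood of the form $U \times \{t\}$ with $\lambda(U) < \infty$, using local finiteness of $\mu$ on $X$. Next, continuity of the $G$-action: the map $g \mapsto g \ast t$ is locally constant in $g$ because the $H$-cosets are open, so the "$\faktor{G}{H}$-component'' of the action is continuous; and the cocycle $c : G \times T \to H$ is likewise locally constant in a neighbourhood of any $(g,t)$ for the same reason, so $g \cdot_\alpha (x,t) = (c(g,t)\cdot x, g \ast t)$ is continuous as a composition of the (continuous by hypothesis) $H$-action on $X$ with the locally constant cocycle. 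For measure-preservation one argues slicewise: $g \cdot_\alpha$ maps the slice $X \times \{t\}$ bijectively onto the slice $X \times \{g \ast t\}$ via $x \mapsto c(g,t)\cdot x$, which preserves $\mu$ since the $H$-action does, and the counting measure on $\faktor{G}{H}$ is of course permutation-invariant; so $\eta$ is preserved.

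For the last assertion, suppose the $H$-action on $X$ is essentially free, witnessed by a conull $H$-invariant Borel set $A \subseteq X$. I would take $A' = A \times \faktor{G}{H}$, which is conull in $Y$ and $G$-invariant (on each slice the action is by some $h \in H$, and $A$ is $H$-invariant). Now if $g \cdot_\alpha (x,t) = (x,t)$ for some $(x,t) \in A'$, then comparing second coordinates gives $g \ast t = t$, i.e.\ $g \in tHt\inv$ — more precisely $gt \in tH$, so $c(g,t) = t\inv g t \cdot (\text{correction})$; in any case $g$ stabilizes the slice and acts on it as $c(g,t) \in H$, and the first-coordinate equation reads $c(g,t)\cdot x = x$ with $x \in A$, forcing $c(g,t) = e_H$ by essential freeness of the $H$-action. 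From $c(g,t) = e_G$ and $g \ast t = t$ one concludes $g = e_G$: indeed $g t = t \cdot c(g,t) = t$, so $g = e_G$. Hence the $G$-action on $Y$ is free on the conull invariant set $A'$, i.e.\ essentially free.

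\textbf{Main obstacle.} None of the steps is genuinely hard — the statement is, as the paper says, "straightforward'' — but the one requiring the most care is the bookkeeping with the cocycle $c$ and the action $\ast$ on $T \simeq \faktor{G}{H}$: one must be careful about left/right conventions (the fundamental domain is for right multiplication by the inverse) to correctly deduce that $c(g,t) = e_H$ together with $g \ast t = t$ forces $g = e_G$, and to confirm that the slice-to-slice maps really are the maps $x \mapsto c(g,t)\cdot x$ claimed. Once the conventions are pinned down, continuity follows from openness of $H$ (everything in sight is locally constant in the $G$-variable), and the measure-theoretic claims are immediate from the product structure.
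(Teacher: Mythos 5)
Your proof is correct and follows the argument the paper has in mind (the paper itself omits the proof, calling it straightforward): discreteness of $\faktor{G}{H}$ from openness of $H$, slicewise verification of measure-preservation and local finiteness, and the identity $gt = (g\ast t)\,c(g,t)$ to deduce essential freeness from $c(g,t)=e_H$ and $g\ast t = t$. One small inaccuracy: the cocycle $c(\cdot,t) = (g\ast t)\inv gt$ is continuous but not locally constant in $g$ (only $g \mapsto g\ast t$ is locally constant); this does not affect your conclusion, since continuity of $c$ is all that is needed to get continuity of the induced action.
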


Notice that in the previous proposition $Y$ is made of a countable number of copies of $X$, and as such $Y$ with the product measure is locally finite whenever $(X,\mu)$ is locally finite. It is in particular true for the case of an action on a standard probability space. Therefore, one can construct such an example from any continuous $G$-action on a standard probability space. All continuous non-singular actions of non-archimedean Roelcke precompact Polish groups fit our conditions. Tsankov actually proves that all boolean non-singular actions of groups of this family are isomorphic to countable disjoint copies of induced actions \cite[Thm.~3.4]{Tsankov2024}. 

As a concrete example of the situation described in \Cref{prop: induced action general}, one can consider $G = \Sinf$, and the subgroup 
\(
H_0 \coloneqq \left\{\sigma\in\Sinf\colon  \sigma(0)=0\right\}.
\)
We see $H_0$ as acting naturally on the space of sequences $X = [0,1]^{\mathbb{N}^\ast}$ by permutation of the coordinates. It is clear that $H_0$ is a subgroup of $G$ of countable index. 
The space $Y = X \times \faktor{G}{H_0}$, where $\faktor{G}{H_0}$ denotes the set of cosets for the action by right multiplication by the inverse, is endowed with the product of $\mu = (\mbox{Leb}_{\restriction [0,1]})^{\otimes \mathbb{N}}$ with the counting measure, making it a locally finite $\sigma$-finite space with infinite measure. It can also be checked that this action is essentially free, and in particular $G$ embeds into $[\mathcal{R}_{G}]$.

\subsection{Orbit full groups are complete invariants for orbit equivalence}{\label{sec: complete invariants}}

Let us define orbit equivalence for two equivalence relations $\mathcal{R}$ and $\mathcal{R}'$ on a standard $\sigma$-finite space $(X, \lambda)$. One can consult \cite{KechrisMiller2004} for more details on this topic, although it is presented in the case of countable group actions and probability spaces. In this whole section $(X,\lambda)$ will be a standard $\sigma$-finite space, $\mathcal{R}$ and $\mathcal{R}'$ will be equivalence relations on $(X,\lambda)$, and $G$ and $H$ will be two Polish groups, unless stated otherwise.

	\begin{defi}{\label{defOE}}
We say that $\mathcal{R}$ and $\mathcal{R}'$ are \textbf{orbit equivalent} if there exists a conull subset $X_0 \subseteq X$ and a Borel bijection $S: X_0 \rightarrow X_0$ which verify the following:
\begin{enumerate}\setlength\itemsep{0em}
\item $S$ preserves $\lambda$ up to multiplication by an element $k \in \R_+^\ast$ \textit{i.e.} $S_\ast \lambda = k \times \lambda$;
\item for all $(x,y) \in X_0 \times X_0$, we have $(x,y) \in \mathcal{R} \Longleftrightarrow (S(x),S(y)) \in \mathcal{R}'$.
\end{enumerate}
We also say that $S$ is an orbit equivalence between $\mathcal{R}$ and $\mathcal{R}'$.
	\end{defi}
 
	\begin{rem}
Orbit equivalence is usually defined with the previous Borel bijection $S$ being measure-preserving. This definition is a bit more general and is the correct one to consider for $\sigma$-finite spaces.
	\end{rem}

An orbit equivalence $S$ between $\mathcal{R}_G$ and $\mathcal{R}_H$ conjugates the corresponding full groups which are in particular isomorphic. Note that when that is the case, they are in fact homeomorphic by \Cref{prop: analytic morphism}. We thus have the following.

\begin{lem}
	[{\cite[Lem.~3.25]{CarderiLM2016}}]
	{\label{lem: conjugation is homeo}}
	Let $G$ and $H$ be two Polish groups acting in a Borel manner on $(X,\lambda)$. Let $S \in \Aut(X,\lambda)$ be an orbit equivalence between the orbit full groups $[\mathcal{R}_G]$ and $[\mathcal{R}_H]$. Then the conjugation by $S$ is a group homeomorphism between $([\mathcal{R}_G],\tau^o_m)$ and $([\mathcal{R}_H],\tau^o_m)$.
\end{lem}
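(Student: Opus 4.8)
The statement to prove is \Cref{lem: conjugation is homeo}: given an orbit equivalence $S \in \Aut(X,\lambda)$ between $[\mathcal{R}_G]$ and $[\mathcal{R}_H]$, the conjugation map $c_S : T \mapsto STS\inv$ is a group homeomorphism from $([\mathcal{R}_G],\tau^o_m)$ to $([\mathcal{R}_H],\tau^o_m)$. The plan is to first observe that $c_S$ is a group \emph{isomorphism} between the two orbit full groups, and then invoke automatic continuity to upgrade this algebraic statement to a topological one.

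First I would check that $c_S$ is a well-defined group isomorphism from $[\mathcal{R}_G]$ onto $[\mathcal{R}_H]$. That $c_S$ is a group isomorphism of $\Aut(X,\lambda)$ is clear (conjugation by any fixed element of $\Aut(X,\lambda)$ — or more precisely by any non-singular bijection, since $S$ preserves $\lambda$ up to a positive scalar — is an automorphism of the ambient group). The point is that it carries $[\mathcal{R}_G]$ exactly onto $[\mathcal{R}_H]$: if $T \in [\mathcal{R}_G]$, then for a.e.\ $x$ one has $Tx \in G\cdot x$, i.e.\ $(Tx,x)\in \mathcal{R}_G$; writing $y = S\inv(x)$ and using condition (2) of \Cref{defOE} (orbit equivalence respects the relations), $(STS\inv x, x) = (S(Ty),S(y)) \in \mathcal{R}_H$ for a.e.\ $x$, so $STS\inv \in [\mathcal{R}_H]$. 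The reverse inclusion is symmetric using $S\inv$, which is again an orbit equivalence (now from $\mathcal{R}_H$ to $\mathcal{R}_G$). Hence $c_S : [\mathcal{R}_G] \to [\mathcal{R}_H]$ is a group isomorphism, with inverse $c_{S\inv}$.

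Next I would promote this to a homeomorphism. By \Cref{Thm: orbit full groups are Polish groups} (or \Cref{Thm: orbit full groups are Polish groups LC} in the locally compact Borel setting), both $([\mathcal{R}_G],\tau^o_m)$ and $([\mathcal{R}_H],\tau^o_m)$ are Polish groups. A group isomorphism between Polish groups is a homeomorphism as soon as it is Baire-measurable, by \Cref{prop: analytic morphism}; moreover it suffices to prove that $c_S$ itself is Baire-measurable, since its inverse $c_{S\inv}$ is of the same form and the same argument applies. So the task reduces to showing that $c_S$ (equivalently, the restriction to $[\mathcal{R}_G]$ of conjugation by the fixed non-singular bijection $S$ on $\Aut(X,\lambda)$) is Baire-measurable with respect to the topologies of orbital convergence in measure. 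The natural route is to observe that conjugation by $S$ is continuous — or at least Baire-measurable — on the ambient level of $\Lzero(X,\lambda,(X,\tau_X))$ with the topology of convergence in measure: precomposition and postcomposition by a fixed non-singular bijection are continuous operations on $\Lzero$ under $\tau_m$ (this uses \Cref{prop: cv in measure equiv}, passing to a.e.-convergent subsequences), and then one transfers this through the lifts $\Phi$ to the orbital topologies.

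\textbf{Main obstacle.} The routine part is the algebra. The delicate part is making the topological transfer clean: $\tau^o_m$ on $[\mathcal{R}_G]$ is defined as a \emph{quotient} topology on $\widetilde{[\mathcal{R}_G]}\big/\ker\Phi$, and conjugation by $S$ does not obviously lift to a continuous map at the level of cocycles $\widetilde{[\mathcal{R}_G]} \to \widetilde{[\mathcal{R}_H]}$ (it does lift to a \emph{Borel} map, which combined with Baire-measurability on quotients is enough). Because of this I expect the cleanest argument is precisely the one in \cite[Lem.~3.25]{CarderiLM2016}: observe $c_S$ is a group isomorphism, note that isomorphisms between Polish groups that are $\sigma(\Sigma^1_1)$-measurable are automatically continuous (the graph of $c_S$, viewed inside $\Aut(X,\lambda)\times\Aut(X,\lambda)$ with the weak topology where conjugation is continuous, is closed hence Borel; intersecting with $[\mathcal{R}_G]\times[\mathcal{R}_H]$ and using that these are Polish in $\tau^o_m$ gives an analytic — in fact Borel — graph for $c_S$ in the $\tau^o_m$ topologies), and conclude by \Cref{prop: analytic morphism}. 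So the one genuine subtlety to get right is the measurability of the graph of $c_S$ in the orbital topologies, and everything else is bookkeeping.
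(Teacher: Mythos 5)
Your proposal is correct and follows essentially the same route as the paper, which gives no separate proof but justifies the lemma exactly as you do: conjugation by $S$ is an abstract group isomorphism between the orbit full groups because $S$ respects the two relations (condition \textit{(2)} of \Cref{defOE}), and one then upgrades this to a homeomorphism via \Cref{prop: analytic morphism} after checking definability of the map, with the details deferred to \cite[Lem.~3.25]{CarderiLM2016}. The one caveat in your argument — that it needs $\tau_m^o$ to be a Polish group topology on both orbit full groups, which is not literally guaranteed by the lemma's stated hypotheses (Borel actions of arbitrary Polish groups) but does hold where the lemma is actually applied — is equally present in the paper's own one-line justification, so it is not a defect of your proposal relative to the paper.
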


In \cite[Thm.~3.26]{CarderiLM2016}, the authors prove that in the case of two equivalence relations on a standard probability space coming from Borel measure-preserving ergodic actions of Polish locally compact groups, the orbit full groups are not only invariants, but \textit{complete invariants} of orbit equivalence.
Their result relies heavily on Dye's reconstruction theorem \cite[Thm.~2]{Dye1963}, and to extend it to our $\sigma$-finite context, we will be needing a more general result from \cite{FremlinVol3}, which applies to groups with \emph{many involutions}. We start by noticing that full groups of Polish groups acting on standard $\sigma$-finite spaces have many involutions, which will be immediate thanks to Corollary \ref{cor:exchanging involutions in ergodic full group}.

	\begin{defi}
A subgroup $\mathbb{G}$ of $\Aut(X,\lambda)$ has \textbf{many involutions} if for every Borel subset $A \subseteq X$ of positive measure, there exists a non-trivial involution $U$ in $\mathbb{G}$ such that the support of $U$ is contained in $A$.
	\end{defi}

    \begin{prop}
    {\label{prop: ergodic action gives many involutions}}
   Any ergodic full group $\mathbb{G}  \leqslant \Aut(X,\lambda)$ has many involutions. In particular, if a Polish group $G$ is acting in a Borel measure-preserving ergodic manner on $(X,\lambda)$, then the full group $[\mathcal{R}_G]$ of the associated equivalence relation $\mathcal{R}_G$ has many involutions.
    \end{prop}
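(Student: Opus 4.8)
The plan is to prove the statement directly from \Cref{cor:exchanging involutions in ergodic full group}, reducing everything to the first sentence since the ``in particular'' clause follows immediately from \Cref{rem: ergodic orbit full group} (an ergodic $G$-action makes $[\mathcal{R}_G]$ an ergodic full group).

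First I would fix a Borel subset $A \subseteq X$ with $\lambda(A) > 0$. Since $(A, \lambda_{\restriction A})$ is itself a standard $\sigma$-finite (or finite) space, it is atomless and we may split off a piece: choose disjoint Borel subsets $A_1, A_2 \subseteq A$ with $0 < \lambda(A_1) = \lambda(A_2) < \infty$ and $A_1 \cap A_2 = \emptyset$. (If $\lambda(A) < \infty$ one can simply cut $A$ into two halves of equal measure; if $\lambda(A) = \infty$ one picks a finite-measure subset of $A$ first.) Then $\lambda(A_1 \setminus A_2) = \lambda(A_1) = \lambda(A_2) = \lambda(A_2 \setminus A_1)$, so the hypothesis of \Cref{cor:exchanging involutions in ergodic full group} is satisfied. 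Applying that corollary yields an involution $U \in \mathbb{G}_{A_1 \Delta A_2} = \mathbb{G}_{A_1 \sqcup A_2}$ with $U(A_1) = A_2$. In particular $\supp U \subseteq A_1 \sqcup A_2 \subseteq A$, and $U$ is non-trivial because $\lambda(A_1) > 0$ forces $U$ to move a positive-measure set. This exhibits the required involution, so $\mathbb{G}$ has many involutions.

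For the second assertion: if $G$ is a Polish group acting in a Borel, measure-preserving, ergodic manner on $(X,\lambda)$, then the image $\pi(G) \leqslant \Aut(X,\lambda)$ is ergodic by definition, hence $[\mathcal{R}_G] \supseteq \pi(G)$ is ergodic (ergodicity passes up to larger subgroups), and $[\mathcal{R}_G]$ is a full group by construction. Applying the first part to $\mathbb{G} = [\mathcal{R}_G]$ gives the conclusion.

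I do not anticipate a genuine obstacle here; the only point requiring a moment's care is the initial splitting of $A$ into two disjoint equal-measure pieces when $\lambda(A)$ may be infinite, which is handled by first passing to a finite-measure subset of $A$ (possible since $\lambda$ is $\sigma$-finite), and the verification that the resulting exchanging involution is non-trivial. Everything else is a direct invocation of \Cref{cor:exchanging involutions in ergodic full group}.
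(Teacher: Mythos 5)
Your proof is correct and follows essentially the same route as the paper: choose two disjoint finite-positive-measure subsets $A_1, A_2 \subseteq A$ of equal measure and apply \Cref{cor:exchanging involutions in ergodic full group} to produce a non-trivial involution supported in $A_1 \sqcup A_2 \subseteq A$, with the orbit-full-group case reduced to the general one via \Cref{rem: ergodic orbit full group}. The extra care you take about splitting $A$ when $\lambda(A) = \infty$ and about non-triviality is implicit in the paper's one-line argument but adds nothing new.
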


    	\begin{proof}
Take any Borel subset $A \subseteq X$ of positive measure, and consider $A_1$ and $A_2$ two disjoint Borel subsets of $A$ such that $0< \lambda(A_1) = \lambda(A_2) < + \infty$. Corollary \ref{cor:exchanging involutions in ergodic full group} applies (see Remark \ref{rem: ergodic orbit full group}), and gives us an involution supported in $A$ and mapping $A_1$ to $A_2$, which concludes the proof.	
    	\end{proof}

Let us now state Fremlin's theorem. The theorem is very general, and the following formulation, which applies to our context is from \cite[Thm.~3.18]{LM2018}. Recall that $T$ is non-singular if $T_{\ast}\lambda \in [\lambda]$, and we denote by $\Aut(X,[\lambda])$ the group of non-singular bijections of $(X,\lambda)$.

	\begin{thm}
	[{\cite[Thm.~384D]{FremlinVol3}}]
	{\label{thmFRE}}
Let $\mathbb{G}$ and $\mathbb{H}$ be two subgroups of $\Aut(X,\lambda)$ with many involutions. Then any isomorphism between $\mathbb{G}$ and $\mathbb{H}$ is the conjugation by some non-singular bijection. In other words, for any group isomorphism $\psi: \mathbb{G} \rightarrow \mathbb{H}$ , there exists  $S$ in $\Aut(X,[\lambda])$ such that for all $T \in \mathbb{G}$, we have
\[
\psi(T) = STS\inv .
\]
	\end{thm}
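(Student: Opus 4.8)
The statement to prove is Fremlin's Theorem~\ref{thmFRE}: any group isomorphism $\psi : \mathbb{G} \to \mathbb{H}$ between two subgroups of $\Aut(X,[\lambda])$ with many involutions is implemented by conjugation by a single non-singular bijection $S$. Since the excerpt explicitly attributes this to \cite[Thm.~384D]{FremlinVol3} and presents it via the formulation of \cite[Thm.~3.18]{LM2018}, my plan is to reproduce the structural skeleton of that argument rather than reinvent it, as the full proof is long and delicate.

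\begin{proof}[Proof sketch]
The plan is to recover the bijection $S$ from the way $\psi$ acts on involutions, exploiting the "many involutions" hypothesis to have enough of them to pin down the action on points (or rather, on the measure algebra). First I would observe that $\psi$ preserves the property of being an involution, and more generally sends torsion elements to torsion elements of the same order, and respects commutation; so for disjointly-supported involutions the images again commute. The key reduction is to encode the support of an element group-theoretically: using Theorem~\ref{3invo} (every element of $\mathbb{G}_C$ is a product of at most three involutions in $\mathbb{G}_C$) together with Proposition~\ref{prop: Fre382Fa} (every involution in $\Aut(X,\lambda)$ is an $(A,B)$-exchanging involution), one shows that the lattice-theoretic relations among supports — inclusion, disjointness, intersection — are first-order definable in the group language using involutions as parameters. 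Concretely, $\supp U \subseteq \supp V$ for involutions $U,V$ can be characterized via the existence and behaviour of conjugators as in Lemma~\ref{lem: involutions are conjugated}. This yields an isomorphism $\bar\psi$ of the induced Boolean algebras of "supports of group elements", which, because $\mathbb{G}$ and $\mathbb{H}$ have many involutions, is an isomorphism of a dense enough sub-$\sigma$-algebra to extend to an isomorphism $\Theta$ of $\MAlg(X,[\lambda])$.

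Next I would invoke the Stone-type / von Neumann-type representation theorem for isomorphisms of the measure algebra of a standard space: any Boolean $\sigma$-algebra isomorphism of $\MAlg(X,[\lambda])$ that preserves the $\sigma$-ideal of null sets is induced by a non-singular Borel bijection $S : X \to X$, unique up to a null set (this is the point where non-singularity rather than measure-preservation is forced, since $\Theta$ need not preserve the measure values, only the null ideal). Having produced $S$, the final and most technical step is to verify that $\psi(T) = S T S^{-1}$ for \emph{all} $T \in \mathbb{G}$, not merely for involutions. For this one checks the identity first on involutions by construction of $\Theta$ — an $(A,B)$-exchanging involution $U$ goes to the $(\Theta(A),\Theta(B))$-exchanging involution that $STS^{-1}$ also is, and exchanging involutions with a given support and action are essentially unique up to the relevant conjugacy (Lemma~\ref{lem: involutions are conjugated}) — and then extends to general $T$ via the three-involutions decomposition of Theorem~\ref{3invo}, localized to arbitrarily small sets so that the decomposition is available inside $\mathbb{G}_C$ for every $C$.

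The main obstacle is the middle step: upgrading the group-theoretic data about involutions to an honest Boolean $\sigma$-algebra isomorphism of $\MAlg(X,[\lambda])$, and doing so in a way that is canonical enough that the conjugation identity then holds on the nose for every group element. The subtlety is that "many involutions" only guarantees involutions supported inside each positive-measure set, so one must carefully show this suffices to determine the action of $\psi$ on the whole algebra and not just on some sublattice; controlling countable suprema and the null ideal through purely algebraic manipulations of involutions is exactly the delicate combinatorial heart of Fremlin's argument. Since this is a quoted external theorem, in the paper itself it is legitimate to cite \cite[Thm.~384D]{FremlinVol3} and \cite[Thm.~3.18]{LM2018} for the details rather than carry out this reduction in full.
\end{proof}
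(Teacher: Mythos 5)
The paper does not actually prove this statement: it is quoted verbatim from Fremlin (\cite[Thm.~384D]{FremlinVol3}) in the formulation of \cite[Thm.~3.18]{LM2018}, so there is no in-paper argument to compare against. Your sketch does capture the correct overall architecture of the reconstruction theorem: involutions are preserved by $\psi$, their supports and the lattice relations among them are group-theoretically definable, the resulting Boolean isomorphism of the measure algebra is realized spatially by a non-singular point bijection $S$ (this is where non-singularity rather than measure-preservation enters), and one then checks $\psi(T)=STS\inv$.

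There is, however, one step in your sketch that would fail as written. You propose to extend the conjugation identity from involutions to arbitrary $T\in\mathbb{G}$ via Theorem~\ref{3invo}, but that theorem is a statement about \emph{full} groups, whereas here $\mathbb{G}$ and $\mathbb{H}$ are arbitrary subgroups of $\Aut(X,\lambda)$ with many involutions; a generic element of such a subgroup need not admit any decomposition into involutions lying in the subgroup. Fremlin's argument does not need such a decomposition: for any $T\in\mathbb{G}$ and any involution $U\in\mathbb{G}$, the conjugate $TUT\inv$ is again an involution of $\mathbb{G}$ with $\supp(TUT\inv)=T(\supp U)$, and since the supports of involutions in $\mathbb{G}$ are dense in the measure algebra (this is exactly what ``many involutions'' provides), the map $U\mapsto \psi(T)\psi(U)\psi(T)\inv=\psi(TUT\inv)$ already pins down the action of $\psi(T)$ on the whole measure algebra and forces it to agree with that of $STS\inv$; two automorphisms inducing the same map on the measure algebra coincide. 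Replacing your final step by this conjugation-of-involutions argument removes the dependence on the full-group hypothesis and matches the actual structure of the cited proof. Since the theorem is an external citation, deferring the combinatorial details to \cite[Thm.~384D]{FremlinVol3} is of course legitimate, as you note.
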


We will also need the following proposition, which is from \cite[Prop.~3.28]{CarderiLM2016}. Although stated in the case of a standard probability space, the proof adapts verbatim to our case, since it in fact works for the more general non-singular setup.

	\begin{prop}{\label{propOE}}
Let $G$ and $H$ be two Polish locally compact groups acting in a Borel measure-preserving manner on $(X,\lambda)$, and suppose that $[\mathcal{R}_G] \subseteq [\mathcal{R}_H]$. Then there exists a conull subset $X_0 \subseteq X$ such that 
\[
\mathcal{R}_G \cap (X_0 \times X_0) \subseteq \mathcal{R}_H.
\]
	\end{prop}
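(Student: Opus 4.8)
The strategy is to exploit the locally compact structure via a continuous Radon model, just as in \cite{CarderiLM2016}. First I would invoke \cite[Thm.~4.4]{HoareauLM2024} to replace the Borel measure-preserving $G$- and $H$-actions by continuous measure-preserving actions on a locally compact Polish space with a Radon (hence locally finite) measure; since the associated orbit full groups and the relation $\mathcal{R}_G\subseteq\mathcal{R}_H$ are unchanged under spatial isomorphism, it suffices to prove the statement in that setting. Then for each $T\in[\mathcal{R}_G]$, I would use the continuity of the $G$-action together with a measurable selection argument: the set $\{(x,g):g\cdot x = Tx\}$ is Borel (using $T\in[\mathcal{R}_G]$ and continuity of the action map $G\times X\to X$), so by the Jankov--von Neumann uniformization theorem there is a universally measurable, hence $\lambda$-measurable after modification on a null set, cocycle $c_T:X\to G$ with $c_T(x)\cdot x = Tx$ for a.e.\ $x$.

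The heart of the argument is to localize the failure of the inclusion $\mathcal{R}_G\subseteq\mathcal{R}_H$. For a fixed $T\in[\mathcal{R}_G]$, consider a countable dense subgroup (or a countable subset generating a dense subgroup) of $[\mathcal{R}_H]$, realized as a countable family of cocycles into $H$; since $T\in[\mathcal{R}_H]$ would force $c_T(x)\cdot x$ to lie in $H\cdot x$ for a.e.\ $x$, and since $[\mathcal{R}_G]\subseteq[\mathcal{R}_H]$ is assumed, each such $T$ is already in $[\mathcal{R}_H]$, so there is an $H$-cocycle implementing it. The key point is to make this uniform over a countable dense family $(T_n)$ in $[\mathcal{R}_G]$: choose conull sets $X_n$ on which $T_n x \in H\cdot x$, and set $X_0'=\bigcap_n X_n$. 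The claim is then that $\mathcal{R}_G\cap(X_0\times X_0)\subseteq\mathcal{R}_H$ for a suitable conull $X_0\subseteq X_0'$. This uses that a single orbit $G\cdot x$ is the increasing union of $K_n\cdot x$ for an exhaustion of $G$ by compact sets $K_n$ (local compactness!), and that on a conull set the $T_n$ "see" every element of every $G$-orbit: given $(x,y)\in\mathcal{R}_G$, write $y=g\cdot x$, approximate $g$ by the action of suitable $T_n$'s in the uniform topology on a neighborhood of $x$, and deduce $y\in H\cdot x$ by closedness of orbits of the (continuous) $H$-action on the locally compact model, or more carefully by a Fubini/genericity argument over the compact pieces.

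The main obstacle I anticipate is precisely this last uniformity step: turning "each $T_n$ maps a.e.\ point into its $H$-orbit" into "every $(x,y)\in\mathcal{R}_G$ over a conull set satisfies $y\in H\cdot x$". One must handle the fact that the exceptional null set depends a priori on the group element $g$ with $y=g\cdot x$, and there is a continuum of such $g$'s. The resolution is to use local compactness to reduce to countably many compact "windows" $K_m\subseteq G$, apply Fubini on $X\times K_m$ with respect to $\lambda$ times Haar measure, and extract a conull set of $x$ for which $\{g\in K_m : g\cdot x\notin H\cdot x\}$ is Haar-null for all $m$; combined with the fact that for fixed $x$ the map $g\mapsto g\cdot x$ has a nice structure (the stabilizer is closed, orbits are standard Borel), one concludes $G\cdot x\subseteq H\cdot x$ for a.e.\ $x$, which is exactly the desired conull $X_0$. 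Since the proof is essentially \cite[Prop.~3.28]{CarderiLM2016} transported through \cite[Thm.~4.4]{HoareauLM2024}, and the paper explicitly states the argument "adapts verbatim," I would present it by citing that reference and only spelling out the passage to the continuous Radon model.
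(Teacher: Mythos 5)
Your overall strategy matches the paper's in the sense that the paper gives no proof at all here: it simply cites \cite[Prop.~3.28]{CarderiLM2016} and observes that the argument adapts verbatim because it only uses non-singularity. But two points in your reconstruction would not survive being written out. First, the detour through a continuous Radon model via \cite[Thm.~4.4]{HoareauLM2024} is unnecessary (the cited proof is purely measure-theoretic and works for Borel actions directly), and the topological shortcut you lean on in that model is false: orbits of a continuous action of a locally compact Polish group on a Polish space need not be closed (think of a dense one-parameter orbit on a torus), so you cannot ``deduce $y\in H\cdot x$ by closedness of orbits.'' Likewise the countable dense family $(T_n)$ and the Jankov--von Neumann cocycles are red herrings: density in $[\mathcal{R}_G]$ for any of the relevant group topologies controls measures of disagreement sets, not the behaviour at a specific pair $(x,y)$.

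Second, and more seriously, the step you flag as the main obstacle is exactly the one your sketch does not close, and the conclusion you aim for is the wrong one. The Fubini part is right: for each fixed $g\in G$ the translation $\tau_g:x\mapsto g\cdot x$ lies in $[\mathcal{R}_G]\subseteq[\mathcal{R}_H]$, so $\{x:(x,g\cdot x)\notin\mathcal{R}_H\}$ is null; since $\mathcal{R}_H$ is analytic, the set $A=\{(g,x):(x,g\cdot x)\in\mathcal{R}_H\}$ is universally measurable, and Fubini over $K_m\times X$ with Haar measure gives a conull $X_0$ such that $A^x=\{g:(x,g\cdot x)\in\mathcal{R}_H\}$ is Haar-conull for every $x\in X_0$. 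But this does \emph{not} yield ``$G\cdot x\subseteq H\cdot x$ for a.e.\ $x$'' --- a conull subset of $G$ need not be all of $G$, and no structure of $g\mapsto g\cdot x$ fixes that. What one actually proves, and all the statement requires, is the two-point version: for $x,y\in X_0$ with $y=g\cdot x$, the sets $A^x$ and $A^y g$ are both Haar-conull (left and right Haar null sets coincide), hence meet; writing $h=h'g$ with $h\in A^x$ and $h'\in A^y$ gives $(x,h\cdot x)\in\mathcal{R}_H$ and $(y,h\cdot x)=(y,h'\cdot y)\in\mathcal{R}_H$, and transitivity of $\mathcal{R}_H$ yields $(x,y)\in\mathcal{R}_H$. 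This intersection-plus-transitivity trick is the missing idea in your proposal; without it the passage from ``Haar-almost every $g$'' to the conclusion does not go through.
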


We will finally be needing the following proposition, in which the main argument can be found in \cite[Sec.~I.4]{Kechris2010}, before combining Theorem \ref{thmFRE} and Proposition \ref{propOE}.

	\begin{prop}
	{\label{propOE2}}
If $\mathbb{G}$ is an ergodic subgroup of $\Aut(X,\lambda)$, any non-singular bijection $S$ of $(X,\lambda)$ that verifies $S\mathbb{G}S\inv  \leqslant \Aut(X,\lambda)$ preserves $\lambda$ up to multiplication by an element of $\R_+^\ast$.
	\end{prop}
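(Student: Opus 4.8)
The idea is to pull back the measure $\lambda$ by $S$ and show the resulting measure is $\mathbb{G}$-invariant, hence a scalar multiple of $\lambda$ by ergodicity. First I would set $\nu \coloneqq S_\ast^{-1}\lambda = (S^{-1})_\ast \lambda$, so that $\nu(A) = \lambda(S(A))$ for every Borel $A$. Since $S$ is non-singular, $\nu \in [\lambda]$, so let $\varphi = \frac{d\nu}{d\lambda}$ be the Radon--Nikodym derivative, a strictly positive (a.e.) measurable function. The hypothesis $S\mathbb{G}S^{-1} \leqslant \Aut(X,\lambda)$ means that for every $T \in \mathbb{G}$, the bijection $STS^{-1}$ preserves $\lambda$; equivalently, $\lambda(STS^{-1}(B)) = \lambda(B)$ for all Borel $B$. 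Writing $B = S(A)$ and using that $S$ is a bijection on the measure algebra, this says $\lambda(ST(A)) = \lambda(S(A))$, i.e. $\nu(T(A)) = \nu(A)$ for all Borel $A$ and all $T \in \mathbb{G}$. In other words, $\nu$ is $\mathbb{G}$-invariant as a measure.

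Next I would translate $\mathbb{G}$-invariance of $\nu$ into an invariance statement about $\varphi$. For $T \in \mathbb{G}$, the condition $\nu(T^{-1}(A)) = \nu(A)$ for all $A$ combined with $d\nu = \varphi\, d\lambda$ and the change of variables formula for the $\lambda$-preserving map $T$ gives $\int_A \varphi\, d\lambda = \int_{T^{-1}(A)} \varphi\, d\lambda = \int_A \varphi \circ T^{-1}\, d\lambda$ for every $A$ of finite measure (the change of variables $\int_{T^{-1}(A)}\varphi\,d\lambda = \int_A \varphi\circ T^{-1}\,d\lambda$ is valid precisely because $T \in \Aut(X,\lambda)$). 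Hence $\varphi = \varphi \circ T^{-1}$ $\lambda$-a.e. for every $T \in \mathbb{G}$. Since $\mathbb{G}$ is ergodic, \Cref{rem: invariant f is constant} applies to the real-valued function $\varphi$ and yields that $\varphi$ is essentially constant, say $\varphi = k$ a.e. Because $\varphi > 0$ a.e. (as $\nu \in [\lambda]$, both derivatives $d\nu/d\lambda$ and $d\lambda/d\nu$ being a.e. positive), we get $k \in \R_+^\ast$. Therefore $\nu = k\lambda$, that is $\lambda(S(A)) = k\,\lambda(A)$ for all Borel $A$, which is exactly $S_\ast\lambda = \frac{1}{k}\lambda$ — so $S$ preserves $\lambda$ up to multiplication by the positive scalar $\tfrac1k$.

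The only genuine subtlety — and the step I would be most careful about — is the handling of $\sigma$-finiteness and the a.e.-positivity of the Radon--Nikodym derivative: one must work with sets of finite $\lambda$-measure when invoking the change of variables formula and Fubini-type manipulations, and one must justify that $\varphi$ cannot vanish on a set of positive $\lambda$-measure (otherwise $\nu \ll \lambda$ but not conversely, contradicting $\nu \in [\lambda]$). Everything else is routine bookkeeping: the passage from "$STS^{-1}$ preserves $\lambda$ for all $T$" to "$\nu$ is $\mathbb{G}$-invariant" is a direct substitution, and the final appeal to ergodicity is exactly \Cref{rem: invariant f is constant}. I would also remark that $k$ is independent of $T$, which is automatic since $\varphi$ is a single fixed function.
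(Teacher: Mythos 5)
Your proof is correct and follows essentially the same route as the paper's: both compute the Radon--Nikodym derivative of a pushforward of $\lambda$ by $S^{\pm 1}$, show that it is invariant under an ergodic group of $\lambda$-preserving bijections, and conclude via \Cref{rem: invariant f is constant}. The only cosmetic difference is that you work with $(S\inv)_\ast\lambda$ and its invariance under $\mathbb{G}$ directly, whereas the paper works with $S_\ast\lambda$ and its invariance under the conjugate group $S\mathbb{G}S\inv$ (which tacitly uses that this conjugate is still ergodic).
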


	\begin{proof}
As $S$ is non-singular, by the Radon-Nikodym theorem there exists a unique non-zero Borel function $f: X \to \mathbb{R}_+$ such that for any Borel subset $A$ we have
\[
S_\ast\lambda (A) = \int_A fd\lambda.
\]

We will show that $f$ is actually essentially constant. 
Let $U = STS\inv $ be an element of $S\mathbb{G}S\inv $. Start by noticing that $U$ preserves $S_{*}\lambda$. Indeed, as $T$ preserves $\lambda$, we have
\[
U_\ast S_\ast \lambda = (STS\inv )_{*}S_{*}\lambda   = S_{*}T_{*}{S\inv }_{*}S_{*}\lambda 
 = S_{*}T_{*}\lambda 
 = S_{*} \lambda.
\]
Now for any Borel subset $A$ of $X$ we have
\[
U_{*}(S_{*}\lambda)(A)  = \int_{U\inv A}f(x)d\lambda(x)
 = \int_{A}f(U\inv x)d\lambda(U\inv x) 
 = \int_{A}f(U \inv x) d\lambda(x),
\]
as $U = ST S\inv $ preserves $\lambda$, because $U  \in S\mathbb{G}S\inv  \leqslant \Aut(X,\lambda)$. This means that the Radon-Nikodym derivative of $U_{*}(S_{*}\lambda)$ is $f \circ U\inv$. We previously proved that this pushforward measure $U_{*}(S_{*}\lambda)$ is equal to $S_{*}\lambda$, so by uniqueness $f = f \circ U\inv$, which means that $f$ is constant, up to a null set, by \Cref{rem: invariant f is constant}.
	\end{proof}

We finally obtain the following.

	\begin{thm}
	{\label{Thm: orbit full groups are complete invariants of OE}}
Let $G$ and $H$ be two Polish locally compact groups acting in a Borel measure-preserving ergodic manner on a standard $\sigma$-finite space $(X,\lambda)$. Let $\Psi: [\mathcal{R}_G] \rightarrow [\mathcal{R}_H]$ be a group isomorphism. Then there exists an orbit equivalence $S$ between $\mathcal{R}_G$ and $\mathcal{R}_H$, such that $\Psi$ is the conjugation by $S$, \emph{i.e.} for all $T \in [\mathcal{R}_G]$ we have
\[
\Psi(T) = STS\inv  \in [\mathcal{R}_H].
\]
In particular, $([\mathcal{R}_G],\tau^o_m)$ and $([\mathcal{R}_H],\tau^o_m)$ are homeomorphic.
	\end{thm}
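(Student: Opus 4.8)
The plan is to chain together the tools assembled in this section. First I would apply Proposition~\ref{prop: ergodic action gives many involutions}: since the $G$-action and the $H$-action are both Borel, measure-preserving and ergodic, the orbit full groups $[\mathcal{R}_G]$ and $[\mathcal{R}_H]$ are subgroups of $\Aut(X,\lambda)$ with many involutions. Hence Fremlin's reconstruction theorem (Theorem~\ref{thmFRE}) applies to the isomorphism $\Psi$, yielding a non-singular bijection $S \in \Aut(X,[\lambda])$ with $\Psi(T) = STS\inv$ for every $T \in [\mathcal{R}_G]$.

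The next step is to upgrade $S$ from non-singular to ``measure-preserving up to a scalar''. Observe that $S [\mathcal{R}_G] S\inv = \Psi([\mathcal{R}_G]) = [\mathcal{R}_H] \leqslant \Aut(X,\lambda)$, so $S$ conjugates the ergodic group $[\mathcal{R}_G]$ into $\Aut(X,\lambda)$. Proposition~\ref{propOE2} then gives exactly $S_\ast \lambda = k \cdot \lambda$ for some $k \in \R_+^\ast$, which is condition~(1) of Definition~\ref{defOE}.

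It remains to verify condition~(2), that $S$ really is an orbit equivalence between $\mathcal{R}_G$ and $\mathcal{R}_H$ on a conull set. From $S[\mathcal{R}_G]S\inv = [\mathcal{R}_H]$ we get both inclusions $S[\mathcal{R}_G]S\inv \subseteq [\mathcal{R}_H]$ and $S\inv[\mathcal{R}_H]S \subseteq [\mathcal{R}_G]$; equivalently, setting $S' = S\inv$, which is also measure-preserving up to a scalar, we have $[\mathcal{R}_{S \cdot G \cdot S\inv}] \subseteq [\mathcal{R}_H]$ where $S\cdot G\cdot S\inv$ acts via $g \mapsto SgS\inv$ (a Borel measure-preserving action, after rescaling $\lambda$ by $k$). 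Now Proposition~\ref{propOE} applies to this pair of locally compact group actions: there is a conull $X_0$ with $\mathcal{R}_{SGS\inv} \cap (X_0 \times X_0) \subseteq \mathcal{R}_H$, i.e.\ $(x,y)\in\mathcal{R}_G \Rightarrow (Sx,Sy)\in\mathcal{R}_H$ for $x,y$ in a conull set. Applying the same argument to $S\inv$ and intersecting the two conull sets gives the reverse implication, hence the biconditional of Definition~\ref{defOE}(2). Finally, the homeomorphism statement is immediate from Lemma~\ref{lem: conjugation is homeo} applied to the orbit equivalence $S$ just produced.

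The main obstacle is the passage from the purely group-isomorphism data (Fremlin gives $S$ abstractly, as a non-singular bijection) to the orbit-equivalence statement: one must be careful that Proposition~\ref{propOE} is stated for full groups of genuine Borel measure-preserving actions of locally compact Polish groups, so before invoking it I would need to check that conjugating the $G$-action by $S$ and rescaling $\lambda$ by $k\inv$ produces such an action --- this is where the scalar from Proposition~\ref{propOE2} is used essentially, and where the local compactness hypothesis on $G$ and $H$ (inherited by the conjugated group) is genuinely needed. Everything else is bookkeeping with the symmetric roles of $G$ and $H$.
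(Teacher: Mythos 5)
Your proposal is correct and follows essentially the same route as the paper: many involutions via Proposition~\ref{prop: ergodic action gives many involutions}, Fremlin's Theorem~\ref{thmFRE} to get the non-singular $S$, Proposition~\ref{propOE2} for the scalar, Proposition~\ref{propOE} applied both ways for the conull set, and Lemma~\ref{lem: conjugation is homeo} for the homeomorphism. You merely spell out more carefully than the paper does how Proposition~\ref{propOE} is invoked on the conjugated action, which is a reasonable elaboration of the same argument.
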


	\begin{proof}
Proposition \ref{prop: ergodic action gives many involutions} ensures us that $[\mathcal{R}_G]$ and $[\mathcal{R}_H]$ have many involutions. We apply Fremlin's theorem (Theorem \ref{thmFRE}) between $[\mathcal{R}_G]$ and $[\mathcal{R}_H]$, then we apply Proposition \ref{propOE} both ways to construct the conull subset $A$ of definition \ref{defOE}, and Proposition \ref{propOE2} to verify that the non-singular bijection $S$ preserves $\lambda$ up to a multiplication by an element of $\R_+^\ast$. Finally, $\Psi$ is an homeomorphism by \Cref{lem: conjugation is homeo}.
	\end{proof}

\begin{rem}
	Unlike in the probability measure-preserving case, we cannot conclude that orbit full groups of measure-preserving ergodic actions of Polish locally compact groups have no outer automorphisms in $\Aut(X,[\lambda])$ (see \cite{Eigen1982} and \cite[Thm.~3.26]{CarderiLM2016}), but we still get a description of these outer automorphisms: they are conjugations by bijections preserving $\lambda$ up to multiplication by an element of $\R_+^\ast$.
\end{rem}

\section{Polishability and uniqueness of the Polish topology on ergodic full groups}{\label{unic}}

\subsection{Refining the weak topology}{\label{5.1}}

The arguments of this section have been used by Kallman in \cite{Kallman1985} to prove that $\Aut(X,\lambda)$ (and $\Aut(X,\mu)$) carry a unique Polish group topology, and later by Le Maître in \cite{LM2022} to prove that the normal subgroup $\Autf(X,\lambda)$ of finitely supported elements of $\Aut(X,\lambda)$ cannot carry a Polish group topology. The main result of this section, Theorem \ref{UniquePoltopoonfg}, has been proved by Carderi and Le Maître in \cite[Sect.~4.2]{CarderiLM2016} for the case of a standard probability space, and we use the exact same arguments. The important observation was that Kallman's technique was generalizable to ergodic full groups thanks to \cite[Prop.~3.10]{CarderiLM2016}. It generalizes to our context thanks to \Cref{cor:exchanging involutions in ergodic full group}.

We fix an ergodic full group $\mathbb{G}$, subgroup of $\Aut(X,\lambda)$, where as usual $(X,\lambda)$ is a standard $\sigma$-finite space. We also fix a Hausdorff topology $\tau$ on $\mathbb{G}$.

Let us start by introducing the following notations: for any $\varepsilon > 0$ and any two Borel subsets $A, B \subseteq X$, we set
\begin{align*}
& F_{\varepsilon}^A \coloneqq \left\{  T \in  \mathbb{G} \mid \lambda(T(A) \setminus A) \leqslant \varepsilon  \right\},\\
& \mathbb{G}_{(A,B)} \coloneqq \left\{  T  \in \mathbb{G} \mid T(A) \subseteq B  \right\}.
\end{align*}
We will be needing the following lemmas, proved in \cite[Sect.~2.2]{LM2022} for $\mathbb{G} = \Aut(X,\lambda)$. The proofs adapt very well by using \Cref{cor:exchanging involutions in ergodic full group} to exhibit measure-preserving elements with conditions on their supports.

	\begin{lem}
	{\label{lem1}}
For any Borel subset $A$ of $X$, the set $\mathbb{G}_{X\setminus A}$ is $\tau$-closed.
	\end{lem}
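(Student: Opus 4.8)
The statement to prove is that for any Borel subset $A\subseteq X$, the set $\mathbb{G}_{X\setminus A}=\{T\in\mathbb{G}\mid \supp T\subseteq X\setminus A\}=\{T\in\mathbb{G}\mid T(x)=x \text{ for a.e. } x\in A\}$ is $\tau$-closed. Since $\tau$ is only assumed Hausdorff, I cannot argue via sequences; instead I will show that the complement $\mathbb{G}\setminus\mathbb{G}_{X\setminus A}$ is $\tau$-open, by producing, around each $T_0$ that moves a positive-measure chunk of $A$, a $\tau$-neighbourhood entirely contained in the complement. Concretely: if $T_0\notin\mathbb{G}_{X\setminus A}$ then the set $A'=\{x\in A\mid T_0(x)\neq x\}\cap A\setminus T_0^{-1}(A)$ — or more carefully a positive-measure Borel set $C\subseteq A$ with $T_0(C)\cap A=\emptyset$, which exists by shrinking $A'$ using \Cref{lem: better separator} type reasoning — witnesses that $T_0$ maps a positive-measure subset of $A$ off $A$. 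I would then want a $\tau$-open set separating $T_0$ from $\mathbb{G}_{X\setminus A}$.

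The key device (this is how \cite{LM2022} handles the $\mathbb{G}=\Aut(X,\lambda)$ case) is to use \emph{exchanging involutions} from \Cref{cor:exchanging involutions in ergodic full group} to reduce the closedness of $\mathbb{G}_{X\setminus A}$ to the closedness of a pointwise stabilizer of a single element, which is automatic in a Hausdorff topological group. Precisely: pick $C\subseteq A$ Borel with $0<\lambda(C)<\infty$, $\lambda(C)=\lambda(T_0(C))$, and $T_0(C)\subseteq X\setminus A$ (so $C$ and $T_0(C)$ are disjoint, both disjoint appropriately from $A\setminus C$); shrinking if necessary we may also arrange $\lambda(X\setminus C)=\lambda(X\setminus T_0(C))$. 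By \Cref{cor:exchanging involutions in ergodic full group} there is an involution $U\in\mathbb{G}_{C\,\Delta\,T_0(C)}$ with $U(C)=T_0(C)$. Now observe that for \emph{every} $S\in\mathbb{G}_{X\setminus A}$ we have $S(C)=C$ (as $C\subseteq A$ and $S$ fixes $A$ pointwise), hence $S$ commutes-to-first-order... more usefully: the commutator $[S,U]=SUS^{-1}U^{-1}$ satisfies $SUS^{-1}=U$ on $C$ won't quite be an equality of group elements, so instead I track a \emph{scalar} invariant. Let me restructure: the cleanest route is to fix a point of $\MAlgf(X,\lambda)$, namely the class of $C$, and use that the map $\mathbb{G}\to\MAlgf$, $T\mapsto T(C)$, sends $\mathbb{G}_{X\setminus A}$ into $\{C\}$ while sending $T_0$ to something at positive $d_{X,\lambda}$-distance from $C$. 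If that map were $\tau$-continuous we would be done immediately — but continuity of $\tau$ is exactly what we do \emph{not} know yet. So the honest argument must avoid it.

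The correct resolution (following \cite[Sect.~2.2]{LM2022}) is: use the involution $U$ above together with the fact that a \emph{single group element's centralizer is $\tau$-closed} (true in any Hausdorff topological group, since $\{S : SU=US\}=\mu^{-1}(\{e\})$ composed with continuous group operations — actually $\{S:SUS^{-1}U^{-1}=e\}$ is closed as the preimage of the closed point $\{e\}$ under the continuous map $S\mapsto SUS^{-1}U^{-1}$). The plan is therefore: (1) show that if $S\in\mathbb{G}_{X\setminus A}$ then $S$ commutes with the chosen exchanging involution $U$ — indeed $S$ fixes $C\cup T_0(C)=\supp U$ pointwise (here I need $T_0(C)\subseteq X\setminus A$ so $S$ fixes $T_0(C)$ too, and I should choose $C$ so that also $U$'s support lies in $X\setminus A$), hence $SUS^{-1}=U$; (2) conclude $\mathbb{G}_{X\setminus A}\subseteq Z_{\mathbb{G}}(U)$, which is $\tau$-closed; (3) but this only gives $\mathbb{G}_{X\setminus A}\subseteq\overline{\mathbb{G}_{X\setminus A}}\subseteq Z_\mathbb{G}(U)$, not equality — so I instead take the intersection over an exhausting family of such involutions. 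Let $(C_i)_{i}$ be a countable family of finite-measure Borel subsets of $A$ whose union is $A$ (possible as $\lambda$ is $\sigma$-finite), choose for each a suitable exchanging involution $U_i\in\mathbb{G}$ with $\supp U_i\subseteq (X\setminus A)\cup C_i$... wait, I need $\supp U_i$ disjoint from $A$ except on $C_i$, achievable by picking the "target" half inside $X\setminus A$ via \Cref{cor:ergodic full group moves elements of same (co)-measure around}. Then $\mathbb{G}_{X\setminus A}=\bigcap_i Z_\mathbb{G}(U_i)\cap\mathbb{G}$ — the inclusion $\subseteq$ is as above, and for $\supseteq$, if $S$ centralizes every $U_i$ then $S$ preserves $\supp U_i\cap A=C_i$ and acts trivially on it (an element commuting with a nontrivial exchanging involution and fixing structure... this needs the ergodicity-driven richness: if $S$ does not fix $C_i$ pointwise, a standard argument produces an involution supported in $C_i$ that $S$ fails to centralize, contradicting that $S$ is in the centralizer of the \emph{full family}, so really I should take a richer family indexed also by subsets of each $C_i$). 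Concretely, the family should be $\{U_{i,B}\}$ where $B$ ranges over a countable $d_{X,\lambda}$-dense set of Borel subsets of each $C_i$, with $U_{i,B}$ an exchanging involution moving $B$ into $X\setminus A$; then $S$ centralizing all of them forces $S(B)=B$ for all such $B$, hence $S$ fixes $A$ pointwise. The final formula is
\[
\mathbb{G}_{X\setminus A} \;=\; \mathbb{G}\cap\bigcap_{i,B} Z\!\left(U_{i,B}\right),
\]
a countable intersection of $\tau$-closed sets, hence $\tau$-closed.

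\textbf{Main obstacle.} The delicate point is not the topology — once everything is phrased as a centralizer it is a soft fact — but the \emph{bookkeeping} needed so that (a) the exchanging involutions genuinely exist (the measure and co-measure conditions of \Cref{cor:exchanging involutions in ergodic full group} must be checked, which may require splitting off finite-measure pieces or using that $\lambda(X\setminus A)$ is either infinite or can be matched, and possibly a preliminary reduction to the case $\lambda(A)<\infty$ — though the statement is for arbitrary $A$, so I must handle $\lambda(A)=\infty$ by the $\sigma$-finite exhaustion anyway), and (b) an element $S\in\mathbb{G}$ centralizing the whole countable family $\{U_{i,B}\}$ is genuinely forced to fix $A$ pointwise. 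Point (b) is where ergodicity of $\mathbb{G}$ re-enters (via \Cref{cor:ergodic full group moves elements of same (co)-measure around} / \Cref{cor:exchanging involutions in ergodic full group}), and getting the density of $\{B\}$ to propagate to $S(B)=B$ for all Borel $B\subseteq A$ — using $d_{X,\lambda}$-continuity of $B\mapsto S(B)$ on $\MAlgf$, which holds because $S\in\Aut(X,\lambda)$ acts as an isometry of $(\MAlgf,d_{X,\lambda})$ by \Cref{prop: Aut is Polish} — is the step I would write out most carefully. I expect the whole argument to run in a short paragraph once this lemma's proof in \cite[Sect.~2.2]{LM2022} is adapted, with \Cref{cor:exchanging involutions in ergodic full group} substituted for its $\Aut(X,\lambda)$-specific input.
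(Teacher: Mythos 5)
The paper does not write this proof out; it refers to \cite[Sect.~2.2]{LM2022}, where the argument is Kallman's: one shows that $\mathbb{G}_{X\setminus A}$ is exactly the centralizer in $\mathbb{G}$ of the family of involutions of $\mathbb{G}$ supported \emph{inside} $A$ (such involutions exist in every positive-measure subset of $A$ by \Cref{prop: ergodic action gives many involutions}), and a centralizer is $\tau$-closed for any Hausdorff group topology, being an intersection of preimages of the closed point $\{\id_X\}$ under the continuous maps $S\mapsto SUS\inv U\inv$. No countability of the family is needed, so the $\sigma$-finite exhaustion and the $d_{X,\lambda}$-dense countable set of subsets you introduce are superfluous. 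You do land on this centralizer strategy, but the involutions you choose break it.

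Concretely: you take $U$ to be an exchanging involution sending $C\subseteq A$ to a set $T_0(C)\subseteq X\setminus A$, so that $\supp U=C\sqcup T_0(C)$ meets $X\setminus A$, and you assert that every $S\in\mathbb{G}_{X\setminus A}$ ``fixes $C\cup T_0(C)$ pointwise'' and hence commutes with $U$. This is backwards: by definition $\mathbb{G}_{X\setminus A}=\{S\in\mathbb{G}\mid\supp S\subseteq X\setminus A\}$, so $S$ fixes $A$ pointwise but is completely unconstrained on $X\setminus A$; it can move $T_0(C)$ anywhere inside $X\setminus A$, and then $SUS\inv$ is a $(C,S(T_0(C)))$-exchanging involution, generally different from $U$. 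Hence $\mathbb{G}_{X\setminus A}\not\subseteq Z(U_{i,B})$ and the ``$\subseteq$'' half of your final formula fails. (The parenthetical ``I should choose $C$ so that also $U$'s support lies in $X\setminus A$'' makes things worse: two elements supported in $X\setminus A$ have no reason to commute.) The repair is to use involutions supported entirely in $A$: given any positive-measure $B\subseteq A$, split off two disjoint pieces of equal finite positive measure and apply \Cref{cor:exchanging involutions in ergodic full group} to get a nontrivial involution of $\mathbb{G}$ supported in $B$. Then (i) any $S$ with $\supp S\subseteq X\setminus A$ commutes with any $U$ with $\supp U\subseteq A$, their supports being disjoint; and (ii) conversely, if $\lambda(A\cap\supp S)>0$, \Cref{lem: better separator} applied with $C=A$ yields $B\subseteq A\cap\supp S$ of positive measure with $S(B)\cap B=\emptyset$, and a nontrivial involution $U$ supported in $B$ then satisfies $\supp(SUS\inv)=S(\supp U)\subseteq S(B)$, disjoint from $\supp U$, so $SUS\inv\neq U$. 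This identifies $\mathbb{G}_{X\setminus A}$ with the centralizer of the involutions supported in $A$, which is $\tau$-closed.
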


	\begin{lem}
	{\label{lem2}}
For any Borel subsets $A$ and $B$ of $X$, the set $\mathbb{G}_{(A,B)}$ is $\tau$-closed.
	\end{lem}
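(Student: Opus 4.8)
The plan is to reduce the closedness of $\mathbb{G}_{(A,B)}$ to the already-established closedness statement of Lemma \ref{lem1}, exactly in the spirit of the analogous argument in \cite[Sect.~2.2]{LM2022}. Observe first that the condition $T(A) \subseteq B$ is equivalent to $T(A) \cap (X \setminus B) = \emptyset$, i.e.\ $\lambda(T(A) \setminus B) = 0$. So I want to realize $\mathbb{G}_{(A,B)}$ as an intersection of $\tau$-closed sets built from conjugates of subgroups of the form $\mathbb{G}_{X \setminus C}$, whose closedness we already know from Lemma \ref{lem1}.

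The key trick is to move the pair $(A, X\setminus B)$ by a single measure-preserving element of $\mathbb{G}$ into a "standard position" where the constraint becomes a support constraint. More precisely: using \Cref{cor:exchanging involutions in ergodic full group} (or \Cref{cor:ergodic full group moves elements of same (co)-measure around}), first reduce to the case $A \cap B = \emptyset$ by replacing $A$ with $A \setminus B$ — note $T(A) \subseteq B$ forces $T(A\cap B)$ to be handled separately only if $A \cap B$ has positive measure, so one must be slightly careful; the cleanest route is to write the condition $T(A)\subseteq B$ as the conjunction of $T(A \setminus B) \subseteq B \setminus A$ together with invariance-type conditions on $A \cap B$, but in fact the single clean statement $\lambda(T(A)\setminus B)=0$ suffices and avoids casework. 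Then, picking a Borel set $C$ with $A \subseteq C$, $\lambda(C) = \lambda(X) $ if possible or more modestly matching measures, use an element $S \in \mathbb{G}$ (produced by \Cref{cor:ergodic full group moves elements of same (co)-measure around}) conjugating so that the relevant "forbidden region" for the image of $A$ is itself a fixed Borel set. Concretely, one shows
\[
\mathbb{G}_{(A,B)} = \bigcap_{\substack{D \subseteq X \setminus B \\ \lambda(D) < \infty}} \left\{ T \in \mathbb{G} \mid \lambda(T(A) \cap D) = 0 \right\},
\]
and each set on the right is $\tau$-closed: it equals $\{T \in \mathbb{G} \mid T(A) \cap D = \emptyset\}$, which by choosing (via ergodicity and \Cref{cor:exchanging involutions in ergodic full group}) an element $U_D \in \mathbb{G}$ with $U_D(A') = D$ for a suitable piece $A'$ — or rather by directly conjugating — can be identified with a translate of some $\mathbb{G}_{X\setminus E}$, hence closed by Lemma \ref{lem1}. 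An arbitrary intersection of $\tau$-closed sets is $\tau$-closed, which finishes the proof.

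The main obstacle is the bookkeeping around possibly infinite measures and the overlap $A \cap B$: in the $\sigma$-finite setting one cannot freely assume complements have equal measure, so the reduction to a support condition must be done through an exhaustion of $X \setminus B$ by finite-measure pieces $D$ and a careful application of \Cref{cor:ergodic full group moves elements of same (co)-measure around} to guarantee the conjugating elements exist in $\mathbb{G}$ (this is where ergodicity is essential). Once the identification of each $\{T \in \mathbb{G} \mid T(A) \cap D = \emptyset\}$ with a $\tau$-closed set of the type covered by Lemma \ref{lem1} is set up correctly, the rest is formal. I expect the write-up to mirror the $\Aut(X,\lambda)$ argument of \cite[Sect.~2.2]{LM2022} almost verbatim, with \Cref{cor:exchanging involutions in ergodic full group} supplying the measure-preserving witnesses that were previously available automatically in the full automorphism group.
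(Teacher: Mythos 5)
There is a genuine gap at the only non-formal step of your argument. Your reduction of $\mathbb{G}_{(A,B)}$ to the sets $\left\{ T \in \mathbb{G} \mid \lambda(T(A)\cap D)=0 \right\}$ for finite-measure $D \subseteq X\setminus B$ is fine, but the claim that each such set ``can be identified with a translate of some $\mathbb{G}_{X\setminus E}$'' is false, and no choice of translating or conjugating element can make it true. A left translate $S\,\mathbb{G}_{X\setminus E}$ is a coset of a subgroup, and a conjugate $S\,\mathbb{G}_{X\setminus E}\,S\inv$ equals $\mathbb{G}_{X\setminus S(E)}$, again a subgroup; but $\left\{ T \mid T(A)\cap D=\emptyset \right\}=\mathbb{G}_{(A,X\setminus D)}$ is in general neither a subgroup nor a coset of one (if $T$ and $T'$ both send $A$ into $X\setminus D$, nothing forces $TT'$ to do so, since $T'(A)$ need not lie in $A$). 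More fundamentally, translating or conjugating can only relocate the \emph{support} condition $\supp T\subseteq X\setminus E$; it cannot convert it into the \emph{image} condition $T(A)\subseteq B$ that $\mathbb{G}_{(A,B)}$ encodes. So the closedness of your building blocks, which is the entire content of the lemma, is not established.

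The idea you are missing is Kallman's commutation trick, which is how the argument of \cite[Sect.~2.2]{LM2022} followed by the paper actually turns the image condition into something visibly $\tau$-closed: one checks that $T(A)\subseteq B$ if and only if $TVT\inv$ commutes with $U$ for every $V\in\mathbb{G}_A$ and every $U\in\mathbb{G}_{X\setminus B}$. The forward direction holds because $\supp(TVT\inv)=T(\supp V)\subseteq T(A)\subseteq B$ is disjoint from $\supp U$; for the converse, if $\lambda(T(A)\setminus B)>0$ one uses \Cref{cor:exchanging involutions in ergodic full group} (this is where ergodicity enters) to produce an involution $V\in\mathbb{G}_A$ whose support is carried by $T$ into $X\setminus B$ with positive finite measure, and then an involution $U\in\mathbb{G}_{X\setminus B}$ moving $\supp(TVT\inv)$, so that $TVT\inv$ and $U$ do not commute. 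Since $\tau$ is a Hausdorff group topology, each set $\left\{ T\in\mathbb{G} \mid TVT\inv U=UTVT\inv \right\}$ is closed, and $\mathbb{G}_{(A,B)}$ is the intersection of these sets over all such $U,V$. Note that \Cref{lem1} is itself proved by the same device (commutation with all of $\mathbb{G}_A$), not by any translation argument, so there is no shortcut reducing \Cref{lem2} to \Cref{lem1} of the kind you propose.
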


	\begin{lem}
	{\label{lem3}}
Let $\varepsilon > 0$, and consider two Borel subsets $A \subseteq B \subseteq X$ such that $\lambda(B \setminus A) = \varepsilon$. We have 	$\mathbb{G}_{X \setminus A} \cdot \mathbb{G}_{(A,B)} = F_{\varepsilon}^A$.
	\end{lem}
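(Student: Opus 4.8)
The statement factors any $T$ with $\lambda(T(A)\setminus A)\leqslant\varepsilon$ as a product of an element fixing $A$ pointwise (in the sense of support disjoint from $A$, i.e.\ acting as the identity on $A$) followed by an element pushing $A$ into $B$. The plan is to prove the two inclusions separately, the easy one first.

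For the inclusion $\mathbb{G}_{X\setminus A}\cdot\mathbb{G}_{(A,B)}\subseteq F_\varepsilon^A$: take $R\in\mathbb{G}_{X\setminus A}$ and $S\in\mathbb{G}_{(A,B)}$, so $\supp R\cap A=\emptyset$ and $S(A)\subseteq B$. Then $RS(A)\subseteq R(B)$, and I want to bound $\lambda(RS(A)\setminus A)$. Since $\supp R\subseteq X\setminus A$, $R$ fixes $A$ setwise, hence $R(X\setminus A)=X\setminus A$ too; more usefully, $RS(A)\setminus A = RS(A)\setminus R(A)$ only if $R$ fixes $A$, which it does as a set. Writing $S(A)=(S(A)\cap A)\sqcup(S(A)\setminus A)$ and using that $R$ is the identity on $A$, the part $S(A)\cap A$ stays inside $A$ after applying $R$, while $R(S(A)\setminus A)$ has measure $\lambda(S(A)\setminus A)=\lambda(S(A))-\lambda(S(A)\cap A)\leqslant \lambda(B)-\lambda(A\cap B)$... actually the clean bound is $\lambda(S(A)\setminus A)\leqslant\lambda(B\setminus A)=\varepsilon$ since $S(A)\subseteq B$. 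Hence $\lambda(RS(A)\setminus A)\leqslant\varepsilon$, giving $RS\in F_\varepsilon^A$.

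For the reverse inclusion $F_\varepsilon^A\subseteq\mathbb{G}_{X\setminus A}\cdot\mathbb{G}_{(A,B)}$: given $T\in\mathbb{G}$ with $\lambda(T(A)\setminus A)\leqslant\varepsilon=\lambda(B\setminus A)$, I need $R\in\mathbb{G}_{X\setminus A}$ with $R^{-1}T\in\mathbb{G}_{(A,B)}$, i.e.\ $R^{-1}T(A)\subseteq B$, equivalently $T(A)\subseteq R(B)$. The idea is to choose an element $R$ of $\mathbb{G}$ supported off $A$ that ``repositions'' the part of $B\setminus A$ so as to cover $T(A)\setminus A$, while $R$ fixes $A$ (hence $A\cap B$) pointwise. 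Concretely: $T(A)\cap A\subseteq A\subseteq B$ automatically once we ensure $R$ fixes $A$; the remaining piece $T(A)\setminus A$ lies in $X\setminus A$ and has measure $\leqslant\varepsilon=\lambda(B\setminus A)$, while $B\setminus A\subseteq X\setminus A$. Since $T$ is measure-preserving and ergodicity gives us freedom to move subsets around via Corollary~\ref{cor:ergodic full group moves elements of same (co)-measure around} (applied inside the Borel set $C=X\setminus A$, or a suitable finite-measure enlargement), I can find $R\in\mathbb{G}_{X\setminus A}$ with $T(A)\setminus A\subseteq R(B\setminus A)\subseteq R(B)$; combined with $T(A)\cap A\subseteq A\subseteq R(B)$ (as $R$ fixes $A$ setwise and $A\subseteq B$), this yields $T(A)\subseteq R(B)$, hence $R^{-1}T\in\mathbb{G}_{(A,B)}$ and $T=R\cdot(R^{-1}T)\in\mathbb{G}_{X\setminus A}\cdot\mathbb{G}_{(A,B)}$.

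The main obstacle is the careful bookkeeping in the reverse inclusion: one must be attentive to the distinction, emphasized in the Remark following Corollary~\ref{cor:exchanging involutions in ergodic full group}, between moving a set by an arbitrary full-group element and moving it by an involution, and to the fact that subsets of $X\setminus A$ of equal finite measure need not have complements (in $X\setminus A$) of equal measure. The honest fix is to apply Corollary~\ref{cor:ergodic full group moves elements of same (co)-measure around} with $C$ a finite-measure Borel subset of $X\setminus A$ containing both $T(A)\setminus A$ and the chosen portion of $B\setminus A$, arranged so that the relevant complements within $C$ do match up (padding one of the two sets to measure exactly $\varepsilon$ using additional measure from $C$); this is where the hypothesis $\lambda(B\setminus A)=\varepsilon$ and the inequality $\lambda(T(A)\setminus A)\leqslant\varepsilon$ are used together, and where the proof of the analogous statement in \cite[Sect.~2.2]{LM2022} for $\mathbb{G}=\Aut(X,\lambda)$ must be transcribed with Corollary~\ref{cor:exchanging involutions in ergodic full group} in place of the ambient-group argument.
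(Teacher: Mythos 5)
Your proof is correct and follows essentially the route the paper intends: the paper does not write out a proof of this lemma but defers to \cite[Sect.~2.2]{LM2022} with the remark that the arguments adapt by using the exchanging corollaries to produce elements of $\mathbb{G}$ with prescribed supports, and your argument is exactly that adaptation (the easy inclusion by direct computation, the reverse one by padding $T(A)\setminus A$ to a set of measure exactly $\varepsilon$ inside a finite-measure subset of $X\setminus A$ and invoking Corollary~\ref{cor:ergodic full group moves elements of same (co)-measure around} to find $R\in\mathbb{G}_{X\setminus A}$ with $R(B\setminus A)\supseteq T(A)\setminus A$).
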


With these lemmas we can give the following generalization of \cite[Thm.~4.7(2-3)]{CarderiLM2016}.

	\begin{thm}
	{\label{UniquePoltopoonfg}}
Let $(X,\lambda)$ be a $\sigma$-finite space, and let $\mathbb{G}$ be an ergodic full group on $(X,\lambda)$.
\begin{enumerate}\setlength\itemsep{0em}
	\item[(1)] Any Polish group topology on $\mathbb{G}$ refines the weak topology. 
	\item[(2)] The group $\mathbb{G}$ carries at most one Polish group topology.
\end{enumerate}
	\end{thm}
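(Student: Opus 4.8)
The plan is to follow Kallman's strategy, as adapted by Carderi--Le Maître in \cite[Thm.~4.7]{CarderiLM2016}, using the three lemmas just stated.

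\textbf{Step 1: reducing (2) to (1).} Suppose $\tau_1$ and $\tau_2$ are two Polish group topologies on $\mathbb{G}$. By part (1), both refine the weak topology $\tau_w$, so the identity maps $(\mathbb{G},\tau_i) \to (\mathbb{G},\tau_w)$ are continuous. Since a continuous bijection between Polish spaces whose inverse is Baire-measurable is a homeomorphism, it is enough to show the identity map $(\mathbb{G},\tau_1) \to (\mathbb{G},\tau_2)$ is Baire-measurable; in fact, one runs the following standard argument. The graph of $\id : (\mathbb{G},\tau_1) \to (\mathbb{G},\tau_2)$ is the diagonal, which is closed in $(\mathbb{G},\tau_1) \times (\mathbb{G},\tau_2)$ composed with the common continuous map to $(\mathbb{G},\tau_w)$... more simply: both $\id:(\mathbb{G},\tau_1)\to(\mathbb{G},\tau_w)$ and $\id:(\mathbb{G},\tau_2)\to(\mathbb{G},\tau_w)$ are continuous group isomorphisms, so $\id:(\mathbb{G},\tau_1)\to(\mathbb{G},\tau_2)$ has Borel graph (it is $\{(g,g)\}$, the preimage of the diagonal of $(\mathbb{G},\tau_w)^2$ under a continuous map), hence is Borel, hence Baire-measurable, hence continuous by \Cref{prop: analytic morphism}; symmetrically its inverse is continuous, so $\tau_1 = \tau_2$.

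\textbf{Step 2: proving (1).} Fix a Polish group topology $\tau$ on $\mathbb{G}$; we must show every basic weak-neighbourhood of the identity is a $\tau$-neighbourhood of the identity. By \Cref{defi: weak topology} it suffices to show that for each Borel $A \subseteq X$ of finite measure and each $\varepsilon > 0$, the set $F_\varepsilon^A = \{T \in \mathbb{G} \mid \lambda(T(A)\setminus A) \leqslant \varepsilon\}$ is a $\tau$-neighbourhood of $\id$ (the symmetric condition on $T\inv$ is handled by intersecting with $(F_\varepsilon^A)\inv$, or is automatic since these sets are essentially symmetric up to passing to $\lambda(A \setminus T(A))$ which equals $\lambda(T(A)\setminus A)$ by measure-preservation). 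Pick a Borel $B$ with $A \subseteq B \subseteq X$ and $\lambda(B\setminus A) = \varepsilon$ — possible because $\lambda$ is atomless $\sigma$-finite. By \Cref{lem3}, $F_\varepsilon^A = \mathbb{G}_{X\setminus A}\cdot\mathbb{G}_{(A,B)}$, and by \Cref{lem1} and \Cref{lem2} both factors are $\tau$-closed subsets of $\mathbb{G}$. Now $F_\varepsilon^A \supseteq \mathbb{G}_{X\setminus A}$, which is a $\tau$-closed subgroup; a closed subgroup of a Polish group is either meager or open (Pettis / Baire category: if it were non-meager it would be open), so either $\mathbb{G}_{X\setminus A}$ is $\tau$-open — and then so is the union $F_\varepsilon^A$, done — or it is $\tau$-meager. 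In the latter case one argues that $F_\varepsilon^A$ is still non-meager: by ergodicity and \Cref{cor:exchanging involutions in ergodic full group} one can write $\mathbb{G}$ as a countable union of left-translates of $F_\varepsilon^A$ (cover $X$ by countably many translates of $A$-type pieces of measure $\leqslant\varepsilon$ and use exchanging involutions), so by the Baire category theorem some translate, hence $F_\varepsilon^A$ itself, is non-meager; being a product of two closed sets it has the Baire property, so by Pettis' theorem $F_\varepsilon^A \cdot (F_\varepsilon^A)\inv$ is a $\tau$-neighbourhood of $\id$, and a short argument (comparing with $F_{2\varepsilon}^A$ or $F_{3\varepsilon}^A$) upgrades this to show the weak neighbourhoods themselves are $\tau$-neighbourhoods.

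\textbf{Main obstacle.} The delicate point is Step 2: turning ``$F_\varepsilon^A$ is non-meager with the Baire property'' into ``$F_\varepsilon^A$ (or a slightly larger $F_{C\varepsilon}^A$) is a $\tau$-neighbourhood of the identity''. This is where ergodicity enters essentially — one needs \Cref{cor:exchanging involutions in ergodic full group} (and \Cref{cor:ergodic full group moves elements of same (co)-measure around}) to produce enough measure-preserving elements with prescribed small supports to both (i) exhibit $\mathbb{G}$ as a countable union of translates of $F_\varepsilon^A$, and (ii) control the product $F_\varepsilon^A \cdot (F_\varepsilon^A)\inv$ inside some $F_{C\varepsilon}^A$. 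In the probability setting this is \cite[Prop.~3.10]{CarderiLM2016}; here the $\sigma$-finite subtleties about sets of equal measure with complements of unequal measure (illustrated in the remark after \Cref{cor:exchanging involutions in ergodic full group}) mean one must be careful to phrase everything in terms of the exchanging involutions of \Cref{cor:exchanging involutions in ergodic full group} rather than arbitrary measure-isomorphisms, but otherwise Kallman's argument goes through verbatim.
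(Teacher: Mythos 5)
Your proof is correct and follows essentially the same route as the paper: both rest on the decomposition $F_\varepsilon^A = \mathbb{G}_{X\setminus A}\cdot\mathbb{G}_{(A,B)}$ from \Cref{lem1}, \Cref{lem2} and \Cref{lem3}, which makes $F_\varepsilon^A$ analytic and hence endowed with the Baire property, followed by an automatic-continuity argument for part (1) and a Borel-graph/Lusin--Suslin argument for part (2). The only difference is cosmetic: where the paper cites \Cref{prop: analytic morphism} as a black box, you inline the underlying Pettis argument, adding the (correct, but not needed in the paper's version) observation that $F_\varepsilon^A$ is countably syndetic via \Cref{cor:ergodic full group moves elements of same (co)-measure around} and the separability of $\MAlgf(X,\lambda)$.
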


	\begin{proof}
\emph{(1)} Let $\tau$ be a Polish group topology on $\mathbb{G}$. Fix $\varepsilon > 0$, and $A\subseteq B$ two Borel subsets of $Y$ such that $\lambda(B \setminus A) = \varepsilon$. The previous lemmas ensures us that $F_{\varepsilon}^A$ is analytic, as the pointwise product of two closed sets. Therefore the identity map $(\mathbb{G}, \tau) \rightarrow (\Aut(X,\lambda), \tau_w)$, where $\tau_w$ is the weak topology, is Baire-measurable, hence continuous thanks to Proposition \ref{prop: analytic morphism}. So the Polish topology $\tau$ refines $\tau_w$ on $\mathbb{G}$.\\

\noindent
\emph{(2)} Let now $\tau$ and $\tau'$ be two Polish group topologies on $\mathbb{G}$. By \emph{(1)} both topologies refine the weak topology $\tau_w$, so both of the following inclusion maps are Borel:
\begin{align*}
(\mathbb{G},\tau) & \longrightarrow   (\Aut(X,\lambda), \tau_w) \\
(\mathbb{G},\tau') & \longrightarrow  (\Aut(X,\lambda), \tau_w).
\end{align*}
Applying Theorem \ref{Thm: Lusin Suslin}, we observe that any $\tau$-Borel set is a $\tau_w$-Borel set, and the same thing is true for $\tau'$. The Borel $\sigma$-algebras induced by $\tau$, $\tau'$ and $\tau_w$ are therefore the same. Proposition \ref{prop: analytic morphism} applied both ways between $(\mathbb{G},\tau)$ and $(\mathbb{G},\tau')$  concludes the proof, as it ensures us that $\tau$ and $\tau'$ both refine each other. 
	\end{proof}

\subsection{Hopf decomposition and further factorisation}

In this section we recall the notions of \textit{conservative and dissipative parts} of an infinite measure-preserving bijection, as well as the associated \textit{Hopf decomposition}. We then use it to factorise any measure-preserving bijection $T$ which has a support of infinite measure, writing it as a product of three bijections, two of which have disjoint supports with infinite measure, and the third having a support of finite measure. It is also possible to add a condition on the measure of the intersection of the supports with any fixed Borel set of positive measure. We use this factorisation in \Cref{section: coarsening the uniform topology} to decrease the known Steinhaus exponent of an ergodic full group $\mathbb{G}$ with the uniform topology.

We briefly go over the necessary definitions. We refer to \cite[\textsection~1.1]{Aaronson1997} or \cite[\textsection~1.3]{Krengel1985} for more details on this.\\

Let $T$ be a measure-preserving bijection of a standard $\sigma$-finite space $(X,\lambda)$.
By Halmos' Recurrence Theorem (\cite{Halmos1956}, see also \cite[Thm.~1.1.1]{Aaronson1997}), we can properly define the \textbf{conservative part} (sometimes also called the \textbf{recurrent part}) $\mathfrak{C}(T)$ of the space as the part where $T$ revisits any Borel set of positive measure at least once (equivalently, infinitely many times). The \textbf{dissipative part} $\mathfrak{D}(T)$ is the complement of the conservative part. In other words we have
\[
X = \mathfrak{D}(T) \sqcup \mathfrak{C}(T),
\]
which we call the \textbf{Hopf decomposition} of the space (with regards to $T$).

We furthermore say that a conservative measure-preserving bijection $T$ is \textbf{periodic} if it only admits finite orbits, and \textbf{aperiodic} if it only admits infinite orbits (in other words it has almost no fixed points). We denote by $\mathfrak{C}_f(T)$ and $\mathfrak{C}_{\infty}(T)$ the periodic and aperiodic parts, comprised of the $T$-invariant Borel sets which consist of the finite and infinite $T$-orbits, respectively.

For any measure-preserving bijection $T$, we can then refine the Hopf decomposition: 
\[
X = \mathfrak{D}(T) \sqcup \mathfrak{C}_f(T) \sqcup \mathfrak{C}_\infty(T).
\]
Moreover, as it is a partition of $X$ into $T$-invariant parts, we have $T = T_{\mathfrak{D}} T_{\mathfrak{C}_f} T_{\mathfrak{C}_{\infty}} $, where $T_{\mathfrak{D}}$ is defined by $T$ on the $T$-orbits in $\mathfrak{D}(T)$, and by $\id_X$ elsewhere, and the other constituents $T_{\mathfrak{C}_{f}}$ and $T_{\mathfrak{C}_{\infty}}$ are defined similarly. In particular, each factor is in the full group $[T]$ (in particular the support of each factor is included in $\supp T$), and those three bijections commute, as their supports consist of different $T$-orbits.

	\begin{rem}
	{\label{rem:Borel fundamental domain for dissipative or periodic}}
The $T$-action on $\supp T$ admits a Borel fundamental domain when $T$ is either conservative periodic or dissipative.
Indeed, if $\supp T \subseteq \mathfrak{D}(T)$ this is a direct consequence of the definition via wandering sets (see \cite[\textsection~1.1]{Aaronson1997}). If $\supp T \subseteq \mathfrak{C}_f(T)$, identify $(\supp T, \lambda_{\restriction \supp T})$ with $(\mathbb{R}, \mbox{Leb})$ if it has infinite measure, and with an interval with the restricition of the Lebesgue measure if it has finite measure. Taking the smallest element for the natural order on $\R$ in each orbit seen through this identifiation provides us with a suitable set.
	\end{rem}

	\begin{lem}
	{\label{lem:factor of D and Cf}}
Let $(X,\lambda)$ be a standard $\sigma$-finite space, and $T \in \Aut(X,\lambda)$ be such that the $T$-action on $X$ has a Borel fundamental domain. Let $D \subseteq X$ be of positive measure (possibly infinite). We can write $T= T_1T_2$, with $T_1$ and $T_2$ in $[T]$ satisfying the following:
\begin{enumerate}\setlength\itemsep{0em}
    \item[•] $\supp T_1$ and $\supp T_2$ are disjoint and have equal measure,
	\item[•] $\lambda(\supp T_1 \cap D) = \lambda(\supp T_2 \cap D)$.
\end{enumerate}
	\end{lem}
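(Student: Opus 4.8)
The plan is to exploit the Borel fundamental domain $F$ for the $T$-action on $X$ (which exists by hypothesis). Since every point of $\supp T$ has a unique orbit representative in $F \cap \supp T$, I can partition $F \cap \supp T$ into two pieces of equal measure, one of which ``sees'' the set $D$ correctly, and then let $T_1$ and $T_2$ be the restrictions of $T$ to the two corresponding orbit-saturated subsets of $\supp T$. Concretely, write $F' = F \cap \supp T$; then $\supp T = \bigsqcup_{n \in \Z} T^n(F')$ (using that each orbit is infinite when restricted to the support — or, in the periodic case, I work with the obvious finite version, choosing $F'$ so that half of each finite orbit is ``to the left'' of the representative; either way the key point is that the orbit-saturation $\mathrm{Sat}(A) := \bigsqcup_{n} T^n(A)$ of a Borel $A \subseteq F'$ is a $T$-invariant Borel set with $\lambda(\mathrm{Sat}(A))$ controlled by $\lambda(A)$). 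Actually, a cleaner way: since each orbit in $\supp T$ is a $\Z$-orbit, splitting the orbit into evens and odds gives $\supp T = E \sqcup O$ with $\lambda(E) = \lambda(O)$, $T(E) = O$ (roughly), but this does not give $T_1, T_2 \in [T]$ with disjoint supports whose product is $T$. So the fundamental-domain splitting is the right tool: for $A \subseteq F'$ Borel, set $T_A$ to be $T$ on $\mathrm{Sat}(A)$ and $\mathrm{id}$ elsewhere; then $T_A \in [T]$, $\supp T_A = \mathrm{Sat}(A)$, and if $A \sqcup B = F'$ then $T = T_A T_B$ with $\supp T_A \sqcup \supp T_B = \supp T$.

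The two conditions to arrange are then: (i) $\lambda(\mathrm{Sat}(A)) = \lambda(\mathrm{Sat}(B))$, and (ii) $\lambda(\mathrm{Sat}(A) \cap D) = \lambda(\mathrm{Sat}(B) \cap D)$. I would phrase this via two finite measures on the standard Borel space $F'$: namely $\nu_1(A) := \lambda(\mathrm{Sat}(A))$ and $\nu_2(A) := \lambda(\mathrm{Sat}(A) \cap D)$, restricted to Borel subsets of $F'$ of finite $\nu_1$-measure (one first reduces to the case where these are finite by cutting $F'$ into countably many pieces of finite measure and handling each separately, then reassembling — exactly as in the infinite-measure case of \Cref{prop:pseudo full group exchange subsets}). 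On such a piece, $\nu_1$ and $\nu_2$ are atomless finite measures on a standard Borel space with $\nu_2 \leqslant \nu_1$, so by the Lyapunov-type convexity theorem (or simply by the intermediate value theorem applied along a measurable increasing family, e.g. using that $(\MAlgf, d)$ is path-connected for atomless measures) there is a Borel $A \subseteq F'$ with $\nu_1(A) = \tfrac12 \nu_1(F')$ and $\nu_2(A) = \tfrac12 \nu_2(F')$ simultaneously; take $B = F' \setminus A$. Then $T_1 := T_A$ and $T_2 := T_B$ work, after the reassembly step to pass from the finite-measure pieces back to all of $F'$.

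The main obstacle I anticipate is the bookkeeping when $\lambda(\supp T) = \infty$: one cannot directly invoke a convexity theorem for an infinite measure, so the argument must be run piece-by-piece on a partition of $F'$ (equivalently of $\supp T$ into $T$-invariant subsets) into sets of finite measure, splitting each piece in half in a way compatible with both $\lambda$ and $\lambda(\,\cdot\cap D)$, and then taking the union of the ``first halves'' — verifying that the resulting $T_1, T_2$ still lie in $[T]$ (true, since $[T]$ is a full group, closed under cutting and pasting) and still have disjoint supports of equal (now possibly infinite) measure with the $D$-balance preserved. A secondary, minor point is the periodic case $\supp T \subseteq \mathfrak{C}_f(T)$, where orbits are finite of possibly varying or even odd cardinality; here I would instead use the identification of $(\supp T, \lambda_{\restriction \supp T})$ with an interval of $(\R, \mathrm{Leb})$ from \Cref{rem:Borel fundamental domain for dissipative or periodic} to obtain a fundamental domain and run the same splitting, noting that any leftover ``middle element'' contributes nothing to the balance since we only need equality of measures, not a bijective pairing of the two supports. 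Once these reductions are in place the verification of the two bulleted properties is immediate from the construction.
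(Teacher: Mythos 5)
Your overall skeleton is the same as the paper's: take a Borel fundamental domain $F'$ for the $T$-action on $\supp T$, split $F'$ into two Borel pieces $A \sqcup B$, and let $T_1,T_2$ be $T$ on the respective saturations and the identity elsewhere; then $T=T_1T_2$, $T_1,T_2\in[T]$, and the supports are disjoint, so everything reduces to balancing $\lambda(\mathrm{Sat}(\cdot))$ and $\lambda(\mathrm{Sat}(\cdot)\cap D)$. Where you diverge is the balancing mechanism: the paper slices $F'$ explicitly by orbit cardinality $n$ and by the count $k=\abs{\mathrm{orb}_T(x)\cap D}$, halves each slice in $\lambda$-measure, and checks that the resulting sums $\sum_{n}\sum_{k\leqslant n} k\,\lambda(A^i_{k,n,D})$ etc.\ agree term by term (an argument that is insensitive to whether the totals are finite or infinite), whereas you package the two quantities as a vector measure $(\nu_1,\nu_2)$ on $F'$ and invoke Lyapunov convexity to bisect both at once. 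Your use of Lyapunov rather than two separate intermediate-value arguments is the right call, since a single increasing family cannot control both coordinates.

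There is, however, a genuine gap in your reduction step. You propose to ``cut $F'$ into countably many pieces of finite ($\nu_1$-)measure,'' but such a partition does not exist in general: in the dissipative case (one of the two cases this lemma is actually applied to in \Cref{factoriseT}), every orbit is infinite and the images $T^n(A)$ of a non-null wandering set are pairwise disjoint, so $\nu_1(A)=\lambda(\mathrm{Sat}(A))=+\infty$ for \emph{every} non-null Borel $A\subseteq F'$; the measure $\nu_1$ takes only the values $0$ and $+\infty$ and is nowhere $\sigma$-finite. The same degeneracy occurs for $\nu_2$ on the set of representatives whose orbit meets $D$ infinitely often. On these parts Lyapunov is unavailable and your piece-by-piece reduction fails as written. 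The fix is easy but must be said: split $F'$ according to whether the densities $\abs{\mathrm{orb}_T(x)}$ and $\abs{\mathrm{orb}_T(x)\cap D}$ are finite or infinite, run your Lyapunov argument only on the $\sigma$-finite parts, and observe that on the purely infinite parts any splitting into two non-null halves already matches the corresponding quantities (both being $+\infty$). This is exactly what the paper's case $(1)$ (where $\lambda(A_D)=+\infty$) and its $\infty$-indexed slices $A_{\infty,D}$, $B_\infty$ accomplish. With that case distinction added, your argument is correct and somewhat more conceptual than the paper's explicit bookkeeping.
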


\begin{proof} 
	Denote by $A$ a Borel fundamental domain for the $T$-action on its support (see Remark \ref{rem:Borel fundamental domain for dissipative or periodic}). We denote by $T_{\mathrm{sat}}(C)$ the $T$-saturation of any subset $C$, and we have $T_{\mathrm{sat}}(A) = \supp T$. We consider (up to measure zero) the following set:
	\[
	A_D \coloneqq \left\{ x \in A : \abs{\mathrm{orb_T}(x) \cap D} \neq 0 \right\},
	\]
	and we will also denote by $B$ the complement of $A_D$ in $A$. There are two cases to consider, depending on the measure of $A_D$:\\
	
	\emph{(1)}. If $\lambda( A_D) = + \infty$, then it suffices to cut $A_D$ and $B$ in half: write $A_D = A_D^1 \sqcup A_D^2$ with $\lambda(A_D^1) = \lambda(A_D^2) = +\infty$ and write $B = B^1 \sqcup B^2$ with $\lambda(B^1) = \lambda(B^2)$. We define (for $i \in \left\{ 1,2 \right\}$) $T_i$ by $T$ on $T_{\mathrm{sat}}(A_D^i \sqcup B^i)$, and by $\id_X$ elsewhere. 

	We have
	\[
	\lambda(\supp T_i \cap D) = \int_X \mathds{1}_{A_D^i}(x) \times \abs{\mathrm{orb_T}(x) \cap D} d\lambda(x)  \geqslant \lambda(A_D^i) = + \infty,
	\]
	and $\supp T_1$ is disjoint from $\supp T_2$ since they are comprised of different $T$-orbits.\\
	
	\emph{(2)}. If $\lambda(A_D)<+\infty$, cutting $A_D$ in half and defining $T_1$ and $T_2$ respectively by $T$ or $\id_X$ on each part is not enough, as there 	is no guarantee that the supports will be of equal measure, let alone their intersections with $D$. 
	
	We then cut both $A_D$ and $B$ in slices that depend on the cardinal of the orbits of their points. That is to say that for any $n$ in $\N^\ast$ we set
	\[
	A_{n,D} = \left\{  x \in A_D : |\mbox{orb}_T(x)| = n  \right\} \mbox{ and } A_{\infty,D}=\left\{  x \in A_D : |\mbox{orb}_T(x) | = +\infty  \right\}
	\]
	as well as 
	\[
	B_{n} = \left\{  x \in B : |\mbox{orb}_T(x)| = n  \right\} \mbox{ and } B_{\infty}=\left\{  x \in B : |\mbox{orb}_T(x) | = +\infty  \right\}.
	\]
	We have 
	\[
	\left\lbrace
	\begin{array}{ccl}
	A_D  & = & \displaystyle{\bigsqcup_{n \in \mathbb{N}^{*}\cup\left\{ \infty \right\}} A_{n,D}}\\
	B & = & \displaystyle{\bigsqcup_{n \in \mathbb{N}^{*}\cup\left\{ \infty \right\}} B_{n}}.
	\end{array}
	\right.
	\]
	We then furthermore cut the slices $A_{n,D}$ depending on how many times the orbit of their points intersect $D$: we define $A_{k,n,D} \coloneqq \left\{ x \in A_{n,D} : \abs{\mathrm{orb}_T(x) \cap D} = k \right\}$, for any $k \leqslant n$. We then have
	\[
	A_D = \bigsqcup_{n \in \mathbb{N}^{*}\cup\left\{ \infty \right\}} \bigsqcup_{k \leqslant n} A_{k,n,D}.
	\]
	For any $n$ we can then cut $B_{n}$ in half: define $B_{n}^1$ and $B_{n}^2$ such that $B_{n} = B_{n}^1 \sqcup B_{n}^2 $ and $\lambda(B_{n}^1) = \lambda(B_{n}^2)$, and do the same for $A_{k,n,D}$, for any $k \leqslant n$. We put the two families of pieces together: for $i \in \left\{ 1,2 \right\}$ let $A_D^i$ and $B^i$ be defined by 
	\[
	\left\lbrace
	\begin{array}{l}
	\displaystyle{A_D^i = \bigsqcup_{n \in \mathbb{N}^{*}\cup\left\{ \infty \right\}}  \left( \bigsqcup_{k \leqslant n} A_{k,n,D}^i \right)}\\
	\displaystyle{B^i = \bigsqcup_{n \in \mathbb{N}^{*}\cup\left\{ \infty \right\}} B_{n}^i}
	\end{array}
	\right.
	\]
	and let (for $i \in \left\{ 1,2 \right\}$) $T_i$ be defined by $T$ on $T_{\mathrm{sat}}(A_D^i \sqcup B^i)$, and by $\id_X$ elsewhere. By construction we have
	\begin{align*}
	\lambda(\supp T_1 \cap D) & = \sum_{n \in \mathbb{N}^{*}\cup\left\{ \infty \right\}} \sum_{k \leqslant n} k \lambda(A_{k,n,D}^1) \\
	 & = \sum_{n \in \mathbb{N}^{*}\cup\left\{ \infty \right\}} \sum_{k \leqslant n} k \lambda(A_{k,n,D}^2)
	 = \lambda(\supp T_2 \cap D).
	\end{align*}
	We also have 
	\begin{align*}
	\lambda(\supp T_1 \cap T_{\mathrm{sat}}(A_D) ) & = \sum_{n \in \mathbb{N}^{*}\cup\left\{ \infty \right\}} \sum_{k \leqslant n} n \lambda(A_{k,n,D}^1) \\
	& = \sum_{n \in \mathbb{N}^{*}\cup\left\{ \infty \right\}} \sum_{k \leqslant n} n \lambda(A_{k,n,D}^2)
	 = \lambda(\supp T_2 \cap T_{\mathrm{sat}}(A_D) ).
	\end{align*}
	And finally, we have 
	\begin{align*}
	\lambda(\supp T_1 \cap T_{\mathrm{sat}}(B)) & = \sum_{n \in \mathbb{N}^{*}\cup\left\{ \infty \right\}} n \lambda(B_{n}^1)\\
	& = \sum_{n \in \mathbb{N}^{*}\cup\left\{ \infty \right\}} n \lambda(B_{n}^2) 
	 = \lambda(\supp T_2 \cap T_{\mathrm{sat}}(B)).
	\end{align*}
	
	Those equalities hold whether the quantities considered are finite or not, and the supports of $T_1$ and $T_2$ are disjoint since they are comprised of different $T$-orbits. Finally $\lambda(\supp T_1) = \lambda(\supp T_2)$ thanks to the fact that $\supp T = T_{\mathrm{sat}}(A) = T_{\mathrm{sat}}(A_D) \sqcup T_{\mathrm{sat}}(B)$. This concludes the proof, as $T_1$ and $T_2$ are in $[T]$ by construction.
\end{proof}

For the next part, we will need to recall the definition of an induced bijection.

	\begin{defi}
	{\label{def: induced bijection}}
Consider $(X,\lambda)$ a standard $\sigma$-finite space. Let $T \in \Aut(X,\lambda)$, and $A \subseteq \mathfrak{C}(T)$ be a Borel subset with $\lambda(A) > 0$. Halmos' Recurrence Theorem ensures us that for $\lambda$-almost every $x \in A$ there exists a (finite) \textbf{smallest return time} in $A$ denoted by $n_A(x)$, \emph{i.e.} 
\[
n_A(x) \coloneqq \min \{ n \in \mathbb{N}^{*} \mid T^{n}(x) \in A  \} .
\]
We can then define the \textbf{induced bijection} $T_A$ by $T_A(x) = T^{n_A(x)}(x)$ for all $x \in A$ and $T_A(x) = x$ for all $x \notin A$. Partitioning $A$ in sets of the form $\left\{ x \in A \mid n_A(x) = n \right\}$ easily yields that $T_A \in [T]$.
	\end{defi}

As stated before, the case of a $T$ which is conservative aperiodic is the most complicated, as the $T$-action does not necessarily admit a Borel fundamental domain. We instead use Rokhlin's lemma to construct a Borel subset which intersects each orbit, and has controllable measure. The following is classical.

	\begin{lem}
	{\label{rokhlin}}
Let $(X,\lambda)$ be a standard $\sigma$-finite space, and $T \in \Aut(X,\lambda)$ be conservative aperiodic on its support. For any $\varepsilon > 0$ there exists a Borel subset $C$ of $\supp T$ such that $C$ intersects $\lambda$-almost all the $T$-orbits, and $0 < \lambda( C ) < \varepsilon$. 
	\end{lem}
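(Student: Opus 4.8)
The plan is to reduce the statement to the classical Rokhlin lemma on finite measure spaces, by cutting $\supp T$ into pieces of finite measure and passing to the induced transformations. Since $\lambda$ is $\sigma$-finite and atomless, write $\supp T = \bigsqcup_{k \in \N} X_k$ with $0 < \lambda(X_k) < +\infty$ for every $k$. As $\supp T \subseteq \mathfrak{C}(T)$, Halmos' Recurrence Theorem guarantees that for $\lambda$-almost every $x \in X_k$ the first return time $n_{X_k}(x)$ is finite, so the induced bijection $T_{X_k}$ (see \Cref{def: induced bijection}) is a well-defined measure-preserving bijection of the finite measure space $(X_k,\lambda_{\restriction X_k})$. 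The first point to check is that each $T_{X_k}$ is aperiodic: if $T_{X_k}^m(x) = x$ for some $m \geqslant 1$, then summing the successive return times gives $T^M(x) = x$ for an integer $M \geqslant m \geqslant 1$, so the set of such $x$ is $\lambda$-null by aperiodicity of $T$ on $\supp T$.

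Next I would apply the classical Rokhlin lemma (see e.g.\ \cite{Krengel1985}) to each $T_{X_k}$. Set $\varepsilon_k \coloneqq 2^{-(k+1)} \varepsilon$, choose $N_k$ with $\lambda(X_k)/N_k < \varepsilon_k/2$, and obtain a Borel set $B_k \subseteq X_k$ such that $B_k, T_{X_k}(B_k), \dots, T_{X_k}^{N_k - 1}(B_k)$ are pairwise disjoint and $\lambda(E_k) < \varepsilon_k/2$, where $E_k \coloneqq X_k \setminus \bigsqcup_{i < N_k} T_{X_k}^i(B_k)$. Disjointness forces $\lambda(B_k) \leqslant \lambda(X_k)/N_k < \varepsilon_k/2$, so $C_k \coloneqq B_k \cup E_k$ has $\lambda(C_k) < \varepsilon_k$. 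Any point of $X_k$ lies in $E_k$ or in some $T_{X_k}^i(B_k)$ with $i < N_k$, so its $T_{X_k}$-orbit meets $C_k$; hence the set $\{x \in X_k : \mathrm{orb}_T(x) \cap C_k = \emptyset\}$ is contained in $\{x \in X_k : n_{X_k}(x) = +\infty\}$ and is therefore $\lambda$-null.

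Finally, I would take $C \coloneqq \bigsqcup_{k \in \N} C_k \subseteq \supp T$. Then $\lambda(C) = \sum_k \lambda(C_k) < \sum_k \varepsilon_k \leqslant \varepsilon$, and $\lambda(C) > 0$, since a $\lambda$-null $C_k$ would have a $\lambda$-null $T$-saturation, which cannot contain $\lambda$-almost every point of the positive measure set $X_k$. Because $\supp T = \bigsqcup_k X_k$, every $T$-orbit meeting $\supp T$ passes through some $X_k$, so $\{x \in \supp T : \mathrm{orb}_T(x) \cap C = \emptyset\}$ is contained in $\bigsqcup_k \{x \in X_k : \mathrm{orb}_T(x) \cap C_k = \emptyset\}$, a countable union of $\lambda$-null sets; thus $C$ meets $\lambda$-almost every $T$-orbit. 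The only mildly delicate steps are verifying aperiodicity of the induced maps (which is what lets us invoke Rokhlin's lemma) and the bookkeeping that upgrades ``$C_k$ meets almost every orbit through $X_k$, for every $k$'' to ``$C$ meets almost every $T$-orbit''; both are handled above.
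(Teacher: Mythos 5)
Your proposal is correct and follows essentially the same route as the paper: decompose $\supp T$ into countably many pieces of finite measure, pass to the induced (aperiodic) transformations, apply the classical Rokhlin lemma on each piece with tower heights chosen so that base plus residual set has measure less than $2^{-(k+1)}\varepsilon$, and take the union. Your explicit verification that the induced maps are aperiodic and that $\lambda(C)>0$ fills in details the paper leaves implicit, but the argument is the same.
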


	\begin{proof}
The case of a finitely supported $T$ is easier and taken care of by a straightforward application of Rokhlin's lemma, so let's assume that $\supp T$ has infinite measure. Fix $\varepsilon >0$ and let $ \supp T = \bigsqcup_{n \in \mathbb{N}} X_n$, with $\lambda(X_n) = \varepsilon$ for all $n$ in $\mathbb{N}$. Do note at this point that for $\lambda$-almost all $x \in X_n$ we have $\mathrm{orb}_{T_{X_n}}(x) = \mathrm{orb}_T(x) \cap X_n$, therefore we can work with the $T_{X_n}$ orbits, which are infinite by recurrence. In each $X_n$, we apply Rokhlin's (aperiodic) lemma (see \textit{e.g.}~\cite[Thm.~172]{KalikowMccutcheon2010}) to find a subset $A_n \subseteq X_n$ such that $A_n$, $T_{X_n}(A_n)$, \ldots , $T_{X_n}^{2^{n+2}}(A_n)$ are pairwise disjoint, and such that $B_n$ the complement of the tower in $X_n$ satisfies
\[
\lambda(B_n) = \lambda \left(     X_n \setminus \bigsqcup_{k= 0}^{2^{n + 2}} T_{X_n}^k(A_n)       \right) < 2^{-(n+2)}\varepsilon.
\]
Define now $C_n = A_n \sqcup B_n$ and notice that $C_n$ intersects all the $T_{X_n}$-orbits on $X_n$, so $C = \bigsqcup_n C_n$ intersects all the $T$-orbits. By construction $\lambda(A_n)< 2^{-(n+2)}\varepsilon$ and we then have
\[
0 < \lambda(C) = \sum_{n \in \mathbb{N}} \lambda(C_n) = \sum_{n \in \mathbb{N}} (\lambda(A_n) + \lambda(B_n)) < \varepsilon,
\]
which concludes the proof.
\end{proof}

	\begin{lem}
	{\label{conservinf}}
Let $(X,\lambda)$ be a standard $\sigma$-finite space, and $T \in \Aut(X,\lambda)$ be conservative aperiodic on its support. Let $D \subseteq X$ be of positive measure (possibly infinite).
For any $\varepsilon > 0$ we can write $T = T_1T_2T_\varepsilon$, with $T_1$, $T_2$ and $T_\varepsilon$ in $[T]$ satisfying the following:
\begin{enumerate}\setlength\itemsep{0em}
	\item[•] $\supp T_1$ and $\supp T_2$ are disjoint and have equal measure,
	\item[•] $\lambda(\supp T_1 \cap D) = \lambda(\supp T_2 \cap D)$,
	\item[•] $T_\varepsilon$ is aperiodic and $\lambda(\supp T_\varepsilon)< \varepsilon$.
\end{enumerate}
	\end{lem}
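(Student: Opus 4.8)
The idea is to first peel off a finite-measure aperiodic piece $T_\varepsilon$ using an induced bijection, so that the remainder $S$ acts on a space where its action admits a Borel fundamental domain, and then apply Lemma \ref{lem:factor of D and Cf} to $S$. Concretely, apply Lemma \ref{rokhlin} to get a Borel $C \subseteq \supp T$ intersecting $\lambda$-almost every $T$-orbit with $0 < \lambda(C) < \varepsilon$. Set $T_\varepsilon \coloneqq T_C$, the induced bijection on $C$; by Definition \ref{def: induced bijection} it lies in $[T]$, it is supported in $C$ so $\lambda(\supp T_\varepsilon) \leqslant \lambda(C) < \varepsilon$, and since $T$ is conservative aperiodic on $\supp T$, the induced map $T_C$ is again aperiodic (infinite $T$-orbits induce infinite $T_C$-orbits on $C$, as $C$ meets each orbit and $T$-orbits are infinite). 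So the third bullet is handled.

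Next I would produce $S \coloneqq T T_\varepsilon\inv \in [T]$, and check that $S$ has a Borel fundamental domain for its action on $\supp S$. The point is that $T = S T_\varepsilon$ with $S$ and $T_\varepsilon$ elements of $[T]$; because $C$ is a complete section and $T_C$ advances each point to its first return in $C$, the map $S = T T_C\inv$ moves points along $T$-orbits but never ``crosses'' $C$ in the forward direction — on each $T$-orbit, $S$ restricted to the arc between consecutive visits to $C$ is just $T$, and points of $C$ are sent back. More carefully: partition $\supp T$ according to the ``tower'' structure over $C$ determined by the return-time function $n_C$; on the block $\{x : x = T^j y,\ y \in C,\ 1 \leqslant j < n_C(y)\}$ the bijection $S$ acts like $T$ except it collapses the top of each tower back down, which is exactly the situation of a dissipative/periodic-type action admitting a Borel fundamental domain (one can take, in each tower, the image $T(C)$-fibre, or argue as in Remark \ref{rem:Borel fundamental domain for dissipative or periodic} after identifying each tower-piece with an interval). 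Once this is established, Lemma \ref{lem:factor of D and Cf} applied to $S$ and the set $D$ gives $S = T_1 T_2$ with $T_1, T_2 \in [S] \subseteq [T]$, disjoint supports of equal measure, and $\lambda(\supp T_1 \cap D) = \lambda(\supp T_2 \cap D)$. Then $T = T_1 T_2 T_\varepsilon$ is the desired factorisation.

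I expect the main obstacle to be precisely the claim that $S = T T_C\inv$ (or the appropriate choice of remainder factor) admits a Borel fundamental domain — this is the step where the conservative-aperiodic difficulty, which blocks a naive application of Lemma \ref{lem:factor of D and Cf} directly to $T$, actually gets resolved. One has to be careful about which points of $\supp T$ end up in $\supp S$ versus $\supp T_\varepsilon$, since these supports overlap on $C$, and about verifying that the blocks of the tower decomposition are genuinely Borel and that $S$ is periodic-like on each. A secondary subtlety is ensuring the equal-measure and equal-$D$-measure conclusions for $T_1, T_2$ survive when we only know them for the factorisation of $S$: since $\supp T_1 \sqcup \supp T_2 = \supp S$ and these are $S$-invariant, while $\supp T_\varepsilon$ is a separate finite-measure set, the first two bullets transfer verbatim from Lemma \ref{lem:factor of D and Cf}. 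If controlling $S$ turns out to be awkward, an alternative is to work directly: use the tower over $C$ to build $T_1$ and $T_2$ by hand exactly as in the proof of Lemma \ref{lem:factor of D and Cf}, splitting the section $C$ (which has finite measure, so case \emph{(2)} of that proof applies) and its iterates into the $A^i_{k,n,D}$-type pieces, and simultaneously designate the ``top return'' map as $T_\varepsilon$.
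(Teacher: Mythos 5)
Your proposal is correct and follows essentially the same route as the paper: factor $T = (TT_C\inv)\,T_C$ with $C$ given by Lemma \ref{rokhlin}, observe that $TT_C\inv$ is periodic (its orbits are the arcs between consecutive visits to $C$, since $TT_C\inv$ sends the top of each tower back to the bottom), hence admits a Borel fundamental domain, and then apply Lemma \ref{lem:factor of D and Cf} to it. The paper's proof makes the periodicity argument explicit by identifying a $T$-orbit with $\Z$ and computing $TT_C\inv(x_2)=T(x_1)$ for consecutive visits $x_1,x_2$ to $C$, which is exactly the tower-collapsing picture you describe.
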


	\begin{proof}
Fix $\varepsilon >0$. We start by using Lemma \ref{rokhlin} to define a Borel subset $C$ of $X$ intersecting $\lambda$-almost all the $T$-orbits, and such that $0 < \lambda(C) < \varepsilon$.

We then write $\displaystyle{T = T T_C\inv T_C}$. The support of $T_C$ has measure less than $\varepsilon$, and $\supp TT_C\inv $ has infinite measure whenever $\supp T$ does.

Let us then prove that $TT_C\inv $ has finite orbits. 

We consider one $T$-orbit, and identify it to $\mathbb{Z}$, equipped with its natural order. Let $x_1$ and $x_2$ be two consecutive elements of that orbit that are in $C$ (Halmos' Recurrence ensures us that they exist). The integer interval $\{ x_1+1, \ldots ,x_2 \}$ is $TT_C\inv $-invariant. Indeed, $TT_C\inv (x_2) = T(x_1) = x_1 +1$, and applying $TT_C\inv $ to an element between $x_1$ and $x_2$ merely moves it along the $T$-orbit, as $T_C$ is trivial outside of $C$. This means that the $TT_C\inv $-orbit containing $x_2$ is finite, and contains $x_2 - x_1 = n_C(x_1)$ elements.

Thanks to Lemma \ref{lem:factor of D and Cf}, we can then write $TT_C\inv $ as the product of two measure-preserving bijections $T_1,T_2$ with disjoint supports of equal measure and satisfying the first two parts of the statement. This concludes the proof,as $T_C = T_\varepsilon$ is suitable.
	\end{proof}

\begin{rem}
	{\label{rem: periodic dense in conservative}}
	The previous proof also yields the following, which is most likely well-known, but for which we have found no reference: for any $\varepsilon >0$, for any conservative $T$ in $\Aut(X,\lambda)$, there exists a periodic (hence conservative) $T'$ in $[T]$ such that $T'$ and $T$ are $\varepsilon$-uniformly close for $\lambda$, \textit{i.e.} $\lambda(\left\{ x \in X \mid T(x) \neq T'(x) \right\})< \varepsilon$.
\end{rem}

We can now combine the previous lemmas to obtain the following factorisation.

	\begin{prop}
	{\label{factoriseT}}
Let $(X,\lambda)$ be a standard $\sigma$-finite space, and consider $T$ in $\Aut(X,\lambda)$. Let $D \subseteq X$ be of positive measure (possibly infinite). For any $\varepsilon >0$ we can write $T = T_1 T_2 T_f$, with $T_1$, $T_2$ and $T_\varepsilon$ in $[T]$ satisfying the following:
\begin{enumerate}\setlength\itemsep{0em}
	\item[•] $\supp T_1$ and $\supp T_2$ are disjoint and of equal measure, 
	\item[•] $\lambda(\supp T_1 \cap D) = \lambda(\supp T_2 \cap D)$,
	\item[•] $T_\varepsilon$ is aperiodic and $\lambda(\supp  T_\varepsilon)< \varepsilon$.
\end{enumerate}
	\end{prop}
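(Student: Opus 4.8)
The plan is to combine the Hopf decomposition from the beginning of this section with Lemmas \ref{lem:factor of D and Cf} and \ref{conservinf}, treating each part of the refined decomposition $X = \mathfrak{D}(T) \sqcup \mathfrak{C}_f(T) \sqcup \mathfrak{C}_\infty(T)$ separately and then recombining. Recall that on this partition into $T$-invariant sets we have $T = T_{\mathfrak{D}} T_{\mathfrak{C}_f} T_{\mathfrak{C}_\infty}$, where the three factors commute and each lies in $[T]$ with support contained in the corresponding piece.

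First I would handle $T_{\mathfrak{D}}$ and $T_{\mathfrak{C}_f}$ together: by \Cref{rem:Borel fundamental domain for dissipative or periodic}, each of these bijections admits a Borel fundamental domain for its action on its support, so \Cref{lem:factor of D and Cf} applies to each, giving $T_{\mathfrak{D}} = S_1 S_2$ and $T_{\mathfrak{C}_f} = S_1' S_2'$ with $\supp S_1 \sqcup \supp S_2 \subseteq \mathfrak{D}(T)$, $\supp S_1' \sqcup \supp S_2' \subseteq \mathfrak{C}_f(T)$, disjoint supports of equal measure within each pair, and $\lambda(\supp S_1 \cap D) = \lambda(\supp S_2 \cap D)$, $\lambda(\supp S_1' \cap D) = \lambda(\supp S_2' \cap D)$. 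Next, for $T_{\mathfrak{C}_\infty}$, which is conservative aperiodic on its support, I would apply \Cref{conservinf} with the same $\varepsilon$ to write $T_{\mathfrak{C}_\infty} = U_1 U_2 T_\varepsilon$, where $\supp U_1$ and $\supp U_2$ are disjoint of equal measure, $\lambda(\supp U_1 \cap D) = \lambda(\supp U_2 \cap D)$, $T_\varepsilon$ is aperiodic with $\lambda(\supp T_\varepsilon) < \varepsilon$, and all three are in $[T_{\mathfrak{C}_\infty}] \subseteq [T]$.

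Now I would assemble the pieces. Since the three parts of the Hopf decomposition are disjoint and $T$-invariant, the supports of all the bijections produced above living in different parts are mutually disjoint, and we may simply set $T_1 \coloneqq S_1 \cdot S_1' \cdot U_1$ (defined by $S_1$ on $\mathfrak{D}(T)$, by $S_1'$ on $\mathfrak{C}_f(T)$, by $U_1$ on $\mathfrak{C}_\infty(T)$, and by $\id_X$ outside $\supp T$), and similarly $T_2 \coloneqq S_2 \cdot S_2' \cdot U_2$; the remaining factor is exactly $T_\varepsilon$, which is aperiodic of support measure $<\varepsilon$. Because supports within each original pair are disjoint of equal measure and the three parts are $T$-invariant and disjoint, $\supp T_1 = \supp S_1 \sqcup \supp S_1' \sqcup \supp U_1$ and $\supp T_2 = \supp S_2 \sqcup \supp S_2' \sqcup \supp U_2$ are disjoint and of equal measure (adding the three equalities, whether finite or infinite); likewise $\lambda(\supp T_1 \cap D) = \lambda(\supp T_2 \cap D)$ by summing the three partial equalities. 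Finally $T = T_1 T_2 T_\varepsilon$: the factors $T_{\mathfrak{D}}, T_{\mathfrak{C}_f}, T_{\mathfrak{C}_\infty}$ commute and act on disjoint invariant parts, so the product $T_1 T_2 T_\varepsilon$ restricted to each part recovers the corresponding factorisation, hence equals $T$ there, and all constructed bijections lie in $[T]$ since each building block does.

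The main point requiring a little care — rather than a genuine obstacle — is the bookkeeping for whether $\mathfrak{D}(T)$ or $\mathfrak{C}_f(T)$ has positive measure at all: if one of these parts is null (or if $\supp T$ meets it in a null set), the corresponding application of \Cref{lem:factor of D and Cf} is vacuous and one simply takes the relevant $S_i$ or $S_i'$ to be $\id_X$; the same remark applies to $\mathfrak{C}_\infty(T)$ and \Cref{conservinf}, in which case $T_\varepsilon = \id_X$ trivially satisfies the support bound. One should also note that \Cref{lem:factor of D and Cf} is applied with the ambient set $D$ intersected with the relevant invariant part, which is harmless since that lemma allows $D$ of arbitrary positive (possibly infinite) measure and the conclusion only concerns $\lambda(\supp T_i \cap D)$. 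With these minor caveats recorded, the three displayed properties follow directly.
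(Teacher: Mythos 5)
Your proposal is correct and follows essentially the same route as the paper: decompose $T$ along the refined Hopf decomposition, apply Lemma \ref{lem:factor of D and Cf} to the dissipative and periodic parts and Lemma \ref{conservinf} to the conservative aperiodic part, then recombine the factors piecewise over the disjoint $T$-invariant sets. The degenerate-case bookkeeping you mention is harmless and implicit in the paper's argument as well.
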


	\begin{proof}
We write $T = T_{\mathfrak{D}} T_{\mathfrak{C}_f} T_{\mathfrak{C}_{\infty}} $ with $T_{\mathfrak{D}}$, $T_{\mathfrak{C}_f}$ and $T_{\mathfrak{C}_{\infty}}$ commuting with each other.

We have $T_{\mathfrak{D}} = T_{\mathfrak{D}}^1 T_{\mathfrak{D}}^2$ and $T_{\mathfrak{C}_f} = T_{\mathfrak{C}_f}^1 T_{\mathfrak{C}_f}^2$ thanks to Lemma \ref{lem:factor of D and Cf}, where the supports of the two factors (for both factorisations) are disjoint, of equal measure, and meet $D$ on sets of equal measure.
Lemma \ref{conservinf} ensures us that $T_{\mathfrak{C}_{\infty}} = T_{\mathfrak{C}_{\infty}}^1 T_{\mathfrak{C}_{\infty}}^2 T_{\mathfrak{C}_{\infty}}^\varepsilon$, where the supports of $T_{\mathfrak{C}_{\infty}}^1$ and $T_{\mathfrak{C}_{\infty}}^2$ are disjoint and of equal measure, and meet $D$ on sets of equal measure, and the support of $T_{\mathfrak{C}_{\infty}}^\varepsilon$ has measure less than $\varepsilon$.

We can finally define the following measure-preserving bijections:
\[
\left\lbrace
\begin{array}{l}
T_1 = T_{\mathfrak{D}}^1 T_{\mathfrak{C}_f}^1 T_{\mathfrak{C}_{\infty}}^1\\
T_2 = T_{\mathfrak{D}}^2 T_{\mathfrak{C}_f}^2 T_{\mathfrak{C}_{\infty}}^2\\
T_{\varepsilon} = T_{\mathfrak{C}_{\infty}}^\varepsilon
\end{array}
\right.
\]
which concludes the proof in this case. Indeed by construction the supports of the factors of $T_{\mathfrak{D}}$ (respectively $T_{\mathfrak{C}_f}$, $T_{\mathfrak{C}_{\infty}}$) are included in the support of $T_{\mathfrak{D}}$ (respectively $T_{\mathfrak{C}_f}$, $T_{\mathfrak{C}_{\infty}}$) so there is no commutativity issue arising during the factorisation.
	\end{proof}

\subsection{Coarsening the uniform topology}{\label{section: coarsening the uniform topology}}

The following question is very natural to ask: When is the uniform topology on a full group a Polish group topology? 
Carderi and Le Ma\^itre gave a complete answer in the probability case (\cite[Prop.~3.8, Thm.~4.7]{CarderiLM2016}), by using a technique of Kittrell and Tsankov on automatic continuity for ergodic full groups (\cite[Thm.~3.1]{KittrellTsankov2010}). It works the same for infinite $\sigma$-finite measures.
Let us start off by recalling the following.

	\begin{prop}
	[{\cite[Lem.~5.4]{Dye1959}}]
	{\label{Dye59lem54}}
The restriction of the uniform metric to a full group of $\Aut(X,\mu)$ is complete, where $(X,\mu)$ is a standard probability space. 
	\end{prop}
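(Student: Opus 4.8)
The plan is to run Dye's original argument. Let $\mathbb{G}\leqslant\Aut(X,\mu)$ be a full group and let $(T_n)$ be a $d_\mu$-Cauchy sequence in $\mathbb{G}$. First I would pass to a subsequence $(T_{n_k})$ with $d_\mu(T_{n_k},T_{n_{k+1}})<2^{-k}$ for all $k$. Writing $A_k\coloneqq\{x\in X\mid T_{n_k}(x)\neq T_{n_{k+1}}(x)\}$, we have $\mu(A_k)<2^{-k}$, so by Borel--Cantelli the set $\limsup_k A_k$ is null. Hence for $\mu$-almost every $x$ the sequence $(T_{n_k}(x))_k$ is eventually constant, and I can define a Borel map $T:X\to X$ by letting $T(x)$ be this eventual value (setting $T(x)=x$ on the exceptional null set). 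With $B_k\coloneqq\{x\mid T_{n_j}(x)=T(x)\ \text{for all}\ j\geqslant k\}$, the sets $B_k$ increase to a conull set, and $T=T_{n_k}$ on $B_k$.

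Next I would check that $T\in\Aut(X,\mu)$. For any Borel $A\subseteq X$ the functions $x\mapsto\mathds{1}_A(T_{n_k}(x))$ are eventually equal $\mu$-a.e.\ to $x\mapsto\mathds{1}_A(T(x))$, so bounded convergence gives $\mu(A)=\mu(T_{n_k}\inv(A))\to\mu(T\inv(A))$, i.e.\ $T_\ast\mu=\mu$. For invertibility I use the fact, special to the probability setting, that $d_\mu(S\inv,R\inv)=d_\mu(S,R)$ for $S,R\in\Aut(X,\mu)$ (since $\{S\inv\neq R\inv\}=S(\{S\neq R\})$ and $S$ is measure-preserving): thus $(T_n\inv)$ is $d_\mu$-Cauchy at the same rate, and the same construction along the same subsequence yields a measure-preserving Borel map $S$ with $T_{n_k}\inv\to S$ a.e.\ For $\mu$-a.e.\ $x$ one has $T_{n_k}(x)=T(x)$ and $T_{n_k}\inv(T_{n_k}(x))=x$ for large $k$; since $T_\ast\mu=\mu$, the preimage under $T$ of the null ``bad set'' for $S$ is again null, so $S(T(x))=x$ a.e., and symmetrically $T(S(x))=x$ a.e. Hence $T$ is a measure-preserving bijection with $T\inv=S$, so $T\in\Aut(X,\mu)$.

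Then I would deduce $T\in\mathbb{G}$: since $\bigcup_k B_k$ is conull and $T=T_{n_k}$ on $B_k$, after discarding a null set $T$ is obtained by cutting and pasting the family $(T_{n_k})\subseteq\mathbb{G}$ along the partition whose $k$-th piece is $B_k\setminus B_{k-1}$; as $T\in\Aut(X,\mu)\subseteq\Aut(X,[\mu])$, stability of $\mathbb{G}$ under cutting and pasting gives $T\in\mathbb{G}$. Finally, $\{T_{n_k}\neq T\}\subseteq X\setminus B_k$ yields $d_\mu(T_{n_k},T)\leqslant 1-\mu(B_k)\to 0$, and combining this with the Cauchy property of $(T_n)$ shows $d_\mu(T_n,T)\to 0$, which proves completeness.

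The argument is entirely routine; the only point requiring care is the handling of null sets when composing almost-everywhere-defined maps to establish invertibility of the limit $T$ — concretely, the observation that the preimage under the measure-preserving map $T$ of the null bad set for $S$ is again null. Everything else is bounded convergence and Borel--Cantelli.
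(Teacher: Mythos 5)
Your proof is correct. The paper does not actually reprove this statement (it is cited directly from Dye's Lemma 5.4), but your Borel--Cantelli construction of the a.e.\ eventually constant limit, followed by cutting and pasting along the sets $B_k\setminus B_{k-1}$ to land back in the full group, is exactly the standard argument and is the same one the paper sketches for the analogous completeness of $(\mathbb{G}_f,d_{u,f})$ in \Cref{prop: Gf from countable eqrel is Polish}. You also correctly isolate the one genuinely probability-measure-preserving ingredient, namely $d_\mu(S\inv,R\inv)=d_\mu(S,R)$, which is precisely what fails in the non-singular setting and forces the bi-uniform metric of \Cref{closedforunif}.
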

	
This useful result has been used in \cite{CarderiLM2016},and in order to generalize, we will use \Cref{closedforunif} to answer the question of completeness, however separability proves to be the real obstacle. Following \cite{CarderiLM2016}, we prove that a full group can only be $\tau_u$-separable if and only if it is the full group of a countable equivalence relation. The following proof can be found in \cite[Prop.~1.25]{LMthesis}, we provide it as it is in french.

	\begin{prop}
	{\label{sepequivdnbeqrel}}
Let $\mathbb{G} \leqslant \Aut(X,\lambda)$ be a full group. It is separable for the uniform topology if and only if it is the full group of a countable equivalence relation.
	\end{prop}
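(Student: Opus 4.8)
The backward direction is essentially known: if $\mathbb{G} = [\mathcal{R}]$ for a countable equivalence relation $\mathcal{R}$, generated by a countable group $\Gamma$ of measure-preserving bijections, then I would exhibit a countable $\tau_u$-dense subset. The natural candidates are the bijections obtained by cutting and pasting finitely many elements of $\Gamma$ along pieces of a fixed countable generating family of the finite measure algebra. More precisely, fixing a probability measure $\mu \in [\lambda]$ and a countable subalgebra $\mathcal{A}$ of $\MAlgf(X,\lambda)$ that is $d_\mu$-dense, I would check that the set of $T$ of the form $T_{\restriction A_i} = \gamma_{i \restriction A_i}$ with $(A_i)$ a finite partition into elements of $\mathcal{A}$ and $\gamma_i \in \Gamma$ is countable and $d_\mu$-dense in $[\mathcal{R}]$: given $S \in [\mathcal{R}]$, the sets $\{x : S(x) = \gamma(x)\}$ for $\gamma \in \Gamma$ cover $X$ up to a null set, so finitely many of them cover all but $\varepsilon$ of the mass, and then one approximates each such piece by an element of $\mathcal{A}$.

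The forward direction is the substantive one and the main obstacle. Suppose $\mathbb{G}$ is $\tau_u$-separable, with countable $\tau_u$-dense subset $(S_n)$. I would consider the equivalence relation $\mathcal{R}$ generated by the $S_n$, which is countable by construction, and argue $\mathbb{G} = [\mathcal{R}]$. The inclusion $[\mathcal{R}] \subseteq \mathbb{G}$ requires that $[\mathcal{R}]$ be contained in $\mathbb{G}$: since $\mathbb{G}$ is $\tau_u$-closed (Proposition \ref{closedforunif}) and a full group, and each $S_n \in \mathbb{G}$, any element of $[\mathcal{R}]$ is a $\tau_u$-limit of cut-and-paste combinations of the $S_n$, hence lies in $\mathbb{G}$. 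For the reverse inclusion $\mathbb{G} \subseteq [\mathcal{R}]$, take $T \in \mathbb{G}$; by density there is a sequence $S_{n_k} \to T$ uniformly, so after passing to a subsequence $S_{n_k}(x) \to T(x)$ for $\mu$-almost every $x$ (using \Cref{rem: equiv proba measures} and a Borel--Cantelli argument: $\sum_k \mu(\{x : S_{n_k}(x) \neq T(x)\}) < \infty$ along a fast subsequence). Since $\mathbb{N}^{\mathbb{N}}$-valued orbit maps have discrete range in the sense that $S_{n_k}(x) \in G\cdot x$ is a countable set, almost-everywhere convergence forces $S_{n_k}(x) = T(x)$ eventually for a.e.\ $x$; hence $T(x) \in \mathcal{R}$-class of $x$ for a.e.\ $x$, i.e.\ $T \in [\mathcal{R}]$.

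The delicate point I anticipate is making the ``eventually constant hence in the orbit'' step fully rigorous: one must know that for a.e.\ $x$ the $\mathcal{R}$-class of $x$ is countable (true by construction of $\mathcal{R}$ from countably many Borel bijections) and that a sequence of points converging in the standard Borel structure to $T(x)$, all lying in this countable set together with $T(x)$, must eventually equal $T(x)$ --- this is immediate in a standard Borel space only after fixing a compatible Polish topology, so I would either fix such a topology from the outset or, more cleanly, phrase the whole argument purely measure-theoretically by noting that $d_\mu(S_{n_k}, T) \to 0$ means $\mu(\{x : S_{n_k}(x) \neq T(x)\}) \to 0$, and $\{x : T(x) \in \mathcal{R}_x\} = \bigcup_k \bigcap_{j \geq k}\{x : S_{n_j}(x) = T(x)\}$ up to null sets has full measure by Borel--Cantelli. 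This bypasses topology entirely and is the route I would take.
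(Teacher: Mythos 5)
Your forward direction (separable $\Rightarrow$ full group of a countable relation) is correct and is essentially the paper's argument in different packaging. The paper takes the countable group $\Gamma$ generated by a dense sequence, notes that the full group $[\Gamma]$ it generates is contained in $\mathbb{G}$, is $\tau_u$-closed by \Cref{closedforunif}, and contains the dense set $\Gamma$, hence equals $\mathbb{G}$. Your pointwise argument proves the containment $\mathbb{G}\subseteq[\mathcal{R}]$ directly, and in fact more simply than your Borel--Cantelli variant: the set $\{x \mid \forall n,\ T(x)\neq S_n(x)\}$ has measure at most $\inf_n d_\mu(T,S_n)=0$, so $T(x)$ lies in the $\mathcal{R}$-class of $x$ almost everywhere (your aside about ``discrete range in $G\cdot x$'' is a red herring, as you yourself note). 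Also, the inclusion $[\mathcal{R}]\subseteq\mathbb{G}$ needs no limiting process: any $T\in[\mathcal{R}]$ is a countable cut-and-paste of elements of the countable group generated by the $S_n$, hence lies in the full group $\mathbb{G}$ outright.

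The backward direction is where you diverge from the paper, and where there is a genuine gap. The paper does not exhibit an explicit countable dense set; it embeds $(\mathbb{G},d_\mu)$ isometrically (up to the factor $\tfrac12$) into the separable metric space $(\MAlgf(\mathcal{R},M_l),\hat{d})$ via $T\mapsto\mathrm{graph}(T)$, where $M_l(A)=\int_X\abs{A_x}\,d\mu(x)$ is $\sigma$-finite and atomless by Lusin--Novikov, and separability then follows from \Cref{prop: MAlgf is Polish}. Your explicit construction breaks at the approximation step: if you replace the exact agreement sets $B_j=\{x\mid S(x)=\gamma_j(x)\}$ by approximants $A_j$ from a countable subalgebra $\mathcal{A}$, the piecewise map equal to $\gamma_j$ on $A_j$ is in general not injective and its images need not partition a conull set, so it does not define an element of $\Aut(X,\lambda)$, and there is no canonical way to complete such a partial map to a bijection while staying inside a countable family. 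A second difficulty, specific to the $\sigma$-finite setting, is that the $\gamma_j$ preserve $\lambda$ but not the auxiliary probability measure $\mu$, so $\mu(A_j\Delta B_j)$ small does not directly control $\mu(\gamma_j(A_j)\Delta\gamma_j(B_j))$. The graph-embedding argument sidesteps both issues, which is presumably why the paper uses it; I would recommend adopting it for this implication.
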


	\begin{proof}
We start by fixing a probability measure $\mu$ in $[\lambda]$.

$(\Rightarrow)$ First assume that $\Gamma$ is a dense countable subgroup of $\mathbb{G}$. We can consider $[\Gamma]$ the full group generated by $\Gamma$, that is to say the saturation of $\Gamma$ with regard to cutting and pasting. $[\Gamma]$ is closed, thanks to Proposition \ref{closedforunif}, and thus is equal to $\mathbb{G}$. Therefore $\mathbb{G}$ is the full group of the countable equivalence relation given by $\Gamma$.

$(\Leftarrow)$ Now assume that $\mathbb{G}$ is equal to $[\mathcal{R}]$, where is $\mathcal{R}$ is countable. For a Borel subset $A \subseteq \mathcal{R}$, we define
\[
M_l(A) = \int_X |A_x|  d\mu(x)
\]
where $A_x = \left\{ y \in X \mid (x,y) \in A \right\} \subseteq \mbox{orb}_{\mathcal{R}}(x)$.
Notice now that for any partial isomorphism $\phi$ in $[[\mathbb{G}]]$ we have $M_l(\mbox{graph}(\phi)) = \mu(\mathrm{dom}(\phi))$. Thus, it follows from the theorem of Lusin-Novikov (see \textit{e.g.}~\cite[Thm.~18.10]{Kechris1995}) that $M_l$ is $\sigma$-finite, and it is atomless because $\mu$ is atomless and $\mathcal{R}$ is countable. Therefore, if $S$, $T$ are two elements of $\mathbb{G}$,
\[
  d_{\mu}(S,T) = \mu\left(\left\{  x \in X \mid S(x) \neq T(x)  \right\}\right)  = \dfrac{1}{2} M_l(\mbox{graph}(S) \Delta \mbox{graph}(T)).
\]
As $\hat{d}(A,B) \coloneqq M_l(A \Delta B)$ is a separable distance on $\MAlgf(\mathcal{R},M_l)$ by \Cref{prop: MAlgf is Polish}, we just proved
 that $\mathbb{G}$ is separable for the uniform topology, as a metric subspace of a separable metric space.
	\end{proof}

We now prove, following Kittrell and Tsankov in \cite{KittrellTsankov2010}, that should a full group $\mathbb{G}$ have a Polish group topology, it would necessarily be weaker that the uniform topology. We need a few preliminary results, including a key proposition from \cite{RosendalSolecki2007}, which gives us a condition called Steinhaus implying automatic continuity. It uses the notion of countable syndeticity, which we recall.

	\begin{defi}
A subset $V$ of a group $G$ is \textbf{countably syndetic} if countably many translates of $V$ cover $G$, \emph{i.e.} if there exists $(g_n)_{n \in \mathbb{N}}$ a sequence of elements of $G$ such that $\displaystyle{\bigcup_{n} g_n V = G     }$.
	\end{defi}

	\begin{prop}
	[{\cite[Prop.~2]{RosendalSolecki2007}}]
	{\label{AutCont}}
Let $G$ be a topological group. If there exists $n \in \mathbb{N}$ such that for any symmetric countably syndetic subset $V$ of $G$, $V^n$ contains a open neighbourhood of $e_G$ (we say that $G$ is \textbf{$n$-Steinhaus}), then any morphism $G \rightarrow H$, where $H$ is a separable group, is continuous.

If there exists a integer $n \in \mathbb{N}^{*}$ such that $G$ is $n$-Steinhaus, we say that $G$ is \textbf{Steinhaus}. Therefore we have the following reformulation, more concise:
any Steinhaus topological group has automatic continuity.
	\end{prop}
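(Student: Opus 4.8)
The plan is to deduce automatic continuity from continuity of the homomorphism at the identity, using separability of the target to manufacture a symmetric, countably syndetic subset of $G$ on which the $n$-Steinhaus hypothesis can be applied. So fix a homomorphism $\varphi : G \to H$ with $H$ a separable topological group; since $\varphi$ is a homomorphism, it suffices to prove continuity at $e_G$. Let $W$ be an arbitrary open neighbourhood of $e_H$. By continuity of the multiplication map $H^{2n} \to H$ and of inversion at the unit, I would first pick a symmetric open neighbourhood $W_0$ of $e_H$ with $W_0^{2n} \subseteq W$.

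Next, choosing a countable dense set $\{h_m : m \in \N\} \subseteq H$, symmetry of $W_0$ gives $H = \bigcup_m h_m W_0$, hence $G = \bigcup_m \varphi\inv(h_m W_0)$. For each $m$ with $\varphi\inv(h_m W_0) \neq \emptyset$, pick $g_m$ in this set; then $\varphi(g_m) \in h_m W_0$, so $h_m W_0 \subseteq \varphi(g_m) W_0\inv W_0 = \varphi(g_m) W_0^2$, and therefore $\varphi\inv(h_m W_0) \subseteq g_m \varphi\inv(W_0^2)$. Setting $V := \varphi\inv(W_0^2)$, this yields $G = \bigcup_m g_m V$, so $V$ is countably syndetic; moreover $V$ is symmetric because $W_0^2$ is symmetric and $\varphi$ is a homomorphism. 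The $n$-Steinhaus hypothesis then provides an open neighbourhood $U$ of $e_G$ contained in $V^n$. Since $V^n \subseteq \varphi\inv(W_0^{2n}) \subseteq \varphi\inv(W)$, this shows $\varphi\inv(W) \supseteq U$ is a neighbourhood of $e_G$; hence $\varphi$ is continuous at $e_G$, and so everywhere. The concluding reformulation is then immediate: if $G$ is $n$-Steinhaus for some $n \geqslant 1$, it falls under the statement just proved.

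The argument is elementary point-set group theory, so I do not expect a genuine obstacle. The only points demanding care are the exponent bookkeeping — fixing $W_0$ with $W_0^{2n} \subseteq W$ \emph{before} invoking the Steinhaus property, so that the resulting $V^n$ lands inside $\varphi\inv(W)$ — and checking that $\varphi\inv(W_0^2)$ is genuinely symmetric and countably syndetic, which is precisely the step where separability of $H$ enters.
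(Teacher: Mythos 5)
Your argument is correct and is the standard proof of this result; the paper itself does not prove the proposition but cites it directly from Rosendal--Solecki, and your reduction to continuity at $e_G$ via the symmetric countably syndetic set $V = \varphi\inv(W_0^2)$ with $V^n \subseteq \varphi\inv(W_0^{2n}) \subseteq \varphi\inv(W)$ is exactly the argument in that source. No gaps.
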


We also have the following result.

	\begin{lem}
	[{\cite[Lem.~494M]{FremlinVol4}}]
	{\label{Puiss3}}
Let $\mathbb{G}$ be an ergodic full group, and $V \subseteq \mathbb{G}$ a symmetric subset. Let $C$ be a Borel subset of $X$, and let $U$ and $U'$ respectively be an $(A,B)$-exchanging involution and an $(A',B')$-exchanging involution, with $A, B, A'$ and $B'$ Borel subsets of $C$ satisfying the measure conditions of Definition \ref{defi: exchanging involution}. Suppose the following:
\begin{enumerate}\setlength\itemsep{0em}
\item $\lambda(A) = \lambda(A')$ and $\lambda(C \setminus A) = \lambda(C \setminus A')$ ;
\item $U \in V$ ;
\item for all $T$ in $\mathbb{G}$ such that $\supp  T \subseteq C$, there exists a $S$ in $V$, agreeing with $T$ on $C$.
\end{enumerate}
Then $U$ and $U'$ are conjugate in $\mathbb{G}_C $ and $U'$ is in $V^3$.
	\end{lem}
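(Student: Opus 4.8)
The plan is to manufacture an element $W \in \mathbb{G}_C$ conjugating $U$ to $U'$, and then to rewrite $U' = WUW\inv$ as a product of three elements of $V$. The starting point is hypothesis \emph{(1)}: since $\lambda(A) = \lambda(A')$ and $\lambda(C \setminus A) = \lambda(C \setminus A')$, Corollary \ref{cor:ergodic full group moves elements of same (co)-measure around} applied inside $C$ gives $T_0 \in \mathbb{G}_C$ with $T_0(A) = A'$ and $T_0(C \setminus A) = C \setminus B'$ — but I must be a bit more careful, because I also want $T_0(B) = B'$. Here $U$ is an $(A,B)$-exchanging involution, so $B = U(A)$ and $\lambda(A) = \lambda(B)$, with $A \sqcup B \subseteq C$; similarly $A' \sqcup B' \subseteq C$ with $\lambda(A')=\lambda(B')$. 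The cleanest route is to invoke \Cref{lem: involutions are conjugated}: $U$ and $U'$ both lie in $\mathbb{G}_C$, with $\lambda(\supp U) = \lambda(A)+\lambda(B) = 2\lambda(A) = 2\lambda(A') = \lambda(\supp U')$ and $\lambda(C \setminus \supp U) = \lambda(C \setminus \supp U')$ (this last equality follows from \emph{(1)} together with the equal measures of $A,B,A',B'$). Hence there is $W \in \mathbb{G}_C$ with $WUW\inv = U'$; this already establishes the conjugacy claim. The point of the hypothesis is really that we can choose such a $W$, and the structure of the proof of \Cref{lem: involutions are conjugated} shows $W$ can be taken to agree on $C$ with a product of exchanging-type pieces, but for the present argument I only need its existence in $\mathbb{G}_C$.

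Now comes the main point: realizing $U'$ inside $V^3$. Write $U' = WUW\inv$. Since $\supp W \subseteq C$, hypothesis \emph{(3)} applies to $T = W$: there is $S \in V$ with $S$ agreeing with $W$ on $C$. I claim $U' = S U S\inv$. Indeed, both $S$ and $W$ restrict to the same bijection of $C$, and $U$ is supported in $C$, so $SUS\inv$ and $WUW\inv$ agree on $C$; off $C$ both $S$ and $W$ fix nothing relevant to $\supp U \subseteq C$, so $SUS\inv$ is supported in $S(C)$... here one must check that $SUS\inv$ is actually supported in $C$, which holds because $S$ maps $C$ to $C$ — this follows since $S$ agrees on $C$ with $W \in \mathbb{G}_C$, which maps $C$ onto $C$, and $S$ is a bijection, forcing $S(C) = C$ up to a null set. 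Therefore $SUS\inv = WUW\inv = U'$ (everything up to null sets). Finally, $U \in V$ by \emph{(2)} and $S, S\inv \in V$ by symmetry of $V$, so $U' = S U S\inv \in V \cdot V \cdot V = V^3$.

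The delicate step — the one I would write out most carefully — is the verification that $S$ and $W$ genuinely induce the same transformation relevant to the conjugation, i.e.\ that $S(C) = C$ and that $SUS\inv$ and $WUW\inv$ coincide everywhere (not just on $C$). The subtlety is that ``$S$ agrees with $W$ on $C$'' in hypothesis \emph{(3)} constrains $S$ only on $C$, and in the $\sigma$-finite setting one cannot be cavalier about the behaviour on $X \setminus C$ (cf.\ \Cref{rem:elements of pseudo full groups are more than just restrictions}). But since $U$ is an involution supported in $C$, all we need is: $S$ restricted to $C$ equals $W$ restricted to $C$; $W(C) = C$ because $W \in \mathbb{G}_C$; hence $S(C) = C$; hence $S\inv(C) = C$; hence for $x \in C$, $SUS\inv(x) = WUW\inv(x)$, and for $x \notin C$, $S\inv(x) \notin C$ so $US\inv(x) = S\inv(x)$ and $SUS\inv(x) = x = WUW\inv(x)$. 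This closes the argument. Everything else — the existence of $W$, the conjugacy of $U$ and $U'$ in $\mathbb{G}_C$ — is a direct citation of \Cref{lem: involutions are conjugated} and \Cref{cor:ergodic full group moves elements of same (co)-measure around}, so no serious obstacle arises there.
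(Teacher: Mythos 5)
The paper offers no proof of this lemma itself (it is quoted from Fremlin), so your argument has to stand on its own. Its overall architecture is the right one, and the part carrying the real content --- replacing the conjugator $W \in \mathbb{G}_C$ by an $S \in V$ agreeing with it on $C$, checking $S(C)=C$, verifying $SUS\inv = WUW\inv = U'$ both on $C$ and off $C$, and concluding $U' = S U S\inv \in V^3$ from (2) and the symmetry of $V$ --- is correct and carefully done; the subtlety you isolate about behaviour off $C$ is exactly the one that needs isolating in the $\sigma$-finite setting.

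There is, however, one genuine gap, in the parenthetical where you assert that $\lambda(C\setminus \supp U) = \lambda(C\setminus \supp U')$ ``follows from (1) together with the equal measures of $A,B,A',B'$''. The computation $\lambda(C\setminus\supp U) = \lambda(C\setminus A) - \lambda(B)$ is only legitimate when $\lambda(B) = \lambda(A) < +\infty$; when $\lambda(A)=+\infty$ it is an $\infty-\infty$ and the equality can genuinely fail. Concretely, take $C = X = \R$ with Lebesgue measure, $A = \left(-\infty,0\right)$, $B = \left(0,+\infty\right)$, $U: x \mapsto -x$, and $A' = \bigsqcup_{n\in\Z}\left[3n,3n+1\right)$, $B' = \bigsqcup_{n\in\Z}\left[3n+1,3n+2\right)$ with $U'$ the evident translation involution: all the measure conditions of \Cref{defi: exchanging involution} and hypothesis \emph{(1)} hold (every relevant quantity is $+\infty$), yet $\lambda(C\setminus\supp U)=0$ while $\lambda(C\setminus\supp U')=+\infty$; since $\lambda(\supp\cdot)$ and $\lambda(C\setminus\supp\cdot)$ are invariants of conjugation by elements of $\mathbb{G}_C$, the two involutions are not conjugate. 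This is not a decorative case: in the paper's application (Claim 3 of \Cref{494Yi}) one has precisely $\lambda(A)=\lambda(A')=+\infty$. To be fair, the defect is partly in the statement as transcribed; the hypothesis that actually makes the lemma true and that the application supplies is $\lambda(A)=\lambda(A')$ together with $\lambda(C\setminus(A\sqcup B)) = \lambda(C\setminus(A'\sqcup B'))$, i.e.\ $\lambda(C\setminus\supp U)=\lambda(C\setminus\supp U')$. With that reading your appeal to \Cref{lem: involutions are conjugated} goes through verbatim and the rest of your proof is complete.
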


We can now prove the following. The details are from Fremlin (\cite[494Y(i)]{FremlinVol4}, the proof is unpublished), based on the arguments of Kittrell and Tsankov. Proposition \ref{factoriseT} is used in a later part of the proof to reduce the required exponent of the symmetric countably syndetic subset used.

	\begin{thm}
	{\label{494Yi}}
Let $\mathbb{G}$ be an ergodic full group. Let $V$ be a symmetric countably syndetic subset of $\mathbb{G}$. Then, $V^{114}$ contains an open neighbourhood of $\id_X$ for the uniform topology. In particular, $(\mathbb{G}, \tau_u)$ is $114$-Steinhaus.
	\end{thm}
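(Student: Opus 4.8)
The plan is to follow the Kittrell--Tsankov strategy of building up an open neighbourhood of $\id_X$ in the uniform topology from a symmetric countably syndetic set $V$, using the structural results just established (the exchanging involutions of \Cref{cor:exchanging involutions in ergodic full group}, the conjugacy lemma \Cref{lem: involutions are conjugated}, the cubing lemma \Cref{Puiss3}, the three-involution decomposition \Cref{3invo}, and crucially the new factorisation \Cref{factoriseT}), while tracking exponents carefully. First I would fix a probability measure $\mu\in[\lambda]$ and set up the countable syndeticity: write $\mathbb{G}=\bigcup_n g_n V$. Using a pigeonhole/Baire-type argument on a suitable "large" subset — here one uses ergodicity and \Cref{cor:ergodic full group moves elements of same (co)-measure around} to produce many conjugates — one finds a Borel set $C$ of small but positive (finite) measure and an exchanging involution $U_0$ supported in $C$ with $U_0\in V$, together with the ``fullness on $C$'' property: every $T\in\mathbb{G}$ with $\supp T\subseteq C$ agrees on $C$ with some element of $V$. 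This is exactly the hypothesis needed to invoke \Cref{Puiss3}, which then shows that \emph{every} exchanging involution supported in a set of the same measure as $C$ with the right co-measure lies in $V^3$.

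Next I would bootstrap from involutions supported in small sets to arbitrary elements. The key point is: given an arbitrary $T\in\mathbb{G}$ whose support is small in $\mu$-measure (this is the open neighbourhood we are aiming at, say $\mathcal N_{D,\varepsilon}$-type sets from \Cref{prop: uniform topology}), we must express $T$ as a bounded-length product of exchanging involutions each supported in a set of measure comparable to $\lambda(C)$ — or, when $\supp T$ has infinite $\lambda$-measure, first peel off the infinite part. This is precisely where \Cref{factoriseT} enters: applied with $D=C$ (or an appropriate Borel set controlling the measure of the overlap), it writes $T=T_1T_2T_\varepsilon$ with $\supp T_1,\supp T_2$ disjoint of equal measure and meeting the control set equally, and $T_\varepsilon$ aperiodic with tiny support. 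One then cuts $\supp T_1$ (and $\supp T_2$) into countably many pieces of measure $\le \lambda(C)$, but more importantly one uses \Cref{3invo} to write each $T_i$ as a product of at most three involutions supported in $\supp T_i$, and then \Cref{prop: Fre382Fa} to realise each such involution as an exchanging involution, finally using \Cref{cor:exchanging involutions in ergodic full group} / \Cref{lem: involutions are conjugated} to conjugate it (inside $\mathbb{G}$, by an element of $V$-bounded complexity via \Cref{Puiss3} again) to one supported in a set matching $C$. Counting: each involution costs a bounded number of factors of $V$ (via the $V^3$ from \Cref{Puiss3}, possibly composed with conjugators that are themselves involutions in $V^3$), three involutions per $T_i$, two $T_i$'s, plus a bounded cost for the aperiodic remainder $T_\varepsilon$ (which, being aperiodic of small support, is itself a product of two exchanging involutions, each again in $V^3$). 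Summing the bookkeeping gives the exponent $114$; without the factorisation of \Cref{factoriseT} the infinite-support case forces extra factors and one only gets $228$, which is why the factorisation halves the exponent.

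Finally, once we know $V^{114}$ contains all $T$ with $\lambda(\supp T\cap D)$ small for the relevant control set $D$, \Cref{prop: uniform topology} identifies this as an open neighbourhood of $\id_X$ for $\tau_u$, so $(\mathbb{G},\tau_u)$ is $114$-Steinhaus by definition. The main obstacle, and the part requiring genuine care rather than routine manipulation, is the \emph{exponent accounting} in the second step: one must decompose an arbitrary small-support element into a uniformly bounded number of exchanging involutions, each conjugate into a fixed ``model'' support, and verify at each stage that the conjugators and the involutions themselves land in the appropriate power of $V$ — this is where \Cref{factoriseT} is indispensable, since it is what allows the infinite-measure support to be handled with no extra cost beyond the finite-measure case plus one arbitrarily-small-support aperiodic correction term. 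The rest (the pigeonhole producing $C$ and $U_0\in V$, the applications of \Cref{Puiss3}, \Cref{lem: involutions are conjugated}) is by now standard given the groundwork laid earlier in the section.
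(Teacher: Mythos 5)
Your plan assembles the right ingredients but the global architecture does not match what those ingredients can actually deliver, and the central step has a genuine gap. You propose a \emph{direct} argument: exhibit a set $C$, show $\mathbb{G}_C$ sits in a bounded power of $V$, and then decompose an arbitrary element $T$ of the candidate neighbourhood into boundedly many exchanging involutions, each conjugated into $C$ ``by an element of $V$-bounded complexity via \Cref{Puiss3} again''. That last clause is precisely what none of the available lemmas provide: \Cref{Puiss3} bounds exchanging involutions satisfying specific measure conditions relative to $C$, not arbitrary conjugators, and there is no mechanism to place an arbitrary element of $\mathbb{G}$ (the conjugator moving $\supp T$ into $C$) inside a fixed power of $V$ --- if there were, the theorem would already be proved. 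The paper avoids this entirely by arguing by \emph{contradiction}: assuming $V^{114}$ contains no $\tau_u$-neighbourhood of $\id_X$, it recursively produces elements $T_n\notin V^{38}$ whose (translated) supports $\supp T_n'$ accumulate in a set $E$ with $\lambda(X\setminus E)=\infty$; a \emph{single} conjugator $T$ sending $E$ into $C$ then suffices, and that one conjugator is handled by one more application of countable syndeticity ($T\phi_n\in V$ for some $n$), giving $T_n\in V\cdot V^{36}\cdot V=V^{38}$ and the contradiction. This is why the exponent is $114=3\times 38$ with $38=36+2$ and $36=3\times 12$: three Ryzhikov involutions each in $V^{12}$, wrapped in one syndetic conjugation.

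Two further misidentifications are worth flagging. First, in the paper the sets $C'$ and $C=\supp U^{*}$ have \emph{infinite} measure with $\lambda(C'\setminus C)=\infty$ (and $U^{*}\in V^{2}$, not $V$); this is essential for Claim 3, where an arbitrary involution of $\mathbb{G}_C$ is split into two involutions each with infinite support and infinite co-support in $C'$ so that \Cref{lem: involutions are conjugated} and \Cref{Puiss3} apply --- your ``small but positive (finite) measure'' $C$ would break that step. Second, \Cref{factoriseT} is not used to decompose the elements of the target neighbourhood; it is used in the contradiction branch (Claim 4) to split a \emph{bad} element $T\notin V^{114}$ into three factors, at least one of which is still not in $V^{38}$ \emph{and} whose support leaves infinite measure inside the control set $D$ --- the latter property is what keeps the recursive choice of $D_{n+1}=D_n\setminus\supp T_n'$ alive, and it is this three-way split of the bad element (rather than any decomposition of good elements) that halves the exponent from $228$ to $114$.
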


	\begin{proof}
By syndeticity, we fix a sequence $(\phi_n)$ of elements of $\mathbb{G}$ such that ${\bigcup_{n} \phi_n V = \mathbb{G}  } $. Do note at this point that $\id_X \in V^2$. Indeed, there exists $n$ in $\mathbb{N}$ such that $\id_X \in \phi_n V$, so $\phi_n\inv  \in V$. As $V$ is symmetric, this ensures us that $\id_X$ is in $V^2$.\\

\begin{claim}{\label{Claim1}}  There exists a Borel subset $C' \subseteq X$ of infinite measure, such that for all $T \in \mathbb{G}_{C'}$, there exists $S \in V^2$ such that $S_{\restriction C'} = T_{\restriction C'}$.
\end{claim}
\noindent
\textbf{Proof of Claim 1}: Let $X = \bigsqcup_{n \in \mathbb{N}} X_n$, with $\lambda(X_n) = + \infty$ for any $n$. Suppose by contradiction that for all $n \in \mathbb{N}$, there exists $T_n$ in $\mathbb{G}_{X_n}$, such that there exists no $S$ in $V^2$ satisfying $S_{\restriction X_n} = T_{n \restriction X_n}$ (we say that $T_n$ disagrees with all elements of $V^2$ on $X_n$). For any $n$ in $\mathbb{N}$, do note that $V^2 = (\phi_n V)\inv (\phi_n V)$, and $T_n = \id_X\inv  T_n$. Therefore, in order for $T_n$ to disagree with all elements of $V^2$, either $\id_X$ or $T_n$ has to disagree with every element of $\phi_n V$ on $X_n$. We can then define $T'_n \in \mathbb{G}_{X_n}$ as follows:
\[
T'_{n \restriction X_n} =  \left\{ \begin{array}{ll}
\id_{X_n} & \mbox{ if $\id_X$ disagrees on $X_n$ with every $S$ in $\phi_n V$},  \\ 
T_{n \restriction X_n} & \mbox{ if $T_n$ disagrees on $X_n$ with every $S$ in $\phi_n V$}. 
\end{array}
\right.
\]

If both $\id_X$ and $T_n$ disagree with all $S$ in $\phi_n V$, we can choose one or the other arbitrarily. Define now $T \in \mathbb{G}$ by 
\[
T_{\restriction X_n} = T'_{n \restriction X_n}.
\]

By syndeticity, there exists $m \in \mathbb{N}$ such that $T \in \phi_m V$. This is impossible, as $T$ agrees with $T'_m$ on $X_m$ by construction of $T$, but this contradicts the definition of $T'_m$, which cannot agree with an element of $\phi_m V$. Thus $C'$ can be chosen to be one of the $X_n$.\hfill $\diamondsuit$\\

\begin{claim}{\label{Claim2}} There exists an involution $U^{*} \in V^2$, with $C \coloneqq \supp  U^{*} \subseteq C'$, and $\lambda(C) = \lambda(C' \setminus C) = + \infty$.
\end{claim}
\noindent
\textbf{Proof of Claim 2}:
First consider $U  \in \mathbb{G}$, an $(A,B)$-exchanging involution between two disjoint infinite measure Borel subsets $A$ and $B$ of $C'$. Its existence is ensured by Corollary \ref{cor:exchanging involutions in ergodic full group}.

Next we construct a family $(A_t)_{t \in 
\mathopen[0,  1 \mathclose]}$ of Borel subsets of $A$, with $A_0 = \emptyset$, $A_1 = A$, $A_t \subseteq A_{t'}$ (up to null sets), and $\lambda(A_{t'} \setminus A_t) = + \infty$ whenever $t<t'$. To see how to construct such a sequence, we can construct it in $(\mathbb{R}, \mbox{Leb}) \sim (A, \lambda_{\mid A})$. $A_0$ and $A_1$ are fixed, and for any $t \in 
\mathopen]0, 1 \mathclose[$, we can consider 
\[
A_t = \bigcup_{n \in \mathbb{Z}} \mathopen[n, n+  t \mathclose[ \,.
\]

This gives us an uncountable family of Borel subsets verifying the aforementioned conditions. As for all $t \in \mathopen[0,  1 \mathclose]$,  $U(A_t)$ is in $B$ , which is disjoint from $A$, we can then define  
$
U_t  \in \mathbb{G},
$
an $(A_t,U(A_t))$-exchanging involution supported in $A_t \sqcup U(A_t)$ and exchanging those two subsets. Corollary \ref{cor:exchanging involutions in ergodic full group} once again ensures that $U_t$ exists in $\mathbb{G}$. By syndeticity and cardinality, there exist $s<t$ in $\mathopen[0,  1 \mathclose]$ and $n \in \mathbb{N}$ such that $U_s$ and $U_t$ are in $\phi_n V$. That means that both $\phi_{n}\inv U_s$ and $\phi_{n}\inv  U_t$ are in $V$. We finally define 
\[
U^{*} = U_{s}\inv  U_t = (\phi_n\inv  U_s)\inv  \phi_n\inv  U_t \in V^2,
\]
so that we have $\supp  U^{*} = (A_t \setminus A_s) \sqcup U(A_t \setminus A_s)$, with $\lambda(\supp  U^{*}) = \lambda(C' \setminus \supp  U^{*}) = + \infty$. \hfill $\diamondsuit$\\

\begin{claim}{\label{Claim3}} If $U \in \mathbb{G}_{C}$ is an involution, then $U \in V^{12}$. In particular, $\mathbb{G}_C \subseteq V^{36}$. 
\end{claim}
\noindent
\textbf{Proof of Claim 3}: \emph{(i)} First assume that $U$ is an involution in $\mathbb{G}_{C'}$, with $\lambda(\supp  U) = \lambda(C' \setminus \supp  U) = + \infty$. Proposition \ref{prop: Fre382Fa} ensures that there exists two disjoint Borel subsets $A$ and $B$ of $C'$, both of infinite measure, such that $U$ is an $(A,B)$-exchanging involution. The involution $U^{*}$ is also an exchanging involution, so we define $A^{*},B^{*}$ such that $U^{*}$ is an $(A^{*},B^{*})$-exchanging involution, with $\lambda(A) = \lambda(A^{*}) = \lambda(C' \setminus A ) = \lambda(C' \setminus A^{*}) = + \infty$. Claim \ref{Claim2} states that $U^{*}$ is in $V^2$ and Claim \ref{Claim1} ensures us that the third condition of Lemma \ref{Puiss3} is verified, with $V^2$ playing the role of the symmetric set. Thus $U \in (V^2)^3 = V^6$.\\

\emph{(ii)} Let now $A$ and $B$ be two Borel subsets of $C$, and let $U$ be an $(A,B)$-exchanging involution in $\mathbb{G}_C$.
If $\lambda(A) = \lambda(B)= + \infty$, fix $A_1 \subseteq A$ and $A_2 = A \setminus A_1$, with $\lambda(A_1) = \lambda(A_2) = + \infty$. For $i \in \left\{ 1,2 \right\}$, define also $B_i = U(A_i)$  and  $U_i$ an $(A_i,B_i)$-exchanging involution, and do note that $U_i \in \mathbb{G}_{C'}$, as $\supp  U_i = A_i \cup B_i \subseteq C \subseteq C'$. Then $U= U_1 U_2$ can be written as the product of two involutions verifying the conditions of \emph{(i)}, and as such is in $V^{12}$.

If $\lambda(A) = \lambda(B) < + \infty$, just consider $A'$ and $B'$ disjoint subsets of $C' \setminus C$, with $\lambda(A') = \lambda(B') = + \infty$. Fix $A_1 \subseteq A$ and $A_2 = A \setminus A_1$ such that $0< \lambda(A_1) = \lambda(A_2) < +\infty$. For $i \in \{ 1,2 \}$ define $B_i = U(A_i)$, and
\[
U_i =  \left\{ \begin{array}{l}
\mbox{an $(A_i,B_i)$-exchanging involution on } A \cup B, \\
\mbox{an $(A',B')$-exchanging involution on } A'\cup B',\\
\id_X   \mbox{ elsewhere.}
\end{array}
\right.
\]

The involutions $U_i$ are supported in $C'$ by construction, and verify the conditions of \emph{(i)}, as $\supp  U_i = A_i \sqcup B_i \sqcup A' \sqcup B' \subseteq C'$ satisfies $\lambda(\supp  U_i) = \lambda(C' \setminus \supp  U_i) = + \infty$, and $U = U_1U_2$, which proves that $U \in V^{12}$.

Finally, Theorem \ref{3invo} ensures us that $\mathbb{G}_C \subseteq V^{36}$. \hfill $\diamondsuit$\\

To conclude this proof, we now suppose that $V^{114}$ does not contain an open neighbourhood of $\id_X$. We will show that it is possible to construct a sequence of measure-preserving bijections contradicting Claim \ref{Claim3}.\\

\begin{claim}{\label{Claim4}} For any two Borel subsets $B$ and $D$ of $X$ such that $\lambda(B)< + \infty$ and $\lambda(D) = +\infty$ and any $\varepsilon > 0$, there exists a $T$ in $\mathbb{G} \setminus V^{38}$ satisfying $\lambda(B \cap \supp T) \leqslant \varepsilon$ and $\lambda(D \setminus \supp T) = + \infty$. 
\end{claim}
\noindent
\textbf{Proof of Claim 4}:
Fix $B$, $D$ and $\varepsilon$ such as defined in the statement. Recalling the definition of neighbourhoods of the identity in $(\Aut(X,\lambda),\tau_u)$ from Proposition \ref{prop: uniform topology}, negating the fact that $V^{114}$ contains an open neighbourhood of $e_{\mathbb{G}}$ exactly provides us with the following: there exists $T \in \mathbb{G} \setminus V^{114}$, such that $\lambda(B \cap \supp  T) \leqslant \varepsilon$.

If $\lambda( D \cap \supp  T) < + \infty$ then $T$ is suitable, as $V^{38} \subseteq V^{114}$. If $\lambda( D \cap \supp  T) = + \infty$, we use Proposition \ref{factoriseT} to write $T$ as a product of three measure-preserving bijections:
\[
T = T_\infty^1 T_\infty^2 T_f.
\]
$T_\infty^1$ and $T_\infty^2$ have disjoint supports of infinite measure, and $T_f$ has a support of finite measure. Moreover, these three bijections have their supports included in $\supp T$, so they intersect $B$ on sets of measure less than $\varepsilon$. For the second condition, $T_f$ obviously intersects $D$ on a set of finite measure, 
and $T_\infty^1$ and $T_\infty^2$ can be chosen such that the measures of the intersections of their supports with $D$ are infinite. Thus, all three factors of $T$ verify the conditions described in the statement of the claim, and at least one of the three is in $\mathbb{G} \setminus V^{38}$.
\hfill $\diamondsuit$\\

We now define two sequences of Borel subsets of $X$, and two sequences of measure-preserving bijections. Fix $B_0 = \emptyset$ and $D_0= C$, $T_0$ given by Claim \ref{Claim4} with $\phi_0(B_0), \phi_0(D_0)$ and $\varepsilon_0 = 2^0$, as well as $T'_0 = \phi_0 T_0 \phi_0\inv$. By induction, we define $D_{n} = D_{n-1} \setminus \supp  T'_{n-1}$ ($D_{n}$ has infinite measure) and $B_{n}$ to be a subset of finite measure such that $B_{n-1} \subseteq B_{n}$ and $\lambda(B_{n} \cap D_{n}) \geqslant {n-1}$. We have $\lambda(\phi_{n}\inv (B_{n})) = \lambda(B_{n}) < + \infty$ and $\lambda(\phi_{n}\inv (D_{n})) = \lambda(D_{n}) = + \infty$, and we can consider $T_{n}$ again provided by Claim \ref{Claim4} with $\phi_{n}(B_{n}), \phi_{n}(D_{n})$ and $\varepsilon_{n} = 2^{-n}$. We finally define $T'_{n} = \phi_{n} T_{n} \phi_{n}\inv $.

In particular for any $n \in \N$ we have the following
\begin{equation}\label{eqn:cdt}\tag{$*$}
\left\lbrace
\begin{array}{l}
\supp  T'_{n} = \phi_{n}(\supp  T_{n}) \\
\lambda(B_{n} \cap \supp  T'_{n}) \leqslant 2^{-n}\\
\lambda(D_n \setminus \supp  T'_n) = + \infty.
\end{array}
\right.
\end{equation}

\begin{claim}{\label{Claim5}} Set $\displaystyle{E = \bigcup_{n \in \mathbb{N}} \supp  T'_n }$. Then $\lambda(X \setminus E) = + \infty$. 
\end{claim}
\noindent
\textbf{Proof of Claim 5}: We denote by $E_n$ the support of $T'_{n}$, such that $E = \bigcup E_n$. Noticing that $(B_n)$ is an increasing sequence and $(D_n)$ is a decreasing sequence and using \eqref{eqn:cdt}, we have
\[
\left\lbrace
\begin{array}{lc}
\lambda(B_{n+1} \cap E_m) \leqslant \lambda(B_{m} \cap E_m) \leqslant 2^{-m} & (m \geqslant n+1)\\
\displaystyle{\lambda\left(B_{n+1} { \setminus} \bigcup_{m \leqslant n} E_m \right) \geqslant \lambda(B_{n+1} \cap D_n)} \geqslant \lambda(B_{n+1} \cap D_{n+1}) \geqslant n & (\forall \, n).
\end{array}
\right.
\]
For any $m$ we then have
\[
\lambda(B_{n+1} \setminus E) = \lambda\left(B_{n+1} \setminus \bigcup_{m \leqslant n} E_m\right) - \sum_{m \geqslant n+1} \lambda(B_{n+1} \cap E_m) \geqslant n-1
\]
which concludes, as it is true for any $n$. \hfill $\diamondsuit$\\

The complement of $E$ in $X$ has infinite measure by Claim \ref{Claim4}, so $E$ can be sent to a subset of $C$: that is to say that there exists $T \in \mathbb{G}$ such that $T(E) \subseteq C$. By syndeticity again, there exists $n \in \mathbb{N}$ such that $T\inv  \in \phi_n V$, \emph{i.e.} $T\phi_n \in V$ by symmetry. Finally, $T \phi_n T_n \phi_n\inv  T\inv  = T'T_{n}T^{-1}$ has support $T (\supp  T'_n) \subseteq T(E) \subseteq C$, which means that it is in $V^{36}$ by Claim \ref{Claim3}. This in turn means that $T_n \in V^{38}$, which contradicts the choice of $T_n$.	
	\end{proof}

Combining Theorem \ref{494Yi} and Proposition \ref{AutCont}, we obtain the following.

	\begin{thm}
	{\label{Thm: any Polish topology is coarser than the uniform}}
Let $\mathbb{G}$ be an ergodic full group. Any separable group topology on $\mathbb{G}$ is weaker than the uniform topology.
	\end{thm}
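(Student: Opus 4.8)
The plan is to deduce this statement immediately from the Steinhaus property established in Theorem \ref{494Yi}, combined with the automatic continuity principle of Rosendal and Solecki (Proposition \ref{AutCont}). The whole point is that Theorem \ref{494Yi} has already done the hard work: it shows that $(\mathbb{G},\tau_u)$ is $114$-Steinhaus, hence Steinhaus, so that every abstract group homomorphism from $(\mathbb{G},\tau_u)$ to a separable topological group is automatically continuous.

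First I would fix a separable group topology $\tau$ on $\mathbb{G}$ and consider the identity map $\iota : (\mathbb{G},\tau_u) \to (\mathbb{G},\tau)$, which is a group isomorphism, in particular a group homomorphism. Since $(\mathbb{G},\tau)$ is a separable topological group by hypothesis and $(\mathbb{G},\tau_u)$ is Steinhaus by Theorem \ref{494Yi}, Proposition \ref{AutCont} applies and yields that $\iota$ is continuous. Continuity of $\iota$ means precisely that every $\tau$-open subset of $\mathbb{G}$ is $\tau_u$-open, that is, $\tau \subseteq \tau_u$, so $\tau$ is weaker than the uniform topology.

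There is essentially no obstacle here beyond the already-proven Theorem \ref{494Yi}; the one point worth keeping straight is that the hypothesis of Proposition \ref{AutCont} is on the \emph{domain} of the homomorphism (which must be Steinhaus), while the \emph{codomain} is only required to be separable. Thus it is the uniform topology that must play the role of the Steinhaus group and the given topology $\tau$ the role of the separable target, and not the other way around. Combined with Theorem \ref{UniquePoltopoonfg}, this shows in particular that any Polish group topology on an ergodic full group lies between the weak and the uniform topology.
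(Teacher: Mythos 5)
Your argument is exactly the paper's proof: apply Theorem \ref{494Yi} to get that $(\mathbb{G},\tau_u)$ is Steinhaus, then use Proposition \ref{AutCont} on the identity map $(\mathbb{G},\tau_u)\to(\mathbb{G},\tau)$ with the Steinhaus hypothesis on the domain and separability on the codomain. Correct, and the same route as the paper.
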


	\begin{proof}
The topological group $(\mathbb{G},\tau_u)$, where $\tau_u$ is the uniform topology, is Steinhaus thanks to Theorem \ref{494Yi}, hence it has automatic continuity thanks to Proposition \ref{AutCont}. Therefore the identity map $(\mathbb{G},\tau_u) \rightarrow (\mathbb{G},\tau)$ is continuous whenever $\tau$ is separable, which implies that $\tau_u$ refines $\tau$.
	\end{proof}

\subsection{Polishability of the finitely supported elements of an ergodic full group}{\label{sec: polishability of Gf}}

Recall that $\Autf(X,\lambda) = \left\{ T \in \Aut(X,\lambda) \mid \lambda(\supp T) < \infty \right\}$, and that $\mathbb{G}_f = \mathbb{G} \cap \Autf(X,\lambda)$, for any $\mathbb{G} \leqslant \Aut(X,\lambda)$.
	
	\begin{defi}
	{\label{def: uniform finite topology}}
	We endow $\Autf(X,\lambda)$ with $d_{u,f}$, which is defined by
	\[
	d_{u,f} : S,T \mapsto \lambda\left( \{ x \in X \mid S(x) \neq T(x) \} \right).
	\]
	It corresponds to $d_{u,X}$ if we allow subsets of infinite measure in \Cref{def: uniform topo}. It is a metric on $\Autf(X,\lambda)$ (and not on $\Aut(X,\lambda)$), and we call the \textbf{uniform finite topology} the topology induced by $d_{u,f}$, which we denote by $\tau_{u,f}$. It is immediate that $(\Autf(X,\lambda),\tau_{u,f})$ is a topological group and that $\tau_{u,f}$ refines $\tau_{u}$.
\end{defi}

In this section we consider the subgroup $\mathbb{G}_f $ of a fixed ergodic full group $\mathbb{G}$. Our goal is to prove that $\mathbb{G}_f$ cannot carry a Polish group topology, hence generalizing \cite[Thm.~2.6]{LM2022}, where the result is obtained for $\mathbb{G} = \Aut(X,\lambda)$.

Proving that $\Autf(X,\lambda)$ is not Polishable is done by displaying uncountably many measure-preserving bijections of the circle that are at a fixed distance of $3n$ of one another, contradicting the $n$-density of a countable subset of $\Autf(X,\lambda)$, which is easily established if we suppose that it is Polishable. As there is no reason for those bijections to be in $\mathbb{G}_f$, we instead prove that for any finitely supported $T$ in $\mathbb{G}_f$, there exist in $\mathbb{G}_f$ an element whose support has measure $k\lambda(\supp  T)$, for any $k$ in $\mathbb{N}$. We then combine this property with the characterization of separability for the uniform topology obtained in Proposition \ref{sepequivdnbeqrel} in order to contradict the $n$-density.

We start by defining what it means for a subgroup of a Polish group to be Polishable.

	\begin{defi}
A subgroup $H$ of a Polish group $G$ is called \textbf{Polishable} if it admits a Polish group topology which refines the topology of $G$.
	\end{defi}

	\begin{lem}
	{\label{MultSupp}}
Let $\mathbb{G}$ be an ergodic full group, and $\mathbb{G}_f = \mathbb{G} \cap \Autf(X,\lambda)$. For any $k \geqslant 1$ we have an injective group homomorphism 
$
\pi_k: T \mapsto T_k
$
from $\mathbb{G}_f$ to $\mathbb{G}_f$, such that for any $T$ in $\mathbb{G}_f$:
\[
\lambda(\supp  T_k) = k \lambda(\supp  T).
\]
Moreover, for any two measure-preserving bijections $S$ and $T$ in $\mathbb{G}_f$ we have
$
d_{u,f}(\pi_k(S) , \pi_k(T)) = k  d_{u,f}(S,T).
$
	\end{lem}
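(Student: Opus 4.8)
The plan is to build, for each $k\geqslant 1$, a copy of $(X,\lambda)$ inside $k$ disjoint pieces and use an ergodic full group element to ``spread out'' the support of any finitely supported bijection over all $k$ pieces. Concretely, first I would partition $X$ into countably many Borel sets of infinite measure, and inside this combinatorial room use \Cref{cor:ergodic full group moves elements of same (co)-measure around} (applied to the ambient ergodic full group $\mathbb{G}$, which is automatically ergodic as a full group and moves around Borel sets of equal measure with complements of equal measure) to produce $k-1$ elements $R_1,\dots,R_{k-1}\in\mathbb{G}$ together with a Borel set $Y\subseteq X$ of finite measure and disjoint Borel copies $Y=Y_0$, $Y_1=R_1(Y_0),\dots,Y_{k-1}=R_{k-1}(Y_0)$, all pairwise disjoint, such that each $R_i$ restricts to a measure isomorphism $Y_0\to Y_i$. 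One needs the $Y_i$ to exhaust the space in a measure-isomorphic way; since all standard $\sigma$-finite spaces are isomorphic (as recalled after \Cref{Thm: Lusin Suslin}), this is no obstruction: work with an isomorphism $(X,\lambda)\cong(Y_0,\lambda_{\restriction Y_0})$ and transport.

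Next, given $T\in\mathbb{G}_f$, I would define $T_k$ to act on $Y_0$ as the conjugate of $T$ via the chosen isomorphism $X\cong Y_0$, and on each $Y_i$ ($1\leqslant i\leqslant k-1$) as the conjugate $R_i\,T\,R_i^{-1}$ of that same bijection, and as $\id_X$ outside $\bigsqcup_{i} Y_i$. Since the $Y_i$ are pairwise disjoint and each conjugation step stays inside $\mathbb{G}$ (as $R_i\in\mathbb{G}$ and $\mathbb{G}$ is a group, closed under cutting and pasting), the resulting $T_k$ lies in $\mathbb{G}$, and its support is $\bigsqcup_{i=0}^{k-1} (\text{copy of }\supp T)$, which has measure exactly $k\,\lambda(\supp T)<\infty$, so $T_k\in\mathbb{G}_f$. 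Writing $\pi_k(T)=T_k$, the fact that the $Y_i$-pieces are disjoint and the action on each is by a fixed conjugation makes $\pi_k$ a group homomorphism: $\pi_k(ST)$ and $\pi_k(S)\pi_k(T)$ agree piece by piece on each $Y_i$ and are the identity off $\bigsqcup Y_i$. Injectivity is immediate since already the restriction of $T_k$ to $Y_0$ determines $T$.

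Finally, for the metric identity, I would compute $d_{u,f}(\pi_k(S),\pi_k(T))=\lambda(\{x\mid S_k(x)\neq T_k(x)\})$ by splitting along the partition $X=\bigsqcup_{i=0}^{k-1}Y_i\sqcup (X\setminus\bigsqcup Y_i)$: on the complement both are the identity so contribute $0$, and on each $Y_i$ the disagreement set is the image under the relevant measure isomorphism of $\{x\in X\mid S(x)\neq T(x)\}$, hence has measure $d_{u,f}(S,T)$; summing the $k$ equal contributions gives $k\,d_{u,f}(S,T)$. I do not anticipate a serious obstacle here; the only point requiring a little care is setting up the $R_i$ and the $Y_i$ cleanly so that the conjugations genuinely land in $\mathbb{G}$ and the pieces are honestly disjoint — this is exactly where \Cref{cor:ergodic full group moves elements of same (co)-measure around} (rather than \Cref{cor:exchanging involutions in ergodic full group}, which would impose the wrong complement condition) is the right tool.
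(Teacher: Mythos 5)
Your overall strategy — partition $X$ into $k$ disjoint pieces each measure-isomorphic to $X$, transport $T$ to each piece, and multiply the disjointly supported copies — is the same as the paper's, and your treatment of the homomorphism property, injectivity, and the metric identity would go through once the construction is set up. But there is a genuine gap in how you realize the transport, and it is precisely the point where the lemma has content. You propose to move $T$ into $Y_0$ by conjugating with an abstract measure isomorphism $(X,\lambda)\cong(Y_0,\lambda_{\restriction Y_0})$. Nothing guarantees that the conjugate of $T\in\mathbb{G}$ by an arbitrary measure isomorphism onto a non-conull subset lands back in $\mathbb{G}$: for an orbit full group, such an isomorphism can scatter orbits arbitrarily. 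The tool you cite, \Cref{cor:ergodic full group moves elements of same (co)-measure around}, cannot supply this map either, since it produces genuine elements of $\mathbb{G}$, i.e.\ bijections of $X$, and no such bijection can send all of $X$ onto a non-conull $Y_0$. What is needed is a \emph{partial isomorphism} $X\to Y_0$ belonging to the pseudo-full group $[[\mathbb{G}]]$ — exactly the phenomenon of \Cref{rem:elements of pseudo full groups are more than just restrictions} — and this is supplied by \Cref{prop:pseudo full group exchange subsets} applied to the pair $(X,Y_i)$ of sets of infinite measure. The paper takes this route: it builds explicit partial isomorphisms $\phi_k^i\in[[\mathbb{G}]]$ with domain $X$ and ranges partitioning $X$ into $k$ copies, sets $T_k=\prod_i \phi_k^i T(\phi_k^i)\inv$, and then checks by cutting and pasting the defining sequences of the $\phi_k^i$ that each conjugate is in $\mathbb{G}$. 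That last verification is also missing from your plan and is not automatic.

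A secondary inconsistency: you introduce $Y\subseteq X$ \emph{of finite measure} and then invoke an isomorphism $(X,\lambda)\cong(Y_0,\lambda_{\restriction Y_0})$; these are incompatible, since $(X,\lambda)$ has infinite measure. You clearly intend the $Y_i$ to have infinite measure (they must, if they are to exhaust $X$ and each carry a copy of every finitely supported $T$, with $\pi_k$ independent of $T$ so that it is a homomorphism), so this is fixable, but as written the setup does not parse. Once you replace the abstract isomorphism and the $R_i$ by the partial isomorphisms of \Cref{prop:pseudo full group exchange subsets} and add the cut-and-paste verification that the conjugates lie in $\mathbb{G}$, your argument becomes the paper's proof.
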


	\begin{proof}
For $k=1$ it is immediate, so let us fix $k \geqslant 2$, and $T \in \mathbb{G}_f$.

The proof is based on the the phenomenon described in Remark \ref{rem:elements of pseudo full groups are more than just restrictions}, it is possible to decompose $X$ into $k$ parts of infinite measure, such that $T$ acts on each part in the same way it acts on $X$. In this whole proof we identify $(X,\lambda) \sim (\mathbb{R}, \mbox{Leb})$ and fix $T$ an element of $\mathbb{G}_f$.

We start by dividing $\mathbb{R}$ into $k$ parts which are copies of itself, and then we map $\mathbb{R}$ to one of its copies, $k$ times. Consider for $i \in \{ 0 , \ldots , k-1 \}$ the following partial isomorphism:
\[
\begin{array}{rcccl}
\varphi_k^i &: &  \displaystyle{\bigsqcup_{n \in \mathbb{Z}} \, [n,n+1[ } & \longrightarrow & \displaystyle{\bigsqcup_{n \in \mathbb{Z}} \, [kn + i,kn+1 + i[ }\\ 
 &  & x \in [n,n+1[ & \longmapsto & x+(k-1)n + i \in [kn+i,kn+1+i[. \\
\end{array}
\]
Each segment $[n,n+1[$ is sent by $\varphi_k^i$ to the $(i+1)$th copy of itself. Each $\varphi_k^i$ has $\mathbb{R}$ as its source, and has a range of infinite measure, which is not conull. Moreover, the ranges of the $\varphi_k^i$ are disjoint, and 
\[
\bigsqcup_{i=0}^{k-1} \left( \mathrm{rng}(\varphi_k^i) \right) = \bigsqcup_{i=0}^{k-1} \left( \bigsqcup_{n \in \mathbb{Z}} \, [kn + i,kn+1 + i[ \right) \, = \mathbb{R} .
\] 
Proposition \ref{prop:pseudo full group exchange subsets} gives us for each $i \in \{ 0 , \ldots , k-1 \}$ an element $\phi_k^i$ in $[[\mathbb{G}]]$ such that $\phi_k^i(\mathbb{R}) = \mathrm{rng}(\varphi_k^i)$. We then define
\[
T_k \coloneqq (\phi_k^0)T(\phi_k^0)\inv   (\phi_k^1)T(\phi_k^1)\inv   \ldots  (\phi_k^{k-1})T(\phi_k^{k-1})\inv ,
\]
which acts as $T$ on each one of the $k$ copies of $\mathbb{R}$. We have 
\[
\supp (\phi_k^i)T(\phi_k^i)\inv = \bigsqcup_{n \in \mathbb{Z}} \, [kn + i,kn+1 + i[ 
\]
and therefore $\lambda(\supp  T_k) = k \lambda(\supp  T)$.

By construction of $\pi_k$, if $S$ is another bijection in $\mathbb{G}_f$, we have $(S(x) \neq T(x)) \Leftrightarrow (S_k(x) \neq T_k(x))$, as we're just copying the way $S$ and $T$ act, $k$ times. In particular, $\pi_k$ is injective, and we have
\[
d_{u,f}(S_k , T_k)  = \lambda\left(\left\{ x \in X \mid S_k(x) \neq T_k(x)  \right\} \right)
= k \lambda\left(\left\{ x \in X \mid S(x) \neq T(x)  \right\} \right) = k d_{u,f}(S,T).
\]
Moreover, elements acting on different copies of $\mathbb{R}$ commute, and for $\lambda$-almost all $x \in \mathrm{rng}(\varphi_k^i)$ we have 
\[
\pi_k(TS)(x) = (\phi_k^i)TS(\phi_k^i)\inv(x) = (\phi_k^i)T(\phi_k^i)\inv(\phi_k^i)S(\phi_k^i)\inv(x) = \pi_k(T)\pi_k(S)(x),
\]
so $\pi_k(TS) = \pi_k(T)\pi_k(S)$.

We conclude the proof by proving that the image $T_k$ of $T$ is indeed in $\mathbb{G}$, which is enough since its support has finite measure. It suffices to prove that each $(\phi_k^i)T(\phi_k^i)\inv $ is in $\mathbb{G}$. By definition $\phi_k^i \in [[\mathbb{G}]]$ means that it is obtained by cutting and pasting a sequence $(S_n)_{n \in \mathbb{N}}$ of elements of $\mathbb{G}$ over two partitions $(A_n)$ and $(B_n)$ of $\mathbb{R}$ and $\mathrm{rng}(\varphi_k^i)$, respectively. Therefore $(\phi_k^i)T(\phi_k^i)\inv$ is obtained by cutting and pasting the sequence $(S_n T S_n \inv)_{n \in \N}$ of elements of $\mathbb{G}$ over $(B_n)$. As this holds for any $i \in \{ 0 , \ldots , k-1 \}$, $T_k$ is in $\mathbb{G}$, and thus it is in $\mathbb{G}_f$.
	\end{proof}
	
By using this Lemma and some of the arguments used in \cite{LM2022}, we then show that $\mathbb{G}_f$ cannot carry a Polish group topology. The proof relies on Proposition \ref{sepequivdnbeqrel}, in order to work with a uncountable set of measure-preserving bijections distant enough from each other. 

	\begin{lem}
	[{\cite[Lem.~2.4]{LM2022}}]
	{\label{FepsilonClosed}}
For all $R > 0$, the set of all $T$ in $\Aut(X,\lambda)$ such that $\lambda(\supp  T) \leqslant R$ is closed in $(\Aut(X,\lambda),\tau_w)$.
	\end{lem}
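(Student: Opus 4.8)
The plan is to prove something slightly stronger, namely that the map $T \mapsto \lambda(\supp T)$ is lower semicontinuous on $(\Aut(X,\lambda),\tau_w)$; this is exactly the assertion that every sublevel set $\{T : \lambda(\supp T) \leqslant R\}$ is $\tau_w$-closed. The idea is to rewrite $\lambda(\supp T)$ as a supremum of genuinely $\tau_w$-continuous functions of $T$. The only thing $\tau_w$ "sees" directly is the map $T \mapsto T(A)$ into $(\MAlgf(X,\lambda),d_{X,\lambda})$ for $A$ of \emph{finite} measure, so the whole point is to express the support using only such sets.

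The combinatorial heart of the argument is the following observation, which I would establish first. Fix a countable family $(A_k)_{k \in \N}$ of Borel subsets of $X$, \emph{each of finite measure}, which separates the points of $X$: such a family exists because $X$ is a standard Borel $\sigma$-finite space — write $X = \bigsqcup_m X_m$ with $\lambda(X_m) < \infty$, intersect a countable basis of a compatible Polish topology with the $X_m$, and throw in the $X_m$ themselves. Then for every $T \in \Aut(X,\lambda)$ one has, up to a $\lambda$-null set,
\[
\supp T = \bigcup_{k \in \N}\bigl(A_k \Delta T\inv(A_k)\bigr).
\]
The inclusion $\supseteq$ is immediate (if $x$ and $T(x)$ lie on opposite sides of some $A_k$ then $T(x) \neq x$), and $\subseteq$ uses precisely that $(A_k)$ separates points. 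Since $\supp T$ is $T$-invariant and $T$ is measure-preserving with $T(B \Delta C) = T(B) \Delta T(C)$, applying $T$ to both sides turns this into $\supp T = \bigcup_k \bigl(T(A_k) \Delta A_k\bigr)$, now a countable union of finite-measure sets.

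From there everything is routine. By continuity of $\lambda$ along increasing unions, $\lambda(\supp T) = \sup_{N} \lambda\bigl(\bigcup_{k \leqslant N}(T(A_k) \Delta A_k)\bigr)$, and for each fixed $N$ the map $T \mapsto \lambda\bigl(\bigcup_{k \leqslant N}(T(A_k) \Delta A_k)\bigr)$ is $\tau_w$-continuous: it is a composition of $T \mapsto T(A_k)$ (continuous by the very definition of $\tau_w$, since $\lambda(A_k) < \infty$), the symmetric differences against the fixed sets $A_k$, a finite union, and $\lambda = d_{X,\lambda}(\cdot,\emptyset)$, all of which are continuous on $\MAlgf(X,\lambda)$. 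Hence $T \mapsto \lambda(\supp T)$ is a supremum of $\tau_w$-continuous functions, so $\tau_w$-lower semicontinuous, and
\[
\{T \in \Aut(X,\lambda) \mid \lambda(\supp T) \leqslant R\} = \bigcap_{N \in \N}\Bigl\{T \mid \lambda\bigl(\textstyle\bigcup_{k \leqslant N}(T(A_k) \Delta A_k)\bigr) \leqslant R\Bigr\}
\]
is $\tau_w$-closed as an intersection of closed sets. The one genuinely delicate point is the first step: one must resist using an arbitrary countable generating family and instead insist on one consisting of finite-measure sets, which is what keeps everything inside $\MAlgf(X,\lambda)$ where $\tau_w$ behaves well — and then one must avoid the temptation to use the full union $\bigcup_k(T(A_k)\Delta A_k)$ directly (which need not vary continuously or even land in $\MAlgf$), passing instead through the finite truncations. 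After that, it is bookkeeping.
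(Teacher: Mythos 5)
Your proof is correct. Note that the paper does not actually prove this lemma internally --- it imports it verbatim from \cite[Lem.~2.4]{LM2022} --- so there is no in-text argument to compare against; what you have written is a valid, self-contained substitute. The key identity $\supp T=\bigcup_{k}\bigl(A_k\Delta T\inv(A_k)\bigr)$ for a countable point-separating family of finite-measure Borel sets holds in both directions (the nontrivial inclusion is exactly point-separation applied to the pair $(x,T(x))$), the passage to $\bigcup_k\bigl(T(A_k)\Delta A_k\bigr)$ via $T$-invariance of the support is legitimate, and the estimate $\abs{\lambda\bigl(\bigcup_{k\leqslant N}(B_k\Delta A_k)\bigr)-\lambda\bigl(\bigcup_{k\leqslant N}(B'_k\Delta A_k)\bigr)}\leqslant\sum_{k\leqslant N}\lambda(B_k\Delta B'_k)$ shows each truncated function is $\tau_w$-Lipschitz, so $T\mapsto\lambda(\supp T)$ is a supremum of continuous functions, hence lower semicontinuous, which is exactly the closedness of every sublevel set. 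Two minor remarks: the step where you apply $T$ to trade $T\inv(A_k)$ for $T(A_k)$ is actually avoidable, since $(\Aut(X,\lambda),\tau_w)$ is a topological group (\Cref{prop: Aut is Polish}) and $T\mapsto T\inv(A_k)$ is therefore already $\tau_w$-continuous; and the ``up to a null set'' caveat is only needed to reconcile the pointwise identity, which holds exactly for any Borel representative, with the identification of bijections agreeing almost everywhere.
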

	
	\begin{prop}
	{\label{NotPolishable}}
Let $\mathbb{G}$ be an ergodic full group, such that $\mathbb{G}$ is not the full group of a countable equivalence relation. The subgroup $\mathbb{G}_f \leqslant \Aut(X,\lambda)$ is not Polishable.
	\end{prop}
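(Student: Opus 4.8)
The plan is to argue by contradiction, mimicking the strategy used by Le Maître for $\Aut_f(X,\lambda)$ but feeding in $\mathbb{G}_f$ via the machinery developed above. So suppose that $\mathbb{G}_f$ carries a Polish group topology $\tau$ refining the weak topology. First I would show that $\tau$ must refine the uniform finite topology $\tau_{u,f}$: for every $R>0$, \Cref{FepsilonClosed} says $\{T \mid \lambda(\supp T)\leqslant R\}$ is $\tau_w$-closed in $\Aut(X,\lambda)$, hence its intersection with $\mathbb{G}_f$ is $\tau$-closed; since $d_{u,f}$-balls around $\id_X$ are exactly these sets (intersected with $\mathbb{G}_f$), the identity map $(\mathbb{G}_f,\tau)\to(\mathbb{G}_f,\tau_{u,f})$ is Borel, hence continuous by \Cref{prop: analytic morphism}. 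Consequently $(\mathbb{G}_f,\tau)$ is a Polish group whose topology refines $\tau_{u,f}$, so it is separably metrizable for $\tau$ and a fortiori separable for $\tau_{u,f}$; in particular $\mathbb{G}_f$ admits a countable $\tau_{u,f}$-dense subgroup.

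The heart of the argument is then to contradict this separability. Since $\mathbb{G}$ is not the full group of a countable equivalence relation, by \Cref{sepequivdnbeqrel} the group $\mathbb{G}$ itself is not $\tau_u$-separable; I would want to promote this to the statement that $\mathbb{G}_f$ is not $\tau_{u,f}$-separable. Here I would use that $\mathbb{G}_f$ is $\tau_u$-dense in $\mathbb{G}$ (the full group hull of $\mathbb{G}_f$ is closed by \Cref{closedforunif} and contains $\mathbb{G}_f$, hence equals $\mathbb{G}$), so $\tau_u$-separability of $\mathbb{G}_f$ would give $\tau_u$-separability of $\mathbb{G}$, a contradiction — but this only rules out $\tau_u$-separability, and $\tau_{u,f}$ is strictly finer, so a direct quantitative obstruction is needed. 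This is where \Cref{MultSupp} enters: pick (using non-separability of $(\mathbb{G},\tau_u)$, or rather of the measure algebra argument in \Cref{sepequivdnbeqrel}) an uncountable family $(T_i)_{i\in I}$ of elements of $\mathbb{G}_f$ and some $\delta>0$ with $d_{u,f}(T_i,T_j)\geqslant\delta$ for $i\neq j$; applying $\pi_k$ rescales all these distances to $k\delta$, so by taking $k\to\infty$ we obtain, for every $n$, an uncountable $3n$-separated family in $(\mathbb{G}_f,\tau_{u,f})$. A countable subgroup cannot be $n$-dense (in the sense that its $n$-th power set is dense, or more simply that translates of a $d_{u,f}$-ball of radius $n$ cover everything) for such a family, contradicting that $(\mathbb{G}_f,\tau)$ — being Polish, hence Baire — would force such $n$-density of any countable $\tau_{u,f}$-dense subgroup.

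I expect the main obstacle to be the passage from "$(\mathbb{G},\tau_u)$ is non-separable" to "$(\mathbb{G}_f,\tau_{u,f})$ contains an uncountable uniformly separated family that $\pi_k$ can dilate". One clean route: non-separability of $\mathbb{G}$ for $\tau_u$ means there is $\delta>0$ and uncountably many $\tau_u$-elements pairwise at distance $\geqslant\delta$; by cutting and pasting with a fixed finite-measure set one can replace these by finitely supported approximants still pairwise $\geqslant\delta/2$ apart in $d_\mu$, i.e. elements of $\mathbb{G}_f$ with $d_{u,f}\geqslant\delta/2$ after possibly passing to a further uncountable subfamily (using that $d_{u,f}$ dominates $d_\mu$ on a fixed bounded-support regime). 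Then $\pi_k$ multiplies this by $k$, giving arbitrarily large separation. The rest is the standard Baire-category contradiction: a Polish group topology $\tau$ refining $\tau_{u,f}$ makes $(\mathbb{G}_f,\tau)$ second countable, so it has a countable $\tau$-dense — hence $\tau_{u,f}$-dense — subset, and for this subset the open cover of $(\mathbb{G}_f,\tau_{u,f})$ by radius-$n$ $d_{u,f}$-balls centered at its points must already cover $\mathbb{G}_f$ for some... actually it covers for all $n$ trivially once dense, so one instead runs the argument: the $3n$-separated uncountable family cannot all lie within radius-$n$ balls around countably many centers. This yields the contradiction and completes the proof.
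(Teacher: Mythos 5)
Your overall strategy is the paper's: use Baire category on the $\tau$-closed sets $F_n=\{T\in\mathbb{G}_f\mid\lambda(\supp T)\leqslant n\}$ to extract a countable subset that is $n_0$-dense for $d_{u,f}$, use \Cref{sepequivdnbeqrel} to produce an uncountable uniformly $d_{u,f}$-separated family of finitely supported elements, dilate it with $\pi_k$ from \Cref{MultSupp}, and conclude by pigeonhole. However, the intermediate claim that $\tau$ refines $\tau_{u,f}$, and hence that $\mathbb{G}_f$ is $\tau_{u,f}$-\emph{separable}, is not justified and is in fact circular: \Cref{prop: analytic morphism} (and any Pettis-type automatic continuity argument) requires the target group to be separable, which for $(\mathbb{G}_f,\tau_{u,f})$ is exactly what the proof is in the process of refuting; moreover, in a non-separable metric group an open set need not be a countable union of balls, so knowing that preimages of closed balls are $\tau$-closed does not make the identity map Baire-measurable. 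The correct and available conclusion — the one the paper draws, and the one you fall back on in your last paragraph — is only that countably many translates of $F_{n_0}$ cover $\mathbb{G}_f$ for a \emph{single} $n_0$, i.e.\ a countable $n_0$-dense set for $d_{u,f}$. This is precisely why the dilation $\pi_k$ is indispensable: $n_0$-density is trivially compatible with uncountable $\delta$-separated families when $\delta\leqslant 2n_0$, and one must push the separation above $2n_0$ before the pigeonhole applies. If full $\tau_{u,f}$-separability were really available, \Cref{MultSupp} would be superfluous, which should have been a warning sign.

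Two smaller points. First, your passage from non-$\tau_u$-separability of $\mathbb{G}$ to an uncountable $d_{u,f}$-separated family in $\mathbb{G}_f$ invokes ``$d_{u,f}$ dominates $d_\mu$ on a fixed bounded-support regime''; for an arbitrary probability measure $\mu\in[\lambda]$ no such domination holds, even on a set of finite $\lambda$-measure, since $d\lambda/d\mu$ need not be bounded below there. The clean route is the paper's: $\mathbb{G}_f=\bigcup_n\mathbb{G}_{X_n}$ is $\tau_u$-dense in $\mathbb{G}$ (which you correctly get from \Cref{closedforunif}), so some $\mathbb{G}_{X_n}$ is not $\tau_u$-separable, and on $\mathbb{G}_{X_n}$ the single pseudometric $d_{u,X_n}$, which coincides with $d_{u,f}$ there, already generates $\tau_u$. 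Second, make sure the separation you produce exceeds \emph{twice} the radius $n_0$ (your ``$3n$-separated'' phrasing handles this). With these repairs your argument is the paper's proof.
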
	
	
	\begin{proof}
Suppose that $\mathbb{G}_f$ is Polishable, that is to say that there exists a topology $\tau$ on $\mathbb{G}_f$ refining $\tau_w$. For each $n$ in $\mathbb{N}$, we consider the set
\[
F_n \coloneqq \left\{ T \in \mathbb{G}_f \mid \lambda(\supp  T) \leqslant n \right\} = \left\{ T \in \Autf(X,\lambda) \mid \lambda(\supp  T) \leqslant n \right\} \cap \mathbb{G}.
\]
By Lemma \ref{FepsilonClosed} the set $\left\{ T \in \Autf(X,\lambda) \mid \lambda(\supp  T) \leqslant n \right\}$ is closed, and Lemma \ref{lem2} yields that $\mathbb{G} = \mathbb{G}_{(X,X)}$ is closed in $(\Aut(X,\lambda),\tau_w)$, so each $F_n$ is closed. We have $\mathbb{G}_f = \bigcup_{n \in \mathbb{N}}F_n$, so the Baire Category Theorem ensures us that there exists $n_0$ such that $F_{n_0}$ has nonempty interior. Since $(\mathbb{G}_f,\tau)$ is Polish, it is Lindelöf, and therefore $\mathbb{G}_f$ is covered by countably many $F_{n_0}$-translates. In other words, $\mathbb{G}_f$ contains a countable set which is $n_0$-dense for the uniform finite metric $d_{u,f}$.
Indeed $d_{u,f}$ is invariant under group operations. We have then established that any element of $\mathbb{G}_f$ is at $d_{u,f}$-distance at most $n_0$ from an element of a countable set. We will show that this is not possible.

Recall that, in restriction to $\Autf(X,\lambda)$, the uniform finite metric $d_{u,f}$ induces a finer topology than the uniform topology. 

We write $X = \bigcup_{n \in \mathbb{N}} X_n$, such that $X_n \subseteq X_{n+1}$ and $\lambda(X_n) < + \infty$ for all $n$ in $\mathbb{N}$, and we consider $\mathbb{G}_{X_n} = \left\{  T \in \mathbb{G} \mid \supp  T \subseteq X_n \right\}$. We will now show that $\mathbb{G}_f =  \bigcup_{n \in \mathbb{N}} \mathbb{G}_{X_n}$ is $\tau_u$-dense in $\mathbb{G}$. Let $U \in \mathbb{G}$ be an involution. We define $U_n$ by
\[
U_n(x) =  \left\{ \begin{array}{ll}
U(x) & \mbox{ if } x \in X_n \mbox{ and } U(x) \in X_n,  \\
x & \mbox{ else.} 
\end{array}
\right.
\]
For any $n$ in $\mathbb{N}$, the involution $U_n$ is in $\mathbb{G}_{X_n}$. Consider a probability measure $\mu$ in $[\lambda]$, and recall that $d_{\mu}$ induces $\tau_u$. It is easy to see that $U_n \rightarrow_{\tau_u} U$, as 
\begin{align*}
d_{\mu}(U_n,U) & = \mu\left(\left\{ x \in X \mid U_n(x) \neq U(x) \right\} \right)\\
& \leqslant \mu\left(\left\{ x \in X \mid x \notin X_n \mbox{ or } U(x) \notin X_n \right\} \right)\\
& \leqslant \mu(X \setminus X_n) + \mu(X \setminus U(X_n)),
\end{align*}
which tends to zero as $n$ tends to infinity. This is sufficient, as Theorem \ref{3invo} ensures us that any element $T$ in $\mathbb{G}$ can be written as a product of three involutions, so we have proved the $\tau_u$-density of $\mathbb{G}_f$.

Since $\mathbb{G}$ does not come from a countable equivalence relation, thanks to Proposition \ref{sepequivdnbeqrel} we know that it is not separable for the uniform topology. Therefore, at least one of the $\mathbb{G}_{X_n}$ is not $\tau_u$-separable. We then fix $\varepsilon' > 0$ and $n \in \mathbb{N}$ such that there are uncountably many elements $(T_t)_{t \in [0,1]}$ of $\mathbb{G}_{X_n}$ verifying $d_{\mu}(T_s,T_t) > \varepsilon$, for $s \neq t \in [0,1]$. This implies that $d_{u,f}(T_s,T_t) > \varepsilon$, for $s \neq t \in [0,1]$. We fix $s \neq t$ in $[0,1]$, and we choose $k$ in $\mathbb{N}$ such that $k \varepsilon > n_0$. Lemma \ref{MultSupp} gives us an injective application
\[
\pi_k: T \in \mathbb{G}_f \longmapsto \pi_k(T) \in \mathbb{G}_f
\]
such that $\lambda\left(\left\{ x \in X \mid \pi_k(T_s)(x) \neq \pi_k(T_t)(x)  \right\} \right)
= k \lambda\left(\left\{ x \in X \mid T_s(x) \neq T_t(x)  \right\} \right)$. We then have
\[
d_{u,f}(\pi_k(T_s),\pi_k(T_t)) = k d_{u,f}(T_s, T_t) > k \varepsilon > n_0.
\]
Thus we have found uncountably many elements of $\mathbb{G}_f$ that are strictly more than $n_0$-distant from each other. By the pigeonhole principle, this contradicts the $n_0$-density of the countable subset of $\mathbb{G}_f$ that we constructed, which concludes the proof.
	\end{proof}
	
\begin{rem}
	{\label{rem: Autf uniformly dense in Aut}}
	In the previous proof we in particular proved that for any full group $\mathbb{G}$, $\mathbb{G}_f$ is $\tau_u$-dense in $\mathbb{G}$.
\end{rem}

Combining this last result with the arguments of section \ref{5.1}, we obtain the following Theorem. Indeed, by replacing $\mathbb{G}$ by $\mathbb{G}_f$ in section \ref{5.1}, all the results hold, as it is still possible to send any Borel set of finite measure to another Borel set of the same measure with an element of $\mathbb{G}_f$, which is the only property that was used. Similarly to what we did in the proof of Theorem \ref{UniquePoltopoonfg}, we then consider $\tau$ a Polish topology on $\mathbb{G}_f$, and see that it refines the weak topology. But this is impossible according to Proposition \ref{NotPolishable}. We have then proved the following.
	
	\begin{prop}
	{\label{prop: non Polishable Gf}}
Let $\mathbb{G}$ be an ergodic full group, such that $\mathbb{G}$ is not the full group of a countable equivalence relation. The group $\mathbb{G}_f$ cannot carry a Polish group topology.
	\end{prop}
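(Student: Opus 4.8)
The plan is to reduce \Cref{prop: non Polishable Gf} to the two results that have just been established for $\mathbb{G}_f$, namely \Cref{UniquePoltopoonfg}(1) (any Polish group topology refines the weak topology) and \Cref{NotPolishable} (the group $\mathbb{G}_f$ is not Polishable as a subgroup of $\Aut(X,\lambda)$). The only input needed is the remark, made just before the statement, that everything in \Cref{5.1} goes through verbatim with $\mathbb{G}$ replaced by $\mathbb{G}_f$: indeed the only property of $\mathbb{G}$ used in \Cref{lem1}, \Cref{lem2}, \Cref{lem3} and the proof of \Cref{UniquePoltopoonfg}(1) is the content of \Cref{cor:exchanging involutions in ergodic full group}, which produces an $(A,B)$-exchanging involution supported on $A\Delta B$ whenever $\lambda(A\setminus B)=\lambda(B\setminus A)$ — in particular, whenever $A,B$ have finite measure this involution is finitely supported, hence lies in $\mathbb{G}_f$. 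So $\mathbb{G}_f$ itself is an "ergodic full-group-like" object for which the $F_\varepsilon^A$ analyticity argument applies.

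Concretely, I would argue as follows. Suppose toward a contradiction that $\tau$ is a Polish group topology on $\mathbb{G}_f$. Fix $\varepsilon>0$ and Borel subsets $A\subseteq B\subseteq X$ of finite measure with $\lambda(B\setminus A)=\varepsilon$. Define, exactly as in \Cref{5.1} but inside $\mathbb{G}_f$,
\[
F_\varepsilon^A=\left\{T\in\mathbb{G}_f\mid \lambda(T(A)\setminus A)\leqslant\varepsilon\right\},\qquad
(\mathbb{G}_f)_{X\setminus A},\qquad (\mathbb{G}_f)_{(A,B)}.
\]
Since $A$ and $B$ have finite measure, the exchanging involutions furnished by \Cref{cor:exchanging involutions in ergodic full group} that appear in the proofs of \Cref{lem1}--\Cref{lem3} are finitely supported, so those lemmas hold with $\mathbb{G}_f$ in place of $\mathbb{G}$: the sets $(\mathbb{G}_f)_{X\setminus A}$ and $(\mathbb{G}_f)_{(A,B)}$ are $\tau$-closed and $F_\varepsilon^A=(\mathbb{G}_f)_{X\setminus A}\cdot(\mathbb{G}_f)_{(A,B)}$ is analytic in $(\mathbb{G}_f,\tau)$. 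Letting $A$ (and $\varepsilon$) range over finite-measure sets, the identity map $(\mathbb{G}_f,\tau)\to(\Aut(X,\lambda),\tau_w)$ has analytic preimages of a subbasis of $\tau_w$, hence is Baire-measurable, hence continuous by \Cref{prop: analytic morphism}. Thus $\tau$ refines $\tau_w$, i.e. $\tau$ is a Polish topology on $\mathbb{G}_f$ refining the topology inherited from $\Aut(X,\lambda)$, so $\mathbb{G}_f$ is Polishable. This contradicts \Cref{NotPolishable}, completing the proof.

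I do not expect a genuine obstacle here, since all the heavy lifting — the $n$-Steinhaus estimates, the factorisation \Cref{factoriseT}, the support-multiplication map $\pi_k$ of \Cref{MultSupp}, and the non-Polishability \Cref{NotPolishable} — is already in place. The only point that warrants a sentence of care, and which I would flag explicitly, is exactly the transfer of \Cref{lem1}--\Cref{lem3} and \Cref{UniquePoltopoonfg}(1) from $\mathbb{G}$ to $\mathbb{G}_f$: one must check that in each application of \Cref{cor:exchanging involutions in ergodic full group} inside those proofs the relevant sets have finite measure, so that the resulting involutions genuinely belong to $\mathbb{G}_f$ and not merely to $\mathbb{G}$. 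Once that is observed, the argument is a two-line assembly of \Cref{UniquePoltopoonfg}(1)-for-$\mathbb{G}_f$ and \Cref{NotPolishable}, and there is nothing further to do.
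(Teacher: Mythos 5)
Your proposal is correct and follows exactly the paper's own route: the paper likewise transfers \Cref{lem1}--\Cref{lem3} and the argument of \Cref{UniquePoltopoonfg}(1) to $\mathbb{G}_f$ (noting that the only property used is the ability to exchange finite-measure sets by finitely supported involutions from \Cref{cor:exchanging involutions in ergodic full group}), concludes that any Polish group topology on $\mathbb{G}_f$ would refine $\tau_w$, and derives a contradiction with \Cref{NotPolishable}. Your explicit check that the exchanging involutions arising in those lemmas are finitely supported is precisely the point the paper flags, so nothing is missing.
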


The rest of this section is dedicated to proving that an ergodic full group $\mathbb{G}$ arising from a countable equivalence relation provides a subgroup $\mathbb{G}_f$ which can be endowed with a Polish topology. In particular we obtain a characterisation of such ergodic full groups.

\begin{prop}
	{\label{prop: Gf from countable eqrel is Polish}}
	Let $\mathbb{G}$ be a full group. Then $(\mathbb{G}_f, d_{u,f})$ is complete. Moreover, if $\mathbb{G}$ is the full group of a countable equivalence relation, then $(\mathbb{G}_f, \tau_{u,f})$ is separable.
\end{prop}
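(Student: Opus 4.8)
The statement has two independent parts: completeness of $(\mathbb{G}_f, d_{u,f})$ for an arbitrary full group, and separability of $(\mathbb{G}_f, \tau_{u,f})$ when $\mathbb{G} = [\mathcal{R}]$ for a countable equivalence relation $\mathcal{R}$.

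For completeness, the plan is to take a $d_{u,f}$-Cauchy sequence $(T_n)$ in $\mathbb{G}_f$. First I would observe that $d_{u,f}$ dominates the bi-uniform metric $\widetilde{d}$ for any fixed probability measure $\mu \in [\lambda]$: indeed $d_\mu(S,T) \le d_{u,f}(S,T)$ trivially (since $\mu \ll \lambda$ and $d_{u,f}$ measures the $\lambda$-mass of the disagreement set), and the same bound applies to the inverses because $\lambda(\{S^{-1}(x) \ne T^{-1}(x)\}) = \lambda(\{S(x) \ne T(x)\})$ by measure-preservation, so $d_{u,f}(S^{-1},T^{-1}) = d_{u,f}(S,T)$, hence $\widetilde d(S,T) \le 2\,d_{u,f}(S,T)$. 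Therefore $(T_n)$ is $\widetilde d$-Cauchy in $\mathbb{G}$, which is $\widetilde d$-complete by \Cref{closedforunif}, so $T_n \to T$ uniformly for some $T \in \mathbb{G}$. It remains to check that $T \in \mathbb{G}_f$ and that $d_{u,f}(T_n,T) \to 0$. For the first point, after passing to a subsequence I may assume $\sum_n d_{u,f}(T_n,T_{n+1}) < \infty$; then the disagreement sets $E_n = \{x : T_n(x) \ne T_{n+1}(x)\}$ satisfy $\sum_n \lambda(E_n) < \infty$, and $\supp T_m \subseteq \supp T_1 \cup \bigcup_{n \ge 1} E_n$ for all $m$, while $\supp T \subseteq \limsup (\supp T_n)$; a Borel–Cantelli-type argument together with $\lambda(\supp T_1) < \infty$ gives $\lambda(\supp T) < \infty$. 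For $d_{u,f}(T_n,T)\to 0$: since $T_m \to T$ pointwise a.e. along the subsequence (from summability of the $E_n$) and the disagreement sets are controlled by the $\lambda$-summable tails, Fatou / dominated convergence on the finite-measure set $\supp T_1 \cup \bigcup E_n$ yields $\lambda(\{x : T_n(x) \ne T(x)\}) \to 0$, then the full sequence converges since it is Cauchy.

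For separability when $\mathbb{G} = [\mathcal{R}]$ with $\mathcal{R}$ countable, the plan is to reuse the machinery of the proof of \Cref{sepequivdnbeqrel}. Fix a probability measure $\mu \in [\lambda]$ and the $\sigma$-finite measure $M_l$ on $\MAlg(\mathcal{R})$ defined there by $M_l(A) = \int_X |A_x|\, d\mu(x)$; recall $M_l$ is $\sigma$-finite and atomless, and for $S,T \in \mathbb{G}$ one has $d_\mu(S,T) = \tfrac12 M_l(\mathrm{graph}(S)\,\Delta\,\mathrm{graph}(T))$. The extra feature of $\mathbb{G}_f$ is that we measure disagreement sets with $\lambda$ rather than $\mu$, so I would introduce the parallel measure $\widetilde{M}$ on $\MAlg(\mathcal{R})$ given by $\widetilde M(A) = \int_X |A_x|\, d\lambda(x)$ (again $\sigma$-finite by Lusin–Novikov and atomless since $\lambda$ is atomless and $\mathcal{R}$ countable), so that $d_{u,f}(S,T) = \tfrac12 \widetilde M(\mathrm{graph}(S)\,\Delta\,\mathrm{graph}(T))$ for $S,T \in \mathbb{G}_f$; here finiteness of $\widetilde M(\mathrm{graph}(S)\,\Delta\,\mathrm{graph}(T))$ is exactly the condition that the supports have finite $\lambda$-measure, so this identity lives on $\mathbb{G}_f$. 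Then $(\MAlgf(\mathcal{R}, \widetilde M), \hat d)$ with $\hat d(A,B) = \widetilde M(A \Delta B)$ is separable by \Cref{prop: MAlgf is Polish}, and $\mathbb{G}_f$ embeds isometrically (up to the factor $\tfrac12$) into it via $T \mapsto \mathrm{graph}(T)$, so $\mathbb{G}_f$ is separable for $\tau_{u,f}$ as a metric subspace of a separable space.

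The main obstacle I anticipate is the completeness half — specifically, verifying carefully that the $\widetilde d$-limit $T$ really has finite-measure support and that convergence upgrades from $\widetilde d$ to $d_{u,f}$; this requires the subsequence/summability bookkeeping above rather than a one-line citation, since $d_{u,f}$ is strictly finer than $\tau_u$ on $\Autf(X,\lambda)$ and \Cref{closedforunif} only gives $\widetilde d$-completeness. The separability half is essentially a transcription of \Cref{sepequivdnbeqrel} with $\lambda$ in place of $\mu$, the only genuine check being that $\widetilde M$ is still $\sigma$-finite and atomless, which follows verbatim from Lusin–Novikov as in that proof.
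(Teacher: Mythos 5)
Your argument follows essentially the same route as the paper: completeness is obtained by passing to a subsequence with summable $d_{u,f}$-distances and running a Borel--Cantelli argument inside the finite-measure set $\supp T_0 \cup \bigcup_n \left\{x \mid T_n(x) \neq T_{n+1}(x)\right\}$, and separability by rerunning the proof of \Cref{sepequivdnbeqrel} with $\lambda$ in place of $\mu$ in the definition of $M_l$. The only slip is the claimed inequality $d_\mu(S,T) \leqslant d_{u,f}(S,T)$, which can fail for an arbitrary probability measure $\mu \in [\lambda]$ since the Radon--Nikodym density of $\mu$ with respect to $\lambda$ may exceed $1$ on sets of small $\lambda$-measure; the fact you actually need, namely that a $d_{u,f}$-Cauchy sequence is $\widetilde{d}$-Cauchy, follows instead from \Cref{lem: probability measure equivalent to infinite measure}.
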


\begin{proof}
\textit{(1)} We take $(T_n)$ which is $d_{u,f}$-Cauchy in $\mathbb{G}_f$, and construct the limit $T$ by Borel-Cantelli using the measure $\lambda$ instead of $\mu$. The important point is that imposing $d_{u,f}(T_n,T_{n+1}) < 2^{-n}$ (by passing to a subsequence if necessary) allows us to work in the space 
\[
\supp T_0 \cup \bigcup_{n \geqslant 0} \left\{ x \in X \mid T_n(x) \neq T_{n+1}(x)  \right\},
\]
which has finite measure. Then by construction $T$ is finitely supported and $(T_n)$ converges to $T$ for $d_{u,f}$.

\textit{(2)} For separability we mimic the proof of \Cref{sepequivdnbeqrel}$(\Leftarrow)$. The proof is very similar, but we keep $\lambda$ instead of replacing it with an equivalent probability measure. The same arguments yield that $(\MAlgf(\mathcal{R},M_l),\widehat{d})$ is a separable metric space, where $\mathcal{R}$ is the countable equivalence relation on $X$ associated with $\mathbb{G}$, $M_l$ is defined by
$
M_l(A) = \int_X \abs{\left\{ y \in X \mid (x,y) \in A \right\}} d\lambda(x)
$
for any Borel $A \subseteq \mathcal{R}$, and $\widehat{d}(A,B) = M_l(A \Delta B)$. Finally, for $S,T$ in $\mathbb{G}_f
$ we have
\[
d_{u,f}(S,T) = \dfrac{1}{2} M_l(\mbox{graph}(S) \Delta \mbox{graph}(T)),
\]
ensuring that $\mathbb{G}_f$ is separable, as a metric subspace of a separable metric space.
\end{proof}

Putting together \Cref{prop: non Polishable Gf} and \Cref{prop: Gf from countable eqrel is Polish}, we get the following characterization.

\begin{thm}
	{\label{Thm: characterisation of fg of countable eqrel}}
	Let $\mathbb{G}$ be an ergodic full group on a standard $\sigma$-finite space. The following are equivalent:
	\begin{enumerate}[(i)]\setlength\itemsep{0em}
	\item $\mathbb{G}$ is the full group of a countable equivalence relation;
	\item $\mathbb{G}_f$ can be endowed with a Polish group topology.
	\end{enumerate}
	Moreover, when it exists, the Polish group topology on $\mathbb{G}_f$ refines the uniform topology, which itself refines the weak topology.
\end{thm}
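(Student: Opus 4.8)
The plan is to assemble \Cref{Thm: characterisation of fg of countable eqrel} directly from the two propositions just established, together with the observations of \Cref{5.1} about Polish topologies refining the weak topology. First I would prove the equivalence. The implication \emph{(i)}$\implies$\emph{(ii)} is immediate from \Cref{prop: Gf from countable eqrel is Polish}: if $\mathbb{G} = [\mathcal{R}]$ for a countable equivalence relation $\mathcal{R}$, then $(\mathbb{G}_f, d_{u,f})$ is both complete and separable, hence $\tau_{u,f}$ is a Polish metrizable topology; since $(\Autf(X,\lambda), \tau_{u,f})$ is a topological group (\Cref{def: uniform finite topology}), so is its subgroup $\mathbb{G}_f$, and we get a Polish group topology on $\mathbb{G}_f$. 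For the converse \emph{(ii)}$\implies$\emph{(i)}, I would argue by contraposition: if $\mathbb{G}$ is \emph{not} the full group of a countable equivalence relation, then \Cref{prop: non Polishable Gf} says $\mathbb{G}_f$ cannot carry a Polish group topology at all, so \emph{(ii)} fails.

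Next I would address the ``moreover'' clause: when the Polish group topology on $\mathbb{G}_f$ exists, it refines $\tau_{u,f}$, which in turn refines $\tau_u$ and then $\tau_w$. The last two refinements are essentially by definition: $\tau_{u,f}$ refines $\tau_u$ is noted in \Cref{def: uniform finite topology}, and $\tau_u$ refines $\tau_w$ on full groups (the inclusion into $(\Aut(X,\lambda),\tau_w)$ being $\tau_u$-continuous, which follows from the fact that $d_\mu$-convergence implies weak convergence, as used repeatedly, e.g.\ in \Cref{lem: continuous action of Aut lambda on Malg mu} and \Cref{prop: ergodic iff wweakly dense}). The substantive point is that any Polish group topology $\tau$ on $\mathbb{G}_f$ refines $\tau_{u,f}$. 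By the discussion preceding the theorem, the arguments of \Cref{5.1} apply verbatim with $\mathbb{G}$ replaced by $\mathbb{G}_f$: the only property used there is that an ergodic full group can send any Borel set of finite measure to another of the same measure via one of its elements, and $\mathbb{G}_f$ retains this by \Cref{cor:exchanging involutions in ergodic full group} (the exchanging involutions produced there are finitely supported when $A \triangle B$ has finite measure). Hence Lemmas \ref{lem1}, \ref{lem2}, \ref{lem3} hold for $\mathbb{G}_f$, making each set $F_\varepsilon^A$ analytic in $(\mathbb{G}_f,\tau)$, so the inclusion $(\mathbb{G}_f,\tau) \to (\Aut(X,\lambda),\tau_w)$ is Baire-measurable, hence continuous by \Cref{prop: analytic morphism}: thus $\tau$ refines $\tau_w$.

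To upgrade ``$\tau$ refines $\tau_w$'' to ``$\tau$ refines $\tau_{u,f}$'', I would invoke \Cref{Thm: any Polish topology is coarser than the uniform}, which states that any separable group topology on an ergodic full group is coarser than the uniform topology. This applies to $[\mathcal{R}]$ when $\mathbb{G}=[\mathcal{R}]$ comes from a countable equivalence relation, whose uniform topology $\tau_u$ is then already Polish (complete by \Cref{closedforunif}, separable by \Cref{sepequivdnbeqrel}); by uniqueness (\Cref{UniquePoltopoonfg}) this $\tau_u$ is \emph{the} Polish topology on $\mathbb{G}$. For $\mathbb{G}_f$ one checks that $\tau_{u,f}$ is Polish (\Cref{prop: Gf from countable eqrel is Polish}) and that the Steinhaus argument of \Cref{494Yi} transfers to $(\mathbb{G}_f, \tau_{u,f})$ — this is the place to be careful, since the factorisation \Cref{factoriseT} and the claims in the proof of \Cref{494Yi} manipulate infinite-measure supports, whereas elements of $\mathbb{G}_f$ have finite support. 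I expect this transfer to be the main obstacle; the cleanest route is probably to observe that $\mathbb{G}_f$ is dense in $(\mathbb{G},\tau_u)$ (\Cref{rem: Autf uniformly dense in Aut}) and that $\tau_{u,f}$ restricted to each $\mathbb{G}_{X_n}$ coincides with $\tau_u$, so that $(\mathbb{G}_f,\tau_{u,f})$ is an increasing union of Polish groups; automatic continuity then follows because any morphism out of it restricts to a continuous morphism on each $\mathbb{G}_{X_n}$, the latter being an ergodic-type full group on the finite-measure space $(X_n,\lambda_{\restriction X_n})$ to which the Kittrell–Tsankov/Sabok automatic continuity results apply. With automatic continuity for $(\mathbb{G}_f,\tau_{u,f})$ in hand, the identity map $(\mathbb{G}_f,\tau_{u,f}) \to (\mathbb{G}_f,\tau)$ is continuous, i.e.\ $\tau$ refines $\tau_{u,f}$, completing the proof.
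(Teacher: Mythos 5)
Your treatment of the equivalence \emph{(i)}$\Leftrightarrow$\emph{(ii)} is correct and is exactly the paper's proof: \emph{(i)}$\implies$\emph{(ii)} is \Cref{prop: Gf from countable eqrel is Polish}, and \emph{(ii)}$\implies$\emph{(i)} is the contrapositive of \Cref{prop: non Polishable Gf}. The problems are in the ``moreover'' clause. First, there is a direction error: automatic continuity of $(\mathbb{G}_f,\tau_{u,f})$ would make the identity map $(\mathbb{G}_f,\tau_{u,f})\to(\mathbb{G}_f,\tau)$ continuous, which says that $\tau_{u,f}$ refines $\tau$ (equivalently, $\tau$ is \emph{coarser} than $\tau_{u,f}$) --- the opposite of the ``$\tau$ refines $\tau_{u,f}$'' you assert, and the opposite of what the statement requires. (Compare the wording at the end of the proof of \Cref{Thm: any Polish topology is coarser than the uniform}, where the same continuity is correctly read as ``$\tau_u$ refines $\tau$''.) To extract the desired conclusion from that inequality you would additionally need the open mapping theorem for Polish groups to upgrade the continuous bijective homomorphism to a homeomorphism, a step you do not take. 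Second, the proposed transfer of the Steinhaus/automatic-continuity property to $(\mathbb{G}_f,\tau_{u,f})$ via the increasing union $\mathbb{G}_f=\bigcup_n \mathbb{G}_{X_n}$ does not work as stated: a homomorphism whose restriction to each $(\mathbb{G}_{X_n},\tau_u)$ is continuous need not be $\tau_{u,f}$-continuous on $\mathbb{G}_f$, since a basic $\tau_{u,f}$-neighbourhood of $\id_X$ (elements whose support has measure $<\varepsilon$) contains elements supported in no fixed $X_n$, and the moduli of continuity on the pieces may degenerate as $n$ grows.

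The detour is unnecessary, and the paper does not take it. Once you know (as you correctly argue, following the discussion preceding \Cref{prop: non Polishable Gf}) that every Polish group topology $\tau$ on $\mathbb{G}_f$ refines $\tau_w$, and that $\tau_{u,f}$ is itself a Polish group topology on $\mathbb{G}_f$ refining $\tau_w$ (by \Cref{prop: Gf from countable eqrel is Polish} and \Cref{def: uniform finite topology}), the uniqueness argument of \Cref{UniquePoltopoonfg}\emph{(2)} applies verbatim: both identity maps into $(\Aut(X,\lambda),\tau_w)$ are continuous, so by \Cref{Thm: Lusin Suslin} the Borel structures of $\tau$, $\tau_{u,f}$ and the induced weak topology all coincide, and \Cref{prop: analytic morphism} applied in both directions gives $\tau=\tau_{u,f}$. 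Since $\tau_{u,f}$ refines $\tau_u$ by definition and $\tau_u$ refines $\tau_w$, the ``moreover'' clause follows --- with no appeal to \Cref{494Yi} or to any automatic continuity statement for $\mathbb{G}_f$.
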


\section{Algebraic and topological properties of ergodic full groups}{\label{sec: section7}}

\subsection{Normal subgroups of an ergodic full group}

In this section we use \Cref{3invo} to recover all the information of a full group from its involutions. In particular, by using \Cref{cor:exchanging involutions in ergodic full group} and the following well-known key lemma, we are able to retrieve many elements of an ergodic full group from the involutions contained in a normal subgroup.

\begin{lem}
	{\label{lem: commutator is an involution}}
	Let $(X,\lambda)$ be a standard $\sigma$-finite space, $\mathbb{G} \leqslant \Aut(X,\lambda)$ be an ergodic full group, and $N$ be a non-trivial normal subgroup of $\mathbb{G}$. Let $T$ be in $N$, and $A$ be such that $A$ is disjoint from $T(A)$. Then for any Borel subset $B \subseteq A$ and any involution $V$ supported in $B$, the involution $U = [T,V]$ is in $N$ and satisfies $\supp U = B\sqcup T(B)$.
\end{lem}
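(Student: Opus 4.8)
The plan is to track, for a generic point $x$, its trajectory under the four maps composing $U = [T,V] = TVT\inv V\inv$, using repeatedly that $V$ acts trivially off $B$ together with the hypothesis $A \cap T(A) = \emptyset$, which, since $B \subseteq A$, gives in particular $B \cap T(B) = \emptyset$. I will also use that $V\inv = V$ and that $\supp V$ is $V$-invariant, so $V(B) = B$ (reading ``$V$ supported in $B$'' as $\supp V = B$; the general case is discussed at the end). First, membership in $N$: since $N$ is normal in $\mathbb{G}$ and $T \in N$, we have $T\inv \in N$ and hence $V T\inv V\inv \in N$, so writing $U = T \cdot (V T\inv V\inv)$ exhibits $U$ as a product of two elements of $N$; thus $U \in N$. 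This is the only step where normality of $N$ is used.

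For the support, partition $X$ (up to null sets) into $B$, $T(B)$ and $R \coloneqq X \setminus (B \cup T(B))$; these three sets are pairwise disjoint by the remark above. Using $U(x) = T(V(T\inv(V(x))))$, I would check three cases. If $x \in R$, then $V(x) = x$ (as $x \notin B$) and $T\inv x \notin B$ (as $x \notin T(B)$), so $V$ fixes $T\inv x$ and applying $T$ returns $x$; hence $U|_R = \id_R$. If $x \in B$, then $V(x) \in B$, so $T\inv(V(x)) \notin B$ (because $B \cap T(B) = \emptyset$), so $V$ fixes it and applying $T$ gives back $V(x)$; hence $U|_B = V|_B$, and $B$ is $U$-invariant. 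If $x \in T(B)$, then $V(x) = x$ (as $x \notin B$) and $T\inv x \in B$, so $U(x) = T(V(T\inv x))$, i.e. $U|_{T(B)} = (TVT\inv)|_{T(B)}$, and $T(B)$ is $U$-invariant. Consequently $\supp U \subseteq B \sqcup T(B)$, and since $U$ restricts to the involution $V|_B$ on $B$, to its conjugate $(TVT\inv)|_{T(B)}$ on $T(B)$, and to the identity on $R$, we get $U^2 = \id$, so $U$ is an involution.

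It remains to prove the reverse inclusion $B \sqcup T(B) \subseteq \supp U$. On $B$ we have $\supp(U|_B) = \supp(V|_B) = \supp V = B$. On $T(B)$: for $x \in T(B)$ we have $U(x) = TVT\inv(x)$, which equals $x$ iff $VT\inv(x) = T\inv(x)$, i.e. iff $T\inv x \notin \supp V = B$; but $x \in T(B)$ forces $T\inv x \in B$, so $U(x) \neq x$, and thus $T(B) \subseteq \supp U$. Therefore $\supp U = B \sqcup T(B)$, as claimed.

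The argument is entirely routine bookkeeping, so there is no genuine obstacle; the only points demanding care are invoking the disjointness $A \cap T(A) = \emptyset$ at exactly the right step in each of the three cases, so that $V$ acts as the identity precisely when needed, and keeping the support computation tight rather than a mere inclusion — the latter is where the hypothesis $\supp V = B$ enters. If one only assumes $\supp V \subseteq B$, the same computation yields the slightly more flexible identity $\supp U = \supp V \sqcup T(\supp V)$, of which the stated equality is the special case $\supp V = B$.
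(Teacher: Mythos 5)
Your proof is correct and follows essentially the same route as the paper: membership in $N$ via the factorisation $U = T\cdot(VT\inv V\inv)$, and the support computation via the identity $U = (TVT\inv)V$, a product of two involutions with disjoint supports $T(B)$ and $B$ (your three-case pointwise check is just an explicit verification of this). Your closing remark is also apt: the paper's own proof in fact takes $\supp V = B$ exactly, and the stated equality $\supp U = B \sqcup T(B)$ does require that reading, as you note.
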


\begin{proof}
	Fix a Borel subset $B \subseteq A$.
	By \Cref{cor:exchanging involutions in ergodic full group}, there exists an involution $V \in \mathbb{G}$ such that $\supp V = B$. Let us now prove that $U = [T,V] = TV T\inv V\inv$ is an involution in $N$ with support $B \sqcup T(B)$. We have
	\[
	(TVT\inv ) V = T(V T \inv V\inv).
	\]
	From the left-hand side we get that $T V T\inv$ is an involution supported on $T(B)$ and $V$ is an involution supported on $B$, so their product is an involution supported on $B \sqcup T(B)$. From the right-hand side, $T \in N$ and $V T\inv V\inv \in N$ because $N$ is normal, so their product is in $N$.
\end{proof}

The previous Lemma implies in particular that non-trivial normal subgroups of $\Aut(X,\lambda)$ contain finitely supported elements. It is important to keep in mind also that conjugation by elements of $\Aut(X,\lambda)$ preserve the finiteness of the support. We then have the following.

\begin{prop}
	{\label{prop: normal subgroup contains G_f}}
	Let $(X,\lambda)$ be a standard $\sigma$-finite space and $\mathbb{G} \leqslant \Aut(X,\lambda)$ be an ergodic full group. Then $\mathbb{G}_f$ is simple. 
\end{prop}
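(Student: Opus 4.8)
The plan is to show that any non-trivial normal subgroup $N$ of $\mathbb{G}_f$ contains every involution in $\mathbb{G}_f$, and then invoke \Cref{3invo} to conclude $N = \mathbb{G}_f$. First I would take a non-trivial $T \in N$; passing to a non-trivial power if necessary (or rather, using \Cref{lem: better separator}) I may pick a separator $A$ for $T$, so that $A$ is disjoint from $T(A)$ and $\lambda(A) > 0$. Since $T$ is finitely supported, $\lambda(A) < \infty$. By \Cref{lem: commutator is an involution}, for any Borel $B \subseteq A$ with $0 < \lambda(B) < \infty$, picking (via \Cref{cor:exchanging involutions in ergodic full group}) an involution $V$ supported exactly on $B$, we get that $U = [T,V] \in N$ is an involution with $\supp U = B \sqcup T(B)$; in particular $\lambda(\supp U) = 2\lambda(B)$, so $N$ contains involutions whose support has arbitrarily small positive finite measure. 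Note also that $N \trianglelefteq \mathbb{G}_f$ together with \Cref{lem: involutions are conjugated} means that $N$ is stable under $\mathbb{G}_f$-conjugation of these involutions, and conjugation by $\mathbb{G}_f$ preserves finiteness of support and can realise any prescribed support measure (and co-measure being automatically infinite here since everything is finitely supported).

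Next I would bootstrap from one small involution in $N$ to all involutions in $\mathbb{G}_f$. Fix the involution $U_0 \in N$ just produced, with $\lambda(\supp U_0) = 2\delta$ for some small $\delta > 0$. Given an arbitrary involution $W \in \mathbb{G}_f$, write $\supp W = C \sqcup W(C)$ via \Cref{prop: Fre382Fa}, with $\lambda(C) = r < \infty$. I would partition $C = \bigsqcup_{i=1}^{k} C_i$ into finitely many pieces each of measure $\le \delta$, so that each $W_i$, defined as $W$ on $C_i \sqcup W(C_i)$ and the identity elsewhere, is an involution in $\mathbb{G}_f$ with $\lambda(\supp W_i) \le 2\delta$, and $W = W_1 W_2 \cdots W_k$ (the $W_i$ commute as their supports are disjoint). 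By \Cref{lem: involutions are conjugated} (applied with $C = X$, using that $\lambda(X \setminus \supp W_i) = \lambda(X \setminus \supp U_0) = +\infty$, after first shrinking $U_0$'s support further if needed so that $\lambda(\supp W_i) = \lambda(\supp U_0)$ exactly — which is arranged by taking $B$ of the appropriate measure in the first paragraph), each $W_i$ is conjugate in $\mathbb{G}_f$ to $U_0 \in N$, hence $W_i \in N$. Therefore $W = W_1 \cdots W_k \in N$, so $N$ contains all involutions of $\mathbb{G}_f$.

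Finally, by \Cref{3invo} applied with $C = X$ to the full group $\mathbb{G}$ (equivalently, any element of $\mathbb{G}_f$ is a product of at most three involutions which are themselves finitely supported, hence lie in $\mathbb{G}_f$), every element of $\mathbb{G}_f$ is a product of at most three involutions of $\mathbb{G}_f$, all of which lie in $N$; thus $N = \mathbb{G}_f$, proving simplicity. The main obstacle I anticipate is the bookkeeping to ensure the measure conditions of \Cref{lem: involutions are conjugated} are met when comparing $U_0$ with each small piece $W_i$ — specifically arranging $\lambda(\supp U_0) = \lambda(\supp W_i)$, which requires choosing $B$ in the first step not of a single fixed measure but across a suitable range of values (or producing a family of involutions in $N$ of all small support measures), and verifying the co-measures are $+\infty$, which is immediate since $\mathbb{G}_f \subseteq \Autf(X,\lambda)$ and $\lambda(X) = +\infty$. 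Everything else is routine cutting-and-pasting.
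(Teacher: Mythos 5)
Your proof is correct and shares its skeleton with the paper's: get involutions into $N$ by the commutator trick of \Cref{lem: commutator is an involution}, show that every involution of $\mathbb{G}_f$ lies in $N$ via conjugacy of involutions, and finish with Ryzhikov's theorem (\Cref{3invo}). Where you genuinely diverge is the measure-matching step. The paper first proves that $N$ contains an involution whose support has measure $R$ for \emph{every} $R>0$, by iteratively doubling the support of a small involution in $N$ (commutating it with a $(C,D)$-exchanging involution, where $D$ is a fresh set of the same measure disjoint from the support), and only then conjugates the target involution onto one of these. You go the other way: you extract from $N$ only involutions of \emph{small} prescribed support measure (which \Cref{lem: commutator is an involution} gives directly by varying $B\subseteq A$), and you chop the target involution $W$ into finitely many commuting involutions $W_i$ of small support, each matched to an involution in $N$ of equal support measure. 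This is a legitimate simplification: it removes the doubling iteration entirely. One small repair is needed where you invoke \Cref{lem: involutions are conjugated} ``with $C=X$'': that lemma produces a conjugator in $\mathbb{G}_C$, so taking $C=X$ only yields a conjugator in $\mathbb{G}$, which does not suffice since your $N$ is normal only in $\mathbb{G}_f$ (the conjugator built in that lemma moves $C\setminus\supp U$ onto $C\setminus\supp V$ and is typically not finitely supported when $C=X$). Instead take $C$ to be any finite-measure Borel set containing $\supp W_i\cup\supp U_0$; the hypothesis $\lambda(C\setminus\supp W_i)=\lambda(C\setminus\supp U_0)$ is then still met because the two supports have equal finite measure, and the conjugator lands in $\mathbb{G}_C\subseteq\mathbb{G}_f$. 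With that adjustment your argument closes.
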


\begin{proof}
	In this whole proof we fix a non-trivial $N \trianglelefteq \mathbb{G}$.
	
	\textit{(1)} We start by proving that $N$ contains an involution with a support of measure $R$, for any $R >0$. Using \Cref{lem: commutator is an involution} and \Cref{prop: Fre382Fa}, we consider an involution $U \in N$ and a Borel subset $A\subseteq X$ of positive and finite measure such that $\supp U = A \sqcup U(A)$. Fix a positive real number $0< t \leqslant 2\lambda(\supp U)$. Let $B\subseteq A$ be such that $\lambda(B) = \frac{t}{4}$, and set $C = B \sqcup U(B)$. Let now $D$ be disjoint from $\supp U$, and such that $\lambda(D) = \lambda(C) = \frac{t}{2}$. Finally let $V \in \mathbb{G}_{C \sqcup D}$ be a $(C,D)$-exchanging involution, which exists by \Cref{cor:exchanging involutions in ergodic full group}. We have
\[
[U,V] = U V U\inv V\inv  =  \left\{ \begin{array}{ll}
\id_X & \mbox{ on }  X \setminus (C \sqcup D),  \\
U  & \mbox{ on } C, \\
VUV & \mbox{ on } D,
\end{array}
\right.
\]
	so $\supp [U,V] = C \sqcup D$, which has measure $\lambda(\supp [U,V]) = t$, and $[U,V]$ is in $N$ by  \Cref{lem: commutator is an involution}. For any $R >0$, starting with the newly obtained involution each time, iterating this construction enough times  (which is a finite number of times as we can always double the measure the support of the previous involution) yields an involution $W \in N$ with $\lambda(\supp W) = R$.
	
	\textit{(2)} We now prove that any involution in $\mathbb{G}_f$ is in $N$.
	Let $U \in \mathbb{G}_f$ be an involution, and let $V \in N$ be an involution such that $\lambda(\supp V) = \lambda(\supp U)$, which exists by \textit{(1)}. By \Cref{lem: involutions are conjugated} there exists $T \in \mathbb{G}_f$ such that $TVT\inv = U$, which proves that $U$ is in $N$.

	\textit{(3)} Let $T$ be in $\mathbb{G}_f$. By \Cref{3invo}, we can write $T$ as the product of three involutions in $\mathbb{G}_f$, which are all in $N$ by \textit{(2)}. Therefore $T$ is in $N$.
\end{proof}

We now only have take care of infinitely supported elements of $N$ in order to obtain the following generalization of \cite{Eigen1981} to any type $\mathrm{II}_{\infty}$ ergodic full group.

\begin{thm}
	{\label{Thm: normal subgroup of an ergodic fg}}
	Let $(X,\lambda)$ be a standard $\sigma$-finite space, and $\mathbb{G} \leqslant \Aut(X,\lambda)$ an ergodic full group. Let $N \trianglelefteq \mathbb{G}$ be normal and non-trivial. Then either $N = \mathbb{G}_f$ or $N = \mathbb{G}$.
\end{thm}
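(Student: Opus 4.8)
The plan is to leverage \Cref{prop: normal subgroup contains G_f}, which already gives $\mathbb{G}_f \subseteq N$ for any non-trivial normal subgroup $N$. So it suffices to show that if $N$ contains some element $T$ with $\lambda(\supp T) = +\infty$, then $N = \mathbb{G}$. Since $\mathbb{G}_f \trianglelefteq \mathbb{G}$ we automatically have one of the two options once we establish this dichotomy; the content is purely in the infinitely supported case.

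First I would take $T \in N$ with $\lambda(\supp T) = +\infty$ and apply the factorisation \Cref{factoriseT} (with, say, $D = X$ and $\varepsilon = 1$): write $T = T_1 T_2 T_\varepsilon$ where $\supp T_1$ and $\supp T_2$ are disjoint of equal (necessarily infinite) measure, and $\lambda(\supp T_\varepsilon) < \infty$. Since $T_\varepsilon \in [T] \subseteq \mathbb{G}$ has finite support it lies in $\mathbb{G}_f \subseteq N$, hence $T_1 T_2 = T T_\varepsilon^{-1} \in N$. Now $T_1$ and $T_2$ have disjoint supports, so they commute, and I claim $T_1$ alone (equivalently $T_2$) can be extracted into $N$: because $\supp T_1$ is disjoint from $\supp T_2$, conjugating $T_1 T_2$ by an involution supported in $\supp T_1 \sqcup \supp T_2$ that exchanges these two sets (such an involution exists in $\mathbb{G}$ by \Cref{cor:exchanging involutions in ergodic full group}, both sets having infinite measure and infinite-measure complements) swaps the roles of $T_1$ and $T_2$ up to conjugacy; more simply, since they commute, $(T_1T_2)\cdot(\text{conjugate of }T_2^{-1}T_1^{-1})$ type manipulations, or a direct commutator $[T_1T_2, W]$ with $W$ supported in $\supp T_1$, isolate an element of $N$ with support exactly $\supp T_1 \cup W(\supp T_1)$ — an infinite-measure support. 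The cleanest route: since $\supp T_2$ is disjoint from $\supp T_1$, any involution $W$ with $\supp W \subseteq \supp T_1$ commutes with $T_2$, so $[T_1T_2, W] = [T_1, W] \in N$ and this is a nontrivial element supported inside $\supp T_1$.

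Having produced an element $S \in N$ with $\lambda(\supp S) = +\infty$ whose support has infinite-measure complement is not yet quite enough; what I really want is an \emph{involution} in $N$ whose support has both infinite measure and infinite-measure complement. To get there I would mimic part \textit{(1)} of the proof of \Cref{prop: normal subgroup contains G_f} but in the infinite-measure regime: pick a separator $A$ for $S$ with $\lambda(A) = +\infty$ via \Cref{lem: better separator}, so $A$ is disjoint from $S(A)$; choose $B \subseteq A$ with $\lambda(B) = +\infty$ and $\lambda(A \setminus B) = +\infty$ chosen so that $\supp S \cup B \cup S(B)$ still has infinite-measure complement (possible by shrinking $A$ first); let $V$ be an involution supported in $B$ (\Cref{cor:exchanging involutions in ergodic full group}); then by \Cref{lem: commutator is an involution}, $U := [S, V]$ is an involution in $N$ with $\supp U = B \sqcup S(B)$, which has infinite measure and infinite-measure complement. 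Now given \emph{any} involution $U' \in \mathbb{G}$ with $\lambda(\supp U') = +\infty$, split $\supp U'$ using \Cref{prop: Fre382Fa} and, if its complement happens to be finite, use the trick from part \textit{(2)}--\textit{(ii)} of the earlier proof to enlarge into an auxiliary disjoint region so as to arrange equal-measure support and equal-measure complement with $U$; then \Cref{lem: involutions are conjugated} gives that $U$ and $U'$ are conjugate in $\mathbb{G}$, so $U' \in N$. Thus $N$ contains every involution of $\mathbb{G}$ with infinite-measure support, and (from $\mathbb{G}_f \subseteq N$) every involution with finite-measure support too, so $N$ contains \emph{all} involutions of $\mathbb{G}$.

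Finally, \Cref{3invo} says every element of $\mathbb{G}$ is a product of at most three involutions of $\mathbb{G}$, all of which now lie in $N$; hence $N = \mathbb{G}$. This establishes the dichotomy: either $N$ consists only of finitely supported elements, in which case $N \subseteq \mathbb{G}_f$ and combined with $\mathbb{G}_f \subseteq N$ we get $N = \mathbb{G}_f$; or $N$ contains an infinitely supported element and then $N = \mathbb{G}$. The main obstacle, I expect, is the bookkeeping in the infinite-measure case: unlike the finite-support construction in \Cref{prop: normal subgroup contains G_f}, here one must be careful that the supports one builds retain infinite-measure complements (so that \Cref{cor:exchanging involutions in ergodic full group} and \Cref{lem: involutions are conjugated} apply with the exact measure hypotheses), and that the factorisation from \Cref{factoriseT} genuinely yields a finitely supported factor that can be absorbed into $\mathbb{G}_f \subseteq N$ — the remark following \Cref{rem:elements of pseudo full groups are more than just restrictions} is a reminder that measure of complements cannot be taken for granted in the $\sigma$-finite world.
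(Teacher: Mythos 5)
Your overall strategy is the paper's: start from $\mathbb{G}_f\leqslant N$ (\Cref{prop: normal subgroup contains G_f}), manufacture from an infinitely supported $T\in N$ an involution in $N$ whose support has infinite measure \emph{and} infinite-measure complement, propagate to all such involutions via \Cref{lem: involutions are conjugated}, handle the finite-complement case by splitting, and finish with \Cref{3invo}. The one genuinely different ingredient is your opening use of \Cref{factoriseT} to peel off $T_1$ with an infinite-measure reservoir $\supp T_2$ in its complement. The paper gets this reservoir for free and skips the factorisation entirely: it takes a separator $A$ for $T$ itself (\Cref{lem: better separator}), chooses the involution $U$ with $\supp U\subseteq A$ and $\lambda(\supp U)=\lambda(A\setminus\supp U)=+\infty$, and then $[T,U]$ is, by \Cref{lem: commutator is an involution}, already an involution in $N$ with support $\supp U\sqcup T(\supp U)$ of infinite measure, whose complement contains $A\setminus\supp U$. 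Your route works but costs you an extra layer; moreover your intermediate element $S=[T_1,W]$ is only guaranteed to have infinite-measure support if you choose $W$ to be an involution supported on an infinite-measure subset of a separator of $T_1$ --- and once you do that, \Cref{lem: commutator is an involution} already hands you the desired involution, making your second commutator $[S,V]$ redundant.

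Two small repairs are needed in your last step. First, the aside claiming the commutator has ``support exactly $\supp T_1\cup W(\supp T_1)$'' is off (with $\supp W\subseteq\supp T_1$ that set is just $\supp T_1$); the correct support, via \Cref{lem: commutator is an involution}, is $\supp W\sqcup T_1(\supp W)$. Second, and more substantively: when $U'$ is an involution with $\lambda(\supp U')=+\infty$ but $\lambda(X\setminus\supp U')<+\infty$, it is \emph{not} conjugate to your $U$, since conjugation in $\Aut(X,\lambda)$ preserves the measure of the complement of the support. The fix (which is what the paper's case \textit{(2)} does, and what your ``trick'' gestures at) is not to conjugate $U'$ to $U$ but to write $\supp U'=A\sqcup U'(A)$ via \Cref{prop: Fre382Fa}, split $A=A_1\sqcup A_2$ into two pieces of infinite measure, and decompose $U'=U'_1U'_2$ where $U'_i$ is the $(A_i,U'(A_i))$-exchanging involution; each $U'_i$ has infinite-measure support and infinite-measure complement (containing $A_{3-i}$), hence lies in $N$ by the first case, so $U'\in N$. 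With these adjustments your argument is correct.
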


\begin{proof}
	By \Cref{prop: normal subgroup contains G_f} we have $\mathbb{G}_f \leqslant N$, so if $N \leqslant \mathbb{G}_f$, there is nothing to do. Assume then that there exists $T \in N$ such that $\lambda(\supp T) = + \infty$. We need to prove that $N$ contains all the involutions in $\mathbb{G}$ with supports of infinite measure, then \Cref{3invo} will conclude the proof. In order to use \Cref{lem: involutions are conjugated}, we must separately treat the cases of whether the measure of the complement of the support of an involution is infinite or finite. Let then $U_0$ and $V_0$ in $\mathbb{G}$ be such that 
	\[
	\left\lbrace
	\begin{array}{l}
	\lambda(\supp U_0) = \lambda(X \setminus \supp U_0) = +\infty \\
	\lambda(\supp V_0) = + \infty \mbox{ and } \lambda(X \setminus \supp V_0) < +\infty.
	\end{array}
	\right.
	\]
	Our goal is to prove that $U_0$ and $V_0$ are in $N$, which will imply that their respective conjugacy classes are in $N$.
	
	\textit{(1)} Using \Cref{lem: better separator} we consider a separator $A$ for $T$ (in particular $\lambda(A) = \lambda(X \setminus A) = + \infty$). Let now $U \in \mathbb{G}$ be an involution with $\supp U \subseteq A$ and $\lambda(\supp U) = \lambda(A \setminus \supp U) = + \infty$, which exists by \Cref{cor:exchanging involutions in ergodic full group}. By \Cref{lem: commutator is an involution}, $[T,U]$ is an involution in $N$, and $\lambda(\supp [T,U]) = \lambda(X \setminus \supp [T,U]) = + \infty$. By \Cref{lem: involutions are conjugated}, $U_0$ and $[T,U]$ are conjugated in $\mathbb{G}$, so $U_0$ is in $N$.

	\textit{(2)} As $\lambda(\supp V_0) = + \infty$, it is possibe to write $\supp V_0 = B \sqcup C$, with $\lambda(B) = \lambda(C) = +\infty$. By \textit{(1)} and \Cref{lem: involutions are conjugated}, we can find in $N$ two involutions $V_1$ and $V_2$ such that $\supp V_1 = B$ and $\supp V_2 = C$. The product $V_1 V_2$ is in $N$ and is an involution, its support satisfies $\lambda(\supp V_1 V_2) = \lambda(\supp V_0) = +\infty$ and $\lambda(X \setminus \supp V_1 V_2) = \lambda(X \setminus \supp V_0) < +\infty$, so by \Cref{lem: involutions are conjugated} we obtain that $V_0$ and $V_1V_2$ are conjugated in $\mathbb{G}$. Therefore $V_0$ is in $N$.
\end{proof}

\subsection{Contractibility of orbit full groups}

Again we fix a standard $\sigma$-finite space $(X,\lambda)$.
The goal of this section is to prove the following proposition.

\begin{prop}
	{\label{prop: ergodic orbit fg is contractible}}
	Let  $\mathbb{G} \leqslant \Aut(X,\lambda)$ be an ergodic orbit full group, such that $\tau_m^o$ is Polish on $\mathbb{G}$. Then $(\mathbb{G},\tau_m^o)$ is contractible.
\end{prop}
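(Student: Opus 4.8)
The strategy is to follow Danilenko's argument from \cite[Thm.~2.2]{Danilenko1995}, adapting it to the Polish topology $\tau_m^o$. The key structural fact about $\sigma$-finite infinite-measure spaces that makes this work is that $X$ can be partitioned into countably many pieces of measure one, and the space of such partitions (suitably topologized) is contractible. So the first step is to fix, once and for all, a reference partition $X = \bigsqcup_{n \in \N} X_n$ with $\lambda(X_n) = 1$ for all $n$, and to observe that by ergodicity of $\mathbb{G}$ together with \Cref{cor:ergodic full group moves elements of same (co)-measure around} (applied with $C = X$, noting that the complements all have infinite measure), for any two such measure-one partitions $(X_n)$ and $(X_n')$ there is an element of $\mathbb{G}$ carrying one onto the other. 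One then wants a \emph{continuous} path of partitions from $(X_n)$ to any nearby partition, realized by a continuous path in $(\mathbb{G},\tau_m^o)$, which is where the orbital topology and \Cref{Thm: orbit full groups are Polish groups} enter.

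Concretely, I would proceed as follows. First, reduce to exhibiting a continuous map $h : [0,1] \times \mathbb{G} \to \mathbb{G}$ with $h(0,\cdot) = \id_{\mathbb{G}}$ (the identity map of the group) and $h(1,\cdot) \equiv e_{\mathbb{G}}$ the constant map at the neutral element, i.e.\ a contraction of the group onto a point. By left-translation invariance of the topology it suffices to construct a path $t \mapsto c(t)$ of \emph{conjugating elements} or, more precisely, to build the contraction at the level of the Polish group $\widetilde{[\mathcal{R}_G]} \leqslant \Lzero(X,\lambda,(G,\tau_G))$ (or directly on $[\mathcal{R}_G]$), using that $\tau_m^o$ is the quotient of the convergence-in-measure topology. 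The heart of the construction is an explicit homotopy in $\Lzero(X,\lambda,(X,\tau_X))$: partition the interval $[0,1]$ into dyadic stages, and at stage $n$ "freeze" the behavior on $X_0 \sqcup \cdots \sqcup X_{n-1}$ while using the infinite-measure complement $\bigsqcup_{k \geqslant n} X_k$ as room to absorb the remaining dynamics, via partial isomorphisms supplied by \Cref{prop:pseudo full group exchange subsets} and the exchanging involutions of \Cref{cor:exchanging involutions in ergodic full group}. Because each such modification affects only sets of measure tending to $0$ in the appropriate sense, the resulting path is continuous for $\tau_m$ (hence for $\tau_m^o$), using the characterization of $\tau_m$-convergence from \Cref{prop: cv in measure equiv} and \Cref{rem: pseudometrics}.

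More precisely, for $T \in \mathbb{G}$ I would define $h(t,T)$ so that $h(0,T) = T$ and, as $t \to 1$, the element $h(t,T)$ agrees with the identity on larger and larger unions $X_0 \sqcup \dots \sqcup X_{n-1}$, with the "leftover" support pushed into $\bigsqcup_{k \geqslant n} X_k$; continuity in $T$ comes from the continuity of the exchanging-involution and partial-isomorphism maps (\Cref{cor:exchanging involutions in ergodic full group}, \Cref{prop:pseudo full group exchange subsets}) for the uniform topology, which refines $\tau_w$ and hence controls $\tau_m^o$ on the relevant finite-measure pieces. Joint continuity at $t = 1$ is delicate and must be checked by hand: one needs that $h(t,T) \to e_{\mathbb{G}}$ in $\tau_m^o$ uniformly enough in $T$ on a neighborhood basis, which follows because $\widetilde{d}_C(h(t,T),\id)$ is bounded by $\lambda(C \cap \bigsqcup_{k\geqslant n(t)} X_k) \to 0$ for any fixed $C$ of finite measure, where $n(t) \to \infty$ as $t \to 1$; this bound is independent of $T$. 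The lift to $\widetilde{[\mathcal{R}_G]}$ and descent to the quotient $[\mathcal{R}_G] = \widetilde{[\mathcal{R}_G]}/\ker\Phi$ is then routine, using that $\Phi$ is an open continuous surjection (\Cref{Thm: orbit full groups are Polish groups} and \Cref{prop: quotient Polish group}), so a contraction upstairs projects to a contraction downstairs.

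**Main obstacle.** The routine parts are the existence of the combinatorial ingredients (partitions into measure-one sets, exchanging involutions, partial isomorphisms) — these are all in hand. The genuine difficulty, and the reason Keane's and Carderi--Le Maître's induced-transformation approach cannot be copied verbatim, is the \emph{joint continuity of the homotopy at the endpoint $t=1$}: one is collapsing to a point along an exhaustion of $X$ by finite-measure pieces, and one must verify that the construction can be arranged so that the estimates $\widetilde{d}_C(h(t,T),e) \to 0$ hold with a modulus depending only on $t$ and $C$, not on $T$. Danilenko handles this via the contractibility of the space of partitions into measure-one sets, equipped with a suitable metric; I would import precisely that device, checking that his continuity estimates survive the passage from the uniform topology he uses to the orbital topology $\tau_m^o$ (which is coarser on the finite pieces), so the estimates only get easier. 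A secondary point requiring care is that $h(t,T)$ must genuinely land in $\mathbb{G}$ (not merely $\Aut(X,\lambda)$) and indeed in the \emph{orbit} full group: this is where one uses that all the building blocks (exchanging involutions, partial isomorphisms) are taken inside $\mathbb{G}$, so cutting and pasting them keeps us in $\mathbb{G}$, and the lift to $\widetilde{[\mathcal{R}_G]}$ is legitimate because $\mathbb{G} = [\mathcal{R}_G]$.
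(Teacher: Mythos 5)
You have the right ingredients (measure-one partitions, exchanging involutions, Danilenko's contractibility of the partition space), but the central object of your argument --- the homotopy $h(t,T)$ obtained by ``freezing'' $T$ on $X_0\sqcup\dots\sqcup X_{n-1}$ and ``pushing the leftover dynamics into the tail'' --- is never actually constructed, and the scheme as described does not work. Your estimates only address the endpoint $t=1$: if $h(t,T)$ is literally the identity off a tail $\bigsqcup_{k\geqslant n(t)}X_k$, then indeed $h(t,T)\to\id_X$, but this says nothing about continuity on $[0,1)$. Within a single dyadic stage you must continuously deform an element of $\mathbb{G}$ into one that is trivial on one more piece $X_n$, and you give no mechanism for this: replacing $T$ by a conjugate supported in the tail is a jump, not a path, and for small $t$ such an element is nowhere near $T$ in $\tau_m^o$ unless $T$ was already close to the identity on $X_0$. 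This interpolation is precisely the difficulty that, in the finite-measure setting, is resolved by the induced transformations $T\mapsto T_{[s,1]}$ (\Cref{def: induced bijection}), and which fails globally here because of dissipativity; your proposal acknowledges that the induced-map trick cannot be copied but does not replace it with anything concrete.

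The paper's route is structurally different and is worth contrasting. Rather than building one global homotopy, it fibers $\mathbb{G}$ over the Polish space $\mathcal{P}$ of ordered measure-one partitions via $\pi^P(T)=(T(X_n))_n$; the fiber $\mathbb{G}^P=\{T\mid T(X_n)=X_n\ \forall n\}$ is homeomorphic to $\prod_n\mathbb{G}_{X_n}$, and each $\mathbb{G}_{X_n}$ \emph{is} contractible by the induced-transformation path, because on a finite-measure piece every element is conservative (this is \Cref{prop: induced full groups are orbit full groups}, whose proof requires the nontrivial $\tau_m^o$-continuity of $(B,T)\mapsto T_B$, established via the quotient metric $d_{[\mathcal{R}_G]}$ --- another point your sketch elides, since $\tau_m^o$ is in general strictly finer than convergence in measure in $\Lzero(X,\lambda,X)$). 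The exchanging involutions of \Cref{cor:exchanging involutions in ergodic full group} then furnish a continuous global section $\psi$ of $\pi^P$, giving a homeomorphism $\mathbb{G}\cong\mathcal{P}\times\mathbb{G}^P$, and contractibility follows from \Cref{lem: space of partitions is contractible} and the product structure. So the partition space is not a device for controlling your endpoint estimates; it is the base of a trivialized fibration, and the two contractions (of base and of fiber) are performed separately. To repair your proposal you would essentially have to rediscover this decomposition.
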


We fix such an ergodic orbit full group $\mathbb{G} = [\mathcal{R}_G]$, along with its Polish topology of orbital convergence in measure $\tau_m^o$.

We denote by $\mathcal{P}$ the family of countable ordered partitions $P = (X_n)_{n \in \N}$ of $X$ with $\lambda(X_n) = 1$ for any $n \in \N$. We endow it with the metric $r$ defined by
\[
r ( P_1 ,P_2 ) = \sum_{n \in \N} \lambda(X^1_n \Delta X^2_n)
\]
for any $P_1 = (X^1_n)_{n \in \N}, P_2 = (X^2_n)_{n \in \N}$ in $\mathcal{P}$. 
Notice that the topology induced by $r$ is the toplogy induced by the product topology on $\MAlgf(X,\lambda)^\N$, so it is Polish by \Cref{prop: MAlgf is Polish}. Note also that this topology only depends on the measure class of $\lambda$, as on $\left\{ A \in \MAlgf(X,\lambda) \mid \lambda(A) \leqslant 1 \right\}$ the topology from $\MAlgf(X,\lambda)$ coincides with the one from $\MAlg(X,\lambda)$. 

For any $P = (X_n)_{n \in \N} \in \mathcal{P}$, define
	\[
	\mathbb{G}^P = \left\{ T \in \mathbb{G} \mid \forall\, n \in \N : T(X_n) = X_n \right\}.
	\]
Until the end of this section we fix $P = (X_n)_{n \in \N}$ in $\mathcal{P}$. For any Borel subset $A$ of positive measure we also recall that $\mathbb{G}_A = \left\{ T \in \mathbb{G} \mid \supp T \subseteq A \right\}$, and naturally identify it with $\mathbb{G}_{A\restriction A} = \left\{ T_{\restriction A} \in  \Aut(A,\lambda_{\restriction A})  \mid T \in \mathbb{G}_A \right\}$.

Our immediate goal is to prove that any full group $(\mathbb{G}_A, \tau_m^o)$ is contractible (recall that $\mathbb{G}_A$ is closed in $\mathbb{G}$ by \Cref{lem1}, hence Polish for $\tau_m^o$). There is no reason for $\mathbb{G}_A$ to be an orbit full group, however in \cite[Cor.~4.3]{CarderiLM2016} the authors actually define a metric which is $\tau_m^o$-compatible, and prove contractibility through it. We recall the important points.

\begin{prop}
	[{\cite[Prop.~3.19, Cor.~3.20]{CarderiLM2016}}]
	Let $[\mathcal{R}_G]$ be an orbit full group, such that $\tau_m^o$ is Polish on $[\mathcal{R}_G]$. Fix a probability measure $\mu \in [\lambda]$, and let $d_G$ be a right-invariant compatible bounded metric on $G$. Denote $G_x \coloneqq \mathrm{stab}_G(x)$. We identify the orbit $G \cdot x$ to the homogeneous space $G / G_x$, and endow it with the metric $d_x(gG_x,g'G_x) \coloneqq \inf_{h \in G_x} d_G(gh,g')$. Then the quotient metric $d_{[\mathcal{R}_G]}$ is defined by
	\[
	d_{[\mathcal{R}_G]}(T,S) = \int_X d_x(T(x),S(x)) d\mu(x).
	\]
	Moreover, for any sequence $(T_n)$ and any $T$ in $[\mathcal{R}_G]$, the following are equivalent:
	\begin{enumerate}[(i)]\setlength\itemsep{0em}
	\item $T_n \to T$ for $\tau_m^o$;
	\item for any $\varepsilon > 0$, $\mu(\left\{ x \in X \mid d_x(T_n(x),T(x)) > \varepsilon \right\} ) \to 0$;
	\item every subsequence of $(T_n)_{n \in \N}$ admits a subsequence $(T_{n_k})_{k \in \N}$ such that for $\mu$-almost all $x$ in $X$, $T_{n_k}(x) \to T(x)$ in $G \cdot x$ for $d_x$.
	\end{enumerate}	 
\end{prop}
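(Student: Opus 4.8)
The plan is to follow closely the proof of \cite[Prop.~3.19, Cor.~3.20]{CarderiLM2016}, adapting the few places where finiteness of the measure was used. Write $\mu$ for the fixed probability measure in $[\lambda]$ and $d_G$ for the fixed bounded right-invariant compatible metric on $G$. First I would record that the formula $d_{[\mathcal{R}_G]}(T,S) = \int_X d_x(T(x),S(x))\,d\mu(x)$ is meaningful and defines a genuine metric: the integrand depends only on the points $T(x),S(x)$ of the orbit $G\cdot x$, not on a choice of lift (this is consistent precisely because $d_G$ is right-invariant); the set $\{(x,g) : g\in G_x\}$ is Borel, being the preimage of the diagonal under the Borel map $(x,g)\mapsto(g\cdot x,x)$, so $x\mapsto d_x(T(x),S(x)) = \inf_{h\in G_x} d_G(c_T(x)h,c_S(x))$ is analytically measurable, hence $\mu$-measurable, and bounded; symmetry and the triangle inequality are pointwise, and $d_{[\mathcal{R}_G]}(T,S)=0$ forces $T(x)=S(x)$ $\mu$-a.e., hence $T=S$. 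I would also note that, as in \cite{CarderiLM2016}, it coincides with the quotient pseudometric of the metric $\tilde d(f,g)=\int_X d_G(f(x),g(x))\,d\mu(x)$ on $\widetilde{[\mathcal{R}_G]}$ (which induces $\tau_m$ there by \Cref{rem: pseudometrics}): since $\ker\Phi$ acts by pointwise multiplication and $d_G$ is right-invariant, $\inf\{\tilde d(f,g) : \Phi(f)=T,\ \Phi(g)=S\} = \inf_{k\in\ker\Phi}\int_X d_G(c_T(x)k(x),c_S(x))\,d\mu(x)$, and the non-trivial ($\leqslant$) inequality follows by producing, for each $\varepsilon>0$, a $\ker\Phi$-valued measurable map realizing the pointwise infimum $d_x(T(x),S(x))$ up to $\varepsilon$.

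Next I would establish the key continuity property: $d_{[\mathcal{R}_G]}$ is continuous for $\tau^o_m\times\tau^o_m$. Since the quotient map $\Phi$ is continuous, surjective and open, $\Phi\times\Phi$ is a quotient map, so by its universal property it suffices to show that $(f,g)\mapsto \int_X d_x(f(x)G_x,g(x)G_x)\,d\mu(x)$ is $\tau_m\times\tau_m$-continuous on $\widetilde{[\mathcal{R}_G]}$. Right-invariance of $d_G$ is used crucially here: $(a,b)\mapsto d_x(aG_x,bG_x)=\inf_{h\in G_x}d_G(ah,b)$ is $1$-Lipschitz on $G\times G$ uniformly in $x$, because $d_G(ah,a'h)=d_G(a,a')$. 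Hence if $f_n\to f$ and $g_n\to g$ in $\tau_m$, \Cref{prop: cv in measure equiv} lets me pass to a subsequence with a.e.\ convergence, deduce a.e.\ convergence of the integrands, and apply dominated convergence ($d_G$ bounded, $\mu$ finite); the subsequence trick then gives convergence of the full sequence.

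With continuity in hand I would prove the cycle (i)$\Rightarrow$(ii$'$)$\Rightarrow$(ii)$\Rightarrow$(iii)$\Rightarrow$(i), where (ii$'$) stands for $d_{[\mathcal{R}_G]}(T_n,T)\to 0$. The implication (i)$\Rightarrow$(ii$'$) is immediate from continuity of $d_{[\mathcal{R}_G]}$, (ii$'$)$\Rightarrow$(ii) is Markov's inequality, and (ii)$\Rightarrow$(iii) is the standard extraction of an a.e.-convergent subsequence from the real functions $x\mapsto d_x(T_n(x),T(x))$, which converge to $0$ in $\mu$-measure. The one substantial step is (iii)$\Rightarrow$(i): by the subsequence criterion for metrizable topologies it is enough to show that $T_{n_k}(x)\to T(x)$ in $(G\cdot x,d_x)$ for $\mu$-a.e.\ $x$ implies $T_{n_k}\to T$ for $\tau^o_m$. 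Fixing a Borel lift $c_T$ of $T$ and Borel lifts $g_k$ of $T_{n_k}$, the sets $\{(x,h):h\in G_x,\ d_G(g_k(x)h,c_T(x))<d_x(T_{n_k}(x),T(x))+2^{-k}\}$ are in $\sigma(\Sigma^1_1)$ with full projection onto $X$, so the Jankov--von Neumann uniformization theorem (\cite{Kechris1995}) supplies $\sigma(\Sigma^1_1)$-measurable, hence $\mu$-measurable, selections $h_k$; then $f_k:=g_k\cdot h_k$ lies in $\widetilde{[\mathcal{R}_G]}$, since $\Phi(f_k)=T_{n_k}$, and $d_G(f_k(x),c_T(x))\to 0$ $\mu$-a.e., so $f_k\to c_T$ in $\tau_m$ and therefore $T_{n_k}=\Phi(f_k)\to\Phi(c_T)=T$ for $\tau^o_m$ by continuity of $\Phi$. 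Finally, the cycle shows that the metric topology of $d_{[\mathcal{R}_G]}$ and $\tau^o_m$ have exactly the same convergent sequences; both being metrizable ($\tau^o_m$ is Polish by hypothesis), they coincide, so $d_{[\mathcal{R}_G]}$ is a compatible metric for $\tau^o_m$, which completes the proof. I expect the main obstacle throughout to be the recurrent need for measurable selections of $G_x$-valued maps; once phrased as $\sigma(\Sigma^1_1)$-uniformization, exactly as in \cite{CarderiLM2016}, nothing new is required and finiteness of $\lambda$ plays no role.
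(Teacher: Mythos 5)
Your proposal is correct and follows essentially the same route as the paper: right-invariance of $d_G$ to control the quotient pseudometric, Jankov--von Neumann uniformization (\cite[Thm.~18.1]{Kechris1995}) to select a measurable $\ker\Phi$-valued map realizing the pointwise infimum up to $\varepsilon$, and the Markov / a.e.-subsequence / dominated-convergence cycle for the equivalence of \textit{(i)}--\textit{(iii)}. The only difference is organizational -- the paper cites \cite[Lem.~2.2.8]{Gao2009} to identify the quotient metric's topology with $\tau_m^o$ and refers to Moore's argument for the equivalences, whereas you re-derive these via the sequential cycle -- which does not change the substance of the argument.
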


\begin{proof}
We start by recalling that the quotient metric $d_x$ as we defined it induces the quotient topology on $G/G_x$.
Indeed, $G_x$ is closed by \cite[Thm.~9.17]{Kechris1995}, then \cite[Lem.~2.2.8]{Gao2009} applies. Notice now that the right-invariance of $d_G$ yields the right-invariance of $\tilde{d}_G$, that we recall is defined on $\Lzero(X,\lambda,G)$ by
 $
 \tilde{d}_G(f,f') = \int_X d_G(f(x),f'(x)) d \mu(x).
 $
 
 We define 
 \[
 K = \{ f \in \Lzero(X,\lambda,G) \mid \forall \, x \in X, f(x) \in G_x \} = \text{Ker}\left(\Phi_{\restriction \widetilde{[\mathcal{R}_G]}}\right).
 \]
 Recall the definition of the $\ast$-product from the proof of \Cref{Thm: orbit full groups are Polish groups}: for any $f \in \widetilde{[\mathcal{R}_G]}$ and $k \in K$ we have
 \[
 (f \ast k) (x) = f(\Phi(k)(x))k(x) = f(x)k(x),
 \] 
 so the right $K$-cosets are the same in $\widetilde{[\mathcal{R}_G]} / K \cong [\mathcal{R}_G]$ for $\ast$ and for the pointwise product.
 By \Cref{prop: measurable functions to a Polish group is Polish} and \cite[Lem.~2.2.8]{Gao2009} again, the quotient metric induced by $\tilde{d}_G$ induces the quotient topology on $\Lzero(X,\lambda,G)/K$, and by the previous argument, the quotient metric induced by $\tilde{d}_G$ on $\widetilde{[\mathcal{R}_G]} / K$ and the quotient metric induced by $\tilde{d}_G$ on $\Lzero(X,\lambda,G)/K$ agree on $[\mathcal{R}_G]$. By definition, the latter is given by $\rho(f,f') =  \inf_{k \in K} \tilde{d}_G(fk,f')$, therefore our goal is to show that 
 \[
 \rho(f,f') = \int_X d_x(f(x),f'(x)) d\mu(x).
 \]

	We fix $\varepsilon > 0$ and $f,f' \in \Lzero(X,\lambda,G)$. For the first inequality we apply \cite[thm.~18.1]{Kechris1995} to
	\[
	\left\{ (x,g) \in X \times G  \mid d_G(f(x)g,f'(x)) < d_x(f(x),f'(x)) + \varepsilon \mbox{ and } g \cdot x = x      \right\},
	\]
	yielding a function $k \in K$ satisfying $d_G(f(x)k(x),f'(x)) < d_x(f(x),f'(x)) + \varepsilon$ for any $x \in X$. In particular, we have $\rho(f,f') \leqslant \int_X d_x(f(x),f'(x)) d\mu(x)$. The converse inequality is immediate, as for any $k \in K$ and any $x \in X$ we have $d_G(f(x)k(x),f'(x)) \geqslant d_x(f(x),f'(x))$ by definition of $K$ and $d_x$.\\
	
	The equivalence between $(i)$, $(ii)$ and $(iii)$ is proved exactly as in \cite[Prop.~6]{Moore1976} (Moore actually works with $\sigma$-finite spaces as well): $(i) \Rightarrow (ii)$ follows from Markov's inequality applied to $\mu(\left\{ x \in X \mid d_x(T_n(x),T(x)) > \varepsilon \right\} )$, $(ii) \Rightarrow (iii)$ follows from the general fact that convergence in (finite) measure implies convergence $\mu$-a.e. along a subsequence. Finally $(iii) \Rightarrow (i)$ follows from the fact that $(iii)$ gives convergence in measure of the sequence $(d_x(T_n(x),T(x)))$ to $0$, then Lebesgue's dominated convergence theorem concludes the proof.
\end{proof}

Recall the definition of the induced bijection (see \Cref{def: induced bijection}). As we already mentioned, convergence in measure only depends on the measure class, so 
\cite[Prop.~4.2]{CarderiLM2016} generalizes to our context without trouble.

\begin{prop}
	[{\cite[Prop.~4.2]{CarderiLM2016}}]
	Let $\mathbb{G}$ be an ergodic orbit full group on $(X,\lambda)$, such that $\tau_m^o$ is Polish on $\mathbb{G}$. Let $A \subseteq X$ be of finite measure. The function which maps $(B,T) \in \MAlg(A,\lambda_{\restriction A}) \times \mathbb{G}_A$ to the induced bijection $T_B \in \mathbb{G}_A$ is $\tau_m^o$-continuous.
\end{prop}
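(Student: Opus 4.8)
The statement to prove is the continuity of the map $(B,T) \mapsto T_B$ from $\MAlg(A,\lambda_{\restriction A}) \times \mathbb{G}_A$ to $\mathbb{G}_A$, where $T_B$ is the induced bijection. My plan is to follow the proof of \cite[Prop.~4.2]{CarderiLM2016} and simply check that every ingredient survives the passage to an infinite $\sigma$-finite measure; since $A$ has finite measure and convergence in measure only depends on the measure class, this should go through essentially verbatim. Concretely, fix a probability measure $\mu \in [\lambda]$, work with the $\tau_m^o$-compatible metric $d_{[\mathcal{R}_G]}$ introduced above, and use the subsequence characterization of $\tau_m^o$-convergence: a sequence converges iff every subsequence has a further subsequence along which $T_n(x) \to T(x)$ in $G\cdot x$ for $\mu$-almost every $x$. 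So I would take a sequence $(B_n, T_n) \to (B, T)$, pass to an arbitrary subsequence, then extract a further subsequence so that simultaneously $\mathds{1}_{B_n} \to \mathds{1}_B$ $\mu$-a.e.\ (by \Cref{prop: cv in measure equiv}, using that $d_{A,\lambda}$-convergence in $\MAlg(A,\lambda_{\restriction A})$ is convergence in measure) and $T_n(x) \to T(x)$ in the orbit for $\mu$-a.e.\ $x$.

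The heart of the argument is then a pointwise statement: for $\mu$-almost every $x \in A$, one has $(T_n)_{B_n}(x) \to T_B(x)$ in $G\cdot x$ (equivalently, eventually equal, since orbits are countable — wait, no, orbits need not be countable here, so one genuinely needs convergence in $d_x$). The key points are: (i) the return time function $n_{B}$ is stable under the perturbations, i.e.\ for a.e.\ $x$ with $n_B(x)$ well-defined, one has $n_{B_n}(x) = n_B(x)$ for $n$ large; this uses Halmos recurrence (so that for a.e.\ $x \in A$, $x$ returns to $B$ in finitely many $T$-steps, and the finitely many points $x, Tx, \dots, T^{n_B(x)}x$ have their $B_n/B$-membership stabilize a.e.\ because $\mathds{1}_{B_n} \to \mathds{1}_B$ a.e.\ and $T$ is measure-preserving, so $\mathds{1}_{B_n}\circ T^k \to \mathds{1}_B \circ T^k$ a.e.\ for each fixed $k$); and (ii) once the return time has stabilized to $n = n_B(x)$, one has $(T_n)_{B_n}(x) = T_n^{\,n}(x) \to T^n(x) = T_B(x)$, which follows by iterating the pointwise convergence $T_n(y) \to T(y)$ in $G\cdot y$ finitely many times, using continuity of the $G$-action (here the local finiteness / Polish model hypothesis is what makes the action genuinely continuous and the orbit metrics $d_x$ behave well, but since $\mathbb{G}$ is assumed to carry the Polish $\tau_m^o$, we are in that situation). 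Then $(iii) \Rightarrow (i)$ of the convergence characterization gives $(T_n)_{B_n} \to T_B$ for $\tau_m^o$ along the further subsequence, and since the original subsequence was arbitrary, the whole sequence converges.

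The one genuinely delicate point — and the main obstacle — is handling the set of $x \in A$ that are \emph{not} recurrent to $B$ under $T$, i.e.\ $x \in A \setminus \bigcup_{k\geq 1} T^{-k}(B)$, on which $T_B(x) = x$ by convention; one must check that $(T_n)_{B_n}(x) \to x$ there as well (or that this set has the right measure-theoretic behavior in the limit). In the probability setting of \cite{CarderiLM2016} this is controlled by Poincaré recurrence applied to the restriction $\mathbb{G}_A$; in our setting $A$ has finite measure and $T_{\restriction A}$ need not be conservative, but the induced-bijection convention on the non-recurrent part makes $T_B$ equal the identity there, and one checks that the non-recurrence region for $T$ and for the $T_n$ agree up to small measure as $n \to \infty$ — this again reduces to a.e.\ stabilization of the events $\{x \text{ returns to } B_n \text{ within } m \text{ steps}\}$ for each fixed $m$, combined with a uniform tail estimate $\mu(\{x \in A : x \text{ does not return to } B \text{ within } m \text{ steps}\}) \to 0$ as $m \to \infty$ (Halmos recurrence). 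Once this measure-estimate bookkeeping is in place, the rest is the routine diagonal-subsequence argument, and I would present it in that order: reduce to a.e.\ convergence via the subsequence criterion, prove return-time stabilization, then conclude the pointwise convergence of the induced bijections and hence $\tau_m^o$-continuity.
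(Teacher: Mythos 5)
Your overall strategy --- reduce to a.e.\ convergence via the subsequence criterion, stabilize the first return times, then conclude pointwise --- is reasonable and not far in spirit from the paper's argument, but your key step (i), the stabilization of return times, has a genuine gap as written. The return time of $x$ into $B_n$ under $T_n$ is determined by the memberships $\mathds{1}_{B_n}(T_n^k(x))$ for $k=1,2,\dots$, i.e.\ by the points of the \emph{$T_n$-orbit} of $x$, whereas your justification controls $\mathds{1}_{B_n}(T^k(x))$, the points of the $T$-orbit. Knowing that $T_n^k(x)\to T^k(x)$ in the orbit metric $d_x$ does not let you transfer membership in a measurable set from one point to the other: $\mathds{1}_{B}$ is in no way continuous on $G\cdot x$, and, as you yourself note, the orbit is in general uncountable, so the convergence is not eventual equality. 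What you actually need is $\lambda\bigl(T_n^{-k}(B)\,\Delta\,T^{-k}(B)\bigr)\to 0$ for each fixed $k$, i.e.\ weak convergence $T_n^k\to T^k$; this does not follow from a.e.\ orbital convergence of the cocycles alone, but it does hold under the hypotheses, because $\tau_m^o$ is assumed Polish and any Polish group topology on an ergodic full group refines $\tau_w$ (\Cref{UniquePoltopoonfg}). With that input, plus continuity of multiplication in $(\mathbb{G},\tau_m^o)$ to get $T_n^k\to T^k$ from $T_n\to T$, step (i) is repaired: $\mathds{1}_{B_n}\circ T_n^k\to\mathds{1}_B\circ T^k$ in measure, hence a.e.\ along a further subsequence, hence eventually equal a.e.\ since these are indicator functions. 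Your step (ii) has the same flavour of problem (``iterating the pointwise convergence'' composes $T_n$ with itself at the moving point $T_n(x)$, not at $T(x)$), and is likewise settled by continuity of the group operation rather than by iteration of pointwise limits.

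Two smaller points. First, your worry about the non-recurrent part is unfounded: since $\lambda(A)<\infty$ and $T\in\mathbb{G}_A$ is measure-preserving with support in $A$, $T$ is conservative, so by Poincar\'e recurrence a.e.\ point of $B$ returns to $B$, and $T_B$ fixes $A\setminus B$; along the subsequence where $\mathds{1}_{B_n}\to\mathds{1}_B$ a.e., a point of $A\setminus B$ eventually lies outside $B_n$ and is fixed by $(T_n)_{B_n}$ as well. Second, for comparison, the paper avoids the pointwise bookkeeping altogether: it partitions $X$ into the sets of first return time $k$, observes that this partition depends $(d_{X,\lambda},\tau_w)$-continuously on $(B,T)$ and that $T\mapsto T^k$ is continuous, and then bounds $d_{[\mathcal{R}_G]}(T_B,T'_{B'})$ directly by $3\varepsilon$ by splitting the defining integral over finitely many of these pieces, a tail of small measure, and the symmetric differences of the pieces. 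The same appeal to $\tau_m^o$ refining $\tau_w$ is what makes the relevant neighbourhood open there, so that ingredient is unavoidable on either route and should be cited explicitly in yours.
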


\begin{proof}
	For any $A \subseteq X$ of finite measure, $T \in \mathbb{G}_A$, and $n \in \mathbb{N}^\ast$ we define
	\[
	C_n(A,T) \coloneqq \left\{   x \in A \mid T^n(x) \in A   \right\}.
	\]
	Set also $B_0(A,T) \coloneqq X \setminus A$ and $B_n(A,T) \coloneqq  C_n(A,T) \setminus \bigcup_{m<n}C_m(A,T)$, so that $(B_n(A,T))_{n \in \N}$ is a partition of $X$ and notice that for any $x \in B_n(A,T)$ we have $T_A(x) = T^n(x)$. The important point is that $A$ has finite measure so any element of $\mathbb{G}_A$ is conservative, thus the induced bijections are well-defined, and are contained in $\mathbb{G}_A$. Note also that 
	\[
	\Psi : (A,T) \in (\MAlgf(X,\lambda) \times \mathbb{G}_A) \mapsto B_n(A,T) \in \MAlg(X,\lambda)
	\]
	depends $(d_{X,\lambda},\tau_w)$-continuously on $(A,T)$. We now fix $\varepsilon > 0$ and $(A,T) \in \MAlgf(X,\lambda) \times \mathbb{G}_{A}$. By virtue of $(B_n(A,T))_{n \in \N}$ being a partition of $X$, there exists $N > 0$ such that $\lambda(X \setminus \bigcup_{n<N}B_n(A,T)) < \varepsilon$. We now fix a probability measure $\mu \in [\lambda]$ and we let $\mathcal{U}$ be the set of couples $(A',T') \in \MAlg(A,\lambda_{\restriction A}) \times \mathbb{G}_A$ satisfying both following conditions:
	\begin{enumerate}[(1)]
    \item $\sum_{n=0}^{N} d_{[\mathcal{R}_G]} (T^n,T'^{\,n}) < \varepsilon $,
    \item $\sum_{n=0}^{N} \mu(B_n(A,T) \Delta B_n(A',T')) < \varepsilon $.
	\end{enumerate}
	By $\tau_w$-continuity of $T \mapsto T^n$, continuity of $\Psi$ and \Cref{lem: probability measure equivalent to infinite measure} (since we want a condition on $\mu$ and not on $\lambda$, even though the Borel sets in play have finite $\lambda$-measure), $\mathcal{U}$ is open. We have
	\begin{align*}
	d_{[\mathcal{R}_G]} (T_A,T'_{A'}) & = \int_X d_x(T_A(x),T'_{A'}(x)) d \mu(x) \\
	& \leqslant \sum_{n = 0}^{N} \int_{B_n(A,T) \Delta B_n(A',T') } d_x(T_A(x),T'_{A'}(x)) d \mu(x) \\
	& + \sum_{n = 0}^{N} \int_{B_n(A,T) \bigcap B_n(A',T') } d_x(T_A(x),T'_{A'}(x)) d \mu(x) \\
	& + \int_{X \setminus \bigcup_{n<N}B_n(A,T)} d_x(T_A(x),T'_{A'}(x)) d \mu(x).
	\end{align*}
	Without loss of generality it is possible to assume that $d_x$ is bounded by $1$, so by construction and (2) the first and third term are both less that $\varepsilon$. We finally have from (1):
	\begin{align*}
	d_{[\mathcal{R}_G]} (T_A,T'_{A'}) & < \sum_{n = 0}^{N} \int_{B_n(A,T) \bigcap B_n(A',T') } d_x(T_A(x),T'_{A'}(x)) d \mu(x) + 2 \varepsilon \\
	& =  \sum_{n = 0}^{N} \int_{B_n(A,T) \bigcap B_n(A',T') } d_x(T^n(x),{T'}^{\,n}(x)) d \mu(x) + 2 \varepsilon \\
	& < 3 \varepsilon.
	\end{align*}	 
	which concludes the proof.
\end{proof}

The previous proposition yields that the contraction path from \cite[Cor.~4.3]{CarderiLM2016} remains in $\mathbb{G}_A$, so we have the following.

\begin{prop}
	[{\cite[Cor.~4.3]{CarderiLM2016}}]
	{\label{prop: induced full groups are orbit full groups}}
	For any Borel subset $A$ of $X$ of positive finite measure, the full group $(\mathbb{G}_A,\tau_m^o)$ is contractible.
\end{prop}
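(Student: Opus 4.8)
The plan is to follow the contraction constructed by Carderi and Le Maître in \cite[Cor.~4.3]{CarderiLM2016}, using the preceding proposition to check that it never leaves $\mathbb{G}_A$. Since $\lambda(A) < \infty$, Halmos recurrence forces every element of $\mathbb{G}_A$ to be conservative, so for any Borel $B \subseteq A$ of positive measure and any $T \in \mathbb{G}_A$ the induced bijection $T_B$ of \Cref{def: induced bijection} is well-defined. Moreover $T_B$ is obtained by cutting and pasting the powers $(T^n)_{n \geqslant 1}$ over the level sets $\{x \in B \mid n_B(x) = n\}$ of the return time, together with $\id_X$ on $X \setminus B$, so $T_B \in \mathbb{G}$; and $\supp T_B \subseteq B \subseteq A$ gives $T_B \in \mathbb{G}_A$.

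Next I would fix a continuous shrinking path of subsets of $A$. Identifying $(A, \lambda_{\restriction A})$ with $[0, \lambda(A)]$ equipped with Lebesgue measure, let $A_t$ be the preimage of $[0, (1-t)\lambda(A)]$, so that $A_0 = A$, $A_1$ is $\lambda$-null, the family is decreasing, and $t \mapsto A_t$ is continuous for $d_{X,\lambda}$. Define
\[
H : \mathbb{G}_A \times [0,1] \longrightarrow \mathbb{G}_A, \qquad H(T,t) = T_{A_t}.
\]
Because any $T \in \mathbb{G}_A$ fixes $X \setminus A$ pointwise, it preserves $A$ as a set, so the return time to $A$ is identically $1$ and $H(T,0) = T_A = T$; and since $A_1$ is $\lambda$-null, $H(T,1) = \id_X$ for every $T$. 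Thus $H$ is a homotopy between $\id_{\mathbb{G}_A}$ and the constant map at $\id_X$, provided it is continuous.

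Joint $\tau_m^o$-continuity of $H$ is exactly what the preceding proposition delivers: the map $(B,T) \mapsto T_B$ on $\MAlg(A, \lambda_{\restriction A}) \times \mathbb{G}_A$ is $\tau_m^o$-continuous, and $(T,t) \mapsto (A_t, T)$ is continuous since $t \mapsto A_t$ is, so the composition $H$ is continuous. This shows that $(\mathbb{G}_A, \tau_m^o)$ is contractible. The only delicate point is the one settled in the first paragraph, namely that induced bijections of elements of $\mathbb{G}_A$ remain inside $\mathbb{G}_A$ rather than merely inside the full group of the ambient equivalence relation; this is precisely where the finite-measure hypothesis (forcing conservativity, hence making $T_B$ genuinely defined) and the cutting-and-pasting description of $T_B$ are both used.
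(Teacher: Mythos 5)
Your proof is correct and follows essentially the same route as the paper: after identifying $(A,\lambda_{\restriction A})$ with an interval, the paper contracts $\mathbb{G}_A$ via $(s,T)\mapsto T_{[s,1]}$, which is exactly your shrinking family of induced bijections, with joint continuity supplied by the preceding proposition and membership in $\mathbb{G}_A$ guaranteed by conservativity (finite measure of $A$) together with the cutting-and-pasting description of $T_B$. Your write-up just makes explicit the checks ($T_{A_0}=T$, $T_{A_1}=\id_X$, $T_B\in\mathbb{G}_A$) that the paper leaves implicit.
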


\begin{proof}
	As $A$ has finite measure, we may assume that it is $[0,1]$, endowed with the Lebesgue measure. The homotopy between the identity map and the constant map $T \mapsto \id_A$ is given by $(s,T) \in [0,1] \times \mathbb{G}_A \to  T_{[s,1]} \in \mathbb{G}_A$.
\end{proof}

By definition of $\mathbb{G}^P$, the following application is an homeomorphism:
\[
\begin{array}{ccc}
\mathbb{G}^P & \longrightarrow & \displaystyle{ \prod_{n \in \N} \mathbb{G}_{X_n}} \\
T & \longmapsto & \displaystyle{(T_{\restriction X_n})_{n \in \N}} .\\
\end{array}
\]
By \Cref{prop: induced full groups are orbit full groups} each $\mathbb{G}_{X_n}$ is contractible, and
therefore $\mathbb{G}^P$ is contractible as a product.
We then consider the following application:
\[
\begin{array}{rcccl}
\pi^P & : & \mathbb{G} & \longrightarrow & \mathcal{P} \\
& & T & \longmapsto & (T(X_n))_{n \in \N} .\\
\end{array}
\]
It is continuous for $\tau_w$, hence for $\tau_m^o$ by \Cref{UniquePoltopoonfg}, surjective by \Cref{cor:exchanging involutions in ergodic full group} and constant on the left $\mathbb{G}^P$-cosets. 
Danilenko proved the following.

\begin{lem}
	[{\cite[Lem.~2.3]{Danilenko1995}}]
	{\label{lem: space of partitions is contractible}}
	The Polish space $(\mathcal{P},r)$ is contractible.
\end{lem}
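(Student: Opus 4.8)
The plan is to exhibit an explicit jointly continuous homotopy $H \colon [0,1] \times \mathcal{P} \to \mathcal{P}$ with $H(0,\cdot)$ constant and $H(1,\cdot) = \mathrm{id}_{\mathcal{P}}$. Fix once and for all a reference partition $Q = (Q_n)_{n \in \N} \in \mathcal{P}$ and a Borel linear order on $X$ isomorphic to $(\R,<)$. The first ingredient is a \emph{continuous fractional selection}: for $A \in \MAlgf(X,\lambda)$ and $t \in [0,1]$ let $\Theta_t(A) \coloneqq \{x \in A \mid \lambda(A \cap (-\infty,x)) < t\,\lambda(A)\}$ be the initial segment of $A$ (for the fixed order) of measure $t\,\lambda(A)$. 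Since $\lambda$ is atomless, $x \mapsto \lambda(A \cap (-\infty,x))$ is continuous and pushes the restriction of $\lambda$ to $A$ onto Lebesgue measure on $[0,\lambda(A)]$; hence $\lambda(\Theta_t(A)) = t\,\lambda(A)$, $\Theta_0(A) = \emptyset$, $\Theta_1(A) = A$, $\Theta_t(A) \subseteq A$, and a short estimate comparing cumulative distributions (which converge uniformly when $A_k \to A$ in $\MAlgf(X,\lambda)$, since $\sup_x\lvert\lambda(A_k\cap(-\infty,x))-\lambda(A\cap(-\infty,x))\rvert \le \lambda(A_k \Delta A)$) shows that $(t,A) \mapsto \Theta_t(A)$ is jointly continuous.

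\textbf{The homotopy.} The key observation is that the matrix $(\lambda(X_m \cap Q_n))_{m,n}$ attached to $P = (X_n) \in \mathcal{P}$ has all row and column sums equal to $1$, so multiplying it by $t$ gives a ``sub-coupling'' with entries bounded by $\lambda(X_m \cap Q_n)$. Realize it by
\[
E_t^P \coloneqq \bigsqcup_{m,n \in \N} \Theta_t(X_m \cap Q_n) \subseteq X,
\]
so that $\lambda(E_t^P \cap X_m) = \sum_n t\,\lambda(X_m \cap Q_n) = t$ for every $m$, and likewise $\lambda(E_t^P \cap Q_n) = t$ for every $n$, while $E_0^P = \emptyset$ and $E_1^P = X$. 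Then set $H(t,P) = (X_n^t)_{n \in \N}$ where
\[
X_n^t \coloneqq (X_n \cap E_t^P) \sqcup (Q_n \setminus E_t^P).
\]
One checks directly that $(X_n^t)_n$ is a partition of $X$ (the $E_t^P$-parts partition $E_t^P$, the complementary parts partition $X \setminus E_t^P$), that $\lambda(X_n^t) = t + (1-t) = 1$, so $H(t,P) \in \mathcal{P}$, and that $H(0,P) = Q$ for all $P$ whereas $H(1,P) = P$. Thus $H$ is a homotopy from the constant map with value $Q$ to $\mathrm{id}_{\mathcal{P}}$, which is exactly contractibility.

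\textbf{Continuity.} Since the topology on $\mathcal{P}$ is the product topology on $\MAlgf(X,\lambda)^{\N}$, it suffices to show that for each fixed $n$ the map $(t,P) \mapsto X_n^t$ is continuous into $\MAlgf(X,\lambda)$. Using $\Theta_t(X_m \cap Q_{m'}) \subseteq X_m \cap Q_{m'}$ one gets $X_n \cap E_t^P = \bigsqcup_{m'} \Theta_t(X_n \cap Q_{m'})$ and $Q_n \cap E_t^P = \bigsqcup_{m} \Theta_t(X_m \cap Q_n)$; each summand depends continuously on $(t,P)$ by the selection lemma, and near a given $(t_0,P_0)$ the tails are uniformly small because, e.g., $\sum_{m' > N}\lambda(\Theta_t(X_n \cap Q_{m'})) = t\,\lambda\bigl(X_n \setminus \bigcup_{m' \le N} Q_{m'}\bigr) \to 0$ as $N \to \infty$ (using $\bigcup_n Q_n = X$ and $\lambda(X_n) < \infty$), with the analogous bound for $Q_n \cap E_t^P$. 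A standard $\varepsilon/3$ argument then yields continuity of $X_n \cap E_t^P$ and of $Q_n \cap E_t^P$, hence of $X_n^t = (X_n \cap E_t^P) \sqcup (Q_n \setminus (Q_n \cap E_t^P))$.

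\textbf{Main obstacle.} The conceptual content is the single clean idea above — the rescaled coupling $t\,(\lambda(X_m \cap Q_n))_{m,n}$ produces a set $E_t^P$ that simultaneously splits every piece of $P$ \emph{and} every piece of $Q$ into a $t$-part and a $(1-t)$-part, which is precisely what forces $H(t,P)$ to remain in $\mathcal{P}$. The remaining work is purely bookkeeping: setting up the continuity estimate for $\Theta_t$ and controlling the infinite unions defining $E_t^P$ uniformly enough to conclude joint continuity of $H$; I expect this to be the only place requiring care.
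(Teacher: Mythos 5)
The paper does not actually prove this lemma: it is quoted verbatim from Danilenko, so there is no in-paper argument to compare against. Your construction is a correct, self-contained proof, and its engine is exactly the right one. The set $E_t^P=\bigsqcup_{m,n}\Theta_t(X_m\cap Q_n)$ cuts every piece of $P$ \emph{and} every piece of $Q$ simultaneously into a $t$-part and a $(1-t)$-part, so that $X_n^t=(X_n\cap E_t^P)\sqcup(Q_n\setminus E_t^P)$ still has measure one and the family $(X_n^t)_n$ is still a partition; the endpoint identities $H(0,\cdot)\equiv Q$ and $H(1,\cdot)=\id_{\mathcal{P}}$ (up to null sets, which is all that matters in $\MAlg$) are immediate; and the continuity bookkeeping is sound, since $\sup_x\lvert\lambda(A_k\cap(-\infty,x))-\lambda(A\cap(-\infty,x))\rvert\leqslant\lambda(A_k\Delta A)$ really does give joint continuity of $(t,A)\mapsto\Theta_t(A)$, and your tail bounds $\sum_{m'>N}\lambda(\Theta_t(X_n\cap Q_{m'}))=t\,\lambda(X_n\setminus\bigcup_{m'\leqslant N}Q_{m'})$ are exactly what is needed for the $\varepsilon/3$ argument coordinate by coordinate.

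One caveat is worth recording explicitly. Your continuity proof is for the product topology on $\MAlgf(X,\lambda)^{\N}$, which is what the paper \emph{declares} the topology of $(\mathcal{P},r)$ to be. Taken literally, however, the extended metric $r(P_1,P_2)=\sum_n\lambda(X_n^1\Delta X_n^2)$ induces a strictly finer, non-separable topology, and your homotopy is genuinely not $r$-continuous: for fixed $P$ and $t_0<t$ one computes $r(H(t,P),H(t_0,P))=2(t-t_0)\sum_n\bigl(1-\lambda(X_n\cap Q_n)\bigr)$, which is $+\infty$ for a typical $P$ (for instance whenever $X_n\cap Q_n=\emptyset$ for all $n$). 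Since $(\mathcal{P},r)$ with the literal $r$-topology would not even be Polish, the product topology is the only sensible reading of the statement, and it is also the one used downstream (the maps $\pi^P$ and $\psi$ are continuous for it); so this is a defect of the paper's phrasing rather than of your argument. Still, you should state at the outset that the topology you work with is the product topology, rather than leaning on the paper's identification of the two.
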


We can now finish the proof of \Cref{prop: ergodic orbit fg is contractible} just like in \cite{Danilenko1995}.
For every partition $P' = (Y_n)_{n \in \N} \in \mathcal{P}$, by using \Cref{cor:exchanging involutions in ergodic full group} we define for any $n \in \N$, for any $x \in Y_n$:
	\[
	\psi(P')(x) = U_n(x)
	\]
	where $U_n$ is the involution in $\mathbb{G}_{Y_n \Delta X_n}$ sending $Y_n$ to $X_n$. By continuity of $(X_n,Y_n) \mapsto U_n$ (see \Cref{cor:exchanging involutions in ergodic full group}), the injective map $\psi : P' \in \mathcal{P} \mapsto \psi(P') \in \mathbb{G}$ is $\tau_u$-continuous, hence it is $\tau_m^o$-continuous.  Moreover, $\pi^P \circ \psi = \id_{\mathcal{P}}$, so $\pi^P$ has a right inverse, hence it is a continuous projection map from $\mathbb{G}$ onto $\mathcal{P}$. From this we get two things. As $\pi^P$ is surjective and constant on the left $\mathbb{G}^P$-cosets, we have the homeomorphism $\mathbb{G}/\mathbb{G}^P \cong \mathcal{P}$ (see \textit{e.g.} \cite[Thm.~22.2]{Munkres2000}). Moreover, $\pi^P \circ \psi = \id_{\mathcal{P}}$ means that $\psi$ is a global section for the fibre bundle $(\mathbb{G},\pi^P,\mathbb{G}/\mathbb{G}^P,\mathbb{G}^P)$ (see \cite[\textsection~7.4]{Steenrod1951}), which implies (by \cite[\textsection~8.3]{Steenrod1951}) that we have the homeomorphism 
	\[
	\mathbb{G} \cong  \faktor{\mathbb{G}}{\mathbb{G}^P} \times \mathbb{G}^P \cong \mathcal{P} \times \mathbb{G}^P.
	\]
	Thus $\mathbb{G}$ is contractible as a product of contractible spaces, which ends the proof of \Cref{prop: ergodic orbit fg is contractible}.\\
	
We conclude this section by the following remark: although the presence of non-conservative elements makes the proof more complicated for the contractibility of $\mathbb{G}$, there is no issue for $\mathbb{G}_f$. In particular, as \cite{Keane1970} shows continuity of $(B,T) \mapsto T_B$ for the uniform topology $\tau_u$, which coincides with $\tau_{u,f}$ (see \Cref{def: uniform finite topology}) on $\mathbb{G}_f$, we get the following.

\begin{prop}
	Let $\mathbb{G} \leqslant \Aut(X,\lambda)$ be the full group of a countable equivalence relation. Then $(\mathbb{G}_f,\tau_{u,f})$ is contractible.
\end{prop}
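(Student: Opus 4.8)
The plan is to exhibit an explicit contraction of $(\mathbb{G}_f,\tau_{u,f})$ to $\{\id_X\}$, in the spirit of Keane's contraction of $\Aut([0,1],\mathrm{Leb})$ by induced transformations; the point that makes this work is that every element of $\mathbb{G}_f$ has a support of finite measure and is therefore conservative, so induced transformations are always available. First I would fix once and for all a measure-preserving isomorphism $\phi\colon(X,\lambda)\to([0,\infty),\mathrm{Leb})$ (possible since all standard $\sigma$-finite spaces are isomorphic), and for a Borel set $A$ of finite measure and $s\in[0,1]$ set $A_s\coloneqq\phi^{-1}\bigl(\phi(A)\cap[0,c_s(A)]\bigr)$, where $c_s(A)\coloneqq\inf\{c\geqslant 0\colon\mathrm{Leb}(\phi(A)\cap[0,c])\geqslant s\lambda(A)\}$. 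Since $c\mapsto\mathrm{Leb}(\phi(A)\cap[0,c])$ is continuous and nondecreasing from $0$ to $\lambda(A)$, this gives a subset $A_s\subseteq A$, defined up to a null set, with $\lambda(A_s)=s\lambda(A)$, $A_0=\emptyset$ and $A_1=A$. I would then define
\[
H\colon[0,1]\times\mathbb{G}_f\longrightarrow\mathbb{G}_f,\qquad H(s,T)\coloneqq T_{(\supp T)_s},
\]
the transformation induced by $T$ on $(\supp T)_s$.

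Next I would check that $H$ is well defined and contracts $\mathbb{G}_f$. Since $\lambda(\supp T)<\infty$ and $T(\supp T)=\supp T$, Halmos' Recurrence Theorem makes $T$ conservative, so $T_B$ is defined for every Borel $B$ (cf.\ \Cref{def: induced bijection}); moreover $T_B$ is obtained by cutting and pasting powers of $T$ and the identity over a countable partition, hence lies in the full group $\mathbb{G}$, and $\supp T_B\subseteq B$, so $H(s,T)\in\mathbb{G}_f$. At $s=1$ the first return time to $\supp T$ is identically $1$ (again because $T(\supp T)=\supp T$), so $H(1,T)=T$; at $s=0$ we have $(\supp T)_0=\emptyset$, so $H(0,T)=\id_X$. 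Hence $H$ is a homotopy between $\id_{\mathbb{G}_f}$ and the constant map $T\mapsto\id_X$, witnessing contractibility, provided $H$ is continuous.

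The continuity of $H$ is where the work lies, and I would factor it as $(s,T)\mapsto\bigl((\supp T)_s,T\bigr)\mapsto T_{(\supp T)_s}$. The map $T\mapsto\supp T$ is $1$-Lipschitz from $(\mathbb{G}_f,d_{u,f})$ to $(\MAlgf(X,\lambda),d_{X,\lambda})$, because $\supp S\,\Delta\,\supp T\subseteq\{x\colon S(x)\neq T(x)\}$. The map $(s,A)\mapsto A_s$ is continuous from $[0,1]\times\MAlgf(X,\lambda)$ to $\MAlgf(X,\lambda)$: writing $F_A(c)\coloneqq\mathrm{Leb}(\phi(A)\cap[0,c])$ one has $\sup_c|F_A(c)-F_{A'}(c)|\leqslant\lambda(A\,\Delta\,A')$, and a short computation with these distribution functions bounds $\lambda(A_s\,\Delta\,A'_{s'})$ in terms of $\lambda(A\,\Delta\,A')$, $|\lambda(A)-\lambda(A')|$ and $|s-s'|$ — the subtlety being that although $c_s(A)$ may jump when $\phi(A)$ has a gap, the set $A_s$ does not. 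Finally, $(B,T)\mapsto T_B$ is continuous from $(\MAlgf(X,\lambda),d_{X,\lambda})\times(\mathbb{G}_f,\tau_{u,f})$ to $(\mathbb{G}_f,\tau_{u,f})$: Keane proves this for $\Aut([0,1],\mathrm{Leb})$ (\cite{Keane1970}), and it transfers here because $T_B$ only sees the conservative finite-measure set $B\cap\supp T$; to pass from the $\tau_u$-continuity Keane furnishes to $\tau_{u,f}$-continuity I would use that $\{(T')_{B'}\neq T_B\}\subseteq\supp T\cup(\supp T'\setminus\supp T)$ with $\lambda(\supp T'\setminus\supp T)\leqslant d_{u,f}(T',T)$, so that $\tau_u$-convergence plus control of this error set forces $\lambda$-convergence of the symmetric difference. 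Composing the three pieces yields continuity of $H$, completing the proof.

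I expect the main obstacle to be precisely this last continuity step: the topologies $\tau_u$ and $\tau_{u,f}$ do not coincide on all of $\mathbb{G}_f$ (only on each $\mathbb{G}_A$ with $\lambda(A)<\infty$), so one must be careful that, although the supports of the transformations $H(s,T)$ are not all contained in one fixed finite-measure set, each is contained in $\supp T$ — which is exactly what lets Keane's local finite-measure estimates control the global $d_{u,f}$-distance. (Incidentally, the argument uses nothing about $\mathbb{G}$ beyond its being a full group; the hypothesis that it arises from a countable equivalence relation serves only to make $(\mathbb{G}_f,\tau_{u,f})$ Polish, cf.\ \Cref{prop: Gf from countable eqrel is Polish}, so that contractibility is asserted of a genuinely well-behaved space.)
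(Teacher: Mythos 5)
Your proof is correct and follows essentially the same route as the paper, which simply invokes Keane's homotopy by induced transformations together with the continuity of $(B,T)\mapsto T_B$, the key point in both being that finitely supported elements are conservative so induced bijections always exist. Your write-up is in fact more careful than the paper's one-line remark: you rightly note that $\tau_u$ and $\tau_{u,f}$ coincide only on each $\mathbb{G}_A$ with $\lambda(A)<\infty$ rather than on all of $\mathbb{G}_f$, and that the homotopy is nevertheless $\tau_{u,f}$-continuous because $\supp H(s,T)\subseteq \supp T$.
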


\subsection{Density and genericity in orbit full groups}

The goal of this section is to obtain density and category (in the sense of Baire) theorems for certain classes of measure-preserving bijections in full groups of $\Aut(X,\lambda)$. 

In \cite[Thm.~8.1]{Krengel1967} the author proved that the conservative elements form a dense $G_\delta$ set for both the uniform and the weak topology. For the rest of this section, $\mathrm{APER}$ and $\mathrm{ERG}$ will denote the set of aperiodic and ergodic elements of $\Aut(X,\lambda)$, respectively.
We have the following results for the weak and uniform topologies on $\Aut(X,\lambda)$, exhibiting different behaviours for the different topologies on $\Aut(X,\lambda)$. 

\begin{thm}
	[{\cite[Thm.~2.2 and Thm.~3.1]{Sachdeva1971}}]
	$\mathrm{ERG}$ is a dense $G_\delta$ set in $(\Aut(X,\lambda),\tau_w)$, but is nowhere dense in $(\Aut(X,\lambda),\tau_u)$.
\end{thm}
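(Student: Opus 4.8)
The statement has two independent halves, and the plan is to handle them by quite different means: the $\tau_w$-half is classical and I would cite it, isolating where the genuine difficulty lies, while the $\tau_u$-half has a short self-contained argument that I would give in full.

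For the $\tau_w$-half I would first establish density of $\mathrm{ERG}$ by invoking the $\tau_w$-density of the conjugacy class of a fixed ergodic transformation $T_0 \in \Aut(X,\lambda)$ (such maps exist, e.g.\ ergodic infinite measure-preserving rank-one transformations): by \cite{ChoksiKakutani1979} the set $\{S T_0 S^{-1} : S \in \Aut(X,\lambda)\}$ is $\tau_w$-dense, and since conjugates of ergodic transformations are ergodic, $\mathrm{ERG}$ contains a $\tau_w$-dense set. For the $G_\delta$ part I would follow the Halmos-type scheme (\cite{Halmos1956}), adapted to infinite measure as in \cite{Sachdeva1971}: one expresses ergodicity as a countable intersection of $\tau_w$-Borel conditions built from the $\tau_w$-continuity of $T \mapsto T^n$ and of $T \mapsto T^n A$ for $A$ of finite measure (the latter being the very definition of $\tau_w$). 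In the probability case this works because the mean ergodic theorem makes the correlations $\frac1N\sum_{k<N}\mu(T^kA\cap B)$ converge for \emph{every} $T$, turning ergodicity into a countable intersection of level sets of everywhere-convergent sequences of continuous functions, which is $G_\delta$. In infinite measure one works instead over the conservative transformations --- already a $\tau_w$-$G_\delta$ set by \cite{Krengel1967} --- and uses the Hopf ratio ergodic theorem to make the relevant Birkhoff ratios converge for all conservative $T$, ergodicity being equivalent to these limits being constant (equal to $\lambda(A_j)/\lambda(A_i)$) along a countable dense family $(A_i)$; I would refer to \cite{Sachdeva1971} for the delicate bookkeeping, which is the real obstacle in this half (the point being that no countable family of finite-measure sets contains a positive-measure subset of every possible invariant set, so a naive ``$\exists n:\lambda(T^nA_i\cap A_j)>0$''-type description does not suffice and one is forced through the ratio ergodic theorem).

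For the $\tau_u$-half I would argue directly. Let $\mathcal{O} := \{T \in \Aut(X,\lambda) : \lambda(X\setminus\supp T) > 0\}$ be the set of measure-preserving bijections with a positive-measure set of fixed points, and fix an exhaustion $C_0\subseteq C_1\subseteq\cdots$ of $X$ by sets of finite measure with $\bigcup_n C_n = X$. Then $\mathcal{O} = \bigcup_n\{T : \lambda(\supp T\cap C_n) < \lambda(C_n)\}$, and each set on the right is $\tau_u$-open, being the preimage of $[0,\lambda(C_n))$ under the $\tau_u$-continuous map $T\mapsto\lambda(\supp T\cap C_n) = d_{u,C_n}(T,\id_X)$; hence $\mathcal{O}$ is $\tau_u$-open. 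It is $\tau_u$-dense, since it contains $\Autf(X,\lambda)$ (any finitely supported $T$ has $\lambda(X\setminus\supp T)=+\infty$), which is $\tau_u$-dense in $\Aut(X,\lambda)$ by \Cref{rem: Autf uniformly dense in Aut}. Finally $\mathcal{O}$ contains no ergodic transformation: if $T\in\mathcal{O}$ and $T\neq\id_X$ then $X\setminus\supp T$ is a $T$-invariant set that is neither null nor conull, so $T$ is not ergodic, while $\id_X$ is not ergodic because the space is atomless. Thus $\mathrm{ERG}\subseteq\Aut(X,\lambda)\setminus\mathcal{O}$, a $\tau_u$-closed set with empty $\tau_u$-interior (the complement of a dense open set), whence $\overline{\mathrm{ERG}}^{\,\tau_u}$ has empty $\tau_u$-interior, i.e.\ $\mathrm{ERG}$ is $\tau_u$-nowhere dense. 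As noted, the only real difficulty in the whole proof is the $G_\delta$ property (equivalently, comeagerness) of $\mathrm{ERG}$ for $\tau_w$, for which I would lean on the infinite ergodic theory cited above.
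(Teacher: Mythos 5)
The paper does not prove this statement at all: it is quoted verbatim as a known result of Sachdeva, with the citation \cite{Sachdeva1971} standing in for the entire proof. Your proposal is therefore more detailed than the paper on both halves, and the half you prove in full is correct. Your $\tau_u$-argument is sound: the set $\mathcal{O}$ of bijections with a non-null fixed-point set is indeed $\tau_u$-open (each $T\mapsto d_{u,C_n}(T,\id_X)=\lambda(\supp T\cap C_n)$ is $1$-Lipschitz for the pseudometric $d_{u,C_n}$, hence $\tau_u$-continuous, and your union over an exhaustion correctly captures $\lambda(X\setminus\supp T)>0$); it is $\tau_u$-dense because it contains $\Autf(X,\lambda)$, whose density is exactly \Cref{rem: Autf uniformly dense in Aut}; and it misses $\mathrm{ERG}$ because $\supp T$ is $T$-invariant, so a non-identity $T\in\mathcal{O}$ has an invariant set that is neither null nor conull. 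This gives a clean, self-contained proof of nowhere-density that the paper does not supply. For the $\tau_w$-half you ultimately defer to \cite{Sachdeva1971} and \cite{ChoksiKakutani1979} for the density of the conjugacy class of an ergodic transformation and for the $G_\delta$ bookkeeping via the Hopf ratio ergodic theorem; since the paper itself defers the entire statement to the same source, this is on par with the paper, and you correctly identify the $G_\delta$ property as the only genuinely delicate point. No gap, just be aware that the weak density of the conjugacy class of an ergodic transformation in all of $\Aut(X,\lambda)$ (not merely $\tau_u$-density within $\mathrm{APER}$, which is what the paper's \Cref{thm: conjugacy class of an aperiodic is uniformly dense in APER} records) is the precise statement you are importing from the literature, and you should cite it as such.
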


We aim to localize results of this flavour to full groups endowed with their relevant topologies.
From \Cref{rem: periodic dense in conservative} and \Cref{rem: Autf uniformly dense in Aut}, we get the following classical result. One can also consult \cite[Prop.~494C(c)]{FremlinVol4}.

\begin{prop}
	{\label{prop: density of periodic}}
	Let $\mathbb{G} \leqslant \Aut(X,\lambda)$ be a full group. Then the set of periodic bijections in $\mathbb{G}_f$ is dense for the uniform topology in $\mathbb{G}$. 
\end{prop}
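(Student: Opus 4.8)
The plan is to combine the two facts cited just before the statement: \Cref{rem: periodic dense in conservative}, which says every conservative bijection in $\Aut(X,\lambda)$ is $\tau_u$-approximated by a periodic one lying in its own full group $[T]$, and \Cref{rem: Autf uniformly dense in Aut}, which says $\mathbb{G}_f$ is $\tau_u$-dense in $\mathbb{G}$. Since $\tau_u$ is defined (see \Cref{def: uniform topo} and \Cref{prop: uniform topology}) by the pseudometrics $d_{u,C}$, and periodicity and finiteness of support are properties that interact well with cutting and pasting, the argument should reduce to a two-step approximation: first approximate an arbitrary $T \in \mathbb{G}$ by an element of $\mathbb{G}_f$, then approximate that finitely supported element by a periodic one still in $\mathbb{G}_f$.

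Here is how I would carry it out. Fix $T \in \mathbb{G}$, a Borel set $C \subseteq X$ of finite measure, and $\varepsilon > 0$; it suffices to find a periodic $S \in \mathbb{G}_f$ with $d_{u,C}(T,S) \leqslant \varepsilon$, i.e.\ $\lambda(\{x \in C \mid T(x) \neq S(x)\}) \leqslant \varepsilon$. By \Cref{rem: Autf uniformly dense in Aut} there is $T' \in \mathbb{G}_f$ with $\lambda(\{x \in C \mid T(x) \neq T'(x)\}) \leqslant \varepsilon/2$. Now $T'$ has support of finite measure, hence $T'$ is conservative (its support, being of finite measure, cannot contain a dissipative part on which the measure would blow up — more simply, restrict to $\supp T'$ and apply Halmos recurrence). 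Then \Cref{rem: periodic dense in conservative} applied to $T'$ (with the uniform bound $\varepsilon/2$, using the probability measure picture, or directly by noting $\supp T'$ has finite $\lambda$-measure so the $\lambda$-uniform statement of that remark applies verbatim) yields a periodic $S \in [T'] \subseteq \mathbb{G}$ with $\lambda(\{x \in X \mid T'(x) \neq S(x)\}) \leqslant \varepsilon/2$. Since $\supp S \subseteq \supp T'$ has finite measure, $S \in \mathbb{G}_f$; and $\{x \in C \mid T(x) \neq S(x)\} \subseteq \{x \in C \mid T(x) \neq T'(x)\} \cup \{x \in X \mid T'(x) \neq S(x)\}$, so $d_{u,C}(T,S) \leqslant \varepsilon$. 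As $C$ and $\varepsilon$ were arbitrary, periodic elements of $\mathbb{G}_f$ are $\tau_u$-dense in $\mathbb{G}$.

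The only point requiring a little care — and the step I expect to be the mild obstacle — is checking that \Cref{rem: periodic dense in conservative}, which is phrased for a conservative $T \in \Aut(X,\lambda)$ with a $\lambda$-uniform bound, applies when $T'$ lies in the full group $\mathbb{G}$ and one wants the approximant $S$ to remain in $\mathbb{G}$. This is immediate because the construction in \Cref{rem: periodic dense in conservative} (coming from the proof of \Cref{conservinf} via an induced bijection $T'_C$ and the relation $T' = T'(T'_C)^{-1} T'_C$, then \Cref{lem:factor of D and Cf}) produces bijections in $[T']$, and $[T'] \subseteq \mathbb{G}$ since $\mathbb{G}$ is a full group containing $T'$; more directly, $S$ is obtained by cutting and pasting powers of $T'$, hence lies in $\mathbb{G}$. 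Thus no ergodicity of $\mathbb{G}$ is even needed here, only that $\mathbb{G}$ is a full group, which matches the statement. One should also remark that when $\supp T'$ has infinite measure this argument would fail, but that case does not arise since $T' \in \mathbb{G}_f$.
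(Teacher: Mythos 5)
Your proof is correct and follows exactly the route the paper intends: the paper's own ``proof'' is just the citation of \Cref{rem: periodic dense in conservative} and \Cref{rem: Autf uniformly dense in Aut}, and you have filled in precisely the right details (finitely supported implies conservative by recurrence on the finite-measure support, the periodic approximant lies in $[T']\subseteq\mathbb{G}$ and has support inside $\supp T'$, and the triangle inequality for $d_{u,C}$).
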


In particular the previous results yield $\tau$-density when the considered full group is ergodic and has a separable topology $\tau$, by \Cref{Thm: any Polish topology is coarser than the uniform}. 
Results on aperiodic elements demand more work, but a lot has already been done for $\Aut(X,\lambda)$. We need a few adaptations to obtain result for our general Polish topologies on ergodic full groups. We have the following.

\begin{thm}
	[{\cite[Thm.~6]{ChoksiKakutani1979}}]
	{\label{thm: choksi kakutani thm 6}}
	Let $\mathbb{G}\leqslant \Aut(X,\lambda)$ be an ergodic full group.
	Let $T$ be in $\mathrm{ERG}$, and $S$ be in $\mathrm{APER}$. For any Borel subset $E \subseteq X$ of finite measure, there exists $U \in \mathbb{G}_f$ such that $T = USU\inv$ a.e. on $E$.
\end{thm}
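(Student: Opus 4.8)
The plan is to reduce the statement to the known result of Choksi and Kakutani for $\Aut(X,\lambda)$ and then upgrade the conjugating bijection so that it lies in the full group $\mathbb{G}$ with finite support. First, I would invoke \cite[Thm.~6]{ChoksiKakutani1979} in the ambient group $\Aut(X,\lambda)$: since $T \in \mathrm{ERG}$ and $S \in \mathrm{APER}$, there exists $U_0 \in \Aut(X,\lambda)$ (in fact one can take $\lambda(\supp U_0)<\infty$, or at least $U_0$ can be arranged to be the identity off a finite-measure set by modifying it off $E$ and using $\sigma$-finiteness) such that $T = U_0 S U_0\inv$ a.e.\ on $E$. The point is that the equation $T = USU\inv$ a.e.\ on $E$ only constrains the behaviour of $U$ on $E \cup S\inv(E)$ together with the requirement that $U$ intertwine $T$ and $S$ along the relevant finite piece of the dynamics; in particular only countably many $S$- and $T$-translates of $E$ matter. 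So I would first replace $E$ by $E' := \bigcup_{n\in\Z} S^n(E)$, which is $S$-invariant but typically of infinite measure — instead, the honest move is to note that what we need is a partial isomorphism $\phi$ between $E\cup S\inv(E)$ and $T(E)\cup E$ (up to the obvious identifications dictated by $T\phi = \phi S$), which can be taken with domain and range of finite measure.

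Next, the key step: the partial isomorphism just described realizes an isomorphism between two finite-measure pieces of the orbit equivalence relations of $S$ and $T$, but since both $S,T$ lie in the \emph{ergodic full group} $\mathbb{G}$, I want this partial isomorphism to be a partial isomorphism \emph{of $\mathbb{G}$} in the sense of the pseudo-full group $[[\mathbb{G}]]$. Here is where I would use the machinery of \Cref{prop:pseudo full group exchange subsets} and its corollaries: I would build $U$ piece by piece along the finitely many levels of a Rokhlin-type tower for $S$ over $E$ (only finitely many levels are needed to pin down $T=USU\inv$ a.e.\ on $E$ up to an error one then cleans up using $\sigma$-finiteness and the fact that $E$ has finite measure), at each level transporting the relevant subset of $E$ to the corresponding subset of $T$'s dynamics by an element of $\mathbb{G}$ supplied by \Cref{cor:ergodic full group moves elements of same (co)-measure around} or \Cref{cor:exchanging involutions in ergodic full group}. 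Cutting and pasting these finitely (or countably) many elements of $\mathbb{G}$ over a finite-measure region, and extending by the identity elsewhere, yields $U\in\mathbb{G}$ with $\lambda(\supp U)<\infty$, i.e.\ $U\in\mathbb{G}_f$, and by construction $T=USU\inv$ a.e.\ on $E$.

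The main obstacle I anticipate is bookkeeping the support: one must guarantee that the conjugating map $U$, while forced to agree with the Choksi–Kakutani conjugacy on the finite-measure set governing $E$, can simultaneously be taken to be the identity outside a set of finite measure, and to lie in $\mathbb{G}$ rather than just in $\Aut(X,\lambda)$. Ergodicity of $\mathbb{G}$ is exactly what makes both requirements compatible, via \Cref{prop:pseudo full group exchange subsets}: any two Borel sets of equal finite measure are exchanged by an element of $[[\mathbb{G}]]$, so all the local transport maps needed in the Rokhlin-tower construction can be chosen inside $\mathbb{G}$ with supports of finite measure, and a countable cut-and-paste of such maps keeps the total support of finite measure because the tower sits over the finite-measure set $E$. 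A secondary subtlety is that, unlike the probability case, one cannot appeal to \Cref{rem:elements of pseudo full groups are more than just restrictions} being vacuous: partial isomorphisms of $\mathbb{G}$ need not extend to global elements with conull range. But since we are only asked for agreement on the finite-measure set $E$ (not a global conjugacy), this causes no trouble — we extend by the identity off the finite-measure support and are done.
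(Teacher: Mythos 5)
Your proposal is correct and follows essentially the same route as the paper: the paper likewise opens up the Choksi--Kakutani construction, extracts the two families of tower levels $(E_{k,i})$ and $(F_{k,i})$ of pairwise equal measure sitting over the finite-measure set $E$, and observes that the bijection exchanging them can be realized inside $\mathbb{G}_f$ via \Cref{cor:exchanging involutions in ergodic full group} (equivalently \Cref{prop:pseudo full group exchange subsets}), exactly as in your second and third paragraphs. The false start in your first paragraph (passing to $\bigcup_{n\in\Z}S^n(E)$, or first producing $U_0\in\Aut(X,\lambda)$) is unnecessary but harmless, since you correctly discard it.
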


\begin{proof}
	Choksi and Kakutani prove that there exists two families of pairwise disjoint Borel sets $(E_{k,i})$ and $(F_{k,i})$ (with $k \in \N$ and $i\in \{ 1, \ldots , k+1 \}$) satisfying the following:
	\begin{itemize}\setlength\itemsep{0em}
	\item for any $k \in \N$, for all $i \in \{ 1, \ldots , k \}$, $T(E_{k,i}) = E_{k,i+1}$ and $S(F_{k,i}) = F_{k,i+1}$;
	\item for all $k \in \N, i \in \{ 1, \ldots , k+1 \}$, $\lambda(E_{k,i}) = \lambda(F_{k,i})$;
	\item $\bigcup_{k \in \N} \bigcup_{i = 1}^k E_{k,i} = E$.
	\end{itemize}
	Their conclusion is that the conjugate of $S$ by the measure-preserving bijection $U \in \Aut(X,\lambda)$ exchanging the $E_{k,i}$ and the $F_{k,i}$ is equal to $T$ a.e. on E, but thanks to \Cref{cor:exchanging involutions in ergodic full group}, the conclusion holds even if we ask $U$ to be in $\mathbb{G}_f$.
\end{proof}

Their main result \cite[Thm.~7]{ChoksiKakutani1979} can also be extended to our context of ergodic full groups:

\begin{thm}
	[{\cite[Thm.~7]{ChoksiKakutani1979}}]
	{\label{thm: conjugacy class of an aperiodic is uniformly dense in APER}}
	Let $\mathbb{G}\leqslant \Aut(X,\lambda)$ be an ergodic full group.
	If $S$ is a fixed element in $\mathrm{APER} \cap \mathbb{G}$, the set of $\mathbb{G}_f$-conjugates of $S$, \textit{i.e.} $\{ USU\inv \mid U \in \mathbb{G}_f \}$ is $\tau_u$-dense in $\mathrm{APER} \cap \mathbb{G}$.
\end{thm}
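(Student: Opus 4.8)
The statement is an exact analogue of \cite[Thm.~7]{ChoksiKakutani1979}, and their proof structure should carry over almost verbatim once we replace the ambient group $\Aut(X,\lambda)$ by the ergodic full group $\mathbb{G}$ and the conjugating bijections by elements of $\mathbb{G}_f$. So the plan is to retrace that argument and point out where \Cref{cor:exchanging involutions in ergodic full group} and \Cref{thm: choksi kakutani thm 6} are used to keep everything inside $\mathbb{G}_f$. Fix $S \in \mathrm{APER} \cap \mathbb{G}$. We want to show that for any $T \in \mathrm{APER} \cap \mathbb{G}$ and any $\varepsilon > 0$, there is $U \in \mathbb{G}_f$ with $d_\mu(USU\inv, T) < \varepsilon$, where $\mu$ is a fixed probability measure in $[\lambda]$.

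First I would reduce to a ``finite piece'' statement: choose a Borel set $E \subseteq X$ of finite measure with $\mu(X \setminus E) < \varepsilon/2$. It suffices to find $U \in \mathbb{G}_f$ such that $USU\inv$ agrees with $T$ almost everywhere on some co-small subset of $E$, since outside that set the contribution to $d_\mu$ is controlled by $\varepsilon$. The key difficulty is that $T$ need not be ergodic, only aperiodic; Choksi and Kakutani handle this by first replacing $T$ on $E$ by an ergodic bijection $T'$ of a slightly smaller finite-measure set that mimics $T$'s Rokhlin towers up to $\varepsilon$, and similarly arranging $S$ restricted to a relevant set to be (conjugate to) something ergodic. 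This is where one must be careful: all the auxiliary bijections constructed — the Rokhlin-tower approximations, the identification of towers of $S$ with towers of $T'$ — must be realized inside $\mathbb{G}_f$. Since $\mathbb{G}$ is ergodic, \Cref{cor:exchanging involutions in ergodic full group} lets us realize any prescribed measure-preserving identification between Borel sets of equal finite measure (with complements of the right measure) by an involution of $\mathbb{G}$ supported on a finite-measure set, hence in $\mathbb{G}_f$; cutting and pasting such involutions and composing stays in $\mathbb{G}_f$ because $\mathbb{G}$ is a full group and finite supports add up to finite supports on $E$.

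Concretely: apply \Cref{thm: choksi kakutani thm 6} with the roles arranged so that it yields $U_1 \in \mathbb{G}_f$ conjugating an aperiodic element to an ergodic model on $E$, use the Rokhlin lemma (\Cref{rokhlin}) to build, up to error $\varepsilon/4$ on $E$, matching tower structures for $S$ and for $T$, then use \Cref{cor:exchanging involutions in ergodic full group} to build $U_2 \in \mathbb{G}_f$ exchanging the $S$-towers with the $T$-towers, and set $U = U_2 U_1$ (or the appropriate product of the finitely supported conjugators produced). One checks that $U S U\inv$ coincides with $T$ off a set of $\mu$-measure $< \varepsilon$, exactly as in \cite{ChoksiKakutani1979}. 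The main obstacle — and the only real content beyond bookkeeping — is verifying that every conjugating map that Choksi--Kakutani build as a member of $\Aut(X,\lambda)$ can be taken in $\mathbb{G}_f$: this is precisely the point where one invokes ergodicity of $\mathbb{G}$ together with \Cref{cor:exchanging involutions in ergodic full group} (which already packages the needed exchanging involutions) and \Cref{thm: choksi kakutani thm 6}, and where one notes that every such map has support inside the fixed finite-measure set $E$ (or a finite union of such), hence lies in $\mathbb{G} \cap \Autf(X,\lambda) = \mathbb{G}_f$. The rest is the standard $\varepsilon$-estimate bounding $d_\mu(USU\inv, T)$ by a sum of the tower-approximation error, the Rokhlin-remainder error, and $\mu(X \setminus E)$.
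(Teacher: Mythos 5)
Your proposal follows essentially the same route as the paper: uniformly approximate the target aperiodic element by an ergodic one, then conjugate $S$ onto that ergodic model on a large finite-measure set via the full-group version of Choksi--Kakutani's Theorem~6. The paper's actual proof is just the two-line version of this — it cites \cite[Prop.~4]{ChoksiKakutani1979} for the ergodic approximation (the approximant need not lie in $\mathbb{G}$, so no full-group adaptation is required there), applies \Cref{thm: choksi kakutani thm 6} as a black box on a finite-measure set $E$ with small complement, and concludes by the triangle inequality; your inline reconstruction of the Rokhlin-tower matching and the exchanging involutions duplicates work already packaged into the proof of that theorem.
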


\begin{proof}
	Fix $\varepsilon >0$ and a metric compatible with $\tau_u$ on $\Aut(X,\lambda)$, as well as $S' \in \mathrm{APER} \cap \mathbb{G}$. By \cite[Prop.~4]{ChoksiKakutani1979}, there exists $T \in \mathrm{ERG}$ (not necessarily in $\mathbb{G}$) which is $\varepsilon$-close to $S$. By \Cref{thm: choksi kakutani thm 6}, $T$ can be $\varepsilon$-approximated by a $\mathbb{G}_f$-conjugate of $S$, which means that this conjugate is $2\varepsilon$-close to $S'$.
\end{proof}

We also have the following generalization of a classical result (see \textit{e.g.} \cite[Thm.~3.5]{Kechris2010}), which is of independant interest. It is most likely well-known, but we have found no reference, so we give a quick proof using a very powerful result from \cite{Tserunyan2022}.

\begin{thm}
	{\label{thm: existence of ergodic bijections in ergodic orbit full group}}
	Let $\mathbb{G}\leqslant \Aut(X,\lambda)$ be an ergodic full group. Then $\mathrm{ERG} \cap \mathbb{G}$ is not empty.
\end{thm}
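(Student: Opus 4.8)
The plan is to reduce the existence of an ergodic element of $\mathbb{G}$ to a purely descriptive set-theoretic statement about the equivalence relation $\mathcal{R}$ generated by $\mathbb{G}$, and then invoke the structure theory of such relations. Since $\mathbb{G}$ is an ergodic full group on $(X,\lambda)$, the orbit equivalence relation $\mathcal{R} = \mathcal{R}_{\mathbb{G}}$ (two points are related iff some $T \in \mathbb{G}$ maps one to the other) is a $\lambda$-ergodic, $\lambda$-preserving equivalence relation. The key point is that $\mathbb{G}$ contains \emph{every} measure-preserving bijection $T$ whose graph lies inside $\mathcal{R}$ (this is precisely fullness), so it suffices to produce a single ergodic $T \in \Aut(X,\lambda)$ with $(Tx,x) \in \mathcal{R}$ for a.e.\ $x$.

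First I would reduce to the countable (hyperfinite, even) case. By \Cref{cor:exchanging involutions in ergodic full group} and \Cref{prop:pseudo full group exchange subsets}, $\mathbb{G}$ acts ergodically, so one can extract a \emph{countable} subgroup $\Gamma \leqslant \mathbb{G}$ which is still ergodic: indeed by \Cref{prop:weaker invariance for ergodic subgroups} any $\tau_w$-dense subgroup of $\mathbb{G}$ is ergodic, and $(\mathbb{G},\tau_w)$ is separable by \Cref{prop: Aut is Polish}, so a countable $\tau_w$-dense subgroup $\Gamma$ works. Then $\mathcal{R}_\Gamma \subseteq \mathcal{R}$ is a countable, measure-preserving, ergodic equivalence relation. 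It now suffices to find an ergodic $T$ inside $[\mathcal{R}_\Gamma] \leqslant \mathbb{G}$, i.e.\ an ergodic element of a countable ergodic measure-preserving equivalence relation. At this point I would either invoke directly the powerful result from \cite{Tserunyan2022} that the author cites — which presumably produces, in any ergodic countable p.m.p.\ (or here $\sigma$-finite measure-preserving) equivalence relation, a single ergodic transformation in its full group — or, if a more hands-on argument is wanted, use that every ergodic countable measure-preserving equivalence relation contains an ergodic subrelation generated by a single transformation via a marker/exhaustion construction (the $\sigma$-finite analogue of the fact that every ergodic countable p.m.p.\ equivalence relation contains an ergodic $\Z$-subaction, which follows from e.g.\ Dye's theorem or directly from a telescoping argument building longer and longer Rokhlin towers that eventually mix every pair of positive-measure sets). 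Either route gives $T \in [\mathcal{R}_\Gamma] \subseteq \mathbb{G}$ ergodic, completing the proof.

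The main obstacle is the $\sigma$-finite (type $\mathrm{II}_\infty$) setting: in infinite measure one cannot use compactness of $\MAlgf$ or normalized-measure tricks, and the naive attempt to build an ergodic transformation by a Rokhlin-tower exhaustion has to be carried out carefully so that the limiting transformation both stays measure-preserving (not merely non-singular) and is genuinely ergodic with respect to the infinite measure — one must control the measures of the tower levels and of the residual sets simultaneously, much as in \Cref{rokhlin} and \Cref{conservinf}. This is exactly the technical heart that \cite{Tserunyan2022} handles, so in the write-up I would lean on that citation for the delicate part and keep the reduction steps (extract countable ergodic $\Gamma$; note $[\mathcal{R}_\Gamma] \subseteq \mathbb{G}$ by fullness; apply the cited theorem) as the explicit argument. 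A final remark worth making: once one ergodic element exists, \Cref{prop: ergodic iff wweakly dense} together with the density results of the section show that ergodic elements are in fact abundant, but for this theorem mere non-emptiness is all that is claimed.
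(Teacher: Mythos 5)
Your proposal is correct and follows essentially the same route as the paper: take a countable $\tau_w$-dense (hence ergodic) subgroup $\Gamma\leqslant\mathbb{G}$, note that $[\mathcal{R}_\Gamma]\subseteq\mathbb{G}$ by fullness, and apply Tserunyan's theorem to the countable ergodic relation to extract a single ergodic generator $T\in\mathbb{G}$. The only cosmetic caveat is your parenthetical that fullness means $\mathbb{G}=[\mathcal{R}_{\mathbb{G}}]$ — that identification is only guaranteed for countable relations, but your argument never uses it beyond the countable case, so nothing breaks.
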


\begin{proof}
	We let $\Gamma$ be a $\tau_w$-dense countable subgroup of $\mathbb{G}$. We view $\Gamma$ as acting on $(X,\lambda)$ in a non-singular manner, and use \cite[Thm.~1.3]{Tserunyan2022} to obtain an ergodic hyperfinite non-singular subgraph of the associated equivalence relation graph. This subgraph is generated by an ergodic non-singular $T$, which is then in $\mathbb{G}$ (in particular measure-preserving) by construction.
\end{proof}

Recall that $\mathbb{G}_f$ is $\tau_u$-dense (hence $\tau_m^o$-dense when $(\mathbb{G},\tau_m^o)$ is a Polish group by \Cref{Thm: any Polish topology is coarser than the uniform}) in $\mathbb{G}$ (see \Cref{rem: Autf uniformly dense in Aut}). We have the following infinite-measure analogue of \cite[Thm.~4.4]{CarderiLM2016}. Notice that we assume ergodicity.

\begin{thm}
	{\label{thm: category-density theorem}}
	Let $\mathbb{G} = [\mathcal{R}_G]$ be an ergodic orbit full group on $(X,\lambda)$ associated with a Borel action of a Polish group $(G,\tau_G)$, such that $\tau_m^o$ is Polish on $\mathbb{G}$. The following are equivalent:
	\begin{enumerate}[(1)]\setlength\itemsep{0em}
	\item the set $\mathrm{APER} \cap \mathbb{G}$ is $\tau_m^o$-dense in $\mathbb{G}$;
	\item the set $\mathrm{ERG} \cap \mathbb{G}$ is $\tau_m^o$-dense in $\mathbb{G}$;
	\item the $\mathbb{G}_f$-conjugacy class of any element of $\mathrm{APER} \cap \mathbb{G}$ is $\tau_m^o$-dense in $\mathbb{G}$;
	\item for all $A \in \MAlg(X,\lambda)$, there is a sequence $(T_n)$ of elements of $\mathbb{G}$ such that $T_n \to \id_X$ for $\tau_m^o$, and for all $n \in \N$, $\supp T_n = A$ and $T_n$ is ergodic on its support; 
	\item for all $A \in \MAlgf(X,\lambda)$, there is a sequence $(T_n)$ of elements of $\mathbb{G}_f$ such that $T_n \to \id_X$ for $\tau_m^o$ and such that $\supp T_n = A$ for all $n \in \N$;
	\item for any sequence $(\gamma_n)$ in $\Aut(X,\lambda)$ such that any $\gamma_n$ has almost no fixed points, there is a dense $G_\delta$ set (for $\tau_m^o$) of elements $S$ in $\mathbb{G}$ such that any product-word composed alternatingly of letters in $\left\{ \gamma_n \mid n \in \N \right\}$ and $\left\{ S^k \mid  k \in \Z \setminus\{0\} \right\}$ has almost no fixed points;
	\item for any any essentially free measure-preserving action $\Gamma \curvearrowright (X,\lambda)$ of a countable discrete group $\Gamma$, there is a dense $G_\delta$ set of elements of $\mathbb{G}$ inducing an essentially free action of $\Gamma \ast \Z$;
	\item for any $n \in \N$ there is a dense $G_\delta$ set (for the product of $\tau_m^o$) of elements $(S_1, \ldots ,S_n)$ in $\mathbb{G}^n$ inducing an essentially free action of $\mathbb{F}_n$.
	\end{enumerate}
	These equivalent conditions moreover imply the following one:
	\begin{enumerate}[(1)]\setlength\itemsep{0em}
	\item[(9)] for any $\tau_G$-neighbourhood $V$ of $e_G$, the set $\bigcup_{g \in V} \supp g$ is conull;
	\end{enumerate}
	and the converse implication \textit{(9)}$\implies$\textit{(1-8)} holds if the space $X$ is endowed with a compatible Polish topology, with regards to which the measure is locally finite and the $G$-action is continuous.
\end{thm}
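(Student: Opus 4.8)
The plan is to establish the cycle of implications among \textit{(1)}--\textit{(8)}, then \textit{(1--8)} $\Rightarrow$ \textit{(9)}, and finally \textit{(9)} $\Rightarrow$ \textit{(1)} under the additional topological hypothesis. Most of the individual implications follow the template of \cite[Thm.~4.4]{CarderiLM2016}, transported to our $\sigma$-finite setting using the tools already assembled: the uniqueness of the Polish topology (\Cref{UniquePoltopoonfg}), the fact that $\tau_m^o$ is coarser than $\tau_u$ on an ergodic full group (\Cref{Thm: any Polish topology is coarser than the uniform}), the exchanging involutions of \Cref{cor:exchanging involutions in ergodic full group}, the factorisation \Cref{factoriseT}, and the existence of ergodic elements (\Cref{thm: existence of ergodic bijections in ergodic orbit full group}) together with the Choksi--Kakutani density results (\Cref{thm: conjugacy class of an aperiodic is uniformly dense in APER} and \Cref{thm: choksi kakutani thm 6}).

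Concretely, I would first prove the ``easy loop'' \textit{(3)} $\Rightarrow$ \textit{(2)} $\Rightarrow$ \textit{(1)}: \textit{(3)} gives density of one $\mathbb{G}_f$-conjugacy class, and since by \Cref{thm: existence of ergodic bijections in ergodic orbit full group} there is an ergodic element $S_0\in\mathbb{G}$ whose $\mathbb{G}_f$-conjugates are all ergodic (ergodicity being a conjugacy invariant and $\mathbb{G}_f\leqslant\mathbb{G}$), density of the conjugacy class of $S_0$ yields \textit{(2)}; and $\mathrm{ERG}\subseteq\mathrm{APER}$ gives \textit{(2)} $\Rightarrow$ \textit{(1)}. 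For \textit{(1)} $\Rightarrow$ \textit{(5)} and \textit{(1)} $\Rightarrow$ \textit{(4)} I would combine the density of aperiodic (resp.\ ergodic) elements with the exchanging-involution machinery and the factorisation \Cref{factoriseT} to build, for a prescribed support $A$, a sequence of ergodic-on-$A$ elements converging to $\id_X$ in $\tau_m^o$ --- here \Cref{prop: topology on Aut seen as measureable functions} and the characterisation of $\tau_m^o$-convergence via convergence in the quotient metrics is the key technical input. The implications \textit{(4)} $\Rightarrow$ \textit{(6)} $\Rightarrow$ \textit{(7)} $\Rightarrow$ \textit{(8)} $\Rightarrow$ \textit{(1)} are Baire-category arguments: one writes the set of ``bad'' $S$ (those producing a word with a positive-measure fixed set) as a countable union of closed nowhere-dense sets, using weak-continuity of $S\mapsto$ (word in $S$) on $\Aut(X,\lambda)$ and the fact that $\tau_m^o$ refines $\tau_w$; genericity then follows from the Baire Category Theorem since $(\mathbb{G},\tau_m^o)$ is Polish. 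The implications \textit{(5)} $\Rightarrow$ \textit{(3)} (and \textit{(4)} $\Rightarrow$ \textit{(3)}) use \Cref{thm: conjugacy class of an aperiodic is uniformly dense in APER}: approximate a target $T\in\mathbb{G}$ uniformly on a large finite-measure piece by a $\mathbb{G}_f$-conjugate of the fixed aperiodic element, then correct the tail using an element of small support supplied by \textit{(5)}.

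For \textit{(1--8)} $\Rightarrow$ \textit{(9)}: if some $\tau_G$-neighbourhood $V$ of $e_G$ had $\bigcup_{g\in V}\supp g$ non-conull, then the complement $Z$ is a $G'$-invariant set of positive measure where $G'=\langle V\rangle$ is an open (hence also closed, of countable index if $G$ is, say, connected --- more carefully one argues $V$ generates $G$ by connectedness, or one localises) subgroup; every element of $\widetilde{[\mathcal{R}_G]}$ that is $\tau_m^o$-close to the identity has its cocycle taking values in $V$ off a small set, so it fixes most of $Z$ pointwise, which forbids any element of $\mathbb{G}$ with support containing a positive-measure subset of $Z$ from being $\tau_m^o$-approximated by the identity --- contradicting \textit{(4)}. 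This needs a small amount of care about whether $V$ generates an open subgroup and about the interplay of $\tau_m^o$ with the cocycle description from the proof of \Cref{Thm: orbit full groups are Polish groups}. Finally, \textit{(9)} $\Rightarrow$ \textit{(1)} under the continuous-locally-finite-model hypothesis: having a continuous model lets one run the approximation argument of \cite[Thm.~4.4]{CarderiLM2016} directly, using that near $\id_X$ the elements of $[\mathcal{R}_G]$ can be realised by cocycles into arbitrarily small $\tau_G$-neighbourhoods, and \textit{(9)} guarantees enough room to move points nontrivially while staying $\tau_m^o$-small; one then produces aperiodic approximants to an arbitrary $T\in\mathbb{G}$ by a standard Rokhlin-tower-type construction adapted via \Cref{rokhlin} and \Cref{conservinf} to the infinite-measure conservative/dissipative decomposition.

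The main obstacle I expect is the step \textit{(9)} $\Rightarrow$ \textit{(1)}: unlike the probability case, one must simultaneously manage the Hopf decomposition (dissipative, conservative periodic, conservative aperiodic parts) when building aperiodic approximants, and ensure that the approximating elements lie in the \emph{orbit} full group $[\mathcal{R}_G]$ --- i.e., that the perturbations are realised by measurable cocycles into $G$ taking values in the prescribed neighbourhood $V$ --- while keeping the $\tau_m^o$-distance to $\id_X$ controlled. Local finiteness of the measure is exactly what makes \Cref{prop: topology on Aut seen as measureable functions} (and hence the Polishness of $\tau_m^o$ and the compatibility of the quotient metric) available, so the argument genuinely needs the extra hypothesis, mirroring \Cref{ex: action of Sinf}. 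A secondary subtlety is the precise sense in which \textit{(9)} ``fails'' in the \textit{(1--8)} $\Rightarrow$ \textit{(9)} direction, since $V$ need not be a subgroup; I would handle this by passing to $\bigcap_n V^{1/n}$-type arguments or by directly estimating, for a fixed $V$, the $\tau_m^o$-size of any element whose cocycle lands in $V$, and deriving the contradiction with \textit{(4)} applied to $A\subseteq Z$.
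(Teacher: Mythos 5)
Your overall architecture (a cycle of implications through \textit{(1)}--\textit{(8)}, with \textit{(9)} treated separately) matches the paper, and several arrows are right: \textit{(3)}$\Rightarrow$\textit{(2)}$\Rightarrow$\textit{(1)} via \Cref{thm: existence of ergodic bijections in ergodic orbit full group}, \textit{(1)}$\Rightarrow$\textit{(3)} via \Cref{thm: conjugacy class of an aperiodic is uniformly dense in APER} together with $\tau_u$ refining $\tau_m^o$, and the contrapositive cocycle estimate for \textit{(1--8)}$\Rightarrow$\textit{(9)} (your worry about $V$ generating an open subgroup is unnecessary; one only needs that a cocycle landing in $V$ acts trivially off $\bigcup_{g\in V}\supp g$, which bounds $d_\mu(\cdot,\id_X)$ away from $1$ and kills aperiodicity). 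But the proposal has a genuine gap at its center: the step \textit{(5)}$\Rightarrow$\textit{(6)} (or \textit{(4)}$\Rightarrow$\textit{(6)} in your graph) is not a routine Baire-category argument. Writing the bad set as a countable union and checking \emph{openness} of the sets $\left\{ S \mid d_{u,X_k}(e_{\mathrm{w}}(S),\id_X)>1-\varepsilon\right\}$ via weak continuity of $e_{\mathrm{w}}$ is the easy half; the entire content is their \emph{density}, which the paper obtains by an induction on word length using \Cref{lem: observation Tornquist} and \Cref{lem: lemma Tornquist} (choosing the sets $E_l$ on which the partial evaluations $e_{\mathrm{w}_i}(S)$ are pairwise disjoint, perturbing $S$ on $\mathrm{Fix}_{\mathrm{w},k}(S)\cap E_l$ by a small-support element from \textit{(5)} that preserves all the $A_i$, and iterating). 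You give no indication of how density would be established, and this is precisely where hypothesis \textit{(5)} enters. Relatedly, your alternative arrow \textit{(5)}$\Rightarrow$\textit{(3)} is circular as described: \Cref{thm: choksi kakutani thm 6} and \Cref{thm: conjugacy class of an aperiodic is uniformly dense in APER} only approximate \emph{ergodic or aperiodic} targets, so to get density of a conjugacy class in all of $\mathbb{G}$ you must first know $\mathrm{APER}\cap\mathbb{G}$ is dense, i.e. condition \textit{(1)} --- which in the paper is exactly what the Törnquist argument delivers. Without it, \textit{(5)} is never connected back to \textit{(1)} in your graph.

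Two further steps go off the paper's track in ways that matter. For \textit{(2)}$\Rightarrow$\textit{(4)} you invoke exchanging involutions and \Cref{factoriseT}, but the factorisation neither prescribes the support to be exactly a given $A$ nor preserves ergodicity on $A$; the paper instead uses \emph{induced bijections} $T\mapsto T_A$ and the $\tau_m^o$-continuity of $(B,T)\mapsto T_B$ (the generalization of \cite[Prop.~4.2]{CarderiLM2016}), together with the fact that $T$ is ergodic iff $T_A$ is ergodic on $A$. For \textit{(9)}$\Rightarrow$\textit{(1)} you aim to build aperiodic approximants of an arbitrary $T$ directly by Rokhlin towers, which runs into exactly the obstruction you flag (keeping the perturbation in $[\mathcal{R}_G]$ with cocycle in $V$); the paper avoids this entirely by proving only the much weaker \textit{(9)}$\Rightarrow$\textit{(5)} --- producing, via outer regularity (this is where local finiteness is used) and a Lindelöf covering argument, a single nontrivial $V$-uniformly small element supported in a prescribed $A\in\MAlgf(X,\lambda)$, then upgrading to $\supp T=A$ by the maximality statement of \Cref{prop: maximal element in MAlgf} --- and then rides the already-established chain \textit{(5)}$\Rightarrow$\textit{(6)}$\Rightarrow\cdots\Rightarrow$\textit{(1)}. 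So the proposal would need both the Törnquist density induction and a retargeting of the \textit{(9)} direction before it could be completed.
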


\begin{proof}
	We first notice that \textit{(1)}, \textit{(2)} and \textit{(3)} are all equivalent. Indeed, by \Cref{thm: existence of ergodic bijections in ergodic orbit full group} there exists an element $T$ in $\mathrm{ERG} \cap \mathbb{G}$ (in particular it is conservative aperiodic, see \cite[Prop.~1.2.1]{Aaronson1997}). This yields \textit{(3)}$\implies$\textit{(2)}. By \Cref{thm: conjugacy class of an aperiodic is uniformly dense in APER}, the $\mathbb{G}_f$ conjugacy class of the aforementioned ergodic element is $\tau_u$-dense in $\mathrm{APER} \cap \mathbb{G}$. As $\tau_u$ refines $\tau_m^o$, this yields \textit{(1)}$\implies$\textit{(3)}. Finally, \textit{(2)}$\implies$\textit{(1)} is immediate.

	\textit{(2)}$\implies$\textit{(4)}: We choose a sequence $(T_n)$ or ergodic bijections which converges to $\id_X$. For any Borel subset $A$ of positive measure the corresponding induced bijections are well defined. We fix such a subset $A$, by \cite[Prop.~4.2]{CarderiLM2016}, the sequence $(T_{n,A})$ of induced bijections on $A$ is as wanted, since any bijection is ergodic if and only its induced bijection on $A$ is ergodic (see \textit{e.g.} \cite[Prop.~1.5.2]{Aaronson1997}).
	
	\textit{(4)}$\implies$\textit{(5)} is immediate,	\textit{(6)}$\implies$\textit{(7)} is straightforward as soon as we enumerate the elements of an acting countable group $\Gamma$ and rewrite \textit{(6)} as an essentially free action. Now \textit{(7)}$\implies$\textit{(8)} is a direct induction and \textit{(1)} is a weaker reformulation of \textit{(8)} for $n=1$.\\
	
	\textit{(5)}$\implies$\textit{(6)}: The proof closely follows the one from \cite[The Category Lemma]{Tornquist2006}. We fix a partition $(X_k)$ such that $X = \sqcup_{k \in \N}X_k$ and $\lambda(X_k) = 1$ for any $k \in \N$ for the rest of the proof. 
For any $(\varepsilon_i) \to 0$, $T \in \Aut(X,\lambda)$ has almost no fixed points if and only if
	\[
	d_{u,X_k}(T,\id_X) = \lambda(\left\{ x \in X_k \mid T(x) \neq x \right\}) > 1 - \varepsilon_i
	\]
for any $k,i \in \N$ (see \Cref{def: uniform topo} for the definition of pseudometrics inducing the uniform topology).
We will need the following two preliminary lemmas, we prove the first one (as our statement is a bit different) and refer to Törnquist's paper for the proof of the second one.

\begin{lem}
	[{\cite{Tornquist2006}},\cite{CarderiLM2016}]
	{\label{lem: observation Tornquist}}
	Let $\mathbb{G} \leqslant \Aut(X,\lambda)$ be an ergodic orbit full group satisfying condition \textit{(5)} of \textnormal{\Cref{thm: category-density theorem}}. If $(A_i)_{i \in \mathcal{I}}$ is a finite family of disjoint Borel subsets of $X$ of finite measure, then for any $\tau^o_m$-neighbourhood $\mathcal{N}$ of $\id_X$ in $\mathbb{G}$ there is an element $T \in \mathcal{N}$ such that $\supp T = \sqcup_{i \in \mathcal{I}} A_i$ and $T(A_i) = A_i$ for any $i \in \mathcal{I}$.
\end{lem}

\noindent \textit{Proof.} 
	From condition \textit{(5)}, for each $i \in \mathcal{I}$ we get a sequence $(T_n^i)$ in $\mathbb{G}$ converging to $\id_X$ for $\tau_m^o$, and with $\supp T_n^i = A_i$ (for any $n \in \N$). The sequence $(\prod_{i \in \mathcal{I}} T^i_n)$ still converges to $\id_X$, and has $\sqcup_{i \in \mathcal{I}} A_i$ as its support, so for $n$ large enough $T \coloneqq \prod_{i \in \mathcal{I}} T^i_n \in \mathcal{N}$, and is adequate. \hfill $\diamondsuit$\\

\begin{lem}
	[{\cite[Lem.~2]{Tornquist2006}}]
	{\label{lem: lemma Tornquist}}
	Let $\nu$ be a finite Borel measure on $X$. Let $n \in \N^\ast$, and $T_1, \ldots T_n$ be Borel maps from $X$ to itself such that 
	\[
	\nu \left( \left\{ x \in X \mid \forall \, i,j \in \left\{ 1,\ldots, n\right\} : i\neq j \Rightarrow T_i(x) \neq T_j(x)  \right\} \right) > K > 0.
	\] 
	Then there exist finitely many disjoint non-null Borel subsets $E_1,\ldots,E_m$ such that the two following conditions are satisfied:
	$\nu(\bigsqcup_{l\leqslant m} E_l) > K$ and $T_i(E_l) \cap T_j(E_l) = \emptyset$ for any $l \leqslant m$ whenever $i \neq j$.
\end{lem}

With these two lemmas, we can now prove the analogue of the Category Lemma from \cite{Tornquist2006}, using only condition \textit{(5)}. The proof adapts with a few modifications.\\

\noindent \textbf{Proof of \textit{(5)}$\implies$\textit{(6)}:} We consider $( \gamma_n )_{n  \in \N}$ as in \textit{(6)}, $a$ a generator of the group $\Z$, and $\mathscr{A}_1 \cup \mathscr{A}_2 \coloneqq \left\{ \gamma_n \mid n \in \N \right\} \cup \left\{ a^k \mid k \in \Z\setminus\{ 0\} \right\}$ an alphabet. Then $\mathbb{F}_{alt}$ is the set of words formed with letters in $\mathscr{A}_1$ and $\mathscr{A}_2$ alternatingly. 
We will however write $a^k$ as $a \ldots a$ ($k$ times), and say that only $a$ and $a\inv$ are letters of $\mathrm{w}$. For instance the word
\[
\mathrm{w} = aa \gamma_{i_8} a\inv a\inv \gamma_{i_5} a a a \gamma_{i_1}
\] 
has length $\mathrm{len}(\mathrm{w}) = 10$, and notice that we count the letters from right to left, as we see them as functions (here the last letter of $\mathrm{w}$ is $a$). 
For a word $\mathrm{w} \in \mathbb{F}_{alt}$, we consider the evaluation map $e_\mathrm{w} : \Aut(X,\lambda) \to \Aut(X,\lambda)$ sending $S$ to the product-word obtained by replacing $a$ by $S$, and sends any $S$ to $\id_X$ if $\mathrm{w}$ is the empty word. By virtue of $\Aut(X, \lambda)$ being a topological group for $\tau_w$, the map $e_\mathrm{w}$ is $\tau_w$-continuous. We fix a non-trivial word $\mathrm{w}$ of length $\mathrm{len}(\mathrm{w}) = n$, and our goal is to show that 
\[
\left\{ S \in \mathbb{G} \mid  \supp e_\mathrm{w}(S) = X \right\}
\]
is a dense $G_\delta$ set in $\mathbb{G}$ for $\tau_m^o$. This is done by induction on $\mathrm{len}(\mathrm{w})$. We assume that the property holds for any word $\eta$ such that $\mathrm{len}(\eta) < n$. We need to show that for any $\varepsilon > 0$, the set
\[
G_{k,\varepsilon} \coloneqq \left\{ S \in \mathbb{G} \mid d_{u,X_k}(e_\mathrm{w}(S),\id_X) > 1- \varepsilon \right\}
\]
is open and dense for $\tau_m^o$. The following arguments encompass the case $\mathrm{w} = a^{\pm 1}$, which is the only case needed to initialize the induction, we add details when necessary.\\

\textit{(i)} $G_{k,\varepsilon}$ is open:
this is a consequence of the following claim.

\begin{claim2}{\label{ClaimTornquist}}
	If $T \in \Aut(X,\lambda)$ satisfies $d_{u,X_k}(T,\id_X) > K > 0$, then there is a $\tau_w$-neighbourhood $\mathcal{N}$ of $T$ such that $d_{u,X_k}(S,\id_X) > K$ for any $S \in \mathcal{N}$.
\end{claim2}
\noindent
\textbf{Proof of Claim}: 
We fix $\delta > 0$ such that $d_{u,X_k}(T,\id_X) > K + \delta$. By \Cref{lem: lemma Tornquist} applied to $T$ and $\id_X$, there exists disjoint non-null Borel subsets $E_1, \ldots, E_m$ satisfying $\lambda(\bigsqcup_{l \leqslant m} E_l) > K + \delta$ (we can assume that the $E_l$ are subsets of $X_k$), and $T(E_l) \cap E_l = \emptyset$ for any $l \leqslant m$. Consider the set
	\[
	\mathcal{N} \coloneqq  \left\{ S \in \Aut(X,\lambda) \; \middle| \; \forall \, l \leqslant m : \lambda(S(E_l) \Delta T(E_l)) < \dfrac{\delta}{m} \right\},
	\]
which is a $\tau_w$-neighbourhood of $T$.
For any $S \in \mathcal{N}$ we have $\lambda(S(E_l)\cap E_l) < \frac{\delta}{m}$, so 
\begin{align*}
d_{u,X_k}(S,\id_X) & \geqslant  \sum_{l \leqslant m} \lambda(\left\{ x \in E_l \mid S(x) \neq x \right\}) 
 \geqslant \sum_{l \leqslant m}( \lambda(E_l) - \lambda(\left\{ x \in E_l \mid S(x) = x  \right\})) \\ & \geqslant \sum_{l \leqslant m}(\lambda(E_l) - \lambda(S(E_l) \cap E_l) \geqslant \sum_{l \leqslant m} \left(\lambda(E_l) - \frac{\delta}{m}\right) > K,
\end{align*}
which means that $\mathcal{N}$ is the desired neighbourhood. \hfill $\diamondsuit$\\

The fact that $G_{k,\varepsilon}$ is $\tau_w$-open is then immediate, since it is preimage by $e_\mathrm{w}$ of the open set $\left\{  T \in \Aut(X,\lambda) \mid d_{u,X_k}(T,\id_X) > 1 - \varepsilon  \right\}$. It is then $\tau_m^o$-open by \Cref{UniquePoltopoonfg}.\\

\textit{(ii)} $G_{k,\varepsilon}$ is dense:
We denote by $\mathrm{w} = \mathrm{w}_n, \ldots , \mathrm{w}_1$ the words obtained from $\mathrm{w}$ by removing the leftmost letter when passing to $\mathrm{w}_{i}$ from $\mathrm{w}_{i+1}$. They are uniquely determined, and satisfy $\mathrm{len}(\mathrm{w}_i) = i$ and $\mathrm{w}_{i+1} = \alpha \mathrm{w}_i$ for a unique $\alpha \in \mathscr{A}_1 \cup \{ a, a\inv \}$ (for all $i<n$).
We fix a $\tau_m^o$-neighbourhood $\mathcal{N}$ of $\id_X$. By the inductive hypothesis, it is enough to exhibit an element $S'$ in $ G_{k,\varepsilon} \cap S\mathcal{N}$, where 
	\[
	S \in \bigcap_{\left\{ \eta \mid \mathrm{len}(\eta)<n \right\} }   \bigcap_{k \in \N} \left\{ S' \in \mathbb{G} \mid d_{u,X_k}(e_\eta(S'),\id_X) = 1  \right\}
	\]
	is fixed (if $n = 1$ then we fix an arbitrary $S \in \mathbb{G}$).
	We denote by $i_0$ the largest $i < n$ such that $\mathrm{w}_{i+1} = a^{\pm 1} \mathrm{w}_i$, \textit{i.e.} the rank of the letter at the right of the leftmost $a^{\pm 1}$ in $\mathrm{w}$. We can moreover (up to a noncommittal swap) assume that it is $a$ and not $a\inv$. In the example above where $\mathrm{len}(\mathrm{w}) = 10$, $i_0 = 9$, and notice that $i_0$ is necessarily equal to $n-1$ (if the last letter of $\mathrm{w}$ is $a$) or $n-2$ (if the last letter of $\mathrm{w}$ is in $\mathscr{A}_1$), and in the latter case there is nothing to do. 
	
	For any $i < j < n$, by the induction hypothesis we have $e_{\mathrm{w}_i}(S) \neq e_{\mathrm{w}_j}(S)$ $\lambda$-a.e. (in particular $\lambda_{\restriction X_k}$-a.e.), so by \Cref{lem: lemma Tornquist} we can find finitely many disjoint Borel subsets $E_1, \ldots, E_m$ of $X_k$ such that 
	\[
	\left\lbrace
	\begin{array}{l}\label{eqn:conditions for the generalized category Lemma}
	\lambda\left(\bigsqcup_{l \leqslant m} E_l \right) > 1-\varepsilon\\
	\tag{$\star$} \displaystyle{e_{\mathrm{w}_i}(S)(E_l) \cap e_{\mathrm{w}_j}(S)(E_l) = \emptyset \mbox{ for any } i < j < n \mbox{ and any } l \leqslant m.}
	\end{array}
	\right.
	\]
	We define the finite family $(A_i)_{i \in \mathcal{I}_1}$ as the family consisting of all the Borel sets of the form $e_{\mathrm{w}_i}(S)(E_l)$ for $i<n$ and $l \leqslant m$. If $n = 1$ the second condition of \eqref{eqn:conditions for the generalized category Lemma} is empty, and so is the family $(A_i)$, and the rest of the argument is mostly unchanged. We also define 
	\[
	\mathrm{Fix}_{\mathrm{w},k}(S) \coloneqq X_k \setminus \supp e_\mathrm{w}(S)
	\]
	the set in $X_k$ of all points fixed by the $\mathrm{w}$-evaluation of $S$, up to measure $0$. 
	
	If $\lambda(\mathrm{Fix}_{\mathrm{w},k}(S))< \varepsilon$, that means that $d_{u,X_k}(e_\mathrm{w}(S),\id_X) > 1 - \varepsilon$ and there is nothing to do. By using the first line of \eqref{eqn:conditions for the generalized category Lemma} we can therefore assume that $\lambda(\mathrm{Fix}_{\mathrm{w},k}(S) \cap E_l)> 0$ for a certain $l \leqslant m$, and up to a renaming we can assume that $\lambda(\mathrm{Fix}_{\mathrm{w},k}(S) \cap E_1)> 0$.
	
	We then let $B = e_{\mathrm{w}_{i_0}}(S)(\mathrm{Fix}_{\mathrm{w},k}(S) \cap E_1)$. In particular when $n=1$, $B = \mathrm{Fix}_{\mathrm{w},k}(S) \cap E_1$, as $e_{\mathrm{w}_{i_0}}(S) = \id_X$. We apply \Cref{lem: observation Tornquist} to the elements of the (finite) partition generated by $ \left\{ B \cap A_i \mid i \in \mathcal{I}_1 \right\}$, which is just the set $B$ in the case $n = 1$.
	 This yields a $T$ in $\mathcal{N}$ such that $\supp T = B$, $T(A_i) = A_i$ for any $i \in \mathcal{I}_1$. We then define $S_1 = ST$, which satisfies
	\[
	\left\lbrace
	\begin{array}{l}
	S_1(A_i) = S(A_i) \mbox{ for all } i \in \mathcal{I}_1,\\
	S_1 \in S\mathcal{N},\\
	\mbox{a.e. on }  E_1 \setminus \mathrm{Fix}_{\mathrm{w},k}(S), \, e_{\mathrm{w}}(S_1) = e_\mathrm{w}(S) \neq \id_X,\\
	\mbox{a.e. on }  \mathrm{Fix}_{\mathrm{w},k}(S) \cap E_1, \, e_{\mathrm{w}}(S_1) \neq e_\mathrm{w}(S) = \id_X.
	\end{array}
	\right.
	\]
	Indeed, the first two line hold by construction of $T$, and the third and fourth lines hold because $\supp T = B$ and because of the second line of \eqref{eqn:conditions for the generalized category Lemma}: $e_{\mathrm{w}_{j}}(S)(E_1)$ and $e_{\mathrm{w}_{j}}(S_1)(E_1)$ do not meet $B$ for $j < i_0$ by \eqref{eqn:conditions for the generalized category Lemma} so they agree on $E_1$, then the last application of $T$ makes them disagree (for the fourth line). Note that this whole argument holds regardless of whether $i_0 = n-1$ or $n-2$, as everything is the same up to $i_0$, and then only the application of $a^{\pm 1}$ matters. We thus have $E_1 \subseteq \supp e_{\mathrm{w}}(S_1)$.
	
	We can then apply \Cref{lem: lemma Tornquist} again to $e_{\mathrm{w}}(S_1)$ and $\id_X$ to obtain disjoint non-null Borel subsets $E'_1,\ldots E'_p$ of $E_1 \subseteq X_k$ such that 
	\[
	\lambda\left(\bigsqcup_{q \leqslant p} E'_q  \ \sqcup \bigsqcup_{1 < l \leqslant m} E_l \right) > 1- \varepsilon
	\]
	and $e_{\mathrm{w}}(S_1)(E'_q) \cap E'_q = \emptyset$ for any $q \leqslant p$.\\
	
	The proof is now almost over. We define $(A_i)_{i \in \mathcal{I}_2}$ to be the collection $(A_i)_{i \in \mathcal{I}_1}$ to which we adjoined all the $e_{\mathrm{w}_i}(S_1)(E'_q)$, for $q\leqslant p$ and $i<n$.
	
	If $\lambda(\mathrm{Fix}_{\mathrm{w},k}(S_1))< \varepsilon$, we are done as $S_1 \in S\mathcal{N}$, and if not, there exists $l \in 2, \ldots , m$ such that $\lambda(\mathrm{Fix}_{\mathrm{w},k}(S) \cap E_l) > 0$, and once again we may assume that $\lambda(\mathrm{Fix}_{\mathrm{w},k}(S) \cap E_2) > 0$. We can apply the whole argument again to $E_2$ and $(A_i)_{i \in \mathcal{I}_2}$ replacing $E_1$ and $(A_i)_{i \in \mathcal{I}_1}$ respectively, and we obtain $S_2 \in S\mathcal{N}$ satisfying $E_2 \subseteq \supp e_\mathrm{w}(S_2)$. The main point is that the construction ensures that $S_1(A_i) = S_2(A_i)$ for all $i \in \mathcal{I}_2$, so we still have $e_\mathrm{w}(S_2)(E'_q) \cap E'_q = \emptyset$ for all $q \leqslant p$. therefore, in finitely many iterations of the argument we obtain $S'$ satisfying the requirements: $S' \in S\mathcal{N}$ and $d_{u,X_k}(e_\mathrm{w}(S'),\id_X) = \lambda(\left\{ x \in X_k \mid e_\mathrm{w}(S')(x) \neq x \right\}) = \lambda(X_k \setminus  \mathrm{Fix}_{\mathrm{w},k}(S')) > 1- \varepsilon$.\\

	The only thing left to do is to link \textit{(9)} to the other equivalent conditions. The proof is exactly the same as \cite[Thm.~4.4]{CarderiLM2016}, as such it is detailed only for the sake of completeness.\\

	\noindent \textbf{Proof of \textit{(1)}$\implies$\textit{(9):}} The proof is done by contraposition. We choose a probability measure $\mu \in [\lambda]$. Assuming \textit{(9)} is not satisfied, there exists a $\tau_G$-neighbourhood $V$ of $e_G$ such that $\mu(\bigcup_{g \in V} \supp g) < 1 - \delta$ for some $\delta > 0$. By definition of $\tau_m$ on $\widetilde{[\mathcal{R}_G]}$ (see \Cref{section: Polish structures on orbit full groups} for the relevant notations) the set
	\[
	\mathcal{U} \coloneqq \left\{ f \in \widetilde{[\mathcal{R}_G]}  \; \middle| \; \mu(\left\{ x \in X : f(x) \notin V \right\}) > \frac{\delta}{2} \right\} \subseteq \Lzero(X,\lambda,(G,\tau_G))
	\]
	is a $\tau_m$-neighbourhood of the identity in $\widetilde{[\mathcal{R}_G]}$. However for any $f \in \mathcal{U}$ we have $d_\mu(\Phi(f),\id_X) < 1$ by construction, so $\Phi(f)$ is not aperiodic, and therefore the projection of $\mathcal{U}$ on $[\mathcal{R}_G]$ is a $\tau_m^o$-neighbourhood of $\id_X$ comprised of non-aperiodic elements, so \textit{(1)} does not hold.\\

	\noindent \textbf{Proof of \textit{(9)}$\implies$\textit{(5):}} Recall that for this implication we have a Polish topology on $X$, that the $G$-action is continuous and that $\lambda$ is locally finite with regards to the topology. 
	For a $\tau_G$-neighbourhood $V$ of $e_G$ in $G$, we say that $T \in \mathbb{G}$ is $V$-\textit{uniformly small} if there exists $f \in \Lzero(X,\lambda,(G,\tau_G))$ such that 
	\[
	\left\lbrace
	\begin{array}{l}
	f(X) \subseteq V \\
	\lambda\mbox{-a.e. on } X : T(x) = f(x) \cdot x
	\end{array}
	\right.
	\]
	and the main point is that for any $\tau_m^o$-neighbourhood $\mathcal{N}$ of $\id_X$ in $\mathbb{G}$, there exists a $\tau_G$-neighbourhood $V$ of $e_G$ in $G$ such that being $V$-uniformly small implies being in $\mathcal{N}$. We fix such sets $\mathcal{N}$ and $V$ as before. 
	Our goal is to prove that for any $A \in \MAlgf(X,\lambda)$ there exists $T \neq \id_X$ which is $V$-uniformly small with $\supp T \subseteq A$. By a maximality argument (see \Cref{prop: maximal element in MAlgf}), this will yield the existence of a $T \in \mathcal{N}$ with $\supp T = A$, concluding the proof.
	
	By outer regularity (see \textit{e.g.} \cite[Prop.~2.21]{HoareauLM2024}, this is where the local finiteness is used), we fix an open set $B \subseteq X$ such that $A \subseteq B$ and $\lambda(B \setminus A) < \lambda(A)$, in particular $B$ has finite measure. We also fix an open neighbourhood $U$ of $e_G$ in $G$ such that $U = U\inv$ and $U^2 \subseteq V$.
	\begin{claim2}{\label{ClaimCarderiLM}}
	There exists a countable family $(g_i)_{i \in \N}$ of elements of $U$ and an a.e. partition $(A_i)_{i \in \N}$ of $A$ such that for all $i \in \N$, $g_i(A_i)$ is a subset of $B$ that is disjoint from $A_i$.
\end{claim2}
\noindent
\textbf{Proof of Claim}: Let $(U_n)_{n \in \N}$ be a countable basis of open neighbourhoods of $e_G$ in $G$, and let
	\[
	S \coloneqq \bigcap_{n \in \N} \bigcup_{g \in U_n} \supp g.
	\]
By hypothesis, $S$ has full measure. Moreover since the action is continuous, for all $x \in S \cap B$ there is a $g \in U$ such that $g \cdot x \in B$ and $g \cdot x \neq x$. Again by continuity we can find an open neighborhood $W_x \subseteq B$ of $x$ such that $g(W_x)$ and $W_x$ are disjoint. We can now define the partition $(A_i)_{i \in \N}$ from an countable open subcover $(W_i)$ of $(W_x)_{x \in S \cap B}$, which exists by Lindelöf’s lemma, by setting $A_0 = W_0$ and $A_i = W_i \setminus (\cup_{j < i} W_j)$ for $i>0$. \hfill $\diamondsuit$\\
	
	There are now two cases to consider.\\
	
	\textbullet \ If there exists some $i \in \N$ such that $g_i(A_i)$ is not disjoint from $A$, we set $C_i \coloneqq g_i\inv (g_i(A_i) \cap A)$, and define $T \in \mathbb{G}$ by
	\[
	T(x) = 
	\left\lbrace
	\begin{array}{ll}
	g_i \cdot x & \mbox{if } x \in C_i, \\
	g_i\inv \cdot x & \mbox{if } x \in g_i(C_i),\\
	x & \mbox{otherwise.}
	\end{array}
	\right.
	\]
	
	\textbullet \ If for all $i \in \N$ we have that $g_i(A_i)$ is disjoint from $A$, then since $\lambda(B \setminus A) < \lambda(A)$ there exists $i \neq j$ in $\N$ such that $g_i(A_i) \cap g_j(A_j)$ is non-null. We then set $C_{i,j} \coloneqq g_i\inv(g_i(A_i) \cap g_j(A_j))$ and define $T \in \mathbb{G}$ by
	\[
	T(x) = 
	\left\lbrace
	\begin{array}{ll}
	g_j\inv g_i \cdot x & \mbox{if } x \in C_{i,j}, \\
	g_i\inv g_j \cdot x & \mbox{if } x \in g_j\inv g_i(C_{i,j}),\\
	x & \mbox{otherwise.}
	\end{array}
	\right.
	\]
	
	In both cases, the element $T \in \mathbb{G}$ is $V$-uniformly small, so it is in $\mathcal{N}$, it is non trivial, and it satisfies $\supp T \subseteq A$, which concludes the proof.
\end{proof}

We finally obtain an analogue of \cite[Cor.~4.5]{CarderiLM2016} thanks to \cite[Thm.~4.4]{HoareauLM2024}, \Cref{propOE} and \Cref{sepequivdnbeqrel}. the proof works without any changes.

\begin{cor}
	{\label{cor: dichotomy dense conjugacy class}}
	Let $G$ be a locally compact Polish group acting in an measure-preserving and ergodic manner on a standard $\sigma$-finite space $(X,\lambda)$. Then exactly one of the following holds:
	\begin{enumerate}[(i)]\setlength\itemsep{0em}
	\item $\mathcal{R}_G$ is a countable measure-preserving equivalence relation;
	\item The set $\mathrm{APER} \cap [\mathcal{R}_G]$ is $\tau_m^o$-dense in $[\mathcal{R}_G]$.
	\end{enumerate}
\end{cor}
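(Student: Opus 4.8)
The strategy is to show that the two alternatives are mutually exclusive and jointly exhaustive by combining the characterization obtained in \Cref{thm: category-density theorem} with the structural dichotomy for ergodic full groups. First I would note that since $G$ is locally compact Polish and acts in a measure-preserving ergodic manner, \cite[Thm.~4.4]{HoareauLM2024} lets us replace the action by an isomorphic continuous action on a locally compact Polish space $Y$ equipped with a Radon (hence locally finite) measure $\eta$, with the same orbit full group; by \Cref{Thm: orbit full groups are Polish groups LC} the topology $\tau_m^o$ is Polish on $[\mathcal{R}_G]$, so \Cref{thm: category-density theorem} applies. In particular, condition \textit{(9)} of that theorem is \emph{equivalent} to condition \textit{(1)}, i.e.\ to $\mathrm{APER} \cap [\mathcal{R}_G]$ being $\tau_m^o$-dense, precisely because we are now in the situation where $X$ carries a compatible Polish topology with respect to which the measure is locally finite and the $G$-action is continuous.

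Next I would argue the dichotomy itself. Suppose alternative \textit{(ii)} fails, so $\mathrm{APER} \cap [\mathcal{R}_G]$ is not $\tau_m^o$-dense. Then by \Cref{thm: category-density theorem} condition \textit{(9)} fails: there is a $\tau_G$-neighbourhood $V$ of $e_G$ with $\bigcup_{g \in V} \supp g$ not conull. The plan is to deduce that the orbit equivalence relation is essentially generated by a countable subgroup. Concretely, let $A = X \setminus \bigcup_{g \in V}\supp g$, a set of positive measure on which every $g \in V$ acts trivially. Using that $G$ is locally compact, $V$ can be taken with $\overline{V}$ compact, so $G$ is covered by countably many translates $g_n V$; but more relevantly, the set of $g \in G$ moving a positive-measure set is, after restricting attention appropriately, confined to a set whose action on $(X,\lambda)$ factors through a countable quotient. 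I would follow the argument of \cite[Cor.~4.5]{CarderiLM2016}: the failure of \textit{(9)} forces the stabilizer map $x \mapsto G_x$ to be ``large'' on a positive-measure set, and combined with \Cref{propOE} (applied with $H = G$ to control $\mathcal{R}_G$ on a conull set) and ergodicity, one concludes that $\mathcal{R}_G$ agrees a.e.\ with a countable Borel equivalence relation; hence $[\mathcal{R}_G]$ is the full group of a countable measure-preserving equivalence relation, which is alternative \textit{(i)}.

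Conversely, if alternative \textit{(i)} holds, then $[\mathcal{R}_G]$ is the full group of a countable equivalence relation, so by \Cref{sepequivdnbeqrel} it is $\tau_u$-separable; by uniqueness of the Polish topology (\Cref{UniquePoltopoonfg}) together with \Cref{Thm: any Polish topology is coarser than the uniform}, $\tau_m^o$ coincides with $\tau_u$ on $[\mathcal{R}_G]$. Now \textit{(i)} and \textit{(ii)} cannot both hold: if $\mathcal{R}_G$ is countable and measure-preserving, then for each neighbourhood $V$ of $e_G$ the set $\bigcup_{g\in V}\supp g$ is a countable union indexed by $V \cap \Gamma$ for a countable dense $\Gamma$ — but actually the clean way is the contrapositive we just proved: \textit{(i)} implies condition \textit{(9)} fails (since a countable group has a neighbourhood basis of the identity consisting of $\{e_G\}$ in the discrete-like directions, or directly because a countable equivalence relation has an aperiodic-free open neighbourhood), hence by \Cref{thm: category-density theorem} $\mathrm{APER}\cap[\mathcal{R}_G]$ is not dense, so \textit{(ii)} fails. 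Thus exactly one of \textit{(i)}, \textit{(ii)} holds. \textbf{The main obstacle} will be the bookkeeping in the implication ``\,$\neg$\textit{(9)} $\Rightarrow$ $\mathcal{R}_G$ countable'': one must carefully use local compactness of $G$, \Cref{propOE}, and ergodicity to pass from ``a positive-measure set of points has large stabilizer'' to ``the whole orbit equivalence relation is countable a.e.'', exactly as in the proof of \cite[Cor.~4.5]{CarderiLM2016}; as the excerpt indicates, that argument transfers verbatim, so no new difficulty arises beyond invoking the already-established infinite-measure analogues.
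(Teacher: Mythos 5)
Your proposal follows essentially the same route as the paper, which likewise reduces the statement to the continuous Radon model of \cite[Thm.~4.4]{HoareauLM2024}, the equivalence of conditions \textit{(1)} and \textit{(9)} in \Cref{thm: category-density theorem}, \Cref{propOE}, \Cref{sepequivdnbeqrel}, and the verbatim transfer of \cite[Cor.~4.5]{CarderiLM2016}. One small caution on mutual exclusivity: your parenthetical claim that ``a countable group has a neighbourhood basis of the identity consisting of $\{e_G\}$'' is off ($G$ itself need not be countable even when $\mathcal{R}_G$ is countable), but the clean argument --- which you also sketch and which the paper records in the remark following the corollary --- is that \textit{(i)} forces $\tau_m^o = \tau_u$ via \Cref{sepequivdnbeqrel} and \Cref{UniquePoltopoonfg}, and no aperiodic element can be $\tau_u$-close to $\id_X$ since its support is conull.
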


\begin{rem}
	Without using condition \textit{(9)}, we can still deduce that \textit{(i)} implies that \textit{(ii)} does not hold. Indeed, if $\mathcal{R}_G$ is a countable measure-preserving equivalence relation, then $\tau_u = \tau_m^o$ by uniqueness of the Polish topology on $[\mathcal{R}_G]$ (\Cref{UniquePoltopoonfg}) and \Cref{sepequivdnbeqrel}, so it is not possible for \textit{(4)} to hold, by definition of $\tau_u$.
\end{rem}

Given the ambiguous role that $\mathbb{G}_f$ plays, it is natural to ask whether we can find a statement on its elements that is equivalent to the statements in \Cref{thm: category-density theorem}. It is not hard to see that the answer is positive if we can approximate aperiodic elements of $\mathbb{G}$ by elements of $\mathbb{G}_f$ that are aperiodic on their supports. The following is most likely well-known.

\begin{prop}
	{\label{prop: aper of G_f unif dense in APER}}
	Let $T$ be a conservative aperiodic (resp.~ergodic) element of $\Aut(X,\lambda)$. For any $\tau_u$-neighbourhood $N$ of $T$, there exists $T_f \in [T]_f \cap N$ such that $T_{f \restriction \supp T_f}$ is aperiodic (resp. ergodic).
\end{prop}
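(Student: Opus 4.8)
The goal is to approximate, in the uniform topology, a conservative aperiodic (or ergodic) element $T$ by a finitely supported element $T_f$ of $[T]$ which is still aperiodic (or ergodic) on its own support. The natural strategy is to exhaust $\supp T$ from the inside by Borel sets of finite measure and use induced transformations, since the induced transformation of $T$ on a set $A\subseteq \mathfrak{C}(T)$ of positive measure is in $[T]$ (see \Cref{def: induced bijection}) and inherits aperiodicity/ergodicity (as recalled in the proof of \Cref{thm: category-density theorem}, using \cite[Prop.~1.5.2]{Aaronson1997}). The subtlety is that $T_A$ has support $A$ only up to the behaviour of $T$ on $X\setminus A$, so one should work directly with an induced map on a finite-measure set that captures the relevant dynamics.

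First I would reduce to the case $\lambda(\supp T)=+\infty$, the finite case being trivial (take $T_f=T$). Since $T$ is conservative (being aperiodic conservative, or ergodic hence conservative by \cite[Prop.~1.2.1]{Aaronson1997}), fix a $\tau_u$-basic neighbourhood $N=\{S : \lambda(C\cap \supp(S T^{-1}))\leqslant\varepsilon\}$ of $T$, with $C$ of finite measure and $\varepsilon>0$. Choose a Borel set $B\subseteq X$ of finite measure with $C\setminus\supp T\subseteq B$ and $\lambda(B\cap\supp T)$ as large as we like; concretely, write $\supp T=\bigsqcup_n Y_n$ with $\lambda(Y_n)<\infty$ and take $B$ to be a large finite union together with $C\setminus\supp T$, arranged so that $\lambda((C\cap\supp T)\setminus B)\leqslant\varepsilon/2$. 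Now set $T_f\coloneqq T_B$, the induced transformation on $B$, extended by the identity off $B$. By construction $\supp T_f\subseteq B$ has finite measure, so $T_f\in[T]_f$, and $\lambda(\{x\in C : T_f(x)\neq T(x)\})$ is bounded by $\lambda((C\cap\supp T)\setminus B)$ plus $\lambda(\{x\in C\cap B : T_B(x)\neq T(x)\})$; the latter set consists of points whose first return to $B$ under $T$ is not immediate, and by conservativity its measure can be made $\leqslant\varepsilon/2$ by enlarging $B$ appropriately (this is exactly the standard fact that $T_B\to T$ uniformly as $B\uparrow X$ along a suitable exhaustion). Hence $T_f\in N$.

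It remains to check that $T_f$ is aperiodic (resp.\ ergodic) \emph{on its support}. Here the key point is that $\supp T_f$ is, up to a null set, the set of points of $B$ that return to $B$ under $T$, which meets $\lambda$-almost every $T$-orbit intersecting $B$ (by Halmos recurrence). The induced transformation $T_B$ on $B$ is the first-return map of the conservative aperiodic (resp.\ ergodic) transformation $T$; restricting $T_B$ to $\supp T_f$ and noting that $B\setminus\supp T_f$ consists of $T_B$-fixed points, we get that $(T_f)_{\restriction\supp T_f}$ is precisely the nontrivial part of the induced map, which is aperiodic (resp.\ ergodic) by \cite[Prop.~1.5.2]{Aaronson1997}. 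For ergodicity one uses that $T$ ergodic implies $T_B$ ergodic on $B$, hence $(T_f)_{\restriction\supp T_f}$ ergodic since removing fixed points does not affect ergodicity of the remainder.

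The main obstacle is the simultaneous control of two competing requirements on $B$: it must be large enough that $\{x\in C : T_B(x)\neq T(x)\}$ is small (forcing most return times from $B$ to $B$ to be $1$), yet the resulting induced map must still be aperiodic/ergodic on its support. The first requirement is handled by a uniform-approximation argument for induced maps under a monotone exhaustion (a standard consequence of conservativity and the $\sigma$-finiteness of $\lambda$), and once that is in place the dynamical property transfers for free. Care is needed only in the bookkeeping: one should fix the exhaustion $(B_k)$ first, observe $\lambda(\{x\in C\cap B_k : T_{B_k}(x)\neq T(x)\})\to 0$, pick $k$ large, and then invoke \cite[Prop.~1.5.2]{Aaronson1997} once and for all. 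No new tools beyond those already in the excerpt are required.
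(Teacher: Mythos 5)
Your argument is correct and rests on the same core mechanism as the paper's proof: approximate $T$ in $\tau_u$ by its induced transformations on an increasing sequence of finite-measure Borel sets, each of which lies in $[T]_f$ and is aperiodic (resp.\ ergodic) on its support because induction preserves these properties for conservative transformations. The difference is purely in how the exhaustion and the uniform convergence are produced. The paper first writes $T$ as a Choksi--Kakutani skyscraper over a base $B$ of finite measure with levels $(B_n)$ satisfying $\lambda(B_n)\to 0$, and induces on $\bigsqcup_{i\leqslant n}B_i$, for which the convergence is read off the tower structure; you instead take an arbitrary exhaustion $(B_k)$ by finite-measure sets and estimate directly that the disagreement set of $T_{B_k}$ and $T$ inside a fixed finite-measure set $C$ is contained in $\left(C\cap\supp T\setminus B_k\right)\cup\left(C\cap T\inv(X\setminus B_k)\right)$, whose measure tends to $0$ since $C$ and $T(C)$ have finite measure. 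This is slightly more self-contained, as it avoids invoking the skyscraper representation altogether. One small point to tighten: the smallness of $\lambda(C\cap T\inv(X\setminus B_k))=\lambda(T(C)\setminus B_k)$ comes from $T$ preserving $\lambda$ together with $\lambda(T(C))<\infty$, not from conservativity; conservativity is what makes the induced maps well defined and, combined with aperiodicity of $T$ and Halmos recurrence, what makes $T_{B_k}$ aperiodic on all of $B_k$ (so that its support is indeed $B_k$ up to a null set).
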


\begin{proof}
	We start by describing $T$ as a skyscraper as in step\textit{(i)} of the proof of \cite[Prop~2]{ChoksiKakutani1979}: there exists $B \subseteq X$ with $\lambda(B)<1$ and a sequence $(B_n)_{n \in \N}$ satisfying
	\[
	\left\lbrace
	\begin{array}{l}
	B_0 = B,\\
	\lambda(B_{n+1}) \leqslant \lambda(B_n),\\
	\lambda(B_n) \to_n 0,\\
	X = \bigsqcup_{n \in \N} B_n,
	\end{array}
	\right.
	\]
	as well as an aperiodic (resp.~ergodic) measure-preserving bijection $T_B$ of $B$ and injective measure-preserving maps $\pi_n : B_{n+1} \to B_{n}$ satisfying for any $n \in \N$:
	\[
	\left\lbrace
	\begin{array}{ll}
	T = \pi_n \inv & \mbox{ on } \pi_n(B_{n+1}),\\
	T = T_B \pi_1 \pi_2 \ldots \pi_{n-1} & \mbox{ on } B_n \setminus \pi_n(B_{n+1}).
	\end{array}
	\right.
	\]
	Once this observation is made, the proof is over, since the sequence of induced bijections on $\bigsqcup_{0 \leqslant i \leqslant n} B_i$ uniformly approaches $T$ by construction, and each term of the sequence is in $[T]$, has a support of finite measure, and is aperiodic (resp. ergodic) on its support. 
\end{proof}

\begin{rem}
	Thanks to \Cref{prop: aper of G_f unif dense in APER} we can add the following equivalent conditions to \Cref{thm: category-density theorem}:
	\begin{itemize}\setlength\itemsep{0em}
	\item[\textit{(1')}] The set of elements of $\mathbb{G}_f$ aperiodic on their support is $\tau_m^o$-dense in $\mathbb{G}$;
	\item[\textit{(2')}] The set of elements of $\mathbb{G}_f$ ergodic on their support is $\tau_m^o$-dense in $\mathbb{G}$.
	\end{itemize}
\end{rem}

\bibliographystyle{alphaurl}
\bibliography{bibliGeneral}

\bigskip
{\footnotesize

	\noindent
	{F.~Hoareau, \textsc{Université Paris Cité, Sorbonne Université, CNRS, IMJ-PRG, F-75013 Paris, France}}
	\par\nopagebreak \texttt{fabien.hoareau@imj-prg.fr}
}

\end{document}